\newtheorem{theorem}{Theorem}[section]
\newtheorem{thm}[theorem]{Theorem}
\newtheorem{cor}[theorem]{Corollary}
\newtheorem{lem}[theorem]{Lemma}
\theoremstyle{definition}
\newtheorem{defn}[theorem]{Definition}
\newtheorem{rem}[theorem]{Remark}
\newtheorem{constr}[theorem]{Construction}
\newtheorem{hypo}[theorem]{Hypothesis}
\theoremstyle{remark}
\newcommand{\mbb}{\mathbb}
\newcommand{\QQ}{\mbb{Q}}
\newcommand{\PP}{\mbb{P}}
\newcommand{\FF}{\mbb{F}}
\newcommand{\mc}{\mathcal}
\newcommand{\mcN}{\mc{N}}
\newcommand{\mcX}{\mc{X}}
\newcommand{\mfm}{\mathfrak{m}}
\newcommand{\OO}{\mc{O}}
\newcommand{\wht}{\widehat}
\newcommand{\whts}{\wht{s}}
\newcommand{\SP}{\text{Spec }}
\newsavebox{\sembox}
\newlength{\semwidth}
\newlength{\boxwidth}
\newcommand{\Sem}[1]{%
\sbox{\sembox}{\ensuremath{#1}}%
\settowidth{\semwidth}{\usebox{\sembox}}%
\sbox{\sembox}{\ensuremath{\left[\usebox{\sembox}\right]}}%
\settowidth{\boxwidth}{\usebox{\sembox}}%
\addtolength{\boxwidth}{-\semwidth}%
\left[\hspace{-0.3\boxwidth}%
\usebox{\sembox}%
\hspace{-0.3\boxwidth}\right]%
}
\newsavebox{\semrbox}
\newlength{\semrwidth}
\newlength{\boxrwidth}
\newcommand{\Semr}[1]{%
\sbox{\semrbox}{\ensuremath{#1}}%
\settowidth{\semrwidth}{\usebox{\semrbox}}%
\sbox{\semrbox}{\ensuremath{\left(\usebox{\semrbox}\right)}}%
\settowidth{\boxrwidth}{\usebox{\semrbox}}%
\addtolength{\boxrwidth}{-\semrwidth}%
\left(\hspace{-0.3\boxrwidth}%
\usebox{\semrbox}%
\hspace{-0.3\boxrwidth}\right)%
}
\title[Hasse principle over global function fields]
{Hasse principle for three classes of varieties over global function fields}
\author[Tian]{Zhiyu Tian}
\address{
CNRS, Fourier Institute UMR 5582\\
100 Rue des Math\'ematiques BP74\\
38402 Saint-Martin d'H\`eres Cedex, France}
\email{zhiyu.tian@ujf-grenoble.fr}
\date{\today}
\begin{document}


\begin{abstract}
We give a geometric proof that Hasse principle holds for the following varieties defined over global function fields: smooth quadric hypersurfaces in odd characteristic, smooth cubic hypersurfaces of dimension at least $4$ in characteristic at least $7$, and smooth complete intersections of two quadrics of dimension at least $3$ in odd characteristics. 

In Appendix A we explain how to modify a previous argument of the author to prove weak approximation for cubic hypersurfaces defined over function fields of curves over algebraically closed fields of characteristic at least $7$. In Appendix B we prove some corollaries of Koll\'ar's results on the fundamental group of separably rationally connected varieties.
\end{abstract}


\maketitle

\tableofcontents

\section{Introduction}
Given a variety $X$ defined over a non-algebraically closed field $K$, 
a fundamental question is to find necessary and sufficient conditions for $X$ to have a $K$-rational point. 
When the field $K$ is a global field (i.e. a number field or the function field of a curve defined over a finite field), 
we have a natural inclusion of the set of rational points of $X$, $X(K)$, into the set of ad\`elic points of $X$, $X(\mathbb{A})$. 
A classical result of Hasse-Minkowski says that if $X$ is a smooth quadric hypersurface, 
then $X(K)$ is non-empty if and only if $X(\mathbb{A})$ is non-empty. 

We say that a smooth projective variety defined over a global field 
satisfies Hasse principle if the condition that $X(\mathbb{A})$ is non-empty implies that $X(K)$ is non-empty. 
The above-mentioned result of Hasse-Minkowski can be rephrased as saying that a smooth quadric hypersurface satisfies Hasse principle. 
A natural question is to look for other varieties which satisfy Hasse principle over global fields.

Hasse principle fails in general. For (separable) rationally connected varieties, 
Colliot-Th\'el\`ene conjectured that the Brauer-Manin obstruction is the only obstruction for Hasse principle (c.f. \cite{CTBrauerManinI} P. 233 and \cite{CTBrauerManinII} P. 174 for the case of number fields). 
Therefore it is expected that smooth Fano complete intersections of dimension at least $3$ should satisfy Hasse principle.

In this article we prove the following results.

\begin{thm}\label{thm:2}
Hasse principle holds for the following varieties:
\begin{enumerate}
\item 
All smooth quadric hypersurfaces of positive dimension defined over global function fields of odd characteristic.
\item
All smooth cubic hypersurfaces in $\PP^n, n \geq 5,$ defined over global function fields of characteristic at least $7$.
\item
All smooth complete intersection of two quadric hypersurface in $\PP^n, n \geq 5,$ defined over global function fields of odd characteristic.
\end{enumerate}
\end{thm}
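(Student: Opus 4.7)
The approach I would pursue is geometric, translating the Hasse principle into the existence of global sections of a fibration over a curve, in the spirit of the Graber--Harris--Starr theorem and its refinements by de Jong--Starr, de Jong--He--Starr, and the author's previous work on weak approximation. Writing $K = \mathbb{F}_q(C)$ for a smooth projective curve $C/\mathbb{F}_q$, I would spread out the given smooth projective $X/K$ to a flat projective family $\pi \colon \mathcal{X} \to C$ with generic fiber $X$ and smooth locus containing a dense open $U \subset C$. Under this correspondence, a $K$-rational point of $X$ is the same as a section of $\pi$, and by the valuative criterion of properness, an adelic point is a compatible family of sections over $\mathrm{Spec}(\mathcal{O}_{K_v})$ for every closed point $v \in C$. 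The Hasse principle thus reduces to showing that local sections everywhere imply the existence of a global section.

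The main engine would be a geometric Hasse principle in the spirit of de Jong--He--Starr. Assuming that the general fiber of $\pi$ satisfies a suitable rational simple connectedness property (a 2-point refinement of separable rational connectedness), one shows that for $e$ sufficiently large the moduli space $\mathcal{M}_e$ of degree $e$ sections of $\pi$ is geometrically irreducible and separably rationally connected over $\mathbb{F}_q$. The given local sections impose closed conditions cutting out a nonempty subscheme $\mathcal{M}_e^{\mathrm{loc}} \subset \mathcal{M}_e$ defined over $\mathbb{F}_q$, still rationally connected in the relevant sense. By Esnault's theorem combined with Lang--Weil estimates, a smooth compactification of a separably rationally connected variety over $\mathbb{F}_q$ has an $\mathbb{F}_q$-rational point; this produces a section of $\pi$, hence a $K$-point of $X$.

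With this framework in place, each case of Theorem~\ref{thm:2} reduces to verifying the appropriate rational simple connectedness for the class of varieties at hand. For smooth quadrics in odd characteristic, the required geometric input comes from the spaces of lines and conics on the quadric. For smooth cubics of dimension $\geq 4$ in characteristic $\geq 7$, the central input is the weak approximation statement proved in Appendix A together with the analysis of conics through two general points of a smooth cubic, while the corollaries of Koll\'ar's theorem established in Appendix B are used to control \'etale fundamental groups in the descent step. For smooth complete intersections of two quadrics of dimension $\geq 3$ in odd characteristic, one exploits the associated pencil of quadrics and the moduli of conics on the variety to deduce the required rational simple connectedness.

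The main obstacle I foresee is precisely this verification in positive characteristic. The characteristic hypotheses --- odd for quadrics and for intersections of two quadrics, and $p \geq 7$ for cubics --- demarcate the ranges in which moduli of low-degree rational curves remain geometrically irreducible and in which the evaluation maps $\overline{\mathcal{M}}_{0,2}(X,d) \to X \times X$ are separable with rationally connected geometric generic fibers. A further subtlety, which would require careful bookkeeping, is to ensure that imposing the local adelic constraints preserves geometric irreducibility of $\mathcal{M}_e^{\mathrm{loc}}$, so that the Lang--Weil and Esnault machinery can indeed be invoked; this hinges on how the local sections interact with the incidence geometry of sections of $\pi$ and on choosing $e$ appropriately large relative to the bad reduction divisor of $\mathcal{X}/C$.
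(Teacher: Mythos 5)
Your proposal follows the full de Jong--He--Starr program (irreducibility \emph{and} rational connectedness of the space of sections, then Esnault plus Lang--Weil), and this is not the route the paper takes; more importantly, as written it has a genuine gap. The step ``verify the appropriate rational simple connectedness for the class of varieties at hand'' is exactly what is unavailable here: cubic hypersurfaces and complete intersections of two quadrics are not known to be rationally simply connected (the paper describes itself as an application of the theory ``in the non-rationally-simply-connected case''), and the spaces of lines through a point can even be reducible, so the very twisting surface machinery does not apply. Moreover, Esnault's theorem requires a smooth proper variety; the moduli space of sections is neither, and producing a smooth compactification that is separably rationally chain connected is a much harder problem than the one being solved. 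Finally, the adelic hypothesis is not used in the paper to cut out a subscheme $\mathcal{M}_e^{\mathrm{loc}}$ of the moduli space; it is used, via Koll\'ar-type semistable integral models (Section~\ref{sec:ss}), to control the singularities of the fibers of $\mcX \to B$ so that the deformation arguments go through.

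The paper's actual argument rests on a much weaker target: one only needs a single \emph{geometrically irreducible} component $\Sigma$ of the space of sections defined over $\FF_q$ (Lemma~\ref{reduction}). Lang--Weil then gives $\FF_{q^n}$-points of $\Sigma$, i.e.\ $\FF_{q^n}(B)$-points of $X$, for all large $n$, and one descends to an $\FF_q(B)$-point by Springer's theorem for quadrics, its analogue for intersections of two quadrics, and the elementary line/plane intersection argument for cubics. To produce such a component, the paper runs the de Jong--He--Starr construction only far enough to show that the sequence of irreducible components over $\bar\FF_q$ is \emph{asymptotically canonical} --- any two choices of starting nice section eventually yield the same components --- which forces Galois invariance in large degree (Lemma~\ref{lem:Galois}); this is established case by case by building ruled surfaces in $\mcX$ containing two given sections, which is where weak approximation (Appendix~\ref{WA}) and the fundamental group results (Appendix~\ref{sec:fundamentalgroup}) actually enter. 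If you want to salvage your outline, replace the rational connectedness and Esnault steps by this irreducibility-plus-Springer reduction.
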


During the preparation of this manuscript, Browning and Vishe posted a paper on arXiv \cite{BrowningVisheCubic}, which uses the circle method to prove Hasse principle and weak approximation for smooth cubic hypersurfaces of dimension at least $6$ defined over $\FF_q(t)$ of characteristic at least $5$.

Colliot-Th\'el\`ene (\cite{CTBrauerManinII}) proves that if the field $\FF_q$ has no cubic root of the unity and the characteristic is not $3$, then the diagonal cubic hypersurface $a_0 X_0^3+a_1 X_1^3+\ldots+a_4 X_4^3=0, a_i \in \FF_q(B)$ in $\PP^4$ satisfies Hasse principle, where $B$ is a smooth projective curve.

Colliot-Th\'el\`ene and Swinnerton-Dyer prove that the Hasse principle holds for families of cubic surfaces defined by $f+tg=0 \subset \PP^3 \times \mathbb{A}^1$ \cite{CTSDpencil}.

Also it is almost certain that the results in the 167-page-long papers (\cite{CTSD}, \cite{CTSDquadric}) of Colliot-Th\'el\`ene, Sansuc, and Swinnerton-Dyer hold over global function fields of odd characteristic, although no one volunteered to write down the details.

Once we know that the Hasse principle holds for smooth complete intersections of two quadrics of dimension at least $3$, 
it is very easy to deduce weak approximation results using a geometric argument as in \cite{CTSD}. 

\begin{thm}\label{thm:wa22}
Smooth complete intersections of two quadric hypersurfaces in $\PP^n$, 
$n \geq 5,$ defined over global function fields of odd characteristic satisfy weak approximation.
\end{thm}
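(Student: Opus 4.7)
I plan to deduce Theorem \ref{thm:wa22} from Theorem \ref{thm:2}(3) by the ``conic bundle'' method of \cite{CTSD}: one exhibits $X$ birationally as a conic bundle $\pi\colon \widetilde X\to \PP^{n-2}$ over a rational $K$-variety and transfers approximation from the base $\PP^{n-2}$ through the Hasse principle for the conic fibre.

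Concretely, let $S$ be a finite set of places of $K$ and let $(P_v)_{v\in S}$, with $P_v\in X(K_v)$, be a system of local points to be approximated. By Theorem \ref{thm:2}(3) we secure a $K$-point $P_0\in X(K)$. The first main geometric step is to produce a line $\ell\subset X$ defined over $K$; granting this, let $\widetilde X=\mathrm{Bl}_\ell X$ and let $\pi\colon\widetilde X\to \PP^{n-2}$ be the projection from $\ell$. Here the $\PP^{n-2}$ parametrises the $2$-planes of $\PP^n$ containing $\ell$, and each such plane $\Pi$ meets $X$ in $\ell\cup C_\Pi$ with $C_\Pi$ a residual conic, so $\pi$ is a conic bundle.

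Given the conic bundle structure, the transfer of approximation is routine. After nudging each $P_v$ to a nearby point of $X(K_v)$ lying outside the exceptional divisor of $\widetilde X\to X$, set $b_v=\pi(P_v)\in\PP^{n-2}(K_v)$. Weak approximation on $\PP^{n-2}$ produces $b\in\PP^{n-2}(K)$ as close as desired to each $b_v$. The fibre $C_b=\pi^{-1}(b)$ is a conic over $K$ that, for $b$ close enough to $b_v$, has a $K_v$-point close to $P_v$ for every $v\in S$; in particular $C_b(\AAA)\ne\emptyset$. The Hasse principle for conics (Hasse--Minkowski, which is Theorem \ref{thm:2}(1)) then yields $C_b(K)\ne\emptyset$, so $C_b\cong_K\PP^1$; weak approximation on $\PP^1$ finally furnishes a $K$-point of $C_b\subset X$ approximating $(P_v)_{v\in S}$, as required.

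The principal obstacle is producing the $K$-line $\ell$ in a form robust enough to serve as the centre of projection. When $\dim X\geq 4$ one expects to produce $\ell$ by intersecting $X$ with carefully chosen linear subspaces through $P_0$ and arguing inductively using Theorem \ref{thm:2}(3). When $\dim X=3$, however, the Fano scheme of lines $F_1(X)$ is an abelian surface, for which no Hasse principle is available, and one must resort to the more delicate geometric construction of \cite{CTSD}. Since that construction depends only on the geometry of the pencil of quadrics $\{\lambda Q_1+\mu Q_2\}$ containing $X$ and on basic field-theoretic manipulations, it should transfer to the odd-characteristic global function field setting without essential modification.
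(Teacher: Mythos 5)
Your top-level plan (get a $K$-point from Theorem \ref{thm:2}(3), then transfer weak approximation through a fibration in the style of \cite{CTSD}) agrees with the paper, which simply combines Theorem \ref{thm:2}(3) with Theorem \ref{thm:WA22} --- the Colliot-Th\'el\`ene--Sansuc--Swinnerton-Dyer result that a rational point implies weak approximation, whose proof rests on the construction of Lemma \ref{lem:rconnect}. But the specific mechanism you propose breaks down in two places. The first is that the conic bundle you describe does not exist. If $\ell\subset X=Q_1\cap Q_2$ is a line and $\Pi\supset\ell$ is a $2$-plane, then $\Pi\cap Q_i$ is a conic containing the line $\ell$, hence splits as $\ell\cup\ell_i$; consequently $\Pi\cap X=\ell\cup(\ell_1\cap\ell_2)$, and the residual intersection is a single point, not a conic. (The picture ``plane section $=$ line $\cup$ residual conic'' is the one for a cubic hypersurface.) Projection from a line on a $(2,2)$ complete intersection is in fact a birational map $X\dashrightarrow\PP^{n-2}$, so your step invoking the Hasse principle for the conic fibre has nothing to act on. The construction that actually works, and that \cite{CTSD} and Lemma \ref{lem:rconnect} use, is different: one intersects $X$ with a tangent hyperplane at the rational point and projects \emph{from that point}, obtaining a quadric hypersurface in $\PP^{n-2}$, and then runs the fibration method on the resulting family of quadrics.

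The second, and more serious, gap is the production of the $K$-line $\ell$, which you treat as a preliminary step but which does not follow from $X(K)\neq\emptyset$. The scheme of lines through a point of $X$ is a $(2,2)$ complete intersection of dimension $n-5$: for $n=5$ it is a length-four scheme and the full Fano scheme of lines is a torsor under an abelian surface, and there exist smooth $(2,2)$ threefolds over fields with rational points but no rational lines (indeed the existence of a $K$-line is equivalent to $K$-rationality and is a delicate question); for $n=6$ it is a genus-one curve and for $n=7$ a degree-$4$ del Pezzo surface, neither of which satisfies the Hasse principle in general, so the proposed induction via Theorem \ref{thm:2}(3) cannot even begin until $n\geq 8$, and even there the local solvability of the scheme of lines would have to be established. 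Finally, deferring the hardest case $\dim X=3$ to ``the more delicate geometric construction of \cite{CTSD}'' is not a proof, and that construction does not produce a line in any case. As written, the argument therefore has no complete case, and the essential content of the theorem --- namely Theorem \ref{thm:WA22} and the point-projection construction behind it --- is missing.
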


The more difficult problem of weak approximation on cubic hypersurfaces and a del Pezzo surface of degree $4$ will be discussed in a subsequent paper.

Our approach is geometric in nature. 
Any variety defined over $\FF_q(B)$ corresponds (non-uniquely) to a fibration $\pi: \mcX \to B$ and rational points correspond to sections of $\pi$. 
Thus we need to establish the existence of a section under the assumption that there are formal sections everywhere locally, 
which turns the problem into finding a rational point of the moduli space of sections over $\FF_q$. 
There are geometric conditions which would guarantee the existence of a rational point. But these conditions are usually difficult to check. 
When the base field is the function field of a complex curve instead of a finite field (thus the variety is defined over the function field of a complex surface), 
this line of argument is encoded in the theory of ``rational simple connectedness" (\cite{dHS}), 
the technical core of which is to check rational connectedness of the moduli space using ``very twisting surfaces".

In some sense this paper is an application of the theory of ``rational simple connectedness" in the ``non-rationally-simply-connected" case.

The basic observation of this paper is that in order to prove the existence of a section under the assumptions of the main theorem, 
it suffices to prove a much weaker statement, 
namely, the existence of a geometrically irreducible component of the space of sections (Lemma \ref{reduction}). 
To find such a component, we use a slight variant of an argument of de Jong-Starr-He \cite{dHS}. 
This argument produces a sequence of irreducible components of spaces of sections over $\bar{\FF}_q$ 
which becomes Galois invariant when the degree is large enough. 
One subtle point in our variation of their construction is that in our case the family of lines through a general point could be reducible.
So a monodromy argument is necessary to show that we can get an irreducible component. 
We deal with these problems in Appendix \ref{sec:fundamentalgroup}, using some results of Koll\'ar on the fundamental group of rationally connected varieties \cite{KollarFundamentalGroup}. 
This sequence is called ``Abel sequence" in \cite{dHS}, as it is related to the Abel-Jacobi map to the Picard variety of the base curve. 
Here we use a different name since no Abel-Jacobi map is involved. 
There seems to be several interesting arithmetic questions related to this sequence (c.f. Questions (\ref{1}), (\ref{2}), (\ref{3})). 
These are discussed in Section \ref{sec:canonical}. 

As usual, singularities cause problems in deforming sections of the family. 
The condition that one has a formal section everywhere is used to analyze singularities. 
Combining a result of Koll\'ar, which describes a ``semistable" integral model, 
we can have some control of the singularities. This is done in Section \ref{sec:ss}. 
The argument is straightforward once the theory of ``semistable model" is established. 
However the computation is quite long. The readers who trust the author's computation can simply 
take a look at Corollaries \ref{cor:semiQ}, \ref{cor:semiC}, \ref{cor:semiQQ} and proceed to the next sections.

The main theorems are proved in Sections \ref{sec:2}, \ref{sec:3}, \ref{sec:22}. 
The case of quadric hypersurface is the simplest. 
We recommend reading this case (Section \ref{sec:2}) first to get a general idea of the proof. 
The main argument is to construct a ruled surface containing two given sections (again following an idea of \cite{dHS}). 
However in our case we cannot find a chain of lines that does the job as in \cite{dHS}. 
We have to use higher degree curves. 
As a result, we have to be very careful about places of bad reductions and the degeneration of the family of rulings of the ruled surface. 
This constitutes most of the technical argument in these sections.

In the Appendix \ref{WA} we show how to modify the argument of \cite{WACubic} to prove weak approximation of cubic hypersurfaces 
defined over function fields of curves defined over an algebraically closed field of characteristic at least $7$. 
This result is used in Section \ref{sec:3}.

\textbf{Acknowledgment:} Part of this project was carried out when the author was visiting Mathematische Institut der Universit\"at Bonn. 
I would like to thank the institute, in particular Prof. Huybrechts and Mrs. Sachinidis, for their hospitality. 
I would also like to thank Prof. J.-L. Colliot-Th\'el\`ene for providing a list of references with many detailed comments, Prof. Jason Starr and Janos Koll\'ar for helpful discussions on the fundamental group of rationally connected fibrations.

\section{Semistable models over global function fields}\label{sec:ss}
In this section we review the theory of semistable models of hypersurfaces defined over global function fields by Koll\'ar \cite{KollarIntegralPolynomial} 
and generalize this to the case of complete intersections of two quadrics. 

Let $S$ be a discrete valuation ring with $K$ the quotient field and let $t$ be a generator of the maximal ideal and $k=R/(t)$ the residue field. Given a polynomial $f \in S[X_0, \ldots, X_n]$, we write $f_k$ and $f_K$ as the image of $f$ in $k[X_0, \ldots, X_n]$ and $K[X_0, \ldots, X_n]$.
\subsection{Semistable models}
We first review the case of hypersurfaces treated in \cite{KollarIntegralPolynomial}.
\begin{defn}
A weight system $W$ on $S[X_0, \ldots, X_n]$ is an $(n+1)$-tuple of integers $W=(w_0, \ldots, w_n)$. 
We write $F(W \cdot X)=F(t^{w_0}X_0, \ldots, t^{w_n}X_n)$. The multiplicity of $F$ at $W$, 
denoted by $\text{mult}_{W} F$, is defined as the minimum of the exponent of $t$ appearing in the monomials of $F(W \cdot X)$.

The family of degree $d$ hypersurfaces defined by $F \in S[X_0, \ldots, X_n]$ is semistable 
if for any change of coordinates over $S$, $X_i=a_{ij}Y_j, \det(a_{i, j}) \in S^*$, and any weight system $W$, we have $\text{mult}_W F \leq \frac{d \sum w_i}{n+1}$. 
Otherwise it is called non-semistable.
\end{defn}

The following theorem is proved in \cite{KollarIntegralPolynomial}.
\begin{thm}\label{thm:ss}
Given a degree $d$ hypersurface $f \in K[X_0, \ldots, X_n]$ which defines a semi-stable hypersurface in $\PP^n_K$ for the action of $SL(n+1)$ on $\PP(H^0(\PP^n, \OO(d)))$ in the sense of geometric invariant theory (GIT), 
there is a semistable model $F \in S[X_0, \ldots, X_n]$ such that $F_K=0$ defines a hypersurface isomorphic to $f=0$. 
In particular this holds for smooth hypersurfaces.
\end{thm}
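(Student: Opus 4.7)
The plan is to begin with any integral model of $f$---produced by clearing denominators so that $F\in S[X_0,\dots,X_n]$ with $F_K$ a nonzero $K$-multiple of $f$---and to improve it iteratively whenever it fails the semistability condition. Suppose the current $F$ is not semistable: then by definition there exist $(a_{ij})\in GL_{n+1}(S)$ and a weight system $W$ with $\mathrm{mult}_W F>d\sum w_i/(n+1)$. Since this defect is invariant under $W\mapsto W+(c,\dots,c)$ and scales linearly with $W$, I first replace $W$ by $(n+1)W$ and then translate so as to arrange $\sum w_i=0$. Writing $G(Y)=F(\sum_i a_{ij}Y_j)$, the instability now says $m:=\mathrm{mult}_W G>0$.

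Next, set
\[
F'(Y)=t^{-m}\,G(t^{w_0}Y_0,\dots,t^{w_n}Y_n).
\]
By the choice of $m$ we have $F'\in S[Y_0,\dots,Y_n]$, and over $K$ the passage from $F$ to $F'$ is an invertible linear change of coordinates followed by a nonzero rescaling, so $\{F'=0\}\subset\PP^n_K$ is isomorphic to $\{f=0\}$. Hence $F'$ is a new integral model of $f$. The assignment $F\mapsto F'$ is the fundamental ``improvement move''; if $F'$ is not yet semistable, I iterate.

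The main obstacle is to show that this procedure terminates. For this I would choose a homogeneous $SL_{n+1}$-invariant polynomial $I$ of some positive degree $r$ on $H^0(\PP^n,\OO(d))$ with $I(f)\neq 0$; such an $I$ exists by the Hilbert--Mumford characterization of GIT semistability of $f$ over $K$, and in particular for smooth $f$ by the classical stability theorems of Matsusaka--Mumford. Because $\det(a_{ij})\in S^*$ and $\mathrm{diag}(t^{w_0},\dots,t^{w_n})$ has determinant $t^{\sum w_i}=1$ (using our normalization), a direct computation using the homogeneity of $I$ in the coefficients and its $SL_{n+1}$-invariance yields
\[
v_t(I(F'))=v_t(I(F))-m\,r,
\]
which is strictly less than $v_t(I(F))$ since $m,r>0$. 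Because every iterate $F^{(k)}$ lies in $S[X]$ and $I(f)\neq 0$, the chain $0\le v_t(I(F^{(k)}))<v_t(I(F^{(k-1)}))$ cannot go on indefinitely, so the iteration halts at a semistable integral model, producing the required $F$.
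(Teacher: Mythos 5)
Your overall strategy — improve an arbitrary integral model by weighted coordinate changes, using the $t$-adic valuation of a nonvanishing $SL_{n+1}$-invariant $I$ as a strictly decreasing termination measure — is exactly the paper's (and Koll\'ar's). The gap is in your normalization step. You claim the defect $\mathrm{mult}_W F-\frac{d\sum w_i}{n+1}$ ``scales linearly with $W$'' and use this to replace an arbitrary destabilizing $W$ by one with $\sum w_i=0$. But $\mathrm{mult}_{\lambda W}F=\min_I\bigl(v_t(c_I)+\lambda\sum_j w_j i_j\bigr)$ is not $\lambda\,\mathrm{mult}_W F$ once some coefficient $c_I$ has positive valuation, so positivity of the defect is not preserved by scaling. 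Concretely, take $n=2$, $d=2$, $F=X_0X_1+tX_0X_2+t^2X_2^2$, a smooth (hence GIT-semistable) conic over $K$ in odd characteristic. With $W=(1,1,0)$ one gets $\mathrm{mult}_WF=2>\tfrac43$, so $F$ is not semistable; yet your normalized weight $(1,1,-2)$ gives $\mathrm{mult}=-2$, and no integral weight system with $\sum w_i=0$ destabilizes $F$ in these coordinates (one needs $2+2w_2>0$, forcing $w_2\ge 0$, and $w_0+w_1>0$, forcing $w_2<0$). So your iteration has no available move at $F$ and halts there; at termination you only know the model admits no destabilizing $W$ with $\sum w_i=0$, which is strictly weaker than semistability. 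This weaker conclusion is also unusable downstream: the total space of this $F$ is singular at the $\FF_q$-point $[0,0,1]$ of the central fiber, so for instance Lemma \ref{lem:semiQ}(\ref{Q1}) — whose proof uses the weight system $(1,\dots,1,0)$, which does not satisfy $\sum w_i=0$ — fails for it.

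The repair is to allow arbitrary $W$ in the improvement move, which is what the paper does via Koll\'ar's identity $I(F(W\cdot X))=t^{\,r\,d\sum w_i/(n+1)}\,I(F(X))$ (Lemma 4.5 of \cite{KollarIntegralPolynomial}, quoted in the paper). One proves this by factoring $\mathrm{diag}(t^{w_0},\dots,t^{w_n})$ as $t^{\sum w_i/(n+1)}$ times a determinant-one diagonal matrix over the ramified extension $S[t^{1/(n+1)}]$ and invoking $SL_{n+1}$-invariance there, or equivalently by the relative invariance of $I$ under the full torus. This yields $v_t(I(F'))=v_t(I(F))-r\bigl(\mathrm{mult}_WF-\frac{d\sum w_i}{n+1}\bigr)$ for \emph{every} destabilizing pair, after which your termination argument (a strictly decreasing chain of nonnegative elements of $\frac{1}{n+1}\ZZ$) goes through verbatim. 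In the example above this corrected move with $W=(1,1,0)$ produces $F'=Y_0Y_1+Y_0Y_2+Y_2^2$, which has good reduction.
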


Here we briefly discuss the proof. Let $f \in K[X_0, \ldots, X_n]$ be a homogeneous polynomial of degree d. We can find a polynomial $F \in S[X_0, \ldots, X_n]$ such that $F=0$ defines a flat family of hypersurfaces of degree $d$ over $\SP S$ 
and over $K$, the hypersurface defined $F_K=0$ is isomorphic to the hypersurface defined by $f=0$.
By geometric invariant theory, there is a homogeneous $SL(n+1)$-invariant polynomial $I$ on the coefficients of degree $d$ homogeneous polynomials 
such that when we apply the function to the coefficients of $F \in S[X_0, \ldots, X_n]$, 
we get a non-zero element of $S$, which is in the ideal $(t^k)$ for some $k \geq 0$. 
In the following, we write the value of the polynomial $I$ on the coefficients of a homogeneous degree $d$ polynomial $G$ as $I(G(X_0, \ldots, X_n))$.
Assume that $F$ is not semistable with respect to a weight system $W$. Then we perform the change of coordinates $X_i=t^{w_i}Y_i$, 
\[
F(X_0, \ldots, X_n)=F( W \cdot Y)=t^{\text{mult}_W F}F'(Y_0, \ldots, Y_n).
\]
Assume the function $I$ is of homogeneous degree $r$. Lemma 4.5 of \cite{KollarIntegralPolynomial} gives the following
\[
I(F'(X_0, \ldots, X_n))=t^{r({-\text{mult}_W F}+\frac{d \sum w_i}{n+1})}I(F(X_0, \ldots, X_n)).
\]
In other word we decrease the vanishing order of $I$ by this change of coordinates if $F$ is not semistable with respect to the weight system $W$. 
By this calculation, a semistable model is nothing but a model which minimizes the vanishing order of $I$. 
Therefore we have the following observation which will be used several times.
\begin{lem}\label{2.3}
Let $F_S \in S[X_0, \ldots, X_n]$ be a semistable family. 
Assume that there is a weight system $W$ such that $\text{mult}_W F$ equals to $\frac{d \sum w_i}{n+1}$. 
Then $F'(X_0, \ldots, X_n)= t^{-\text{mult}_W F} F(W \cdot X)$ is also a semistable family.
\end{lem}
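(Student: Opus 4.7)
The plan is to reduce the lemma to the formula for the transformation of the GIT invariant $I$ that was just recalled, combined with the characterization of semistability as minimizing the $t$-adic vanishing order of $I$ among all integral models of the given $K$-hypersurface.

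First I would verify that $F'$ is a genuine integral model. By the very definition of $\text{mult}_W F$, every monomial of $F(W \cdot X)$ is divisible by $t^{\text{mult}_W F}$, so after factoring out this power of $t$ we obtain $F' \in S[X_0,\ldots,X_n]$, and at least one coefficient of $F'$ is a unit (so $F'$ is not divisible by $t$). Over $K$ the substitution $X_i \mapsto t^{w_i} X_i$ is an invertible linear change of variables, so $F'_K = 0$ and $F_K = 0$ cut out isomorphic hypersurfaces in $\PP^n_K$; in particular $F'$ and $F$ are two integral models of the same $K$-hypersurface.

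The core step is a one-line computation: apply the formula
\[
I(F'(X_0, \ldots, X_n)) = t^{r\bigl(-\text{mult}_W F + \frac{d \sum w_i}{n+1}\bigr)} I(F(X_0, \ldots, X_n))
\]
with the hypothesis $\text{mult}_W F = \frac{d \sum w_i}{n+1}$. The exponent vanishes, so $I(F') = I(F)$, i.e.\ $F'$ has exactly the same $t$-adic vanishing order of $I$ as $F$. Since $F$ is semistable, and being semistable is equivalent to minimizing $\text{ord}_t I$ among integral models (as explained just before the lemma), $F$ achieves this minimum; hence $F'$ does too. If $F'$ were not semistable, some further $SL$-change of coordinates over $S$ followed by some weight system would produce an integral model $F''$ of the same $K$-hypersurface with $\text{ord}_t I(F'') < \text{ord}_t I(F') = \text{ord}_t I(F)$, contradicting the semistability of $F$. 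Therefore $F'$ is semistable.

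The only subtle point is that the substitution $X_i \mapsto t^{w_i}X_i$ has determinant $t^{\sum w_i}$, which is not a unit in $S$ unless $\sum w_i = 0$, so the semistability of $F'$ is \emph{not} an immediate consequence of the defining condition applied to $F$. The role of the invariant $I$ is precisely to bridge this gap: its transformation law makes the effect of a non-unit weight substitution explicit, and the equality case $\text{mult}_W F = \frac{d\sum w_i}{n+1}$ is exactly the case in which $\text{ord}_t I$ is left unchanged.
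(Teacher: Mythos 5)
Your proposal is correct and follows essentially the same route as the paper, which states this lemma as an immediate consequence of the transformation formula $I(F')=t^{r(-\text{mult}_W F+\frac{d\sum w_i}{n+1})}I(F)$ together with the characterization of semistable models as those minimizing the $t$-adic vanishing order of $I$. Your extra checks (that $F'$ is an honest integral model with isomorphic generic fiber, and the remark that the weight substitution is not an $SL$-change of coordinates over $S$) only make explicit what the paper leaves implicit.
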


The generalization to the case of complete intersections of two quadrics is easy. 

We first introduce some notations.
We use multi-index. So the monomial $X_0^{i_0}\ldots X_n^{i_n}, \sum_{j=0}^n i_j=d$ is abbreviated as $X_I$. 
We use these monomials as a set of basis of the vector space of homogeneous polynomials of degree $d$. 
The wedge products $X_I \wedge X_J$ form a basis of the vector space $\wedge^2 H^0(\PP^n, \OO(d))$.
Let $F_S, G_S \in S[X_0, \ldots, X_n]$ be two homogeneous polynomial of degree $d$. 
The pencil spanned by them are parameterized by the Grassmanian $G(2, H^0(\PP^n, \OO(d)))$.
We expand the wedge product $F_S \wedge G_S$ in terms of the basis $X_I \wedge X_J, I \neq J$
\[
F_S \wedge G_S=\sum a_{IJ}X_I \wedge X_J.
\]
The coefficients $a_{IJ}$ are the homogeneous coordinates under the Pl\"ucker embedding of the Grassmanian $G(2, H^0(\PP^{n}, \OO(d))$ into $\PP(\wedge^2 H^0(\PP^n, \OO(d)))$.

\begin{defn}
Let $F_S, G_S \in S[X_0, \ldots, X_n]$ be two homogeneous polynomials of degree $d$ and $W$ a weight system. 
The multiplicity of the pencil $\lambda F_S+ \mu G_S$ at the weight system $W$, denoted by $\text{mult}_W(F, G)$, 
is the minimum of the exponent of $t$ in the coefficients of $X_I \wedge X_J$ 
for all the non-zero terms $X_I \wedge X_J$ of $F_S \wedge G_S$. 
This multiplicity only depends on the pencil, not on $F$ and $G$.

We say the pencil is semistable if $\text{mult}_W (F, G) \leq \frac{2d (\sum w_i)}{n+1}$. Otherwise it is called non-semistable.
\end{defn}

Note that we always have $\text{mult}_W(F, G) \geq \text{mult}_W F+\text{mult}_W G$ for any weight system $W$. 
This is a strict inequality if and only if the lowest order term of $F(W \cdot X)$ and $G(W \cdot X)$ are proportional.
\begin{thm}
Given a pencil of degree $d$ polynomials $\lambda F_S+\mu G_S$, 
if the generic fiber defined by $F_K=G_K=0$ is GIT semi-stable for the action of $SL(n+1)$ on $\PP(\wedge^2 H^0(\PP^n, \OO(d)))$, 
then there is a semistable model of the pencil.
\end{thm}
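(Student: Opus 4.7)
My plan is to adapt the proof of Theorem \ref{thm:ss} from \cite{KollarIntegralPolynomial}, replacing the $SL(n+1)$-action on $\PP(H^0(\PP^n, \OO(d)))$ with its induced action on the Plücker embedding of $G(2, H^0(\PP^n, \OO(d)))$ inside $\PP(\wedge^2 H^0(\PP^n, \OO(d)))$. The overall shape is identical: an $SL$-invariant detects the distance from semistability of any integral model, and a torus transformation strictly decreases its $t$-valuation whenever the current model is not semistable.

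First, using the hypothesis that the generic fiber $[F_K \wedge G_K]$ is GIT semistable, I select a homogeneous $SL(n+1)$-invariant polynomial $I$ of some positive degree $r$ on $\wedge^2 H^0(\PP^n, \OO(d))$ with $I(F_K \wedge G_K) \neq 0$. I start from any integral model, normalized so that the Plücker coordinates $a_{IJ}$ lie in $S$ with at least one a unit; then $I(F_S \wedge G_S) \in S \setminus \{0\}$ has some valuation $k \geq 0$. If the pencil is already semistable, this is the model we want. Otherwise, there is an $SL(n+1)(S)$ coordinate change and a weight system $W$ with $\text{mult}_W(F,G) > 2d\sum w_i/(n+1)$; the coordinate change preserves $I$, and I then perform the substitution $X_i = t^{w_i}Y_i$, under which the Plücker coordinates transform as $a_{IJ} \mapsto a_{IJ} t^{W\cdot(I+J)}$. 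Dividing by $t^{\text{mult}_W(F,G)}$ produces a new integral model of the pencil (after possibly choosing a fresh $K$-basis $(F'_S, G'_S)$).

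The key step is the transformation law for $I$, analogous to Lemma 4.5 of \cite{KollarIntegralPolynomial}. Since the center of $GL(n+1)$ acts on $\wedge^2 H^0(\PP^n, \OO(d))$ with weight $2d$, the diagonal torus $M = \text{diag}(t^{w_0}, \ldots, t^{w_n})$ acts on the $SL$-invariant $I$ by the scalar $t^{2rd\sum w_i/(n+1)}$. Combining this with the rescaling by $t^{-\text{mult}_W(F,G)}$ yields
\[
\text{val}_t I(F'_S \wedge G'_S) = \text{val}_t I(F_S \wedge G_S) + r\left(-\text{mult}_W(F,G) + \frac{2d \sum w_i}{n+1}\right),
\]
and by the non-semistability hypothesis the second term is strictly negative. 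Since $\text{val}_t I$ is a non-negative integer throughout the process, the iteration must terminate in finitely many steps, producing the desired semistable model.

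The main obstacle I anticipate is the careful bookkeeping in the transformation formula, in particular checking that dividing the Plücker vector by $t^{\text{mult}_W(F,G)}$ really corresponds to an integral $S$-point of the Grassmannian --- equivalently, that one can always rechoose a $K$-basis $(F'_S, G'_S) \in S[X_0,\ldots,X_n]^2$ whose Plücker vector is primitive. This is a manageable observation once the pencil is viewed intrinsically as a point in $\PP(\wedge^2 H^0)$ rather than as a specific pair of polynomials, but it is the one place where the pencil case is genuinely more delicate than the hypersurface case, since $\text{mult}_W(F,G)$ can strictly exceed $\text{mult}_W F + \text{mult}_W G$ precisely when the leading forms of $F(W\cdot X)$ and $G(W\cdot X)$ are proportional.
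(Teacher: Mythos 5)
Your proposal is correct and follows essentially the same route as the paper: an $SL(n+1)$-invariant $I$ on $\PP(\wedge^2 H^0(\PP^n,\OO(d)))$ whose $t$-valuation strictly drops by $r\bigl(\text{mult}_W(F,G)-\frac{2d\sum w_i}{n+1}\bigr)$ at each destabilizing step, forcing termination. The one point you defer to a ``manageable observation'' --- rechoosing an integral basis with primitive Pl\"ucker vector when the leading forms of $F(W\cdot X)$ and $G(W\cdot X)$ are proportional --- is exactly the step the paper carries out explicitly by replacing $G'$ with $t^{-v}(G'-aF')$, so nothing essential is missing.
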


\begin{proof}
If the pencil spanned by $(F_S, G_S)$ is not semi-stable with respect to a weight system $W=(w_0, \ldots, w_n)$, 
then set $F_S'=t^{-\text{mult}_W F_S}F_S(W\cdot X)$ and $G_S'=t^{-\text{mult}_W G_S} G_S(W \cdot X)$. 

If $\text{mult}_W{(F, G)}=\text{mult}_W F+ \text{mult}_W G$, we replace $F_S$ and $G_S$ with this new pair $(F_S', G_S')$. 

If $\text{mult}_W{(F, G)}>\text{mult}_W F+ \text{mult}_W G$, 
then $F_k'$ and $G_k'$ are proportional. 
We may write
\[
F'\equiv F'_k+t^{u}H_1 \mod t^{u+1}, G'\equiv a F'_k+t^{v}H_2 \mod t^{v+1}, u, v \geq 1, a \in k,
\] 
where $H_1, H_2$ are polynomials with coefficients in $k$ and neither of them is proportional to $F_k$. 
Without loss of generality, we may assume that $u\geq v$. 
Then $\text{mult}_W(F', G')=v=\text{mult}_W{F, G}-\text{mult}_W F- \text{mult}_W G$. 
We replace the pair $(F_S, G_S)$ with $G_S''=t^{-v}(G_S'-a F_S')$ and $F_S''=F_S'$. 

Note that $F_k$ and $G_k$ are proportional if and only if the multiplicity of the pencil $(F_S, G_S)$ is at least $1$ 
with respect to the weight system $W=(0, \ldots, 0)$, or equivalently, 
the pair is not semistable with respect to the weight system $(0, \ldots, 0)$.

Now we show that this process will eventually produce a semistable model. 
The idea is the same as Koll\'ar's argument for the case of a single polynomial. 
Let $I$ be an $SL(n+1)$ invariant homogeneous polynomial of degree $s$ in the coordinates $a_{IJ}$ 
such that $I(F_S \wedge G_S)$ is non-zero in $K$ and lie in the ideal $(t^k) \subset S$ for some $k$. 
It suffices to show that in each step we decrease the number $k$.

We have
\[
F_S(W \cdot X)=t^{\text{mult}_W F_S} F_S', G_S( W \cdot X)=t^{\text{mult}_W G_S}G_S',
\]
and
\[
F_S(W \cdot X) \wedge G_S(W\cdot X)=t^{\text{mult}_W F+\text{mult}_W G} F_S' \wedge G_S'.
\]
Thus $I(F_S (W \cdot X) \wedge G_S(W\cdot X))=t^{s(\text{mult}_W F+\text{mult}_W G)} I(F_S' \wedge G_S')$ by the homogeneity. 
On the other hand, we have
\begin{align*}
&I(F_S (W \cdot X) \wedge G_S(W\cdot X))=I(t^{\frac{d \sum w_i}{n+1}}F_S(W' \cdot X) \wedge t^{\frac{d \sum w_i}{n+1}} G_S(W' \cdot X))\\
=&t^{s\frac{2d \sum w_i}{n+1}} I(F_S(W' \cdot X) \wedge G_S(W' \cdot X))=t^{s\frac{2d \sum w_i}{n+1}} I(F_S \wedge G_S)
\end{align*}
where $W'=(w_0-\frac{d \sum w_i}{n+1}, \ldots, w_n-\frac{d \sum w_i}{n+1})$ and 
$$F_S(W' \cdot X)=F(t^{w_0-\frac{d \sum w_i}{n+1}} X_0, \ldots, t^{w_n-\frac{d \sum w_i}{n+1}}X_n),$$
$$G_S(W' \cdot X)=G(t^{w_0-\frac{d \sum w_i}{n+1}} X_0, \ldots, t^{w_n-\frac{d \sum w_i}{n+1}}X_n).$$  
The second equality follows from the $SL(n+1)$ invariance of $I$. 
Combining everything, we have
\[
I(F_S' \wedge G_S')=t^{s(\frac{2d \sum w_i}{n+1}-\text{mult}_W F-\text{mult}_W G)}I(F_S \wedge G_S).
\]
In the case $\text{mult}_W{(F, G)}>\text{mult}_W F+ \text{mult}_W G$, we also have
\[
I(F_S'' \wedge G_S'')=t^{-sv}I(F_S' \wedge G_S').
\]
Thus 
\[
I(F_S'' \wedge G_S'')=t^{s(\frac{2d \sum w_i}{n+1}-\text{mult}_W (F, G))}I(F_S \wedge G_S).
\]
So the second step will decrease the exponent of $t$ if the pair is not semistable with respect to the weight system $W$.
\end{proof}

\begin{rem}
It seems that the central fiber of a semistable model defined by $F_S=G_S=0$ may not be a complete intersection, except in the case $d=2$ (c.f. Lemma \ref{ci}). 
The reason is, we can only guarantee that $F$ and $G$ are non-proportional modulo $t$. 
It seems to the author that there is a possibility that even if we start with two polynomials whose reduction modulo $t$ defines a complete intersection, 
we cannot guarantee that we can keep this condition. 
More precisely, the author do not know if it is possible for new pair $F'$ and $G'$ (or $F''$ and $G''$) 
to have a common factor (but not proportional) modulo $t$ after each step. 
Luckily in the case $d=2$, this will not happen, because a common factor, if exists, 
will have to be a linear polynomial which will de-stablize the pair as shown below. 
\end{rem}

\begin{lem}\label{ci}
If $d=2$ and $(F_S, G_S)$ is a semi-stable family of a pencil of two quadrics, 
then $F_S=G_S=0$ defines a flat projective family of complete intersection of two quadrics over $\SP S$.
\end{lem}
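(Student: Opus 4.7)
The plan is to prove the contrapositive: if the central fiber $F_k = G_k = 0$ in $\PP^n_k$ fails to be a complete intersection, then $(F_S, G_S)$ is not semistable; and this will also imply flatness of the family over $\SP S$. Since $F_k$ and $G_k$ are homogeneous of degree $2$, they cut out a subscheme of codimension $2$ in $\PP^n_k$ (equivalently, give a flat family) if and only if they have no common factor in $k[X_0,\ldots,X_n]$. Any nontrivial common factor must be either an honest linear form $L$, or all of $F_k$ (in which case $F_k$ and $G_k$ are proportional). Dispose of the two cases separately.

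In the proportional case, every Pl\"ucker coordinate of $F_S \wedge G_S$ is divisible by $t$, so for the trivial weight system $W = (0, \ldots, 0)$ one has $\text{mult}_W(F, G) \geq 1 > 0 = \frac{2d \sum w_i}{n+1}$, already destabilizing the pair (this is the case flagged in the remark preceding the lemma). In the remaining case, $F_k$ and $G_k$ share a common linear factor but are linearly independent. Lifting a suitable $k$-linear change of coordinates to an $S$-linear one with unit determinant, which is possible since $S$ is local and $GL(n+1, k)$ lifts to $GL(n+1, S)$, and invoking the invariance of the semistability condition under such changes, we may assume the common factor is $X_0$. Concretely, in both $F_S$ and $G_S$ the coefficient of every monomial $X_i X_j$ with $i, j \geq 1$ lies in $(t) \subset S$.

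Now apply the weight system $W = (1, 0, \ldots, 0)$. Each monomial in $F_S$ contributes $t$-exponent at least $1$ to $F_S(W \cdot X)$: monomials $X_i X_j$ with $i, j \geq 1$ inherit a factor of $t$ from their coefficient; cross terms $X_0 X_j$ pick up $t^{w_0} = t$; and $X_0^2$ picks up $t^{2}$. Hence $\text{mult}_W F_S \geq 1$, and similarly $\text{mult}_W G_S \geq 1$, so the general inequality $\text{mult}_W(F, G) \geq \text{mult}_W F_S + \text{mult}_W G_S$ gives $\text{mult}_W(F, G) \geq 2$. On the other hand $\frac{2d \sum w_i}{n+1} = \frac{4}{n+1} < 2$ as soon as $n \geq 2$ (for $n = 1$ two quadrics never define a positive-dimensional complete intersection and the statement is vacuous), contradicting semistability. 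The main technical point, which is where I would be most careful, is verifying that the $S$-linear normalization sending the common linear factor to $X_0$ preserves the semistability hypothesis; this is built into the definition, provided one checks that any invertible matrix over $k$ indeed lifts to one over $S$ with unit determinant.
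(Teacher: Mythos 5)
Your proof is correct and follows essentially the same route as the paper: reduce flatness to the central fiber having no common factor, then destabilize a common linear factor with the weight system $(1,0,\ldots,0)$ and proportionality with the trivial weight system $(0,\ldots,0)$. Your write-up merely makes explicit the multiplicity estimates and the lifting of the coordinate change to $S$, which the paper leaves implicit.
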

\begin{proof}
It suffices to show that the family $F_S=G_S=0$ has constant fiber dimension. 
Since $F_S$ and $G_S$ are families of quadric hypersurfaces, 
the intersection has constant fiber dimension if and only if $F_k$ and $G_k$ do not contain a common linear factor over $k$
and $F_k$ and $G_k$ are not propotional. 
If $F_k$ and $G_k$ has a common linear factor, which, without loss of generality, can be assumed to be $X_0$, 
then the family is not semi-stable with respect to a weight system $(1, 0, \ldots, 0)$. If $F_k$ and $G_k$ are propotional, 
the pencil is not semi-stable with respect to the weight system $(0, \ldots, 0)$.
\end{proof}

Recall that for a semistable model and for any weight system we always have the inequalities
\[
\frac{2d\sum w_i}{n+1}\geq \text{mult}_W(F, G) \geq \text{mult}_W F+\text{mult}_W G.
\]
From this we deduce the following useful lemma.

\begin{lem}\label{2.8}
Let $(F_S, G_S)$ be a semistable family of a pencil of homogeneous polynomials of degree $d$. 
Assume that there is a weight system $W$ such that 
\[
\text{mult}_W F+ \text{mult}_W G=\frac{2d \sum w_i}{n+1}. 
\]
Then 
\[
F'(Y_0, \ldots, Y_n)= t^{-\text{mult}_W F} F(W \cdot Y) 
\]
and
\[
G'(Y_0, \ldots, Y_n)= t^{-\text{mult}_W G} G(W \cdot Y)
\]
also define a semistable family.
\end{lem}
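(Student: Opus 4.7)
The plan is to recycle the $SL(n+1)$-invariant bookkeeping Koll\'ar uses for the proof of the preceding theorem (and for the single-polynomial analogue, Lemma~\ref{2.3}). The role of the hypothesis is to make a certain exponent of $t$ vanish, so that the $t$-adic valuation of the invariant is preserved by the weighted substitution defining $(F', G')$.

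First I would unpack the hypothesis. Combining the standing inequality $\text{mult}_W(F,G) \ge \text{mult}_W F + \text{mult}_W G$, the semistability bound $\text{mult}_W(F,G) \le \frac{2d\sum w_i}{n+1}$, and the assumption $\text{mult}_W F + \text{mult}_W G = \frac{2d\sum w_i}{n+1}$, all three quantities coincide. In particular $F'$ and $G'$ reduce mod $t$ to non-proportional elements of $k[X_0,\ldots,X_n]$, so $(F'_S, G'_S)$ is genuinely a pencil and the ``proportional leading parts'' branch of the algorithm in the proof of the preceding theorem is not activated.

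Next I would fix an $SL(n+1)$-invariant $I$ of degree $s$ on $\wedge^2 H^0(\PP^n, \OO(d))$ with $I(F_S \wedge G_S) \ne 0$ in $K$ (existence from GIT semistability of the generic fiber), and apply the identity established in the proof of the preceding theorem:
\[
I(F'_S \wedge G'_S) \;=\; t^{s\left(\frac{2d\sum w_i}{n+1}-\text{mult}_W F-\text{mult}_W G\right)} I(F_S \wedge G_S).
\]
The hypothesis makes the exponent of $t$ equal to zero, so $v_t(I(F'_S\wedge G'_S)) = v_t(I(F_S\wedge G_S))$. To finish, I would appeal to the ``semistable $\Leftrightarrow$ minimum valuation'' principle implicit in the proof of the preceding theorem: a non-semistable model always admits a transformation that strictly lowers $v_t(I)$, so (since $v_t(I) \in \ZZ_{\ge 0}$) iteration terminates at a semistable model and every semistable model attains the common minimum. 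Since $(F_S, G_S)$ achieves this minimum, so does $(F'_S, G'_S)$; if $(F'_S, G'_S)$ were not semistable, the algorithm would yield a model with strictly smaller $v_t(I)$, a contradiction.

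The main obstacle is making this ``minimum valuation'' characterization of semistability rigorous, in particular verifying that the class of transformations built into the definition of semistability (arbitrary $S$-linear changes of coordinates composed with weighted substitutions) suffices, up to $t$-rescaling, to reach every integral model of the fixed generic fiber; this reduces to a Smith-normal-form / elementary-divisor exercise in $GL_{n+1}(K)$ together with the observation that $GL_{n+1}(S)$-changes preserve $v_t(I)$ because $\det B \in S^*$. Modulo that bookkeeping, Lemma~\ref{2.8} is a one-line consequence of the displayed identity.
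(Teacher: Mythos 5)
Your proposal is correct and is exactly the argument the paper intends: Lemma \ref{2.8} is stated as an immediate consequence of the identity $I(F_S' \wedge G_S')=t^{s(\frac{2d \sum w_i}{n+1}-\text{mult}_W F-\text{mult}_W G)}I(F_S \wedge G_S)$ from the proof of the preceding theorem together with the characterization of semistable models as those minimizing the vanishing order of $I$ (the analogue of the remark preceding Lemma \ref{2.3}). The ``Smith normal form'' bookkeeping you flag as the remaining obstacle is likewise left implicit in the paper, which defers it to Koll\'ar's treatment in \cite{KollarIntegralPolynomial}.
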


\subsection{Semistable models for quadrics}
From now on we discuss the singularities of a semistable model over $\FF_q\Sem{t}$ (in the presence of a formal section). 
All of the results in the following three sections can be proved almost without change for any local field.
\begin{lem}\label{lem:semiQ}
Let $\mcX \to \FF_q\Sem{t}$ be a semistable model of a smooth quadric hypersurface defined over $\FF_q\Semr{t}$.
Assume the characteristic is not $2$. 
\begin{enumerate}
\item \label{Q1}$\mcX$ is smooth at any $\FF_q$-rational point in the central fiber $\mcX_0$.
\item \label{Q2} 
If there is a formal section $\whts$, 
then the central fiber $\mcX_0$ is geometrically integral and the singular locus has codimension at least $2$.
\end{enumerate}
\end{lem}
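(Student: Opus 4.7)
The plan is to translate semistability, an inequality between $\text{mult}_W F$ and $\tfrac{d\sum w_i}{n+1}$, into algebraic conditions on the coefficients of $F$, and then extract geometric information about $\mcX$ and its central fiber.

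For part (\ref{Q1}), I use an $SL(n+1,\FF_q)$-change of coordinates (lifted to $SL(n+1,S)$) to move $p$ to $(1{:}0{:}\ldots{:}0)$, and then apply the weight system $W=(0,1,\ldots,1)$, for which $\tfrac{2\sum w_i}{n+1}=\tfrac{2n}{n+1}<2$. Semistability forces $\text{mult}_W F\le 1$. Writing $F=\sum a_{ij}X_iX_j$, the $X_0^2$ term contributes $v_t(a_{00})$, the $X_0X_i$ terms ($i>0$) contribute $v_t(a_{0i})+1$, and the remaining terms contribute at least $1$. Since $p\in\mcX_0$ we have $v_t(a_{00})\ge 1$, so $\text{mult}_W F\le 1$ is equivalent to either $v_t(a_{00})=1$ or some $a_{0i}$ ($i>0$) being a unit. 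This is exactly the condition that the image of $F$ in $\mfm/\mfm^2$ is nonzero, where $\mfm=(t,X_1,\ldots,X_n)$ is the local maximal ideal of $p$ in the chart $X_0=1$; equivalently, $\mcX$ is regular at $p$.

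For part (\ref{Q2}), the central fiber $F_0$ has a well-defined rank $r$ in odd characteristic, and $\mcX_0$ is geometrically integral with singular locus of codimension $\ge 2$ if and only if $r\ge 3$. I argue by contradiction, assuming $r\le 2$, and distinguishing three cases. If $r=1$, then $F_0=cX_0^2$ after an $\FF_q$-change of coordinates, and the weight $W=(1,0,\ldots,0)$ forces every monomial of $F$ to contribute at least $1$, giving $\text{mult}_W F\ge 1>\tfrac{2}{n+1}$, a contradiction. If $r=2$ and $F_0$ splits over $\FF_q$, we may take $F_0\sim X_0X_1$, and the weight $W=(0,1,0,\ldots,0)$ gives the same contradiction: the $X_0X_1$ term contributes $1$ while $\tfrac{2}{n+1}<1$ for $n\ge 2$.

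The main obstacle is the remaining case, $r=2$ with $F_0$ anisotropic over $\FF_q$, where I crucially use the formal section. Since the two geometric components of $\mcX_0$ are Galois-conjugate, the $\FF_q$-point $\whts(0)$ must lie in their intersection $\{X_0=X_1=0\}$. After an $\FF_q$-change of coordinates I assume $\whts(0)=(0{:}0{:}1{:}0{:}\ldots{:}0)$, and I straighten $\whts$ by the $SL(n+1,S)$-change $X_i\mapsto X_i-a_i(t)X_2$ ($i\ne 2$) so that $\whts$ becomes the constant section; the identity $F(\whts)=0$ then forces the coefficient $a_{22}$ of $X_2^2$ in $F$ to vanish identically. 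Applying $W=(1,1,0,\ldots,0)$, the monomials $X_0^2,X_1^2$ contribute $2$, the mixed monomials $X_0X_j,X_1X_j$ contribute at least $2$, the $X_2^2$ contributes nothing since $a_{22}=0$, and the remaining monomials $X_iX_j$ with $i,j\ge 2$ contribute at least $1$. For $n\ge 4$, $\tfrac{4}{n+1}<1\le\text{mult}_W F$ is a direct contradiction; for $n=2$ no $X_iX_j$ with $i,j\ge 2$ other than $X_2^2$ is present, so $\text{mult}_W F\ge 2>\tfrac{4}{3}$. In the borderline case $n=3$, where $\tfrac{4}{n+1}=1$, I invoke Lemma \ref{2.3}: the rescaled polynomial $F'=t^{-1}F(tX_0,tX_1,X_2,X_3)$ is again semistable, and its central fiber, collecting the weight-zero terms of $tF_1$, is $F'_0=X_3(b_{23}X_2+b_{33}X_3)$, a quadric of rank $\le 2$ that splits over $\FF_q$. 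This falls under the already-treated cases $r=1$ or split $r=2$ applied to $F'$, yielding the final contradiction.
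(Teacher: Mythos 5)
Your proposal is correct. Part (\ref{Q1}) is the same argument as the paper's (the paper uses the weight $(1,\ldots,1,0)$ at $[0,\ldots,0,1]$; yours is the mirror image, spelled out in full, and you correctly identify the content of (\ref{Q1}) as regularity of the total space at $p$, not smoothness of the fiber). For part (\ref{Q2}) the underlying mechanism is shared --- destabilize a central fiber of rank $\le 2$ by a weight system supported on a linear space it contains --- but your treatment of the one case where the formal section is genuinely needed, namely $F_0$ a product of two Galois-conjugate linear forms, diverges from the paper's. The paper argues geometrically: since $\mcX$ is regular at $\whts(0)$ and the section retracts $\OO_{\mcX,\whts(0)}$ onto $S$, one gets $t\notin\mfm^2$, so the central fiber is smooth at $\whts(0)$; hence $\whts(0)$ lies on exactly one of the two hyperplanes, that hyperplane is Galois-invariant, and $(1,0,\ldots,0)$ destabilizes. (The paper compresses this into ``the total space is smooth, thus the central fiber is smooth at the point,'' which as stated is not an implication --- cf. $tX_0^2+X_1^2+\cdots+X_n^2$ at $[1{:}0{:}\cdots{:}0]$ --- and really needs the section, as above.) You instead observe that anisotropy forces $\whts(0)$ into $\{X_0=X_1=0\}$, straighten the section to kill the $X_2^2$-coefficient, and destabilize with $(1,1,0,\ldots,0)$, iterating once via Lemma \ref{2.3} in the borderline case $n=3$. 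Both are valid; the paper's route is shorter and reuses (\ref{Q1}) more directly, while yours is entirely explicit at the level of coefficients, sidesteps the ``regular total space $\Rightarrow$ smooth fiber'' subtlety, and makes visible that the formal section is only needed for the anisotropic rank-two case.
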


\begin{proof}
(\ref{Q1}) follows directly from the semistability with respect to the weight system $(1, \ldots, 1, 0)$ (assume that the $\FF_q$-point is $[0, \ldots, 0, 1]$).

A quadric hypersurface is geometrically integral if and only if it is not a hyperplane with multiplicity $2$ or a cone over two points. 
If there is a formal section $\whts$, then $\whts(0)$ is a rational point in the central fiber and 
the total space is smooth along this point by (\ref{Q1}). Thus the central fiber is also smooth at the point. 
In particular it is geometrically reduced. If the central fiber is a cone over two points, 
then $\whts(0)$ lies in one of the two irreducible components. 
Thus there is a linear space defined over $\FF_q$ of the central fiber, 
which is impossible since it will make the family not semistable with respect to the weight system $(1, 0, \ldots, 0)$ (assuming the linear space is $X_0=0$).
\end{proof}

As an immediate corollary, we have the following
\begin{cor}\label{cor:semiQ}
\begin{enumerate}
\item Let $\mcX \to B$ be a semistable model of a conic in $\PP^2_{\FF_q(B)}$. 
Assume the characteristic is not $2$.
If the set of ad\`elic points is non-empty, then $\mcX$ is smooth and all the fibers over closed points of $B$ are smooth.
\item Let $\mcX \to B$  be a semistable model of a quadric in $\PP^n_{\FF_q(B)}, n \geq 3$. 
Assume the characteristic is not $2$.
If the set of ad\`elic points is non-empty, then every fiber over a closed point is a geometrically integral quadric hypersurface.
\end{enumerate}
\end{cor}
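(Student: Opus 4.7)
The plan is to deduce the corollary by applying Lemma \ref{lem:semiQ} fiberwise. Since $\mcX \to B$ is a semistable model, for each closed point $b \in B$ the base change $\mcX \times_B \SP \widehat{\OO}_{B,b}$ is a semistable family over $\FF_{q(b)}\Sem{t}$ (where $\FF_{q(b)}$ is the residue field at $b$). The hypothesis that $\mcX(\mathbb{A})$ is nonempty provides a formal section at every place, so the hypotheses of Lemma \ref{lem:semiQ}(\ref{Q2}) are satisfied at every closed point of $B$. Hence for every closed $b \in B$, the fiber $\mcX_b$ is geometrically integral with singular locus of codimension at least two.

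For part (2), the fibers of a pencil of quadrics in $\PP^n$ with $n \geq 3$ have dimension $\geq 2$, and the conclusion that each fiber is geometrically integral is immediate from the above paragraph.

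For part (1), each fiber $\mcX_b$ is a curve, so the condition that its singular locus has codimension at least two forces the singular locus to be empty; that is, each $\mcX_b$ is smooth. Combined with the smoothness of the geometric generic fiber (a smooth conic is smooth), this shows that the morphism $\mcX \to B$ has smooth fibers over every point of $B$. Since $B$ is a smooth curve and $\mcX \to B$ is flat with everywhere smooth fibers over a regular base, the total space $\mcX$ is itself smooth.

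The only subtlety is the interpretation of ``semistable model over $B$'' as a collection of local semistable models at each closed point of $B$, so that Lemma \ref{lem:semiQ} applies verbatim; once this localization is made, the corollary is a direct consequence and no genuinely new argument is needed. The main content therefore lies in Lemma \ref{lem:semiQ} rather than in the corollary itself.
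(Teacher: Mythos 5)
Your proposal is correct and matches the paper's intent: the paper presents this as an immediate consequence of Lemma \ref{lem:semiQ}, applied at each closed point of $B$ (with the formal sections supplied by the ad\`elic point), exactly as you do. Your extra observations — that codimension $\geq 2$ singular locus forces smooth fibers in the conic case, and that flatness plus smooth fibers over the smooth curve $B$ gives smoothness of $\mcX$ — correctly fill in the routine details the paper leaves implicit.
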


\subsection{Semistable models for cubics}
\begin{lem}\label{lem:nopoint}
Let $X$ be a cubic hypersurface of positive dimension defined over a finite field $\FF_q$ of odd characteristic. 
Then either $X$ has an $\FF_q$-rational point in the smooth locus or $X$ is defined by $F(X_0, X_1, X_2)=0$, 
where $F=0$ is either a union of three Galois conjugate hyperplanes or a hyperplane with multiplicity $3$.
\end{lem}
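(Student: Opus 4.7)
The plan is to classify cubic hypersurfaces $X=\{f=0\}$ over $\FF_q$ without smooth $\FF_q$-rational points by analysing the factorisation of $f$ over $\bar{\FF}_q$. I would first treat the cases where $f$ admits an $\FF_q$-rational linear factor $L$ of multiplicity one, namely $f = L\cdot Q$ with $Q$ an irreducible quadric, $f = L_1 L_2 L_3$ with at least one $L_i$ Galois-fixed, and $f = L^2 L'$ with $L\ne L'$ (in which case the unique mult-one factor $L'$ is Galois-fixed, hence $\FF_q$-rational). In each of these situations $H=\{L=0\}$ is an $\FF_q$-rational hyperplane component of $X$, and any point of $H$ at which $f/L$ does not vanish is a smooth point of $X$. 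Since $f/L$ has degree at most two and does not vanish identically on $H$, a direct count gives $|(H\setminus\{f/L=0\})(\FF_q)| \geq q^{n-1} - O(q^{n-2}) > 0$ for $n\geq 1$, producing a smooth $\FF_q$-point.

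This reduces the problem to three possibilities: $f$ is geometrically irreducible; $f = L^3$; or $f = L\cdot L^\sigma\cdot L^{\sigma^2}$ is the product of three Galois-conjugate linear forms making up a single orbit of size three. The middle case is a non-reduced hyperplane, so $X_{\mathrm{sm}}=\emptyset$, matching the exceptional conclusion. In the third case the three geometric components $H_i$ are cyclically permuted by Frobenius, and the smooth locus decomposes as the disjoint union of the three pieces $H_i\setminus\bigcup_{j\ne i}H_j$, which Frobenius also permutes transitively; hence no $\FF_q$-rational smooth point exists, again matching the exceptional conclusion.

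It remains to show that a geometrically integral cubic hypersurface $X\subset\PP^n$ of positive dimension over $\FF_q$ (odd characteristic) has a smooth $\FF_q$-point. For $n=2$ I would split into the smooth, nodal, and cuspidal plane cubics: the Hasse--Weil estimate gives $|X(\FF_q)| \geq (\sqrt{q}-1)^2 \geq 1$ in the smooth case, while the nodal and cuspidal cases admit $\PP^1$-normalisations from which one counts $q\pm 1$ or $q$ smooth $\FF_q$-points. For $n\geq 3$ I would use Chevalley's theorem to obtain some $p\in X(\FF_q)$; if $p$ is smooth we are done, and otherwise $p$ is a double point, so projection from $p$ identifies $\PP^{n-1}\setminus\{f_2=0\}$, where $f_2$ is the tangent-cone quadric of $f$ at $p$, with a Zariski-open subset of $X$ via an $\FF_q$-rational isomorphism. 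This contributes at least $q^{n-1}$ $\FF_q$-rational points to $X$, which must outnumber $|X_{\mathrm{sing}}(\FF_q)|$ in view of $\dim X_{\mathrm{sing}}\leq n-2$, forcing a smooth $\FF_q$-point.

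The main obstacle will be this last step: the projection count must beat $|X_{\mathrm{sing}}(\FF_q)|$ uniformly for every odd $q$, not merely for $q$ larger than a bound depending on $n$. The dimension inequality controls the asymptotics, but the implicit constant involves the degree of $X_{\mathrm{sing}}$ as the subvariety of $\PP^n$ cut out by $f$ and the partials $\partial_i f$. Closing the estimate for small $q$ requires either an explicit degree bound on $X_{\mathrm{sing}}$ or an $\FF_q$-Bertini-style argument producing a hyperplane $H$ with $X\cap H$ geometrically integral, after which one concludes by induction on $n$.
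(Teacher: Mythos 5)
Your case analysis (rational linear factor of multiplicity one; the two genuinely exceptional shapes $L^3$ and a single Galois orbit of three conjugate hyperplanes; geometrically integral) matches the paper's, as does the key idea for the geometrically integral case with $n\geq 3$: take a Chevalley point $p$ and, if it is singular, project from it. The genuine gap is in how you extract a \emph{smooth} rational point from the projection. You propose to count the $\FF_q$-points of the open set $\{f_2\neq 0\}\subset X$ against $|X_{\mathrm{sing}}(\FF_q)|$, and you correctly observe that this comparison cannot be closed uniformly in $q$ without a degree bound on $X_{\mathrm{sing}}$; for small odd $q$ the estimate as stated fails. But this counting step is unnecessary. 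Write the equation as $X_0Q(X_1,\dots,X_n)+C(X_1,\dots,X_n)=0$ with $p=[1,0,\dots,0]$, so $Q=f_2$. If $x\in\PP^{n-1}(\FF_q)$ satisfies $Q(x)\neq 0$, the line $\ell$ through $p$ with direction $x$ is not contained in $X$ (its restriction is $t^2\bigl(sQ(x)+tC(x)\bigr)$), meets $X$ with multiplicity exactly $2$ at $p$, and hence with multiplicity exactly $1$ at the residual point $[-C(x),Q(x)x_1,\dots,Q(x)x_n]$; a point at which some line meets $X$ with multiplicity $1$ is a smooth point of $X$. So one needs only a \emph{single} $\FF_q$-point of $\PP^{n-1}$ off the quadric $\{Q=0\}$, which is elementary: if $Q=0$ is geometrically a union of at most two hyperplanes this is a trivial count, and otherwise $Q$ has a smooth $\FF_q$-point $y$, a line through $y$ not tangent to $Q$ meets $Q$ in two rational points, and any further rational point of that line lies off $Q$. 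This is exactly how the paper concludes.

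Two smaller points. First, your assertion ``otherwise $p$ is a double point'' fails when $X$ is a cone whose vertex contains $p$: then $f_2\equiv 0$ and the projection argument is vacuous. A geometrically integral cubic can perfectly well be a cone (e.g.\ a cone over a smooth plane cubic), so this case must be treated; the paper disposes of cones by induction on dimension, since a smooth $\FF_q$-point of the base yields smooth $\FF_q$-points of the cone away from the vertex. Second, even your preliminary claim of ``at least $q^{n-1}$'' points on $\PP^{n-1}\setminus\{f_2=0\}$ silently uses the Serre-type bound that a quadric hypersurface in $\PP^{n-1}$ has at most $2q^{n-2}+q^{n-3}+\dots+1$ rational points; this is true but should be cited or replaced by the line argument above, after which no counting is needed at all.
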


\begin{proof}
If $X$ is a geometrically integral curve, 
then it is either a smooth genus $1$ curve or a rational curve with at most $2$ points identified. 
In any case it is easy to see that there is a point in the smooth locus. 
If $X$ is redubicle over $\FF_q$, then one of the irreducible component is a line and the statement is clear. 
A plane cubic is geometrically reducible but irreducible over $\FF_q$ if and only if it is the union of three Galois conjugate lines. 
The case of a triple line is clear.

If $X$ is cone, then the statment follows from induction on dimension.

In the following assume that $n \geq 3$ and $X$ is not a cone. 
Any hypersurface of degree $d$ in $\PP^n$ over $\FF_q$ has a rational point as long as $d \leq n$. So $X$ has a rational point. 
Assume this is $[1,0,\ldots, 0]$ and it is a singular point of $X$ of multiplicity $2$. 
We may write the equation of $X$ as
\[
X_0 Q(X_1, \ldots, X_n)+C(X_1, \ldots, X_n)=0.
\]
It suffices to show that there is a point $[x_1, \ldots x_n]$ such that $Q(X_1, \ldots, X_n)\neq 0$. 
If this is the case, then the point $$[-C(X_1, \ldots, X_n),X_1 Q(X_1, \ldots, X_n), \ldots, X_nQ(X_1, \ldots, X_n)]$$ 
is a rational point in the smooth locus. 
So it suffices to show that given any quadric hypersurface $Q$ in $\PP^n$, 
there is a rational point in $\PP^n$ which is not in the quadric hypersurface. 
This is clear if the quadric is either a hyperplane with multiplicity $2$ or a union of two hyperplanes.
In the other cases, there is a rational point $x \in Q^{sm}$.
Then one can find a line defined over $\FF_q$ passing through $x$ but is not tangent to $Q$ at $x$.
This is because the space of lines through $x$ is parametrized by $\PP^{n-1}$ and those lines which are tangent to $Q$ is a codimension $1$ linear subspace.
This line intersect $Q$ at two rational points. 
So one can always find an $\FF_q$-rationl point in the line but not in $Q$.
\end{proof}

\begin{lem}\label{lem:semiC}
Let $\mcX \to \FF_q\Sem{t}$ be a semistable family of cubic hypersurfaces in $\PP^n, n \geq 5$ such that the generic fiber is smooth. 
Also assume that the characteristic is not $2$ or $3$. 
The closed fiber is either geometrically integral, 
or a cone over three lines which are Galois conjugate to each other. 
\end{lem}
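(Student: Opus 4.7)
The plan is to analyze the irreducible factorization of $F_k$ over $\bar{\FF_q}$ and to eliminate every factorization type other than a product of three Galois-conjugate linear forms by invoking semistability against well-chosen weight systems.

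First I would show that $F_k$ admits no irreducible factor defined over $\FF_q$. If some linear factor $L$ were defined over $\FF_q$, an $\FF_q$-linear change of coordinates reduces us to $L=X_0$, and then the weight system $W=(1,0,\ldots,0)$ gives $\mathrm{mult}_W F\geq 1$: every monomial of $F_k$ has $X_0$-degree at least one, while every monomial of $F$ outside $F_k$ is $t$-divisible. The semistability bound $\frac{3}{n+1}$ is strictly less than $1$ for $n\geq 5$, contradicting semistability. An irreducible quadric factor $Q$ defined over $\FF_q$ would force the complementary linear factor $L=F_k/Q$ to be Galois-fixed, hence defined over $\FF_q$, reducing to the linear case just ruled out.

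Second, I would classify the $\bar{\FF_q}$-factorization. Write $F_k=c\prod P_i^{a_i}$ over $\bar{\FF_q}$ with $P_i$ irreducible of degree $d_i$. The Galois group of $\bar{\FF_q}/\FF_q$ permutes the $P_i$ preserving multiplicities; a singleton orbit would produce an $\FF_q$-factor, already excluded. Thus every orbit has size at least two, and for each orbit $d_i\, a_i\,(\text{orbit size})\leq 3$, which forces $d_i=a_i=1$ and the orbit size to be three. So $F_k=L_1L_2L_3$ with $L_1,L_2,L_3$ three distinct Galois-conjugate linear forms over $\FF_{q^3}$.

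Finally, the union $\bigcup\{L_i=0\}$ is a cone with Galois-stable vertex $V=\{L_1=L_2=L_3=0\}$, defined over $\FF_q$. The $\FF_q$-descent of the span of $L_1,L_2,L_3$ in $(\bar{\FF_q})^{n+1,*}$ provides, after an $\FF_q$-linear change of coordinates, coordinates in terms of which $F_k$ is supported on a three-dimensional linear subspace, exhibiting $\mcX_0$ as the cone over three Galois-conjugate lines in the corresponding $\PP^2$. To rule out a potential collapse of the span to a two-dimensional subspace, one uses the weight system $W=(1,1,0,\ldots,0)$: its multiplicity satisfies $\mathrm{mult}_W F\geq 1$ while the semistability bound is $\frac{6}{n+1}\leq 1$, which excludes the degeneration for $n\geq 6$ outright; the borderline $n=5$ can be handled by a more delicate application of Lemma \ref{2.3}, iterating the weight substitution to expose an $F'_k$ that is again a cone and pinning down the rank of the span.

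The main obstacle I expect is the quadric-factor step: one must check that any irreducible quadric factor of $F_k$ over $\bar{\FF_q}$ has trivial Galois orbit (else the orbit product would have degree exceeding three) and is therefore defined over $\FF_q$, and then that the complementary linear piece is also $\FF_q$-rational by uniqueness of factorization. Once this is in hand, the orbit-versus-degree accounting and the cone interpretation are essentially mechanical.
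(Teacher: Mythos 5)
Your argument is essentially the paper's: the entire content of the published proof is that semistability against the weight system $(1,0,\ldots,0)$ forbids any hyperplane defined over $\FF_q$ in the central fiber (since $\text{mult}_W F\geq 1>\frac{3}{n+1}$), whence the fiber is either geometrically integral or a union of three Galois-conjugate hyperplanes; your Galois-orbit counting just makes this explicit and is correct. One caveat on your final paragraph: the collapse of the span of $L_1,L_2,L_3$ to a two-dimensional space genuinely occurs and cannot be ruled out when $n=5$ --- the paper itself, in the proof of Lemma \ref{lem:cone}, explicitly allows the central fiber to be a cone over three Galois-conjugate points on a line precisely when $n=5$ --- so the ``more delicate application of Lemma \ref{2.3}'' you invoke there does not exist. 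Fortunately nothing needs ruling out: three conjugate hyperplanes through a common codimension-$2$ linear space still form a cone over three (concurrent) Galois-conjugate lines in a $\PP^2$, so the stated conclusion holds in that degenerate case as well.
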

\begin{proof}
By the definition of semistability, there is no hyperplane in the central fiber which is defined over $\FF_q$. 
Thus the central fiber is either the union of three conjugate hyperplanes or geometrically integral.
\end{proof}

The following two lemmas are mostly computational. 
The general idea is that semistability gives conditions on the multiplicity of the total family along a linear space defined over $\FF_q$.
If the central fiber is non-normal or a cone over a plane cubic, we can make a suitable base change so that the total family has larger multiplicity along the linear space.
Then we will be able to find a new central fiber which is less singular by the change of coordinates as in the proof of Theorem \ref{thm:ss}. 

\begin{lem}\label{lem:cone}
Use the same assumptions as in Lemma \ref{lem:semiC}. If the central fiber is a cone over an irreducible (over $\FF_q$) plane cubic, 
and there is a formal section of the family, then there is a tower of separable degree $2$ field extensions of $\FF_q\Semr{s}/\FF_q\Semr{t}$ 
such that the base change of the generic fiber  to $\FF_q\Semr{s}$ can be extended to a family over $\SP \FF_q\Sem{s}$ 
whose central fiber is normal and not a cone over a smooth plane cubic.
\end{lem}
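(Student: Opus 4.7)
The plan is to make a ramified separable quadratic base change $t \mapsto s^2$ and apply a Koll\'ar-style coordinate rescaling to produce a new semistable model whose central fiber differs from the original cone, and to iterate whenever the new central fiber is still a cone over a smooth plane cubic.

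First I would choose coordinates so that the cone vertex is the linear subspace $\{X_0 = X_1 = X_2 = 0\}$ and write
\[
\mcF(X, t) = F(X_0, X_1, X_2) + t\, \mcG(X, t),
\]
with $F$ the given irreducible plane cubic. Applying semistability with the weight $W = (1,1,1,0,\ldots,0)$, the bound $\mathrm{mult}_W \mcF \leq 9/(n+1) < 2$ combined with the cone hypothesis forces some monomial of $\mcG|_{t=0}$ to be pure in $X_3,\ldots,X_n$; let $G_0(X_3,\ldots,X_n)$ denote the nonzero cubic form these monomials define. (This same computation shows the hypothesis is incompatible with semistability as soon as $n \geq 9$, so the lemma has content only for $5 \leq n \leq 8$.)

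Next I would set $s^2 = t$, rescale $X_0, X_1, X_2$ by $s$, and divide the equation by $s^2$ to obtain
\[
\mcF'(X, s) = s\, F(X_0, X_1, X_2) + \mcG(sX_0, sX_1, sX_2, X_3, \ldots, X_n, s^2),
\]
whose central fiber at $s = 0$ is $G_0(X_3, \ldots, X_n) = 0$. Lemma~\ref{2.3}, combined with the semistabilization procedure of Theorem~\ref{thm:ss} if necessary, ensures this yields a semistable model over $\FF_q\Semr{s}$. When $n \geq 6$ the new base $\{G_0 = 0\}$ lies in $\PP^{n-3}$ with $n - 3 \geq 3$, so the new central fiber is not a cone over a plane cubic; a codimension count, using that $G_0$ is a hypersurface and thus projectively normal, gives normality. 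When $n = 5$ the new base is again a plane cubic: if $G_0$ is singular, then the cone has codimension-one singular locus and one continues; if $G_0$ is smooth, we remain in the bad case. In either subcase I iterate, exchanging the roles of the two coordinate triples $\{X_0,X_1,X_2\}$ and $\{X_3,X_4,X_5\}$ and adjoining a further square root, producing a tower of separable quadratic extensions.

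The main obstacle is proving termination of this iteration when $n = 5$ and verifying normality each time we declare success. Termination should follow by tracking the $t$-adic valuation, normalized by the ramification index, of a fixed $SL(n+1)$-invariant evaluated on the model: the calculation of Theorem~\ref{thm:ss} shows that each semistabilization after a base change strictly decreases this quantity, and since it is bounded below the process must halt, at which point the central fiber is forced into a normal form that is not a cone over a smooth plane cubic. The formal section is essential throughout: by the semistability analysis analogous to Lemma~\ref{lem:semiQ}(\ref{Q1}), it pins down an $\FF_q$-rational point at which the total space is smooth, and this point dictates the admissible coordinate scalings in each successive base change.
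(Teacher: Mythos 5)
Your overall strategy (ramified quadratic base changes combined with Koll\'ar-style weighted rescalings) is the same as the paper's, and your opening observation is correct and even slightly sharper than what the paper records: semistability with the weight $(1,1,1,0,\ldots,0)$ does force a nonzero cubic form $G_0(X_3,\ldots,X_n)$ in $\mcG|_{t=0}$, and does rule out the cone hypothesis for $n\geq 9$. The gap comes immediately after.

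Your rescaling $t=s^2$, $X_i\mapsto sX_i$ for $i=0,1,2$ produces the central fiber $G_0(X_3,\ldots,X_n)=0$, a cone with vertex the plane $\{X_3=\cdots=X_n=0\}$, and you claim this is normal for $n\geq 6$ because "$G_0$ is a hypersurface and thus projectively normal." This is false: a hypersurface cone is normal only if its singular locus has codimension at least $2$, and nothing forces $G_0$ to be even reduced or irreducible. Indeed the degenerate possibilities for $G_0$ --- a triple hyperplane, a union of three Galois-conjugate hyperplanes, a cone over three conjugate points --- are not excluded by semistability (Lemma \ref{lem:nopoint} shows they are exactly what happens when $G_0=0$ has no smooth rational point), and they are precisely the cases that occupy most of the paper's proof. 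In those cases the paper does \emph{not} use your symmetric weight: it locates a smooth rational point of $G_0=0$ (or of $F=0$) and uses an \emph{asymmetric} weight that leaves one extra coordinate unscaled, so that additional monomials such as $Y_3^2Y_4$ or $Y_0^2Y_1$ survive into the new central fiber and force it to be integral and normal; when neither $F$ nor $G_0$ has a smooth point, the formal section is used concretely to guarantee the presence of a monomial $tL(X_0,X_1,X_2)X_n^2$ whose survivor $Y_0Y_n^2$ does the same job. Your proposal invokes the formal section only rhetorically and never extracts such a term, so the hard cases are not handled.

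The termination argument is also not sound as stated. A cone over a \emph{smooth} plane cubic can perfectly well be the central fiber of a model that already minimizes the valuation of the $SL(n+1)$-invariant (that is the whole reason the lemma is needed), so iterating the construction within the semistable class cannot be driven by a strictly decreasing invariant; and after a ramified base change the normalized valuation lives in $2^{-m}\ZZ$, so "strictly decreasing and bounded below" does not by itself give termination. The paper avoids this entirely: each of its cases terminates in an explicit, bounded number of steps because the surviving extra monomial is identified in advance.
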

\begin{proof}
Since the central fiber is an irreducible plane cubic over $\FF_q$, we can write the equation of the family as
\[
F(X_0, X_1, X_2)+t G(X_3, \ldots, X_n)+ tX_0Q_0+tX_1 Q_1+tX_2 Q_2+t^2(\ldots)=0,
\]
or
\[
F(X_0, X_1)+t G(X_2, \ldots, X_n)+ tX_0Q_0+tX_1 Q_1+t^2(\ldots)=0.
\]
Note that by semistability, the second case can happen only if $n=5$ (consider the weight system $(1, 1, 0, \ldots, 0)$).

First consider the case that $G(X_3, \ldots, X_n)=0$ (or $G(X_2, \ldots, X_n)=0$) has a smooth rational point. 
Without loss of generality, assume that the point is $[X_3, \ldots, X_n]=[1, 0, \ldots, 0]$ (or$[X_2, \ldots, X_n]=[1, 0, \ldots, 0]$)
and that the tangent hyperplane of this point in the hypersurface $G=0$ is given by $X_4=0$ (or $X_3=0$). 
Then make the following base change and change of variables:
\[
t=s^2, X_0=s Y_0, X_1=s Y_1, X_2=s Y_2, X_3=Y_3, X_4=s Y_4,\ldots, X_n=s Y_n,
\]
or
\[
t=s^2, X_0=s Y_0, X_1=s Y_1, X_2= Y_2, X_3=s Y_3, \ldots, X_n=s Y_n.
\]
The new family is 
\[
F(Y_0, Y_1, Y_2)+Y_3^2 Y_4+L(Y_0, Y_1, Y_2) Y_3^2+s(\ldots)=0,
\]
or
\[
F(Y_0, Y_1)+Y_2^2 Y_3+L(Y_0, Y_1) Y_2^2+s(\ldots)=0.
\]
The central fiber $F(Y_0, Y_1, Y_2)+Y_3^2 (Y_4+L(Y_0, Y_1, Y_2))=0$ or $F(Y_0, Y_1)+Y_2^2 (Y_3+L(Y_0, Y_1))=0$ 
defines a normal cubic hypersurface which is not a cone over a plane cubic.

Then consider the case that $F(X_0, X_1, X_2)=0$ has a smooth rational point and $G(X_3, \ldots X_n)=0$ does not have a smooth rational point. Note that $F(X_0, X_1)$ cannot have a smooth rational point otherwise the family is not semistable.
By the semi-stability condition, the equation $G=0$ defines a hyperlane of multiplicity $3$ only if $n=5$ (consider the weight system $(1, 1, 1, 1, 0, \ldots, 0)$). 
In this case we can write the equation as
\[
F(X_0, X_1, X_2)+t X_3^3+tM(X_0, X_1, X_2; X_3, X_4, X_5)+t^2 H(X_4, X_5)+t^2(\ldots)=0,
\] 
where each monomial in $M(X_0, X_1, X_2; X_3, X_4, X_5)$ contains a factor of $X_0, X_1$ or $X_2$ and a factor of $X_3, X_4$ or $X_5$. 
By the semistability condition and the assumption that $F=0$ has a smooth rational point, 
the curve defined by $F(X_0, X_1, X_2)=0$ is a geometrically irreducible plane cubic.
If there are monomials of the form $X_i X_j^2$ in $M$, for some $0 \leq i \leq 2, 4 \leq j \leq 5$, then make the following change of variables
\[
t=s^2, X_0=s Y_0, \ldots, X_3=s Y_3, X_4=Y_4, X_5=Y_5.
\]
The new family becomes
\[
F(Y_0, Y_1, Y_2)+L_4(Y_0, Y_1, Y_2)X_4^2+L_5(Y_0, Y_1, Y_2)Y_5^2+s(\ldots)=0,
\]
where $L_4$ and $L_5$ are linear polynomials and at least one of them is non-zero. 
The central fiber is geometrically reduced, geometrically irreducible, normal and not a cone over a plane cubic.

Assume that there are no monomials of the form $X_i X_j^2$ in $M$, where $0 \leq i \leq 2, 4 \leq j \leq 5$. 
Make the following change of variables
\[
X_0=tY_0, \ldots, X_3=tY_3, X_4=Y_4, X_5=Y_5.
\]
The new family is still semi-stable by Lemma \ref{2.3} and can be written as
\[
H(X_4, X_5)+tF(X_0, X_1, X_2)+t^2(\ldots)=0.
\]
By semistability $H=0$ defines a cone over a union of $3$ Galois conjugate points. 
This reduces to the previous case where $F=0$ has a rational point in the smooth locus.

Next consider the case $G=0$ defines a cone over three Galois conjugate points or three Galois conjugate lines. Assume $F=0$ has a smooth rational point $[1, 0, 0]$ and the tangent line at this point is $X_1=0$. 
Make the following base change and change of variables:
\[
t=s^4, X_0=s^2 Y_0, X_1=s^3 Y_1, X_2=s^3 Y_2, X_3=s Y_3, X_4=s Y_4,\ldots, X_n=s Y_n.
\]
Then the new family is 
\[
G(Y_3, \ldots, Y_n)+Y_0^2Y_1+s(\ldots)=0.
\]
The equation $G(Y_3, \ldots, Y_n)+Y_0^2Y_1=0$ defines a normal geometrically integral cubic hypersurface which is not a cone over a plane cubic.
Note that the base change can be factorized as two degree $2$ base changes.

Finally consider the case neither $F$ or $G$ has a rational point in the smooth locus. 
Assume the formal section intersect the central fiber at $[0, \ldots, 0, 1]$. 
By lemma \ref{lem:nopoint}, $F=0$ and $G=0$ has multiplicity $3$ along the rational point. 
The semis-stability condition requires that the total family has multiplicity strictly less than $3$ along any rational point in the central fiber. 
Thus there has to exist $t^2 X_n^3$ or $t L(X_0, X_1, X_2)X_n^2$. 
By the existence of a section intersecting the central fiber at $[0, \ldots, 0, 1]$, 
the term $tL(X_0, X_1, X_2)X_n^2$ has to exist. Without loss of generality, assume $L(X_0, X_1, X_2)=X_0$. 
Then make the following change of variables:
\[
t=s^2, X_0=s Y_0, X_1=s Y_1, X_2=s Y_2, \ldots, X_{n-1}=s Y_{n-1}, X_n=Y_n.
\]
The new family is defined by 
\[
F(Y_0, Y_1, Y_2)+Y_0 Y_n^2+s(\ldots)=0.
\]
This defines a normal cubic hypersurface which is not a cone over a plane cubic 
as long as $F=0$ is not a cone over $3$ Galois conjugate points in a line. 

Note that $F=0$ is the central fiber of the original semi-stable family. 
Thus, by the semi-stability condition, it is a cone over $3$ Galois conjugate points in a line only if $n=5$ 
(and it is never a hyperplane with multiplicity $3$). 

When $n=5$, we may write the original family as
\[
F(X_0, X_1)+tG(X_2, X_3)+tM(X_0, X_1; X_2, X_3, X_4, X_5)+t^2H(X_4, X_5)+\ldots=0,
\]
or
\[
F(X_0, X_1)+tG(X_2, X_3, X_4)+tM(X_0, X_1; X_2, X_3, X_4, X_5)+a t^2 X_5^3+\ldots=0, a \neq 0
\]
where each monomial in $M(X_0, X_1; X_2, X_3, X_4, X_5)$ contains a factor of $X_0$ or $X_1$ and a factor of $X_2, X_3, X_4$ or $X_5$.

Recall that the formal section intersects the central fiber at $[0, \ldots, 0, 1]$. 
By the semistability, the multiplicity of the total family is less than $3$ at this point in the central fiber.
Thus there has to exist $\lambda t^2 X_5^3$ and $\mu t L(X_0, X_1) X_5^2$ in the defining equation. 
As before we assume $L=X_0$. As above, by the existence of a section, $\mu$ is non-zero. 
Make the following change of variables
\[
X_0=t Y_0, X_1=tY_1, X_2=Y_2, \ldots, X_5=Y_5,
\]
and the new family becomes
\[
G(Y_2, Y_3)+t(\mu Y_0 Y_5^2+ \lambda Y_5^3+\ldots)+t^2(\ldots)=0,
\]
or
\[
G(Y_2, Y_3, Y_4)+ t(\mu Y_0 Y_5^2+ \lambda Y_5^3+\ldots)+t^2(\ldots)=0.
\]
All the terms not written explicitly in $\mu X_0 X_5^2+ \lambda X_5^3+\ldots$ has a factor of $X_i, i=1, \ldots, 4$.
Then $Y_0=-\lambda, Y_5=\mu, Y_i=0, i\neq 0, 5$ is a smooth point of 
$\mu X_0 X_5^2+ \lambda X_5^3+\ldots=0 $. 
Thus we reduces to previous known cases.
\end{proof}

\begin{lem}
Use the same assumptions as in Lemma \ref{lem:semiC}. 
If the central fiber is geometrically integral but non-normal, then $n=5$. 
Either there is another semistable model whose central fiber is normal and not a cone over an irreducible plane cubic, 
or there is degree $2$ ramified base change $t=s^2$ 
such that the base change of the generic fiber to $\FF_q\Semr{s}$ can be extended to a family over $\SP \FF_q\Sem{s}$ 
whose central fiber is normal and not a cone over an irreducible plane cubic.
\end{lem}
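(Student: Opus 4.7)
The plan is to follow the template of Lemma~\ref{lem:cone}: locate a codimension-$2$ linear subspace of singularities in the central fiber, exploit semistability of the total family to force $n = 5$, and then appeal to Lemma~\ref{2.3} either directly or combined with a degree-$2$ ramified base change.

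First I would show that $\text{Sing}(X_0)_{\text{red}}$ contains a unique codimension-$2$ linear subspace $L$, necessarily $\FF_q$-rational. A geometrically integral non-normal cubic hypersurface has singular locus of codimension $\leq 1$ in itself, so of dimension $\geq n-2$ in $\PP^n$. By inspecting the degree-$3$ piece of $I(\Sigma)^2$ for candidate dimension-$(n-2)$ components $\Sigma$, any positive-degree non-linear component (a conic, twisted cubic, or quadric in a hyperplane) forces a linear factor of $F_0$, violating geometric integrality; so the top-dimensional component is a linear $\PP^{n-2}$. Similarly, two distinct codimension-$2$ linear subspaces in the singular locus are ruled out by computing $(X_0, X_1)^2 \cap (X_2, X_3)^2 = 0$ and $(X_0, X_1)^2 \cap (X_0, X_2)^2 \subset (X_0)$ in degree $3$. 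Uniqueness plus Galois invariance forces $L$ to descend to $\FF_q$. Choose coordinates so that $L = V(X_0, X_1)$; the central fiber then has equation
\[
F_0 = X_0^2 A + X_0 X_1 B + X_1^2 C + G(X_0, X_1),
\]
with linear $A, B, C \in \FF_q[X_2, \ldots, X_n]$ and $G$ cubic in $X_0, X_1$ only.

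Second, I would apply semistability to the total family $F_S = F_0 + t F_1 + t^2 F_2 + \ldots$ against the weight $W = (1, 1, 0, \ldots, 0)$. Each monomial of $F_0$ contributes $t^{\geq 2}$ to $F_S(W \cdot X)$, while a monomial of $F_i$ ($i \geq 1$) of bidegree $0$ in $(X_0, X_1)$ contributes $t^i$. Hence $\text{mult}_W F_S \geq 1$, and semistability demands this to be at most $6/(n+1)$; for $n \geq 6$ this is strictly less than $1$, a contradiction, so $n = 5$. At $n = 5$ the equality $\text{mult}_W F_S = 1 = 6/6$ forces some $F_i$ with $i \geq 1$ to have a nonzero part $F_i^{(0)}$ independent of $X_0, X_1$.

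Third, I would apply Lemma~\ref{2.3} with this $W$: the rescaled family $F'(X) = t^{-1} F_S(tX_0, tX_1, X_2, \ldots, X_5)$ is again semistable, and its central fiber is $F_1^{(0)}$, a cubic in $\FF_q[X_2, \ldots, X_5]$, sitting inside $\PP^5$ as a cone with one-dimensional apex $V(X_2, \ldots, X_5)$ over a cubic surface in $\PP^3$. If this cubic surface is normal, then the cone over it is also normal (singular locus contained in the codimension-$3$ apex), and the base being a cubic surface rather than a plane cubic means we are not in the case handled by Lemma~\ref{lem:cone}, so the first alternative of the lemma holds. Otherwise I would perform the base change $t = s^2$, rescale the $(X_0, X_1)$ coordinates with weight $1$ in $s$, and apply Lemma~\ref{2.8}; the new central fiber becomes $F_0^{(2)} + F_1^{(0)} = X_0^2 A + X_0 X_1 B + X_1^2 C + F_1^{(0)}$, which is no longer a cone (the two groups of terms together use all six coordinates nontrivially) and, for the generic interaction between $A, B, C$ and $F_1^{(0)}$, is normal. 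The main obstacle is the final verification that the resulting central fiber (in either alternative) is both normal and not a cone over an irreducible plane cubic, which requires a careful case analysis on the singular structure of the cubic surface $F_1^{(0)}$ and its interplay with $A, B, C$.
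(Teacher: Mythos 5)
Your first two steps track the paper's proof: the singular locus of a geometrically integral non-normal cubic has a unique codimension-two linear component, necessarily $\FF_q$-rational (the paper obtains this more directly by cutting with general hyperplanes down to an irreducible singular plane cubic), and semistability against $W=(1,1,0,\ldots,0)$ forces $n=5$ and, via Lemma~\ref{2.3}, produces a new semistable model with central fiber $F_1^{(0)}=0$. The gaps are in your third step. First, if the cubic surface $F_1^{(0)}=0\subset\PP^3$ is normal but is itself a cone over a smooth plane cubic, then $F_1^{(0)}=0\subset\PP^5$ is a normal cone over an irreducible plane cubic, which is precisely what the lemma must exclude; your remark that ``the base being a cubic surface rather than a plane cubic'' keeps you out of the case of Lemma~\ref{lem:cone} is incorrect (a cone over a cone over a plane cubic is still a cone over a plane cubic), and the paper explicitly routes this subcase back to Lemma~\ref{lem:cone}.

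Second, and more seriously: when $F_1^{(0)}$ is non-normal you perform the base change immediately and assert that $X_0^2A+X_0X_1B+X_1^2C+F_1^{(0)}$ is normal ``for the generic interaction'' between $A,B,C$ and $F_1^{(0)}$. There is no genericity available: these forms are dictated by the given family, $A,B,C$ may span only a one-dimensional space of linear forms in all of $X_2,\ldots,X_5$, and $F_1^{(0)}$ is an essentially arbitrary non-normal cubic; the verification you defer as ``the main obstacle'' is the entire technical content of the lemma. Worse, your one-step shortcut discards the structure that makes the verification feasible. The paper iterates the rescaling a \emph{second} time before base changing: from $F=F_1^{(0)}$, non-normal and not a cone with singular line say $Y_2=Y_3=0$, it passes to the part $G(Y_0,Y_1,Y_4,Y_5)$ of the next coefficient free of $Y_2,Y_3$, and only sets $t=s^2$ when $G$ is again non-normal. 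In that final configuration the quadratic part is $Z_2^2L_2+Z_3^2L_3+Z_2Z_3L$ with $L_2,L_3,L$ linear in $Z_4,Z_5$ \emph{only} and spanning a two-dimensional space (since $F$ is not a cone), which reduces the normality check to two explicit normal forms whose Jacobian ideals can be computed by hand. Without that second iteration your concluding normality claim is unsupported, and the ``not a cone'' assertion (``the two groups of terms together use all six coordinates'') is likewise unjustified, since nothing forces $A,B,C,F_1^{(0)}$ to involve all coordinates and being a cone is a statement about the existence of a vertex, not about which coordinates appear. (A minor point: Lemma~\ref{2.8} concerns pencils, and in any case after $t=s^2$ the multiplicity at $(1,1,0,\ldots,0)$ becomes $2\neq\frac{d\sum w_i}{n+1}$, so neither rescaling lemma yields semistability of the base-changed family; this is harmless only because the second alternative of the statement does not require it.)
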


\begin{proof}
The central fiber is non-normal if and only if its singular locus has a unique irreducible component which is a linear space of dimension $n-2$. 
To see this we pass to an algebraic closure of $\FF_q$. 
Taking $n-2$ general hyperplane sections we get an irreducible singular plane cubic curve. 
Thus there is only one singularity, which is either a node or a cusp. 
So there is only one codimension one irreducible component of the singular locus, 
which is a codimension two linear subspace in $\PP^n$ and defined over $\FF_q$. 
Note that when $n \geq 6$, a semistable model does not contain a codimension $2$ linear space defined over $\FF_q$. 
Thus the central fiber of the semistable model is non-normal only if $n$ is $5$. 

Now we work over $\FF_q$ again. Let the codimension $2$ singular locus be defined by $X_0=X_1=0$. 
Then the family can be written as
\begin{align*}
&X^2_0L_0(X_2, \ldots, X_5)+X^2_1 L_1(X_2, \ldots, X_5)+X_0 X_1 L(X_2, \ldots, X_5)+C(X_0, X_1)\\
+&tF(X_2, \ldots, X_5)+t^2(\ldots)=0
\end{align*}
where $L_0, L_1, L$ (resp. $C, F$) are linear polynomials of degree $1$ (resp. $3$) in $X_2, \ldots, X_5$.

Make the following change of variables
\[
X_0=t Y_0, X_1=t Y_1, X_2=Y_2, \ldots, X_5=Y_5.
\]
Then the new family is still semistable by Lemma \ref{2.3} and given by
\[
F(Y_2, \ldots, Y_5)+t (Y^2_0L_0+Y^2_1L_1+\ldots)+t^2(\ldots)=0.
\]

If $F(X_2, \ldots, X_5)=0$ defines a geometrically integral normal cubic hypersurface or a cone over a plane cubic, then we are either done or have reduced to previous known cases in Lemma \ref{lem:cone}.

In the following assume that $F(X_2, \ldots, X_5)=0$ defines a non-normal cubic surface in $\PP^3$ which is not a cone. 
Furthermore, assume that the singular locus is defined by $X_2=X_3=0$. 
The new family can be written as
\[
Y^2_2L_2(Y_4, Y_5)+Y^2_3 L_3(Y_4, Y_5)+Y_2 Y_3 L(Y_4, Y_5)+C'(Y_2, Y_3)+tG(Y_0, Y_1, Y_4, Y_5)+t(\ldots)=0.
\]
We can find a new semistable family whose central fiber is defined by $$G(Y_0, Y_1, Y_4, Y_5)=0$$ via a similar change of variables as above. 
So if $G(Y_0, Y_1, Y_4, Y_5)=0$ defines a normal cubic surface and not a cone over a smooth plane cubic, we are done. 
Otherwise if it defines a cone over a plane cubic, then we reduce to the case of Lemma \ref{lem:cone}.

In the following assume that $G(Y_0, Y_1, Y_4, Y_5)=0$ defines a non-normal cubic surface. 
Make the following change of variables:
\[
t=s^2, Y_2=s Z_2, Y_3=s Z_3, Y_0=Z_0, Y_1=Z_1, Y_4=Z_4, Y_5=Z_5.
\]
The new family becomes
\[
Z^2_2L_2(Z_4, Z_5)+Z^2_3 L_3(Z_4, Z_5)+Z_2 Z_3 L(Z_4, Z_5)+G(Z_0, Z_1, Z_4, Z_5)+s(\ldots)=0.
\]
We claim that this defines a normal cubic hypersurface which is not a cone over a plane cubic. 
To see this, we can compute the singular locus after making a base change to an algebraic closure of $\FF_q$. 
By the assumption that $F$ defines a non-normal cubic surface which is not a cone over a plane cubic, 
the linear span of $L_2, L_3, L$ is $2$-dimensional. Up to making linear combinations of $Z_2, Z_3$ and $Z_4, Z_5$ over $\bar{\FF}_q$, 
we may assume that either $L_2=Z_4, L_3=Z_5, L=0$ or $L_2=Z_4, L_3=0, L=Z_5$. So it suffices to show that the singular locus of
\[
Z^2_2Z_4+Z^2_3 Z_5+G(Z_0, Z_1, Z_4, Z_5)=0,
\]
and
\[
Z^2_2Z_4+Z_2 Z_3 Z_5+G(Z_0, Z_1, Z_4, Z_5)=0.
\]
has dimension at most $2$ and if there is an irreducible component of the singular locus which is isomorphic to $\PP^2$,
then the multiplicity along this $\PP^2$ is not $2$. 

In the first case the singular locus is defined by
\[
Z_2Z_4=Z_3Z_5=\frac{\partial G}{\partial Z_0}=\frac{\partial G}{\partial Z_1}=\frac{\partial G}{\partial Z_4}+Z_2^2=\frac{\partial G}{\partial Z_5}+Z_3^2=0.
\]
If $Z_2=Z_3=0$, then the singular locus is the singular locus of $G=0$, thus a codimension $4$ linear space.

If $Z_2=0, Z_3 \neq 0$, then $Z_5=0$. We also have $\frac{\partial G}{\partial Z_5}+Z_3^2=0$. Thus the singular locus has codimension at least $3$. Similarly if $Z_2\neq 0, Z_3=0$, the singular locus has codimension at least $3$.

If $Z_2 \neq 0, Z_3 \neq 0$, then $Z_4=Z_5=0$. Furthermore $\frac{\partial G}{\partial Z_4}+Z_2^2=\frac{\partial G}{\partial Z_5}+Z_3^2=0$. Thus the singular locus has codimension at least $4$.

In the second case the singular locus is defined by 
\[
2Z_2Z_4+Z_3Z_5=Z_2Z_5=\frac{\partial G}{\partial Z_0}=\frac{\partial G}{\partial Z_1}=\frac{\partial G}{\partial Z_4}+Z_2^2=\frac{\partial G}{\partial Z_5}+Z_2 Z_3=0.
\]
If $Z_2=0$, then $Z_3Z_5=0$ and $\frac{\partial G}{\partial Z_0}=\frac{\partial G}{\partial Z_1}=\frac{\partial G}{\partial Z_4}=\frac{\partial G}{\partial Z_5}=0$. The last conditions on $G$ defines a codiemension $2$ linear space. Thus the singular locus has codimension at least $3$.

If $Z_2 \neq 0$, then $Z_5=Z_4=0$ and $\frac{\partial G}{\partial Z_4}+Z_2^2=\frac{\partial G}{\partial Z_5}+Z_2 Z_3=0$. Thus the singular locus has codimension at least $3$.

In the above cases, if the singular locus has an irreducible component which is isomorphic to $\PP^{2}$, it is easy to check that the multiplicity is $2$ along this plane.
\end{proof}
We can globalize the base change and birational modifications and prove the following.
\begin{cor}\label{cor:semiC}
Let $\mcX \to B$ be a family of cubic $n$-folds ($n\geq 4$) over a smooth projective curve $B$ defined over $\FF_q$ of characteristic at least $5$. 
Assume that the set of ad\`elic points is non-empty. 
Then there is a tower of degree $2$ branched cover $C=C_1\to C_2 \to \ldots \to B$ such that the base change of the generic fiber over $B$ can be extended to a family $\mcX' \to C$ 
which has geometrically reduced, geometrically irreducible, normal fibers over closed points, 
none of which is a cone over a smooth plane cubic.
\end{cor}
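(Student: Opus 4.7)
The plan is to globalize the pointwise statements of Lemmas \ref{lem:semiC} and \ref{lem:cone}, together with the subsequent non-normal lemma, over $B$ by means of a tower of degree $2$ covers branched at the finite bad locus. I would start from any proper flat extension of the smooth generic fiber to $B$. By Theorem \ref{thm:ss}, applied in the formal neighborhood of each closed point, one obtains a semistable model locally; only finitely many points require a nontrivial modification, and each such modification is birational and supported at that point, so they patch into a global semistable family $\mcX_1 \to B$.

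By Lemma \ref{lem:semiC}, at every closed point the central fiber of $\mcX_1$ is either geometrically integral or a cone over three Galois-conjugate lines. Let $S \subset B$ be the finite set of closed points where the fiber is either non-normal or a cone over an irreducible plane cubic. The hypothesis $\mcX(\mbb{A}) \neq \emptyset$ supplies a formal section at each such place, so the local improvement lemmas apply at every $p \in S$ and produce a degree $2$ ramified base change (or a tower of two such) of $\SP \OO_{B,p}$ together with a semistable model whose central fiber is geometrically integral, normal, and not a cone over a smooth plane cubic. I would then choose a degree $2$ branched cover $C_2 \to B$ whose branch divisor contains the points of $S$ for which a single degree $2$ base change suffices, adjusting by an auxiliary branch point if necessary for the parity of the branch divisor class. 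Pulling back $\mcX_1$ and applying the prescribed local birational modifications at each ramification point yields a semistable family over $C_2$ with strictly smaller bad locus. Iterating at most once more produces the desired tower $C = C_1 \to C_2 \to \ldots \to B$ and family $\mcX' \to C$.

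The main technical obstacle will be coordinating the global cover with the prescribed local base changes: one must verify that the chosen degree $2$ cover realizes, at each $p \in S$, the specific local ramified extension $s^2 = t$ prescribed by the corresponding lemma, and that no new bad fibers are introduced at points of $B$ outside $S$. The first is arranged by the freedom in choosing branch divisors on a curve over $\FF_q$ together with the uniqueness, via Hensel's lemma, of the degree $2$ ramified extension of each completed local ring; the second is immediate because away from the branch locus the cover is étale, which preserves both semistability and the isomorphism type of every fiber.
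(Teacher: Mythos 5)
Your globalization argument is precisely what the paper intends: the paper gives no proof of this corollary beyond the one-line assertion that the local results can be globalized, and your construction --- patching local semistable models into a global one, then taking a tower of degree $2$ covers branched at the finitely many bad places (padded by auxiliary branch points at places of good reduction so that the branch divisor class is divisible by $2$ in $\text{Pic}(B)$) and applying Lemmas \ref{lem:semiC}, \ref{lem:cone} and the non-normal-fiber lemma at each ramification point --- is the intended argument. One inaccuracy: in odd characteristic the ramified quadratic extension of $\FF_q\Sem{t}$ is \emph{not} unique; there are two, namely $s^2=t$ and $s^2=ut$ with $u$ a non-square unit, so Hensel's lemma does not by itself guarantee that your global cover realizes the exact substitution $t=s^2$ used in the local lemmas. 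This is harmless --- multiplying $t$ by a unit only changes the displayed equations in those proofs by unit coefficients and affects none of the conclusions (normality, geometric integrality, not being a cone), and alternatively one can choose the defining function of the double cover to have prescribed $1$-jet at each bad point --- but it should be said this way rather than via a false uniqueness claim. Likewise, at inert unramified points the fiber is not literally unchanged but is base-changed to a quadratic extension of the residue field, which is fine since all the required properties are geometric.
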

\subsection{Semistable models for complete intersections of two quadrics}
This section is a straightforward computation using the theory of semistable models. First notice the following,

\begin{lem}\label{nonnormal22}
Let $X$ be a geometrically integral, non-normal complete intersection of two quadrics in $\PP^n$ defined over a field $k$ of characteristic at least $3$. 
Then the singular locus of $X$ has a unique $(n-3)$-dimensional component which is a linear space defined over $k$. 
\end{lem}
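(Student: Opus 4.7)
The plan is to show that non-normality of the complete intersection forces the singular locus to have codimension one in $X$, and to identify every codimension-one component with the vertex of a rank-three member of the pencil of quadrics.

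Since $X = \{Q_1 = Q_2 = 0\} \subset \PP^n$ is a complete intersection it satisfies Serre's condition $S_2$, so non-normality forces the failure of $R_1$; hence $\text{Sing}(X)$ has a component $Z$ of dimension at least $n-3$. I pass to $\overline{k}$ and fix such a $Z$. At every $p \in Z$ the Jacobian $\bigl(\begin{smallmatrix} \nabla Q_1(p) \\ \nabla Q_2(p)\end{smallmatrix}\bigr)$ has rank at most one, so there is some $[a_p : b_p] \in \PP^1$ with $p \in \text{Sing}(a_p Q_1 + b_p Q_2)$. Sending $p$ to $[a_p : b_p]$ defines a rational map $Z \to \PP^1$ whose image lies in the finite zero locus of the discriminant $\det(a A_1 + b A_2)$; irreducibility of $Z$ then forces this map to be constant, so $Z$ is contained in the vertex $V$ of a single singular pencil member $Q_0 = a_0 Q_1 + b_0 Q_2$.

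Let $r = \text{rank}(Q_0)$, so that $V = \text{Sing}(Q_0)$ is a linear subspace of $\PP^n$ of dimension $n-r$. On $V$ the form $Q_0$ vanishes identically, hence $Q_1|_V$ and $Q_2|_V$ are proportional; thus $X \cap V$ is either $V$ itself (dimension $n-r$, when $V \subset X$) or a quadric hypersurface in $V$ of dimension $n-r-1$. Matching $\dim(X \cap V) \geq n-3$ against these two cases leaves only $r \leq 3$ with $V \subset X$, or $r \leq 2$. Geometric integrality together with characteristic $\neq 2$ excludes $r \leq 2$: for $r = 1$ we have $Q_0 = c\ell^2$, so $\ell^2 \in I(X)$ and $X$ is non-reduced; for $r = 2$ we have $Q_0 = \ell_1\ell_2$ over $\overline{k}$ with linearly independent $\ell_1, \ell_2$, so $X_{\overline{k}} = (Q_1 \cap \{\ell_1 = 0\}) \cup (Q_1 \cap \{\ell_2 = 0\})$ decomposes into two hyperplane sections of $Q_1$. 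Both contradict the hypothesis, so $r = 3$ and $V \subset X$, and therefore $Z = V$ is a linear subspace of dimension $n-3$.

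For uniqueness I argue by induction on $n$. In the base case $n = 3$, $X$ is a geometrically integral curve of arithmetic genus one and degree four, and the formula $p_a - p_g = \sum_p \delta_p$ forces $\sum_p \delta_p \leq 1$, so $X$ has at most one singular point. For the inductive step, Bertini ensures that for a general hyperplane $H \subset \PP^n$ the scheme $X \cap H$ is a geometrically integral, non-normal complete intersection of two quadrics in $\PP^{n-1}$ with $\text{Sing}(X \cap H) = \text{Sing}(X) \cap H$; two distinct $(n-3)$-dimensional linear components of $\text{Sing}(X)$ would cut out two distinct $(n-4)$-dimensional linear components of $\text{Sing}(X \cap H)$, contradicting induction. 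The unique top-dimensional component is then $\text{Gal}(\overline{k}/k)$-invariant, hence descends to a linear subspace of $\PP^n$ defined over $k$. The main obstacle I foresee is this uniqueness step: the rank analysis produces linear components immediately, but ruling out a second such component requires either the Bertini/induction argument above (with care that a generic hyperplane sees both components distinctly) or a direct argument that two distinct vertex subspaces $V, V' \subset X$ coming from linearly independent rank-three pencil members $Q_0, Q_0'$ (which then span the full pencil, so $X = Q_0 \cap Q_0'$) force $X$ to be reducible; in either formulation it is geometric integrality that pins down the uniqueness.
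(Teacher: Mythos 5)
Your reduction of non-normality to a codimension-one singular locus via Serre's criterion is correct, and your uniqueness step (Bertini induction down to the integral arithmetic-genus-one curve in $\PP^3$) is sound; that step is in fact essentially the paper's entire proof, which cuts $X$ by general hyperplanes down to an integral singular $(2,2)$-curve, notes that such a curve of arithmetic genus one has exactly one singular point, and reads off existence, uniqueness, and degree one (hence linearity) of the top-dimensional singular component all at once.

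The gap is in your rank analysis. You assert that the rational map $Z \dashrightarrow \PP^1$, $p \mapsto [a_p : b_p]$, has image in the \emph{finite} zero locus of $\det(aA_1 + bA_2)$ and is therefore constant. For non-normal $X$ this binary form can vanish identically, and the map can be non-constant. Take the paper's own first normal form (Lemma \ref{lem:nonnormal22}), $X = \{X_0X_3 + X_1^2 = X_0X_4 + X_2^2 = 0\} \subset \PP^5$: every member $aQ_1 + bQ_2 = X_0(aX_3 + bX_4) + aX_1^2 + bX_2^2$ involves only four independent linear forms, hence has rank at most $4 < 6$, so the discriminant is identically zero. Here $\operatorname{Sing}(X) = \{X_0 = X_1 = X_2 = 0\} \cong \PP^2$ is indeed a linear $\PP^{n-3}$, but at $p = [0:0:0:x_3:x_4:x_5]$ the gradients are $(x_3,0,\ldots,0)$ and $(x_4,0,\ldots,0)$, so $[a_p:b_p] = [x_4:-x_3]$ and the map is surjective onto $\PP^1$; the singular component is \emph{not} contained in the vertex of any single pencil member (those vertices are lines sweeping out the plane). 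So the case analysis on $r = \operatorname{rank}(Q_0)$ never gets started in this case, even though the conclusion happens to hold. To repair the argument you must either treat the identically-vanishing-discriminant case separately, or simply promote your base case to all dimensions as the paper does: a general codimension-$(n-3)$ linear section of $X$ is an integral singular complete intersection curve of arithmetic genus one, hence has exactly one singular point, which forces $\operatorname{Sing}(X)$ to have a unique $(n-3)$-dimensional component and that component to have degree one.
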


\begin{proof}
We base change to an algebraic closure $\bar{k}$ of $k$ 
and take general hyperplane sections repeatedly until the complete intersection is a reduced and irreducible curve in $\PP^3$. 
Then it is a singular curve of arithmetic genus $1$ contained in a pencil of quadric surface. 
Note that there is a smooth member of the pencil otherwise the curve is a cone over $4$ points in $\PP^2$. 
The curve has only one singular point. 
Thus there is a unique irreducible component of the singular locus which is an $(n-3)$-dimensional linear space. 
In particular this linear space is defined over $k$.
\end{proof}

\begin{lem}\label{lem:semiQQ}
Let $\mcX \to \SP \FF_q \Sem{t}$ be a semistable family of complete intersections of two quadrics defined by $Q=Q'=0$ in $\PP^n, n\geq 5$. 
Assume that the characteristic is not two and that the generic fiber of $\mcX \to B$ is smooth. 
Let $Q_0$ and $Q_0'$ be the reduction of $Q$ and $Q'$ modulo $t$.
\begin{enumerate}
\item \label{221}
None of the quadrics defined by $\lambda Q_0+\mu Q_0'=0, [\lambda, \mu] \in \PP^1(\FF_q)$ has a linear factor defined over $\FF_q$.
\item \label{222}
The closed fiber does not contain a linear subspace of dimension $n-2$ defined over $\FF_q$.
\item \label{223}
For any formal section $\whts$, at most one of $Q_0$ and $Q_0'$ is singular at the $\FF_q$-rational point $\whts(0)$.
\item \label{224}
The closed fiber is geometrically reduced.
\end{enumerate}
\end{lem}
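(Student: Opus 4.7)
Each of the four statements is obtained by exhibiting a weight system over $\FF_q$ that, through the semistability inequality $\text{mult}_W(Q, Q') \leq 4(\sum w_i)/(n+1)$ for a pencil of quadrics, forces a contradiction.

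For (1), the plan is to first use a change of basis in the pencil (over $\FF_q$) to reduce to the case where $Q_0$ itself factors as $X_0 \cdot L$ over $\FF_q$; then the weight system $(1, 0, \ldots, 0)$ gives $\text{mult}_W Q \geq 1$ and $\text{mult}_W Q' \geq 0$, so $\text{mult}_W(Q, Q') \geq 1$, exceeding the bound $4/(n+1) < 1$ for $n \geq 4$. For (2), placing the purported $(n-2)$-subspace at $V(X_0, X_1)$ forces $Q_0, Q_0' \in (X_0, X_1)$, and the weight $(1, 1, 0, \ldots, 0)$ yields $\text{mult}_W(Q, Q') \geq 2 > 8/(n+1)$ for $n \geq 5$. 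For (3), I move $\whts(0)$ to $[0 : \cdots : 0 : 1]$. Simultaneous singularity of $Q_0, Q_0'$ at this point means neither involves $X_n$, so both lie in $\FF_q[X_0, \ldots, X_{n-1}]$; expanding $Q(\whts(t)) \equiv 0 \pmod{t^2}$ along $\whts(t) = (tX_0'(t), \ldots, tX_{n-1}'(t), 1)$ forces the coefficient of $tX_n^2$ in $Q$ (and in $Q'$) to vanish. With $W = (1, \ldots, 1, 0)$ one obtains $\text{mult}_W Q, \text{mult}_W Q' \geq 2$, hence $\text{mult}_W(Q, Q') \geq 4 > 4n/(n+1)$.

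For (4), my strategy is proof by contradiction. Suppose $V(Q_0, Q_0')_{\bar\FF_q}$ is non-reduced. By Lemma \ref{ci}, each fiber is a complete intersection, hence Cohen--Macaulay and pure of dimension $n - 2$, so the non-reducedness occurs generically along an irreducible component $Z$ over $\bar\FF_q$. A local computation at the generic point of $Z$ produces a pencil member $Q'' = \lambda_0 Q_0 + \mu_0 Q_0'$ over $\bar\FF_q$ that is singular along $Z$. Since singular loci of quadrics are linear, either $\text{rank}\, Q'' = 2$, in which case $Q''$ is a product of two linear forms and $Z$ is their common zero locus (an $(n-2)$-linear subspace), or $\text{rank}\, Q'' = 1$, in which case $Q'' = H^2$ and $Z \subset V(H)$. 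In the rank-$1$ case, if $[\lambda_0, \mu_0] \in \PP^1(\FF_q)$ then Galois descent on the coefficients of $H$ writes $H^2 = c \tilde{H}^2$ with $\tilde{H}$ an $\FF_q$-linear form, exhibiting a factorization of $Q''$ over $\FF_q$ and contradicting (1); otherwise the Galois orbit of $[\lambda_0, \mu_0]$ has size $\geq 2$, and each orbit element contributes $n$ to the vanishing order of the binary discriminant $\det(\lambda M + \mu M')$ of degree $n+1$, forcing $2n \leq n+1$, impossible for $n \geq 5$. The degenerate scenario of identically-zero discriminant is to be ruled out using the Bezout bound $\deg V(Q_0, Q_0') = 4$ combined with (1) and (2). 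In the rank-$2$ case, the same Bezout bound shows the Galois orbit of $Z$ has size at most $2$ (each non-reduced linear component contributes $\geq 2$ to the total degree $4$); orbit size $1$ puts $Z$ over $\FF_q$ and contradicts (2), while orbit size $2$ forces the cycle $V(Q_0, Q_0') = 2Z_1 + 2Z_2$ for Galois-conjugate $Z_i = V(L_i, L_i')$, and then a direct Galois-equivariant computation in $I(Z_1)^2 \cap I(Z_2)^2$ produces an $\FF_q$-defined pencil member with a linear factor over $\FF_q$, contradicting (1) once more.

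The main obstacle is (4), and within it the rank-$2$ subcase with Galois-orbit of size exactly $2$: carrying out the $\FF_q$-linear algebra in $I(Z_1)^2 \cap I(Z_2)^2$ equivariantly under the quadratic Galois action to extract an $\FF_q$-destabilizing factorization, together with the exclusion of the identically-zero discriminant scenario in the rank-$1$ analysis, is where the real computational effort is concentrated.
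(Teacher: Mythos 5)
Your treatments of (1), (2) and (3) are correct and are essentially the paper's argument made explicit: the paper disposes of (1) and (2) with "follows from the definition of semistability" and of (3) with the observation that a formal section through a point where both quadric families are singular forces too high a multiplicity there; your weight systems $(1,0,\ldots,0)$, $(1,1,0,\ldots,0)$ and $(1,\ldots,1,0)$, combined with $\text{mult}_W(Q,Q')\geq \text{mult}_W Q+\text{mult}_W Q'$, are exactly the computations being alluded to.

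Part (4) has a genuine gap at its first substantive step. It is not true that non-reducedness of $V(Q_0,Q_0')$ along a component $Z$ produces a pencil member $\lambda_0 Q_0+\mu_0 Q_0'$ with \emph{constant} $[\lambda_0,\mu_0]$ singular along $Z$. What the local computation at the generic point $\eta_Z$ gives is that the two gradients are proportional at $\eta_Z$, but the ratio lives in the residue field $\kappa(\eta_Z)$ and need not be constant on $Z$. Concretely, take $Q_0=X_0X_2$ and $Q_0'=X_0X_3+X_1^2$: the cycle is $2[V(X_0,X_1)]$ plus a reduced quadric, so the intersection is non-reduced along $Z=V(X_0,X_1)$, yet the gradient of $\lambda Q_0+\mu Q_0'$ restricted to $Z$ is $(\lambda X_2+\mu X_3,0,\ldots,0)$, which vanishes identically only for $\lambda=\mu=0$; no member of the pencil is singular along $Z$. (This particular example is of course excluded by (1), but your argument does not invoke (1) to justify the existence of $Q''$, and the whole rank/discriminant case analysis that follows is built on $Q''$.) The paper's route avoids this entirely: since the fiber is a degree-$4$ complete intersection with no embedded components, a non-reduced component has multiplicity $\geq 2$, and the degree bound together with (2) forces the reduced fiber to be either two Galois-conjugate $(n-2)$-planes or a single $(n-2)$-dimensional quadric; in either case (Hartshorne connectedness puts the two planes in a common hyperplane) the reduced fiber is $V(H,q)$ with $H$, $q$ defined over $\FF_q$, whence $Q_0=HL_0+a_0q$ and $Q_0'=HL_0'+a_0'q$, and the combination $a_0'Q_0-a_0Q_0'$ is an $\FF_q$-member of the pencil with the linear factor $H$, contradicting (1). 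If you repair your step by first running this classification, you will find you no longer need $Q''$ at all.
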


\begin{proof}
The first two follow directly from the definition of semistable families.

For (\ref{223}), note that if a formal section of a fibration intersects the closed fiber at a singular point, 
then the total space has to be singular at this point. Thus if both $Q_0$ and $Q_0'$ are singular at $\whts(0)$, 
the two family of quadric hypersurfaces are both singular at $\whts(0)$, which violates the semistable hypothesis by looking at the multiplicity at this point.

For (\ref{224}), if the closed fiber is not geometrically reduced, 
there is a whole irreducible component which is not geometrically reduced. 
Since the closed fiber has degree $4$ and the closed fiber does not contain a linear subspace of dimension $n-2$ defined over $\FF_q$, 
the only possibilities of the closed fiber are: 
\begin{itemize}
\item 
a union of two Galois conjugate $(n-2)$-dimensional linear subspaces, each having multiplicity $2$;
\item
 a quadric of dimension $n-2$ with multiplicity $2$. 
\end{itemize}
In any case, there is a unique linear form $H$ and a quadric form $q$ such that the reduced closed fiber is defined by $H=q=0$. The quadratic polynomials $Q_0$ and $Q_0'$ are contained in the ideal generated by $H$ and $q$. So $Q_0=H \cdot L_0+ a_0 q, Q_0'=H \cdot L_0'+a_0' q$.
Then one of the quadrics $\lambda Q_0+\mu Q_0'=0, [\lambda, \mu] \in \PP^1(\FF_q)$ has a linear factor defined over $\FF_q$. 
This is impossible by (\ref{221}).
\end{proof}
\begin{lem}\label{225}
Use the same notations as in \ref{lem:semiQQ}. 
If there is a formal section of the family, then the closed fiber is either geometrically irreducible 
or there is a member of the pencil spanned by $Q_0, Q_0'$ of the form $X_0^2+aX_1^2=0$, 
where $a$ is not a square in $\FF_q$. This can happen only if $n=5$.
\end{lem}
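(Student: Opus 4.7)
The plan has two stages: first derive a rank-$2$ pencil member of the stated form from geometric reducibility, then show via semistability that such a member forces $n=5$.

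For Stage 1, I suppose $\mcX_0$ is geometrically reducible and decompose $\mcX_0 \otimes \bar{\FF}_q$ into irreducible components whose degrees sum to $4$. By (\ref{224}) the components are geometrically reduced, and by Galois invariance they form orbits which I constrain via (\ref{221})--(\ref{223}). Case analysis on the degree partitions of $4$ eliminates partitions $(4)$ (contradicts reducibility), $(3,1)$ (the degree-$1$ component must form a singleton orbit, contradicting (\ref{222})), and $(2,1,1)$ (the degree-$2$ component is $\FF_q$-rational, so its containing hyperplane yields a pencil member with $\FF_q$-linear factor, violating (\ref{221})). The remaining cases are $(2,2)$ with Galois-conjugate components and $(1,1,1,1)$; both give a rank-$2$ pencil member over $\FF_q$. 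In case $(2,2)$, a non-degenerate codim-$2$ subvariety of $\PP^n$ has degree $\geq 3$, so each degree-$2$ component is contained in a hyperplane; the product $H \cdot H^\sigma$ of the Galois-conjugate hyperplanes is a rank-$2$ quadric in $I(\mcX_0)_2 = (Q_0, Q_0')$. In case $(1,1,1,1)$, a pencil cutting out four linear $(n-2)$-subspaces contains a rank-$2$ member coming from the rank-$\leq 2$ structure of its generators over $\bar{\FF}_q$. In either case, (\ref{221}) forces this rank-$2$ member to have no $\FF_q$-linear factor, hence to take the form $X_0^2 + a X_1^2$ with $a$ a non-square.

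For Stage 2, change coordinates so $Q_0 = X_0^2 + a X_1^2$, and let $L = \{X_0 = X_1 = 0\}$, the $(n-2)$-dimensional $\FF_q$-rational singular locus of $Q_0$. By (\ref{222}), $L \not\subset \mcX_0$, so $C := Q_0'|_L \neq 0$. The formal section's specialization $p = \whts(0)$ lies in $L \cap \{Q_0' = 0\}$ (Galois swaps the two geometric components of $\mcX_0$, so $\FF_q$-points must lie in their intersection); normalize $p = [0{:}\cdots{:}0{:}1]$. Write $Q_0' = \alpha X_0^2 + \beta X_0 X_1 + \gamma X_1^2 + X_0 A + X_1 B + C$ with $A, B, C$ in $X_2, \ldots, X_n$; then $c_{nn} = 0$ and by (\ref{223}) the gradient of $Q_0'$ at $p$ is nonzero. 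The section equation $F(\whts) \equiv 0 \pmod{t^2}$ forces the $X_n^2$-coefficient of $F_1$ to vanish.

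Applying semistability with $W = (1, 1, 0, \ldots, 0)$ gives $\text{mult}_W F \geq 1$, $\text{mult}_W G = 0$, so $\text{mult}_W(F,G) \geq 1$, and the bound $8/(n+1)$ forces $n \leq 7$. For the sharper bound, use $W' = (1, 1, \ldots, 1, 0)$: the vanishing of $F_1$'s $X_n^2$-coefficient gives $\text{mult}_{W'} F \geq 2$, while smoothness of $Q_0'$ at $p$ together with $c_{nn} = 0$ gives $\text{mult}_{W'} G = 1$. The lowest-weight terms satisfy $\tilde F \supset X_0^2 + a X_1^2$ while $\tilde G$ is divisible by $X_n$; a wedge computation, combined with iterated applications of Lemma \ref{2.8} and auxiliary weight systems to eliminate $n \in \{6,7\}$, finally yields $n = 5$. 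The principal obstacle is this sharp passage from $n \leq 7$ to $n = 5$: it requires careful tracking of how the rank-$2$ structure of $Q_0$, the formal-section constraint, and the smoothness of $Q_0'$ at $p$ interact through wedge-product multiplicities across several weight systems.
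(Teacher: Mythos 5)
Your Stage 1 breaks down precisely at the partition $(1,1,1,1)$, and this is where your route diverges most sharply from the paper's. You assert that a pencil cutting out four linear $(n-2)$-subspaces "contains a rank-$2$ member coming from the rank-$\leq 2$ structure of its generators over $\bar{\FF}_q$," and then apply (\ref{221}) to conclude it has the form $X_0^2+aX_1^2$. But (\ref{221}) only constrains $\FF_q$-rational members of the pencil, and nothing in your argument produces a rank-$2$ member that is defined over $\FF_q$. Cutting down to $\PP^3$, when the pencil contains a smooth quadric the four lines are two pairs of rulings and the pencil has exactly two rank-$2$ members ($h_1h_2$ and $h_3h_4$, the two ways of pairing the lines into planes); these two members may be Galois-conjugate to each other, in which case neither is $\FF_q$-rational and your conclusion fails. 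In the sub-case where every member is a cone with common vertex, the three rank-$2$ members may likewise be permuted without a fixed point. The paper does something different here: it uses the formal section together with (\ref{223}) to show the $(1,1,1,1)$ case \emph{cannot occur at all} — the point $\whts(0)$ must lie in the intersection of two of the conjugate linear components (or at the vertex), and each possibility is excluded by the arguments of (\ref{223}) and (\ref{224}). Your treatment never invokes the formal section in this case, and without it the case is neither eliminated nor shown to yield the stated normal form. (Your partitions $(4)$, $(3,1)$, $(2,1,1)$ and $(2,2)$ are handled essentially as in the paper and are fine.)

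On Stage 2: your derivation of $n\leq 7$ from $W=(1,1,0,\ldots,0)$ is correct, and the observations that $c_{nn}=0$ forces $\text{mult}_{W'}F\geq 2$ and that smoothness of $Q_0'$ at $p$ gives $\text{mult}_{W'}G=1$ for $W'=(1,\ldots,1,0)$ check out. But $\text{mult}_{W'}(F,G)\geq 3$ against the bound $\tfrac{4n}{n+1}<4$ yields no contradiction for any $n\geq 4$, and Lemma \ref{2.8} does not even apply since $\tfrac{4n}{n+1}$ is not attained; the promised "iterated applications of Lemma \ref{2.8} and auxiliary weight systems to eliminate $n\in\{6,7\}$" is exactly the missing content, so Stage 2 is a plan rather than a proof. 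To be fair, the paper's own displayed proof of this lemma establishes only the dichotomy and does not justify the final sentence "this can happen only if $n=5$" either, so you are attempting something the author left implicit; but as written your argument does not close it.
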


\begin{proof}
If the closed fiber is geometrically reduced and geometrically reducible, then, 
by Lemma \ref{lem:semiQQ} (\ref{222}), the only possibilities of the closed fiber are: 
\begin{itemize}
\item
two quadrics of dimension $n-2$;
\item
a union of an $(n-2)$-dimensional quadric and two $(n-2)$-dimensional linear subspace, 
and none of the linear subspaces is defined over $\FF_q$.
\item
a union of $4$ linear subspaces of dimension $n-2$, none of which is defined over $\FF_q$;
\end{itemize}

For the first case, each of the quadrics is contained in a unique hyperplane. 
So if one of the quadrics is defined over $\FF_q$, then the corresponding hyperplane is defined over $\FF_q$. 
Then one of the quadrics $\lambda Q_0+\mu Q_0'=0, [\lambda, \mu] \in \PP^1(\FF_q)$ has a linear factor defined over $\FF_q$, which is a impossible by Lemma \ref{lem:semiQQ} (\ref{221}). 
Otherwise the two quadrics are conjugate to each other. 
So the product of the two linear forms of the corresponding hyperplanes is defined over $\FF_q$ 
and has the form $X_0^2+aX_1^2=0$ where $a$ is not a square in $\FF_q$.
Clearly this is one of quadrics in the pencil.

For the second case, the quadric is defined over $\FF_q$ by $H=q=0$. 
This is impossible by the same consideration as in the proof of Lemma \ref{lem:semiQQ} (\ref{224}).

For the last case, we first pass to an algebraic closure and take $n-3$ general linear sections. 
Then we get a reducible complete intersection in $\PP^3$, which is a union of $4$ lines. 
Furthermore two lines intersect if and only if they come from two linear subspaces intersecting at an $(n-3)$-dimensional subspace. 
Two lines are disjoint if and only if they come from two linear subspaces intersecting at an $(n-4)$-dimensional subspace. 

In $\PP^3$, the four lines are defined by a pencil of quadrics. 
First consider the case that one of the members in the pencil is a smooth quadric surface. 
In this case the lines are the rulings and the closed fiber $\mcX_0$ is a cone over the four lines. 
Two of the lines belong to one family of the ruling and the other two belong to the other family of the ruling. 
The formal section cannot intersect the closed fiber at the vertex by Lemma \ref{lem:semiQQ} (\ref{223}). 
It cannot intersect the closed fiber at the smooth locus since none of the linear spaces is defined over $\FF_q$. 
Thus it has to intersect the closed fiber at the intersection of two of the linear subspaces and not the vertex. 
Furthermore the triple intersection of the linear subspaces is empty. 
So this formal section determines two linear subspaces which intersect along a linear subspace of dimension $n-3$.
The union of these two linear subspaces is Galois invariant and is defined by $H=q=0$ over $\FF_q$.
By the same argument in the proof of Lemma \ref{lem:semiQQ} (\ref{224}), this is impossible.

If none of the member of the pencil of quadrics is smooth, then they are all cones over (possibly singular) plane conics with the same vertex. 
So the four lines intersect at the vertex, and the four $(n-2)$-dimensional linear subspace intersect at a subspace of dimension $n-3$. 
Since none of the linear subspaces is defined over $\FF_q$, the formal section must intersect the central fiber at the vertex. 
But this is impossible by Lemma \ref{lem:semiQQ} (\ref{223}).
\end{proof}

As it will become clear in the proof of Hasse principle, we need the singular fibers to be geometrically integral and not a cone over a curve in $\PP^3$. Luckily we can always find a semistable model which satisfies this requirement.

\begin{lem}\label{226}
Use the same notations as in \ref{lem:semiQQ}. 
Assume that there is a formal section. The closed fiber is non-normal or is a cone over an irreducible $(2, 2)$ complete intersection curve in $\PP^3$ only if $n=5$. 
When the closed fiber is a cone over a curve in $\PP^3$, 
there is another semi-stable model whose closed fiber is geometrically integral, non-normal but not a cone over a curve in $\PP^3$.
\end{lem}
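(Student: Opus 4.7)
The plan is to handle two separate assertions: the dimension bound $n = 5$, and the construction of a new semistable model with the desired closed fiber.

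For the first assertion, in both sub-cases I will exhibit an $(n-3)$-dimensional linear subspace of the closed fiber defined over $\FF_q$, and then apply semistability with the weight system $W = (1,1,1,0,\ldots,0)$ to conclude $n \leq 5$. In the non-normal case, Lemma \ref{nonnormal22} produces this subspace directly as the unique $(n-3)$-dimensional component of the singular locus. In the cone case, both $Q_0$ and $Q_0'$ are cones with vertex equal to the intrinsic $(n-4)$-dimensional vertex $V$ of the complete intersection (since each is the pullback of a quadric on $\PP^3$ defining $C$), hence both are singular along all of $V$; Lemma \ref{lem:semiQQ}(\ref{223}) then forces $\whts(0) \notin V$, so $\whts(0)$ projects to a rational point $p \in C(\FF_q)$, and the join $V \vee p$ supplies the desired $(n-3)$-dimensional linear subspace. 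Placing this subspace at $\{X_0 = X_1 = X_2 = 0\}$, the vanishing of $Q_0, Q_0'$ on it yields $\text{mult}_W Q, \text{mult}_W Q' \geq 1$, so $\text{mult}_W(Q, Q') \geq 2$; combined with the semistability bound $\text{mult}_W(Q, Q') \leq 12/(n+1)$, this forces $n \leq 5$.

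For the second assertion, I will first locate a \emph{smooth} rational point $p$ on $C$: if $C$ is smooth any rational point works, while if $C$ is a singular irreducible $(2,2)$-curve (necessarily rational with one nodal or cuspidal singularity), the normalization $\PP^1$ gives $\#C(\FF_q) \geq q \geq 3$, so at least one rational point lies in the smooth locus. Placing $V = \{X_0 = X_1 = X_2 = X_3 = 0\}$ and $p = [1,0,0,0,0,0]$, I write $Q_0 = X_0 L(X_1, X_2, X_3) + R(X_1, X_2, X_3)$ and $Q_0' = X_0 L'(X_1, X_2, X_3) + R'(X_1, X_2, X_3)$, where smoothness of $C$ at $p$ forces $L, L'$ to be linearly independent linear forms in $X_1,X_2,X_3$. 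Apply the weight system $W = (0,1,1,1,0,0)$: a direct expansion yields $\text{mult}_W Q = \text{mult}_W Q' = 1$, saturating the semistability bound, so Lemma \ref{2.8} produces a new semistable family whose closed-fiber equations take the form
\[
Q''_0 = Y_0 L(Y_1, Y_2, Y_3) + q_1(Y_0, Y_4, Y_5), \qquad Q'''_0 = Y_0 L'(Y_1, Y_2, Y_3) + q_1'(Y_0, Y_4, Y_5),
\]
with $q_1, q_1'$ the parts of the $t$-linear corrections to $Q, Q'$ free of $X_1, X_2, X_3$.

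To verify that this new closed fiber is non-normal but not a cone over a curve in $\PP^3$, I observe that the $2$-plane $P := \{Y_0 = Y_4 = Y_5 = 0\}$ lies in the fiber (both equations vanish on $P$) and that the Jacobian matrix restricted to $P$ has rank at most $1$, since its only nonzero column is the first one, carrying $L(Y_1,Y_2,Y_3)$ and $L'(Y_1,Y_2,Y_3)$. Because the fiber has dimension $3$, this already forces it to be non-normal. A cone over a curve in $\PP^3 \subset \PP^5$ has a line as vertex, which must lie in the common vertex of the two defining quadrics; a direct computation identifies these vertices as the lines $\{Y_0 = Y_4 = Y_5 = L = 0\}$ and $\{Y_0 = Y_4 = Y_5 = L' = 0\}$, whose intersection is $0$-dimensional by linear independence of $L, L'$, ruling out the cone structure. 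Geometric integrality follows from Lemma \ref{225}, since a reducible alternative would exhibit two conjugate components whose common locus is a quadric surface in $\PP^3$ and hence cannot contain the linear $2$-plane $P$ already exhibited. The main delicate point is guaranteeing a smooth rational point on $C$: at a singular point of $C$ one of $L, L'$ would vanish, the two vertex lines would coincide along a line rather than meeting at a point, and the resulting fiber would again be a cone over a curve in $\PP^3$; the point-count on the normalization is what prevents this pathology.
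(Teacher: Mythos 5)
Your first assertion ($n=5$) and the skeleton of your second (locate a smooth $\FF_q$-point of $C$, apply the weight system $(0,1,1,1,0,0)$, invoke Lemma \ref{2.8}) coincide with the paper's proof. But your verification that the new closed fiber is not a cone over a curve in $\PP^3$ has a genuine gap. You assert that the vertices of the two new quadrics are the lines $\{Y_0=Y_4=Y_5=L=0\}$ and $\{Y_0=Y_4=Y_5=L'=0\}$; this is false in general. If, say, $q_1(Y_0,Y_4,Y_5)=Y_4^2$, the vertex of $Y_0L+Y_4^2$ is the $2$-plane $\{Y_0=Y_4=L=0\}$, not a line; and if $q_1,q_1'$ share a linear factor in $Y_4,Y_5$, or one of them is divisible by $Y_0$, the intersection of the two vertices really does contain a line (or the fiber acquires a component inside a hyperplane), so the conclusion itself would fail for such $q_1,q_1'$. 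What excludes these configurations is the semistability of the \emph{new} family --- which you obtain from Lemma \ref{2.8} but never use. The paper applies Lemma \ref{lem:semiQQ} (\ref{221}) and (\ref{222}) to the transformed family to show that $q_1$ and $q_1'$ are neither proportional nor share a linear factor, hence have no common zero; only then does the intersection of the vertices collapse to a point. Without some such control on $q_1,q_1'$ your not-a-cone argument does not go through.

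The same omission undercuts your integrality argument: a quadric surface in $\PP^3$ can contain a $2$-plane (it may be a union of two planes), so ``the common locus cannot contain $P$'' is not a valid deduction. The correct route is again through semistability of the new family: any member $\lambda Q_0''+\mu Q_0'''$ with $[\lambda:\mu]\in\PP^1(\FF_q)$ of rank at most $2$ is forced (by inspecting the Gram matrix, since $\lambda L+\mu L'\neq 0$) to have $Y_0$ as a linear factor, contradicting Lemma \ref{lem:semiQQ} (\ref{221}); then Lemma \ref{225} gives geometric irreducibility. (Note also that Lemma \ref{225} and Lemma \ref{lem:semiQQ} (\ref{224}) require a formal section of the \emph{new} family; you should check that the section survives the substitution $X_i=tY_i$.) In short, your construction is the paper's construction, but the paper's verification hinges on re-applying the semistability constraints to the transformed family, and that is precisely the step missing from your write-up.
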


\begin{proof}
First recall that the singular locus of a non-normal geometrically integral complete intersection of two quadrics contains a unique codimension $1$ linear space, necessarily defined over the field of definition of the complete intersection (c.f. Lemma \ref{nonnormal22}). 
If the central fiber is a cone over a geometrically irreducible curve of genus $1$, then it also contains a $(n-3)$-dimensional linear space, 
which is a cone over an $\FF_q$-rational point of the curve (the curve is either a smooth curve of genus $1$ or a rational curve and has an $\FF_q$-rational point in any case).
But by the semistability assumption, there is no $(n-3)$-dimensional linear space defined over $\FF_q$ in the closed fiber when $n\geq 6$. 
So these cases can occur only when $n$ is $5$.

Assume that the closed fiber is a cone over an irreducible curve in $\PP^3$. 
There is an $\FF_q$-point in the smooth locus of the curve. 
Up to a change of coordinates over $\FF_q\Sem{t}$, 
we may assume the point $[1, 0, 0, 0, 0, 0]$ is a rational point of the generic fiber over $\FF_q \Semr{t}$,
as well as a smooth point in the central fiber.
Assume that the two tangent hyperplanes of the two quadrics defining the central fiber are $X_1=0$ and $X_2=0$. 
Then we may write the equation as
\[
\left\{
\begin{aligned}
&X_0 X_1+q_0(X_1, X_2, X_3)+tq_1(X_4, X_5)+tX_1(\ldots)+tX_2(\ldots)+tX_3(\ldots)+t^2(\ldots)=0\\
&X_0 X_2+q_0'(X_1, X_2, X_3)+tq_1'(X_4, X_5)+tX_1(\ldots)+tX_2(\ldots)+tX_3(\ldots)+t^2(\ldots)=0
\end{aligned}
\right.
\]

We use the following change of variables
\[
X_0=Y_0, X_1=t Y_1, X_2=t Y_2, X_3=t Y_3, X_4=Y_4, X_5=Y_5.
\]
Note that both of the defining equations has multiplicity $1$ along the weight system $(0, 1, 1, 1, 0, 0)$. 
Thus the new family has to be semistable by Lemma \ref{2.8}. 
The new defining equations are
\[
\left\{
\begin{aligned}
&Y_0 Y_1+q_1(Y_4, Y_5)+t(\ldots)=0\\
&Y_0 Y_2+q_1'(Y_4, Y_5)+t(\ldots)=0.
\end{aligned}
\right.
\]
Note that $q_1$ and $q_1'$ are not proportional. 
Otherwise the pencil of quadrics contains a member which is the union of two hyperplanes defined over $\FF_q$, 
and the family cannot be semistable by Lemma \ref{lem:semiQQ} (\ref{221}). 
They cannot have a common linear factor otherwise the complete intersection contains a $3$-dimensional linear space defined over $\FF_q$, 
contradicting the semistability by Lemma \ref{lem:semiQQ} (\ref{222}). Thus $q_1(Y_4, Y_5)=q_1'(Y_4, Y_5)=0$ has no solution over $\bar{\FF}_q$. 
The new central fiber is geometrically integral, not a cone over a curve in $\PP^3$, and non-normal with singular locus $Y_0=Y_4=Y_5=0$).
\end{proof}

\begin{lem}\label{227}
Use the same notations as in \ref{lem:semiQQ}. Assume that there is a formal section. 
If there is a member $q(X_0, X_1)$ of the pencil spanned by $Q_0, Q_0'$ which defines two Galois conjugate hyperplanes, 
then there is another semistable family whose closed fiber is geometrically integral and not a cone over a curve in $\PP^3$.
\end{lem}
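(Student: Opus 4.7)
The plan is to adapt the strategy of Lemma~\ref{226}: combine a change of basis in the pencil with a ramified degree~$2$ base change $t=s^2$ and a rescaling of two coordinates, producing a new semistable family (over $\FF_q\Sem{s}$) whose central fiber is geometrically integral and not a cone over a curve in $\PP^3$.

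First, I would normalize the pencil. Since $X_0^2+aX_1^2$ lies in the pencil spanned by $Q_0,Q_0'$, after a change of basis of $(Q,Q')$ over $\FF_q\Sem{t}$ I may assume $Q_0=X_0^2+aX_1^2$ and write
\[
Q=X_0^2+aX_1^2+tR(X_0,\ldots,X_n)+t^2(\ldots),\qquad Q'=Q_0'(X_0,\ldots,X_n)+tR'(\ldots)+t^2(\ldots).
\]
Because $a$ is a non-square in $\FF_q$, the equation $Q_0=0$ has no $\FF_q$-rational solution off the line $\{X_0=X_1=0\}$, so the formal section $\whts$ meets the central fiber at some point $\whts(0)=[0:0:a_2:\ldots:a_n]$. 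Since $Q_0$ is singular along $\{X_0=X_1=0\}$, Lemma~\ref{lem:semiQQ}(\ref{223}) forces $Q_0'$ to be smooth at $\whts(0)$; in particular the quadric $P'(Y_2,\ldots,Y_n):=Q_0'(0,0,Y_2,\ldots,Y_n)$ is non-zero and smooth at $[a_2:\ldots:a_n]$. Moreover, the semistability hypothesis applied to the weight system $W=(1,1,0,\ldots,0)$ forces $\text{mult}_W Q=1$, which is equivalent to $R$ containing a monomial involving only $X_2,\ldots,X_n$.

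Next, I would apply the ramified base change $t=s^2$ together with the change of variables $X_0=sY_0$, $X_1=sY_1$, $X_i=Y_i$ for $i\geq 2$. After dividing the transformed $Q$ by $s^2$, the new pencil becomes
\[
\tilde Q=Y_0^2+aY_1^2+P(Y_2,\ldots,Y_n)+s(\ldots),\qquad \tilde Q'=P'(Y_2,\ldots,Y_n)+s(\ldots),
\]
where $P(Y_2,\ldots,Y_n):=R(0,0,Y_2,\ldots,Y_n)$. (Note that doing the analogous change of variables \emph{without} the base change would already produce a family whose central fiber is a cone over a complete intersection in the $Y_2,\ldots,Y_n$-coordinates, hence the base change is essential.) Semistability of the new pair is verified by a direct check: one computes $\text{mult}_{W'}(\tilde Q,\tilde Q')$ for each weight system $W'$ and confirms the bound $\leq 8(\sum w'_i)/(n+1)$. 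Equivalently, one may track the vanishing order of the $SL(n+1)$-invariant $I$ from the proof of Theorem~\ref{thm:ss}: the base change doubles this order, and the rescaling associated with $W$ brings it down to the minimum, which is the characterization of semistability.

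Finally, I would verify that the new central fiber, defined by $Y_0^2+aY_1^2+P=0$ and $P'=0$, is geometrically integral and not a cone over a curve in $\PP^3$. Geometric integrality follows by combining the smoothness of $P'$ at $[a_2:\ldots:a_n]$ with the irreducibility of $Y_0^2+aY_1^2+c$ as a form in $Y_0,Y_1$ for generic $c\in\bar{\FF}_q$. For non-conicity, any potential vertex line $L$ must be contained in the singular loci of both quadrics: the singular locus of the first quadric forces $L\subset\{Y_0=Y_1=0\}\cap\text{Sing}(P)$ viewed in $\PP^{n-2}_{Y_2,\ldots,Y_n}$, while that of the second forces $L\subset\text{Sing}(P')$. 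The smoothness of $P'$ at $[a_2:\ldots:a_n]$ restricts $\text{Sing}(P')$, and a case-by-case analysis rules out all candidate vertices. This final non-conicity analysis, particularly the treatment of Galois-conjugate geometric vertices that arise over $\bar\FF_q$ once the form $Y_0^2+aY_1^2$ splits as a product of two linear forms, is the main technical obstacle.
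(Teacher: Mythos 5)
Your approach diverges from the paper's in a way that matters: the paper's proof of Lemma \ref{227} never performs a base change. It stays over $\FF_q\Sem{t}$ and produces the new model by an \emph{iterated} sequence of $t$-weighted coordinate changes (e.g. $X_j=t^2Y_j$, $X_k=tY_k$ for $k\neq 2,j$, $X_2=Y_2$), each justified by Lemma \ref{2.8} because the chosen weight systems attain the semistability bound with equality. Your ramified extension $t=s^2$ changes the statement being proved: Lemma \ref{227} feeds into Corollary \ref{cor:semiQQ}, which asserts a good semistable model over $B$ itself, and the Hasse principle argument for intersections of two quadrics can only descend rational points along \emph{odd}-degree extensions (Lemma \ref{reduction}, via the result cited there). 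A model that exists only after a degree $2$ cover is therefore not sufficient for the application, unlike in the cubic case where degree $2$ towers are explicitly permitted. This is the main gap: you need to eliminate the base change, which is exactly what the paper's more delicate choice of weight systems accomplishes.

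Two further points are asserted where the real work lies. First, semistability of your post-base-change, rescaled pair is not established: Lemma \ref{2.8} cannot be invoked for $W=(1,1,0,\ldots,0)$ since the bound $\tfrac{2d\sum w_i}{n+1}=\tfrac{4}{3}$ is not an integer and so never equals $\text{mult}_WQ+\text{mult}_WQ'$, and the invariant-theoretic argument is circular unless you already know the minimal vanishing order of $I$ over $\FF_q\Sem{s}$. Second, geometric integrality and non-conicity of the fiber $\{Y_0^2+aY_1^2+P=P'=0\}$ are deferred to a ``case-by-case analysis''; degenerate configurations (e.g. $P$ and $P'$ both of rank $2$ with a common factor, producing $(n-2)$-dimensional linear components over $\bar\FF_q$) are genuinely possible and must be excluded. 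The paper's mechanism is to arrange that the image of the formal section is a \emph{smooth} $\FF_q$-point of the new central fiber, conclude that no member of the new pencil is a pair of conjugate hyperplanes, and then quote Lemma \ref{225} for geometric irreducibility and Lemma \ref{226} for the cone case; your sketch would need an equally concrete substitute.
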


\begin{proof}
Recall that in this case the complete intersection is a threefold contained in $\PP^5$.
Assume the family is given by
\[
\left\{
\begin{aligned}
&q_0(X_0, X_1)+tq_1(X_2, \ldots, X_5)+tX_0(\ldots)+tX_1(\ldots)+t^2(\ldots)=0\\
&X_0L_0+X_1L_1+q_0'(X_2, \ldots, X_5)+t(\ldots)=0,
\end{aligned}
\right.
\]
where $q_0$ is irreducible over $\FF_q$. 
Up to a change of coordinates by taking linear combinations of $X_2, \ldots, X_5$, 
we may assume that the point $[0, 0, 1, 0, \ldots, 0]$ is a rational point over $\FF_q \Semr{t}$. 
It follows that there is no monomial of the form $t^k X_2^2$ in the equations above. 
By Lemma \ref{lem:semiQQ} (\ref{223}), the closed fiber of the second quadric hypersurface is smooth along $[0, 0, 1, 0, \ldots, 0]$. 
The generic fiber of two families are smooth at the point $[0, 0, 1, 0, \ldots, 0]$. 
Assume the tangent hyperplane of the two quadric hypersurfaces along the point $[0, 0, 1, 0, \ldots, 0]$ are $X_i=0$ and $X_j=0$ for some $i$ and $j$ respectively.
 Then the only monomials in the defining equations of the two quadric hypersurfaces, 
which have a factor of $X_2$, are of the form $t^{k_1} X_2X_i$ and $X_2 X_j$, $i\neq j$, respectively 
(the second equation already has this term in the zeroth order term since the central fiber is smooth along $[0, 0, 1, 0, \ldots, 0]$.  

First we claim that for a semistable family we cannot have $\{i, j\}=\{0, 1\}$. To see this we make the following change of variables
\[
X_i=tY_i, X_j=t^2 Y_j, X_2=Y_2, X_k=t Y_k, k\geq 3.
\]
The multiplicities of the two equations at the weight system $(1, 2, 0, 1, 1, 1)$ are both equal to $2$, the sum of which is exactly $\frac{4(1+2+0+1+1+1)}{6}=4$. 
Since the original family is semistable, the new family is also semistable by Lemma \ref{2.8}. 
The new family is defined by 
\[
\left\{
\begin{aligned}
&aY_i^2+t^{k_1-1}Y_2Y_i+t(\ldots)=0, a \neq 0\\
&Y_0(\ldots)+Y_1(\ldots)+Y_2 Y_j+\ldots+t(\ldots)=0
\end{aligned}
\right.
\]

But this is not semistable since the first equation has a linear factor in the zeroth order term.

So without loss of generality we may assume that $(i, j)=(3, 0), (0, 3)$ or $(3, 4)$. 
We make the following change of variables
\[
X_j=t^2 Y_j, X_2=Y_2, X_k=t Y_k, k\neq 2, j.
\]
As before the new family is semistable by Lemma \ref{2.8} and is defined by
\[
\left\{
\begin{aligned}
&aY_1^2+t^{k_1-1}Y_2Y_3+t(\ldots)=0\\
&Y_0(\ldots)+Y_1(\ldots)+Y_2 Y_0+\ldots+t(\ldots)=0
\end{aligned}
\right. \text{if }(i, j)=(3, 0)
\]
\[
\left\{
\begin{aligned}
&q_0(Y_0, Y_1)+t^{k_1-1}Y_2Y_0+t(\ldots)=0\\
&Y_0(\ldots)+Y_1(\ldots)+Y_2 Y_3+\ldots+t(\ldots)=0
\end{aligned}
\right.\text{if } (i, j)=(0, 3)
\]
\[
\left\{
\begin{aligned}
&q_0(Y_0, Y_1)+t^{k_1-1}Y_2Y_3+t(\ldots)=0\\
&Y_0(\ldots)+Y_1(\ldots)+Y_2 Y_4+\ldots+t(\ldots)=0
\end{aligned}
\right.\text{if } (i, j)=(3, 4)
\]

So $k_1=1$ and $a\neq 0$ if $(i, j)=(3, 0)$ otherwise the first equation has a linear factor in the zeroth order term. 
In the other cases, if $k_1\neq 1$, then the zeroth order term of the first defining equation is still $q(Y_0, Y_1)$ 
and we can use the same type of change of variables and produce a third semi-stable family. 
We may continue this process until we get $Y_2Y_3$ in the zeroth order term. 
So in the end, we have a new semistable family whose central fiber is defined by
\[
\left\{
\begin{aligned}
&aY_1^2+Y_2Y_3=0\\
&Y_0(\ldots)+Y_1(\ldots)+Y_2 Y_0+\ldots=0
\end{aligned}
\right. \text{if}~(i, j)=(3, 0)
\]
\[
\left\{
\begin{aligned}
&q_0(Y_0, Y_1)+Y_2Y_0=0\\
&Y_0(\ldots)+Y_1(\ldots)+Y_2 Y_3+\ldots=0
\end{aligned}
\right. \text{if}~(i, j)=(0, 3)
\]
\[
\left\{
\begin{aligned}
&q_0(Y_0, Y_1)+Y_2Y_3=0\\
&Y_0(\ldots)+Y_1(\ldots)+Y_2 Y_4+\ldots=0
\end{aligned}
\right.\text{if}~(i, j)=(3, 4)
\]
Then the central fiber is geometrically integral. 
Indeed, the new family is still semistable and has a formal section, 
so it is geometrically reduced by Lemma \ref{lem:semiQQ} (\ref{224}). 
Note that the only monomials in the defining equations above that have a factor of $Y_2$ are $Y_2Y_0, Y_2Y_3$ or $Y_2Y_4$. 
So the point $[0, 0, 1, 0, 0]$ is a smooth point of the central fiber, and in particular a smooth point of every quadric in the pencil defining the central fiber. 
Then none of the quadrics defining the central fiber is a union of two Galois conjugate hyperplanes.
By Lemma \ref{225}, the central fiber is geometrically irreducible.

If the central fiber is a cone over a $(2, 2)$-complete intersection curve in $\PP^3$, 
we can use Lemma \ref{226} to produce a new family whose fiber is geometrically reduced, geometrically irreducible, and not a cone over a curve.
\end{proof}
We may glue local semi-stable families together and prove the following.
\begin{cor}\label{cor:semiQQ}
Assume the characteristic is not $2$. 
Given a smooth complete intersection of two quadrics in $\PP^n, n\geq 5$ defined over $\FF_q(B)$.
Assuming that the set of ad\`elic points is non-empty,
then there is a semistable model over $B$ whose closed fibers are geometrically integral, 
and are not cones over a $(2, 2)$ complete intersection curve in $\PP^3$. 
The closed fibers can be non-normal only if $n=5$.
\end{cor}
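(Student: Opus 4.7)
The plan is to globalize the local analysis carried out in Lemmas \ref{lem:semiQQ}--\ref{227}. I would start from any flat projective realization of $X$ as a pencil $(F,G)$ of quadrics in $\PP^n_B$ and, working at each closed point $b\in B$, apply the existence theorem for semistable models of pencils (proved earlier in this section) to the completion $\hat{\OO}_{B,b}\cong\FF_{q_b}\Sem{t_b}$. The GIT-theoretic normalization procedure alters $(F,G)$ only on the closed fiber and preserves the generic fiber, so the local semistable pairs glue along their common generic fiber to a global semistable model $\mcX\to B$. Since a semistable model is smooth at all but finitely many closed fibers $b_1,\ldots,b_m$, on the complement of $\{b_1,\ldots,b_m\}$ the fiber is already a smooth $(2,2)$ complete intersection, hence geometrically integral, normal, and not a cone.

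The hypothesis $X(\mathbb{A})\neq\emptyset$ supplies a formal section at each $b_i$, which activates Lemmas \ref{lem:semiQQ}, \ref{225}, \ref{226} and \ref{227}. At each $b_i$ I perform a local surgery dictated by the shape of the closed fiber. If the closed fiber is geometrically reducible, Lemma \ref{225} forces $n=5$ and produces a pencil member of the form $X_0^2+aX_1^2$ with $a$ a non-square; Lemma \ref{227} then replaces the local model by one whose closed fiber is geometrically integral and not a cone over a $(2,2)$ curve in $\PP^3$. If the closed fiber is geometrically integral but a cone over an irreducible $(2,2)$ curve (again forcing $n=5$ by Lemma \ref{226}), Lemma \ref{226} replaces the local model by one whose closed fiber is geometrically integral, non-normal, and non-cone. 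For $n\geq 6$ neither case can arise: Lemma \ref{225} rules out geometric reducibility and Lemma \ref{226} rules out non-normality and the cone case, so the closed fiber is automatically geometrically integral, normal, and not a cone, yielding the asserted dichotomy between $n=5$ and $n\geq 6$.

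Each surgery is effected by an explicit change of coordinates and basis of the pencil, defined over $\hat{\OO}_{B,b_i}$, that preserves the generic fiber up to isomorphism. Hence it algebraizes to an étale neighborhood of $b_i$ (by Artin approximation, or directly from the polynomial nature of the construction, since only finitely many coefficients are involved) and can be glued to $\mcX$ restricted to $B\setminus\{b_i\}$. The modifications at distinct points $b_i$ are supported on disjoint closed fibers, so they do not interfere, and performing them simultaneously produces the required global semistable model. The main technical obstacle is verifying that the local replacements remain semistable after descent and that gluing at several points is consistent; this is automatic because semistability is a condition checked one fiber at a time, each surgery at $b_i$ touches only the fiber over $b_i$, and the generic fiber is fixed throughout.
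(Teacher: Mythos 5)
Your proposal is correct and follows essentially the same route as the paper, which disposes of this corollary with the single remark ``We may glue local semi-stable families together''; your case analysis via Lemmas \ref{225}, \ref{226}, \ref{227} and the gluing of the local coordinate changes (which only modify finitely many closed fibers and fix the generic fiber) is exactly the intended argument, spelled out in more detail than the paper itself provides.
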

\section{Asymptotically canonical sequece of spaces of sections}\label{sec:canonical}
In this section we discuss the main construction used in the proof of the main theorem, which is essentially due to \cite{dHS}.
\begin{defn}\label{def:free}
Let $\mcX \to B$ be a family of algebraic varieties defined over a field. 
A section $s: B \to \mcX$ is $m$-free if $s(B)$ is contained in the smooth locus of $\mcX$ 
and $H^1(B, \mcN_s(-b_1-\ldots-b_m))=\{0\}$ for any set of (not necessarily distinct) $m$ closed points $b_1, \ldots, b_m$ of $B$, where $\mcN_s$ is the normal sheaf of $s$ in $\mcX$. 
A morphism $f: B \to X$ is $m$-free if the induced section of the trivial family $X \times B \to B$ is free.
\end{defn}
\begin{rem}
In case of a morphism from $\PP^1$, $1$-freeness is the same as free and $2$-freeness is the same as very free. If the generic fiber is smooth projective and separably rationally connected, then the existence of a free section implies the existence of an $m$-free section for any $m>0$.
\end{rem}
We now introduce some basic hypothesis on the family $\mcX \to B$.
\begin{hypo}\label{hyp}
Given a family $\mcX \to B$ of Fano complete intersections defined over a perfect field $k$, assume the followings are satisfied.
\begin{enumerate}
\item The Fano scheme of lines of a general fiber $\mcX_b$ is smooth.
\item Choose an algebraic closure $\bar{k}$ of $k$. A general line (defined over $\bar{k}$) in a general fiber (defined over $\bar{k}$) is a free line.
\item The relative dimension of $\mcX \to B$ is at least $3$.
\item There is a free section.
\end{enumerate}
\end{hypo}

We need to introduce one more notion. Let $\mcX \to B$ be as above and $F(B) \to B$ be the relative Fano scheme of lines of the family.
The Fano scheme of lines is connected on a smooth Fano complete intersection of dimension at least $3$. 
Thus by the smoothness assumption it is irreducible for a general fiber 
and there is an open subset $U$ of the base $B$ such that the relative Fano scheme of lines $F(U)$ over $U$ is irreducible. 
Let $\bar{F} \to B$ be the closure of $F(U)$ in $F(B)$.
Denote by $\mathcal{L} \to \bar{F}$ the universal family of lines for the fibration $\mcX \to B$ restricted to the irreducible component $\bar{F}$ and let $ev_L:\mathcal{L} \to \mcX$ be the natural evaluation morphism.
It is proper and surjective.

The morphism $\mathcal{L} \to \mcX$ factors through a variety $\mathcal{Z}$ via $\mathcal{L} \to \mathcal{Z} \to \mcX$ such that a general fiber of $\mathcal{L} \to \mathcal{Z}$ is geometrically irreducible and $\mathcal{Z} \to \mathcal{X}$ is finite and generically \'etale (c.f. (9) of \cite{KollarFundamentalGroup}).
Let $\mcX^0$ be the open locus of $\mcX$ and $\mathcal{Z}^0$ be the inverse image of $\mcX^0$ in $\mathcal{Z}$ such that $\mathcal{Z}^0 \to \mathcal{X}^0$ is \'etale. 

Finally let $\mcX^1 \subset \mcX$ be the open subvariety of $\mcX$ over which the evaluation morphism $ev_L$ has constant fiber dimension.
The complement $\mcX-\mcX^1$ has codimension at least $2$ in $\mcX$.

\begin{defn}\label{def:nicesection}
Use the same notations as above. A section $s: B \to \mcX$ is a \emph{nice section} if it is $2$-free, intersects the locus $\mcX^0 \cap \mcX^1$ and the fiber product $B\times_{\mcX} \mathcal{L}$ is geometrically irreducible.
\end{defn}
It follows from the definition that a nice section is always contained in $\mcX^1$.

When the Fano index of a general fiber of $\mcX \to B$ is at least $3$, the existence of a nice section is easy. 
Since the complement of $\mcX^1$ in $\mcX$ has codimension at least $2$, a general deformation of a $2$-free section lies in $\mcX^1$. 
Then checking the irreducibility of the base change amounts to checking the irreducibility of the family of lines through a general point of the section.
We first prove the following Lemma.
\begin{lem}
Let $\mcX \to B$ be a family of Fano complete intersections defined over an algebraically closed field $\bar{k}$ which satisfy Hypothesis \ref{hyp}. Assume that the Fano index of a general fiber is at least $3$. Then a $2$-free section defined over $\bar{k}$ is nice if it contains a general $\bar{k}$-point of $\mcX$ and lies in $\mcX^1$.
\end{lem}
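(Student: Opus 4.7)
The plan is to verify the three conditions of Definition \ref{def:nicesection}: the section is $2$-free (given), intersects $\mcX^0 \cap \mcX^1$, and the fiber product $Y := B \times_\mcX \mathcal{L}$ is geometrically irreducible. The intersection condition is immediate: $s(B) \subset \mcX^1$ by hypothesis, and the general $\bar k$-point through which $s$ passes must lie in the dense open $\mcX^0$.

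For the geometric irreducibility of $Y$, my strategy is to show first that the generic fiber $Y_\eta$ over $B$ is geometrically irreducible, and then rule out vertical components by a dimension count. For the generic fiber, I would invoke the standard fact that on a smooth Fano complete intersection of dimension $\geq 3$ and Fano index at least $3$, the scheme of lines through a general point is irreducible. This forces the finite generically étale cover $\mathcal{Z} \to \mcX$ to have degree $1$, hence to be birational, so the geometric fibers of $\mathcal{L} \to \mcX$ are irreducible over a dense open $U \subset \mcX$. Since $s$ passes through a general $\bar k$-point, the generic point $s(\eta)$ of $s(B)$ lies in $U$, and therefore $Y_\eta$ is geometrically irreducible over $\mathrm{Spec}\,k(B)$.

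To finish, I would argue that $Y$ has no irreducible component failing to dominate $B$. By $2$-freeness, $s(B)$ lies in the smooth locus of $\mcX$, so $s : B \hookrightarrow \mcX$ is a regular closed embedding of codimension $n := \dim \mcX - 1$. Writing $d$ for the relative dimension of $\mathcal{L}^1 \to \mcX^1$ and noting that $\mathcal{L}^1$ is irreducible of dimension $1 + n + d$, the fiber product $Y$ is cut out in $\mathcal{L}^1$ locally by $n$ equations, so Krull's height theorem gives that every irreducible component of $Y$ has dimension at least $1 + d$. Any component of $Y$ failing to dominate $B$ lies in a single fiber of $Y \to B$ and so has dimension at most $d$, a contradiction. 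Hence every component dominates $B$, and the irreducibility of $Y_\eta$ forces $Y$ to be irreducible.

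The main obstacle is the geometric input that Fano index $\geq 3$ forces the scheme of lines through a general point to be irreducible, which is what upgrades $\mathcal{Z} \to \mcX$ to a birational morphism and thereby gives irreducibility of $Y_\eta$. Once this is available, the remainder is a direct dimension count that leverages $2$-freeness through the regular embedding of $s(B)$ into the smooth locus of $\mcX$.
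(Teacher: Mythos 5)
Your architecture matches the paper's: reduce geometric irreducibility of $B\times_{\mcX}\mathcal{L}$ to (a) irreducibility of the family of lines through a general point and (b) the absence of components not dominating $B$. Your handling of (b) via Krull's height theorem — every component of $ev_L^{-1}(s(B))$ has dimension at least $1+d$ because $s(B)$ is regularly embedded of codimension $n$ in the smooth locus, while a component over a single $b$ sits inside the fiber $ev_L^{-1}(s(b))$, of dimension exactly $d$ since $s(b)\in\mcX^1$ — is correct and is a self-contained alternative to the paper's route, which instead cites Koll\'ar's result that every component of $T\times_{\mcX}\mathcal{Z}$ dominates $T$ for any curve $T\to\mcX^1$.

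The gap is the step you label ``the standard fact.'' This paper works in positive characteristic, and there the irreducibility of the scheme of lines through a general point is precisely the content of the lemma: the remark immediately following it states that the only non-obvious point is that such points form a \emph{non-empty} open subset. What is standard is that this scheme is a complete intersection of positive dimension in $\PP^{n-1}$ (index $\geq 3$ gives positive expected dimension), hence connected. Promoting connected to irreducible in characteristic zero uses generic smoothness of the evaluation map from the Fano scheme; in characteristic $p$ generic smoothness fails, and a connected reduced scheme over the non-closed field $K_b=k(\mcX_b)$ need not stay irreducible after base change to $\overline{K_b}$. The paper supplies exactly this missing argument: smoothness of the Fano scheme $F_b$ (Hypothesis (1), which your proposal never uses) makes the generic fiber $F_{K_b}$ of the evaluation map normal, being a localization of $F_b$; the positive-dimensional complete-intersection structure gives $H^0(F_{K_b},\OO_{F_{K_b}})=K_b$; Koll\'ar's Lemma 10 then yields geometric irreducibility of $F_{K_b}$, which spreads to general closed points by EGA IV.9. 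Without an argument of this kind your assertion that $\mathcal{Z}\to\mcX$ has degree $1$ is unsupported, and the rest of the proof does not get off the ground.
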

As indicated above, the condition of a point being ``general" can be taken to be that the family of lines through this point to be geometrically irreducible. The only non-obvious thing to check is that these points form a \emph{non-empty} open subset of $\mcX$.
\begin{proof}
Note that for any curve $T \to \mcX^1$, the every irreducible component of the base change $T\times_{\mcX}\mathcal{Z}$ dominates $T$ (c.f. proof of (7) in \cite{KollarFundamentalGroup}).
Thus it suffices to show that the family of lines through a general point of a genera fiber of $\mcX$ is geometrically irreducible. 

We look at the evaluation map $f: F_b \to \mcX_b$ of the Fano scheme of lines on a general fiber over $b \in B$.
Let $K_b$ be the function field of $\mcX_b$.
The Fano scheme $F_b$ is smooth by assumption. So the generic fiber $F_{K_b}$ of the morphism $f$ is normal.
Moreover the family of lines through a general point in $\mcX_b$ is a complete intersection of positive dimension. So $H^0(F_{K_b}, \OO_{F_{K_b}})=K_b$.
Then it is geometrically irreducible by Lemma 10, \cite{KollarFundamentalGroup}. 
Note that this is easy in characteristic zero. Only in characteristic $p$ one needs to be careful since normality is not preserved by passing to an algebraic closure.
Then a general fiber is geometrically irreducible (\cite{EGAIV} IV. 9).
\end{proof}
In general, one can still show the existence of a nice section over $\bar{k}$ for every family satisfying the Hypothesis \ref{hyp}. The proof is a simple application of the result of Koll\'ar (\cite{KollarFundamentalGroup}) on the fundamental group of separably rationally connected varieties.
For the ease of the reader, we summarize some results concerning the property of a section being ``nice" in the following. These are proved in Appendix \ref{sec:fundamentalgroup}.
\begin{lem}[=Lemma \ref{lem:familyofnicesection}]\label{lem}
Let $\mcX \to B$ be a family defined over an algebraically closed field $k$ satisfying Hypothesis \ref{hyp}. 
\begin{enumerate}
\item\label{induction1}
There is a nice section.
\item \label{general}
Let $\mathcal{S} \to W$ be an irreducible component of the space of sections such that there is a geometric point $w \in W$ which parameterizes a nice section $\mathcal{S}_w$. Then a general point of $W$ parameterizes a nice section.
\item \label{induction}
Let $\mathcal{S} \to W$ be a geometrically irreducible component of the space of sections such that a general geometric point parameterizes a nice section. Then $\mathcal{S} \times_{\mcX} \mathcal{L}$ is geometrically irreducible and generically smooth.
Furthermore it is contained in a unique geometrically irreducible component of the Kontsevich moduli space of stable sections which contains an open substack parameterizing nice sections.
\end{enumerate}
\end{lem}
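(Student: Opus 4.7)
I would prove the three parts in order; part (1) contains the main technical content, and parts (2) and (3) follow from openness arguments and standard facts about fiber products. For (1), I start from the given free section $s_0\colon B\to\mcX$ and apply the standard comb construction: attach very free rational curves in smooth fibers (which exist because each smooth fiber $\mcX_b$ is a Fano complete intersection of dimension $\geq 3$ with a free line, hence separably rationally connected) to $s_0$ and smooth the resulting comb to obtain a $2$-free section $s_1$. Since $2$-freeness provides plenty of deformations and a general deformation meets any prescribed open dense subset of $\mcX$, one can arrange $s_1$ to intersect $\mcX^0\cap\mcX^1$.

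The hard step is showing that $B\times_{\mcX}\mcL$ is geometrically irreducible. Since $\mcL\to\mcZ$ has geometrically irreducible general fiber, it suffices to prove that $s^*\mcZ^0\to B$ is geometrically irreducible. My plan is to use Koll\'ar's fundamental group theorem: for a smooth separably rationally connected variety the relevant \'etale fundamental group is trivial, so the \'etale cover $\mcZ^0\to\mcX^0$, restricted to a fiber $\mcX_b^0$ over a general $b\in B$, is a trivial cover. The relative version of this, to be developed in Appendix~\ref{sec:fundamentalgroup}, then shows that $\mcZ^0\to\mcX^0$ is essentially pulled back from an \'etale cover $B'\to B$; and $B'$ must be irreducible because $\mcZ$ is (as the Stein factorization image of the irreducible total space $\mcL$ arising from the chosen irreducible component $\bar F$ of the relative Fano scheme of lines). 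Hence $s^*\mcZ^0\cong B'$ is irreducible for every section $s$ whose image meets $\mcX^0$. This is the main obstacle, since in positive characteristic one must be careful with normality and with tame versus wild ramification; Appendix~\ref{sec:fundamentalgroup} is devoted to addressing these subtleties.

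For (2), each of the three conditions defining ``nice'' is open on the space of sections: $2$-freeness is open by cohomology semicontinuity; meeting the open locus $\mcX^0\cap\mcX^1$ is open; and geometric irreducibility of $\mcS_w\times_{\mcX}\mcL$ is open by the standard semicontinuity for the number of geometric connected components of fibers of a proper morphism. Since this locus is nonempty by hypothesis, a general point of $W$ parameterizes a nice section. For (3), the standard principle that a flat family over an irreducible base with geometrically irreducible general fiber has geometrically irreducible total space gives geometric irreducibility of $\mcS\times_{\mcX}\mcL$, while generic smoothness follows from the \'etaleness of $\mcZ^0\to\mcX^0$ combined with the generic smoothness of $\mcL\to\mcZ$. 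Finally, sections form an open substack of the Kontsevich moduli of stable maps in the section class, so the closure of $\mcS$ therein is contained in a unique geometrically irreducible component, whose general geometric point parameterizes a nice section by (2).
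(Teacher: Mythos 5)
There is a genuine gap in your part (1), and it is exactly the subtlety the paper is built around. You assert that because each smooth fiber $\mcX_b$ is separably rationally connected, the \'etale cover $\mcZ^0\to\mcX^0$ restricted to $\mcX_b^0$ is trivial, and hence that $\mcZ^0\to\mcX^0$ is pulled back from a cover of $B$. Koll\'ar's theorem gives triviality of $\pi_1$ of the smooth \emph{projective} fiber $\mcX_b$; but $\mcZ^0_b\to\mcX^0_b$ is only an \'etale cover of the \emph{open} locus $\mcX^0_b$ (the map $\mcZ\to\mcX$ is merely generically \'etale, with genuine branching along the boundary), and $\pi_1(\mcX^0_b)$ is not trivial. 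Concretely, for a smooth $(2,2)$ complete intersection in $\PP^5$ the family of lines through a general point consists of four points permuted transitively by monodromy (the Fano variety of lines is irreducible, so $\mcZ_b$ is irreducible), so $\mcZ^0_b\to\mcX^0_b$ is a \emph{connected} degree-$4$ cover, not a trivial one, and it is certainly not pulled back from $B$. With this step gone, your argument that $s^*\mcZ^0$ is irreducible for every section meeting $\mcX^0$ collapses. The correct use of Koll\'ar's result (Theorem \ref{thm:fungroup} of the paper) is the surjectivity of $\pi_1(ev^{-1}(\mcX^0))\to\pi_1(\mcX^0)$ for the total space of a family of sections through a marked point: this makes $\mathcal{S}\times_{\mcX}\mcZ^0$ irreducible, and one then descends to a \emph{general individual} section by using the marked-point lift $W\to\mathcal{S}\times_{\mcX}\mcZ^0$ together with smoothness of the generic fiber to conclude that the generic fiber over $W$ is geometrically irreducible.

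Two further points. In part (2), geometric irreducibility of the fibers of $\mathcal{S}\times_{\mcX}\mcZ^0\to W$ is only a \emph{constructible} condition (EGA IV 9.7.9), not an open one: the number of connected components of fibers of a proper (or finite) morphism can jump up under specialization, so your semicontinuity argument does not apply. The paper instead shows the constructible locus contains a dense open set by deforming the given nice section with a general point fixed and re-running the marked-point argument. In part (3), the object that must be placed in a unique component with nice general member is the family of one-toothed combs $\mathcal{S}\times_{\mcX}\mathcal{L}$, not the closure of $\mathcal{S}$; establishing that a general smoothing of such a comb is again \emph{nice} (in particular that its pullback of $\mcZ^0$ stays irreducible) requires a specialization argument to the broken curve, which your sketch omits.
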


Now we introduce the following hypothesis on the family $\mcX \to B$.
\begin{hypo}\label{hypnice}
Given a family $\mcX \to B$ of Fano complete intersections defined over a perfect field $k$, assume the followings are satisfied.
\begin{enumerate}
\item The Fano scheme of lines of a general fiber $\mcX_b$ is smooth.
\item Choose an algebraic closure $\bar{k}$ of $k$. A general line (defined over $\bar{k}$) in a general fiber (defined over $\bar{k}$) is a free line.
\item The relative dimension of $\mcX \to B$ is at least $3$.
\item There is a family of nice sections $\mathcal{S} \to W$ parameterized by a geometrically irreducible variety $W$ defined over $k$.
\end{enumerate}
\end{hypo}
When the field $k$ is algebraically closed, this is equivalent to Hypothesis \ref{hyp}.

Given a family $\mcX \to B$ defined over a perfect field $k$ and satisfying Hypothesis \ref{hypnice}, we have the following construction due to de Jong, He and Starr \cite{dHS}.
\begin{constr}\label{Sequence}
Start with the family $\mcX \to B$ and the family of nice section $\mathcal{S} \to W$, 
we will define a sequence of irreducible components $M_i(W), i=0, 1, \ldots$ of the moduli space of sections and their compactifications $\overline{M}_i(W), i=0, 1, \ldots$ as follows.

Define $\overline{M}_0(W)$ to be the unique irreducible component obtained as the Zariski closure of $W$ in the Kontsevich moduli space
and $M_0(W)$ to be the Zariski dense open substack of $\overline{M}_{0}(s)$ parameterizing sections. 
Denote by $\mathcal{S}_0 \to {M}_0$ the universal family of sections.

Then we define $\overline{M}_1(W)$ to be the unique geometrically irreducible component containing the family of stable sections $\mathcal{S}_0 \times_{\mcX} \mathcal{L}$.
A general point in $\overline{M}_1(W)$ parameterizes a nice section by (\ref{induction}) of Lemma \ref{lem}
Take $M_1(W)$ to the geometrcially irreducible open substack parameterizing nice sections. 

The fibration $\mcX \to B$ together with the family of nice sections parameterized by $M_1(W)$ also satisfies Hypothesis \ref{hypnice} by Lemma \ref{lem}. 
Then we can continue with the above construction, replacing $W$ by $M_1(W)$ etc..
This process produces a sequence of geometrically irreducible components $M_i(W) \subset \overline{M}_i(W), i=0, 1, \ldots$. 
\end{constr}

\begin{defn}
Given a family of Fano complete intersections $\mcX \to B$ defined over a perfect field ${k}$ satisfying Hypothesis \ref{hypnice}, 
we say that $\mcX \to B$ has an asymptotically canonical sequence if for any two geometrically irreducible component of the space of section $\mathcal{S}_1 \to W_1$ and $\mathcal{S}_2 \to W_2$ whose general points parameterize nice sections, 
there are numbers $N_1, N_2$ such that $M_{N_1+i}(W_1)=M_{N_2+i}(W_2)$ for all $i \geq 0$.
\end{defn}

The importance of this property is that it allows us to get geometrically irreducible component 
defined over a perfect field even though we do not know the existence of a free section over this field.

\begin{lem}\label{lem:Galois}
Let $\mcX \to B$ be a family of Fano complete intersections defined over a perfect field $k$ 
such that the base change to an algebraic closure $\bar{k}$ satisfies Hypothesis \ref{hyp} or equivalently Hypothesis \ref{hypnice}. 
Let $\mathcal{S} \to W$ be a nice section of the family defined over $\bar{k}$.
Furthermore assume that the sequence $M_i(W), i \geq 0$ over $\bar{k}$ is an asymptotically canonical canonical sequence, 
then every component $M_i(W)$ is Galois invariant, i.e. defined over $k$, for $i$ large enough.
\end{lem}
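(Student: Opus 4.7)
The plan is to exploit the fact that Construction~\ref{Sequence} depends only on data defined over $k$ (the fibration $\mcX\to B$, the universal family of lines $\mathcal{L}\to\bar{F}$, the evaluation morphism $ev_L$, and the Kontsevich moduli stack), together with the starting component $W$. Consequently the construction is equivariant for the action of $\mathrm{Gal}(\bar k/k)$: for every $\sigma\in\mathrm{Gal}(\bar k/k)$ and every $i\geq 0$ one has $\sigma(M_i(W))=M_i(\sigma W)$, since each $M_{i+1}(W)$ is characterized as the unique geometrically irreducible component of the Kontsevich stack containing $\mathcal{S}_i\times_{\mcX}\mathcal{L}$, and this characterization commutes with field automorphisms. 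The Galois translate $\sigma W$ is again a component whose general point parameterizes a nice section, since Definition~\ref{def:nicesection} is purely geometric.

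Next I would observe that the orbit $\{\sigma W:\sigma\in\mathrm{Gal}(\bar k/k)\}$ is finite: the Kontsevich stack of stable sections of a fixed homology class is of finite type over $k$, hence has only finitely many geometrically irreducible components, and $\mathrm{Gal}(\bar k/k)$ merely permutes them. In particular every $\sigma W$ has the same curve class as $W$.

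Applying the asymptotically canonical hypothesis to the pair $(W,\sigma W)$ produces integers $N_1(\sigma),N_2(\sigma)\geq 0$ with
\[
M_{N_1(\sigma)+i}(W)=M_{N_2(\sigma)+i}(\sigma W)\qquad\text{for every }i\geq 0.
\]
At this point I would use a degree bookkeeping: each step of Construction~\ref{Sequence} adds one line in a fiber, so $M_j(\cdot)$ parameterizes stable sections of class $[\,\cdot\,]+j[\text{line}]$. Since $[W]=[\sigma W]$, equating classes on the two sides of the above equality forces $N_1(\sigma)=N_2(\sigma)$. Taking $N$ to be the maximum of $N_1(\sigma)$ as $\sigma$ ranges over the finite orbit yields $M_{N+i}(W)=M_{N+i}(\sigma W)$ for every $\sigma$ and every $i\geq 0$.

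Combining this with the equivariance of the construction gives $\sigma(M_{N+i}(W))=M_{N+i}(\sigma W)=M_{N+i}(W)$, so each $M_{N+i}(W)$ is a $\mathrm{Gal}(\bar k/k)$-invariant, geometrically irreducible closed substack of a $k$-stack; since $k$ is perfect, it descends to a substack defined over $k$. The main obstacle is the equivariance of Construction~\ref{Sequence}: this reduces to the observation that the auxiliary data used at each step (especially $\mathcal{L}\to\bar{F}$ and the intermediate Kontsevich stacks) is canonically constructed from $\mcX\to B$ and therefore naturally defined over $k$, so no choice needs to be made over $\bar{k}$ beyond the initial component~$W$.
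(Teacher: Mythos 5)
Your proof is correct and follows essentially the same route as the paper: finiteness of the Galois orbit of $W$ among components of nice sections, Galois equivariance of Construction \ref{Sequence} (so $\sigma(M_i(W))=M_i(\sigma W)$), and then the asymptotically canonical hypothesis applied to each conjugate pair. The one place you go beyond the paper is the degree bookkeeping forcing $N_1(\sigma)=N_2(\sigma)$ — the paper simply asserts $M_i(W_j)=M_i(W)$ for $i$ large without justifying that the shifts agree — and your observation that conjugate components have the same curve class, so equal shifts are forced, is a worthwhile clarification of that implicit step.
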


\begin{proof}
The family of nice sections $\mathcal{S} \to W$ is defined over a finite Galois field extension $k'/k$. 
Thus the sequence $M_i(W), i=0, 1, \ldots$ is defined over $k'$. 
It suffices to show that the Galois group $Gal(k'/k)$ fixes $M_i(W)$ for $i$ large. 
There are only finitely many geometrically irreducible components of nice sections $\mathcal{S}_1\to W_1, \ldots, \mathcal{S}_n \to W_n$ defined over $k'$ which are Galois conjugate to the family $\mathcal{S} \to W$. 
So for each $i>0$ the spaces of nice sections $M_i(W_j), j=1, \ldots, n$ are defined over $k'$ and are Galois conjugate to $M_i(W)$. 
Furthermore these are all the Galois orbits of $M_i(W)$.
By assumption, there is a finite number $N$ such that for all $i>N$ and for any $j=1, \ldots, n$, $M_i(W_j)=M_i(W)$. 
Thus the components $M_i(W)$ are Galois invariant, i.e. defined over $k$.
\end{proof}
There are several natural questions related to this construction.
\begin{enumerate}
\item \label{1}
Given a family of Fano complete intersections $\mcX \to B$ defined over a perfect field ${k}$ satisfying Hypothesis \ref{hypnice}, when can we find an asymptotically canonical sequence?
\item \label{2}
Given a family of Fano complete intersections $\mcX \to B$ defined over a perfect field $k$ 
such that the base change to an algebraic closure $\bar{k}$ satisfying Hypothesis \ref{hypnice},
is the sequence $M_i(W), i=0, 1, \ldots$ constructed over $\bar{k}$ Galois invariant for $i$ large enough? 
\item \label{3}
In view of Manin's conjecture on asymptotic behavior of rational points, 
one could ask, in the case when the sequence is asymptotically canonical, 
is there a limit of $[M_i]\cdot \mathbb{L}^{-K_\mcX \cdot s - i K_\mcX \cdot L}$ in the Grothendieck ring of varieties? 
If the family is defined over a finite field $\FF_q$ and has an asymptotically canonical sequence, 
is there a limit of the number $\frac{\#M_i(\FF_q)}{q^{K_\mcX \cdot s + i K_\mcX \cdot L}}$?
\end{enumerate}

For the purpose of this article, the importance of Questions (\ref{1}) and (\ref{2}) is because of the following 
(clearly an affirmative answer of (\ref{1}) and (\ref{2}) implies the existence of the component $\Sigma$.)
\begin{lem}\label{reduction}
Let $\mcX \to B$ be a family of varieties defined over $\FF_q$ 
such that the generic fiber $X$ has dimension at least $1$ is either a smooth quadric hypersurface, 
a smooth cubic hypersurface, or a complete intersection of two quadrics. 
If there is an irreducible component $\Sigma$ of the space of sections which is geometrically irreducible, 
then there is a section defined over $\FF_q$.
\end{lem}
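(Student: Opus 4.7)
The plan is to combine Lang--Weil point-counting on $\Sigma$ with classical descent theorems (Springer, Amer, Coray) for zero-cycles of coprime degree on the generic fiber $X$. The key observation is that even though $\Sigma$ itself may lack an $\FF_q$-point, producing an $\FF_{q^n}$-point of $\Sigma$ for $n$ coprime to a small prime ($2$ for quadrics and intersections of two quadrics, $3$ for cubics) already forces a section over $\FF_q$ by descent applied to $X$ itself.

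First dispatch the trivial case: if $\dim \Sigma = 0$, then being geometrically irreducible forces $\Sigma$ to consist of a single $\bar{\FF}_q$-point, which is automatically fixed by $\mathrm{Gal}(\bar{\FF}_q/\FF_q)$ and hence defined over $\FF_q$, giving the section directly. Assume henceforth $\dim \Sigma \geq 1$.

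The Lang--Weil estimate applied to the geometrically integral variety $\Sigma_{\mathrm{red}}$ yields
\[
|\Sigma(\FF_{q^n})| = q^{n \dim \Sigma}\bigl(1+O(q^{-n/2})\bigr) \quad\text{as } n\to\infty,
\]
so $\Sigma(\FF_{q^n})\neq \emptyset$ for all sufficiently large $n$. I choose such an $n$ additionally satisfying $\gcd(n,2)=1$ (in the quadric and intersection-of-two-quadrics cases) or $\gcd(n,3)=1$ (in the cubic case). Assuming $B$ is geometrically connected over $\FF_q$ (otherwise one works componentwise), $\FF_{q^n}(B)/\FF_q(B)$ is a degree $n$ field extension, and an $\FF_{q^n}$-point of $\Sigma$ corresponds to a section of $\mcX \times_{\FF_q}\FF_{q^n} \to B\times_{\FF_q}\FF_{q^n}$, equivalently an $\FF_{q^n}(B)$-point of the generic fiber $X$. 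The associated closed point of $X$ produces a zero-cycle on $X$ over $\FF_q(B)$ of degree dividing $n$, hence still coprime to $2$ or $3$ as appropriate.

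Finally, I invoke the classical descent theorems: Springer's theorem in the quadric case (a zero-cycle of odd degree yields a rational point), Amer's generalization in the case of intersections of two quadrics, and Coray's theorem together with Riemann--Roch in the curve case for smooth cubic hypersurfaces of positive dimension (a zero-cycle of degree coprime to $3$ yields a rational point). Each produces an $\FF_q(B)$-rational point of $X$, which is precisely a section of $\mcX\to B$ defined over $\FF_q$. The main subtlety to verify is the cubic case: one handles smooth genus-one curves directly via Riemann--Roch applied to a linear combination of the zero-cycle with the hyperplane class of degree $3$ to produce a divisor of degree $1$, and for higher-dimensional smooth cubic hypersurfaces one appeals to Coray's theorem and its extensions; the arguments for Springer and Amer are essentially immediate.
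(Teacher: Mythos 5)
Your treatment of the quadric and intersection-of-two-quadrics cases coincides with the paper's: Lang--Weil produces an $\FF_{q^n}$-point of $\Sigma$ for all large $n$, hence an $\FF_{q^n}(B)$-point of $X$, and Springer's theorem (resp.\ the Amer--Brumer/Springer odd-degree descent for pencils of quadrics, valid in odd characteristic) finishes. Your separate handling of $\dim\Sigma=0$ and the genus-one curve case via Riemann--Roch are fine.

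The cubic case, however, has a genuine gap. You assert that ``Coray's theorem and its extensions'' convert a zero-cycle of degree coprime to $3$ on a smooth cubic hypersurface into a rational point. That is not what Coray's theorem says: for cubic surfaces it only produces a closed point of degree $1$, $4$, or $10$, and whether index $1$ (or odd index prime to $3$) forces a rational point is a well-known open problem already for cubic surfaces; no general descent theorem of the strength you need is available for cubic hypersurfaces over an arbitrary field. The paper avoids this entirely by exploiting that Lang--Weil gives points over $\FF_{q^n}$ for \emph{all} sufficiently large $n$, in particular for $n=2^k$: a point of $X$ over a quadratic extension $\FF_{q^{2m}}(B)$ and its conjugate span a line defined over $\FF_{q^{m}}(B)$, which either lies in $X$ or meets it in a rational third point, so one descends through the tower $\FF_{q^{2^k}}(B)\supset\cdots\supset\FF_q(B)$ one quadratic step at a time. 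Your argument is repaired by simply choosing $n$ to be a large power of $2$ and replacing the appeal to Coray by this elementary secant-line descent; as written, though, the cubic case does not go through.
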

\begin{proof}
By the Lang-Weil estimate, the variety $\Sigma$ has an $\FF_{q^n}$ point for every $n$ large enough. 
Equivalently, the generic fiber $X$ of the family $\pi: \mcX \to B$ has an $\FF_{q^n}(B)$ rational point for every $n$ large enough. 

A smooth quadric hypersurface $Q$ or a complete intersection of two quadrics defined over a field of odd characteristic has a rational point if and only if there is a rational point in some odd degree field extension 
(c.f. \cite{SpringerQuadric} for the case of quadrics and \cite{CI22} for the case of complete intersection of two quadrics). 
Thus in these two cases we are done. These are all we need for the proof of the main theorem.

We sketch an argument which proves the lemma in all characteristic. First note that if there is an $\FF_{q^{3n}}(B)$ rational point, then there is an $\FF_{q^n}(B)$ rational point. 
To see this, denote the $\FF_{q^{3n}}$-point and its Galois conjugate points by $x, y, z$ and consider the linear space spanned by these points. 
If this is a line, then it has to lie in $X$ since every quadric which intersects a line at three points has to contain the line.
Thus there is a rational point of $X$ in the line.
If this is a plane contained in the hypersurface $X$, then there are rational points contained in the plane.
If this is a plane which is not contained in $X$, then the intersection of the plane with $X$ is either a possibly singular conic or a zero cycle of degree $4$.
In any case there is a $0$-cycle of degree $3$ which is contained in the intersection and defined over $\FF_{q^n}(B)$. 
It then follows that there is a rational point over $\FF_{q^n}(B)$.
As discussed above, the variety $X$ has a rational point in $\FF_{q^{3^k}}(B)$ for some $k$ large enough.
Then there is an $\FF_{q}(B)$ point by this argument.

For smooth cubic hypersurfaces, we claim that if there is a rational point of $X$ defined over $\FF_{q^{2n}}(B)$, 
then there is a rational point defined over $\FF_{q^n}(B)$. 
As discussed at the beginning of the proof, the cubic hypersurface $X$ has a rational point in $\FF_{q^{2^k}}(B)$ for some $k$ large enough.
Then by the claim there is an $\FF_{q}(B)$ point.
To see this claim, note that given any $\FF_{q^{2n}}(B)$-point and its conjugate, 
they span a unique line $L$ defined over $\FF_{q^n}(B)$. 
Then either the line is contained in the hypersurface $X$ or it intersects $X$ at a third intersection point. 
In any case we have a rational point over $\FF_{q^n}(B)$.
\end{proof}

\begin{rem}
The existence of a geometrically irreducible component is almost a necessary condition. 
More precisely, if there is a section of $\mcX \to B$ which lies in the smooth locus of $\mcX$, 
then after adding sufficiently very free curves in general fibers (over $\bar{\FF}_q$) and their Galois conjugates,
we have a smooth point of an irreducible component of the Kontsevich moduli space.
Furthermore, a general point of this component parameterize a section of $\mcX$.

However, there is no guarantee that if there is a section, then it lies in the smooth locus of $\mcX$ in general. 
For a semistable family of quadric hypersurfaces, this is automatic by the definition of semi-stability. 
For complete intersection of two quadrics of dimension at least $3$, the existence of a section implies that all the fibers are geometrically integral and thus have a rational point in the smooth locus. 
Furthermore, we also know weak approximation holds once there is a rational point (Theorem \ref{thm:WA22}). So we can find a section which intersect every singular fiber at a smooth point, in particular lies in the smooth locus.
For semistable families of cubic hypersurfaces, closed fibers may not have a rational point in the smooth locus. But after a tower of degree $2$ base changes, we can always achieve this. However the weak approximation problem is still open.

On the other hand, if we can resolve the singularities of $\mcX \to B$ (which seems possible even though resolution of singularities is unknown in general, since the singularities are fairly explicit), then we can apply the argument to the resolution.
The irreducible component of the space of sections of the resolution gives a geometrically irreducible subvariety of the space of sections of $\mcX \to B$.
\end{rem}

\begin{cor}\label{cor:reduction}
Let $\mcX \to B$ be a family of varieties defined over $\FF_q$ 
such that the generic fiber $X$ has dimension at least $3$ and is either a smooth quadric hypersurface, 
a smooth cubic hypersurface, or a complete intersection of two quadrics. 
If the construction \ref{Sequence} gives an asymptotically canonical sequence of spaces of sections, then there is a section.
\end{cor}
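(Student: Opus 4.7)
The plan is to assemble the corollary from two ingredients already in place: Lemma \ref{lem:Galois}, which says that an asymptotically canonical sequence must become Galois invariant past some index, and Lemma \ref{reduction}, which turns a geometrically irreducible component of the space of sections into an actual $\FF_q$-section for the three classes of generic fibers listed. So the proof is essentially a matter of chaining these two facts.

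First I would unpack the hypothesis. Asserting that Construction \ref{Sequence} yields an asymptotically canonical sequence implicitly furnishes us with input data: a geometrically irreducible family of nice sections $\mathcal{S} \to W$ over $\bar{\FF}_q$ (whose existence is guaranteed in general by part (\ref{induction1}) of Lemma \ref{lem}), and the resulting sequence of geometrically irreducible components $M_i(W) \subset \ol{M}_i(W)$ of the moduli space of sections of $\mcX \to B$, a priori only defined over $\bar{\FF}_q$. The asymptotically canonical property says that for any two choices of starting data the two sequences eventually coincide. In particular, for every $\sigma \in \mathrm{Gal}(\bar{\FF}_q/\FF_q)$, applying $\sigma$ to $\mathcal{S} \to W$ gives another geometrically irreducible family of nice sections, so the sequence it generates agrees with the sequence built from $W$ after finitely many indices. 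Since the Galois orbit of $W$ is finite, Lemma \ref{lem:Galois} then yields an index $N$ such that $M_i(W)$ is Galois invariant for all $i \geq N$, hence descends to a geometrically irreducible $\FF_q$-subvariety of the space of sections.

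Having produced such a component for some $i \geq N$, the second step is immediate: since the generic fiber $X$ is either a smooth quadric hypersurface, a smooth cubic hypersurface, or a smooth complete intersection of two quadrics of dimension at least $3$, the hypotheses of Lemma \ref{reduction} are met, and it produces a section defined over $\FF_q$. Beyond matching definitions there is no real obstacle to the corollary itself; the genuine difficulty of the paper lies upstream, in verifying that the sequence is asymptotically canonical for each of the three target classes (which, together with the semistable-model work of Section \ref{sec:ss}, is the content of Sections \ref{sec:2}, \ref{sec:3}, \ref{sec:22}). The dimension hypothesis $\dim X \geq 3$ enters only to guarantee both the smoothness of the Fano scheme of lines in Hypothesis \ref{hyp} and the applicability of Lemma \ref{reduction}.
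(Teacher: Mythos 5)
Your proof is correct and is essentially the paper's own (implicit) argument: the asymptotically canonical property together with Lemma \ref{lem:Galois} yields a Galois-invariant, hence $\FF_q$-defined, geometrically irreducible component $M_i(W)$ for $i$ large, and Lemma \ref{reduction} then produces a section. The paper leaves this chaining unstated, so your write-up matches its intent exactly.
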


Later in the proof we have to work with unions of sections and high degree curves in fibers. Thus it is necessary to know when does such a curve lie in the components $M_i(s)$ constructed above.

We first define the notion of a comb (with broken teeth).
\begin{defn}
A comb (with broken teeth) defined over an algebraically closed field $k$ is a prestable curve $C=C_0 \cup R_1 \cup \ldots \cup R_l$ together with a morphism $f: C \to X$ to a variety $X$, where $C_0$ is a smooth projective curve and $R_i, 1 \leq i \leq l$ are chains of rational curves attached to $C_0$ at distinct points. The curve $C_0$ is called the handle and each $R_i, 1 \leq i \leq l$ is a (broken) tooth.
\end{defn}
We need the following simple observation.

\begin{lem}\label{porcupine}
Let $\mcX \to B$ be a family of Fano complete intersections over an algebraically closed field and $s: B \to \mcX$ be a free section. 
Assume that $C\subset \mcX$ is a comb with handle $s(B)$ and teeth $C_i, i=1, \ldots, n$ 
which are free curves in smooth fibers $\mcX_{b_i}, i=1, \ldots n$. 
Furthermore assume that  every $C_i$ deforms to a chain of free lines in the fiber $\mcX_{b_i}$ and 
that the deformation can be parametrized by an irreducible curve. 
Then the comb $C$ lies in $\overline{M}_k(s)$ for some $k$ and corresponds to a smooth point in $\overline{M}_k(s)$. 
In particular, $\overline{M}_k(s)$ is the unique irreducible component containing the point corresponding to the comb $C$.
\end{lem}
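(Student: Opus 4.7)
The plan is to degenerate the comb $C$ to a reference comb $C^\circ$ built out of free lines only, place $C^\circ$ in $\overline{M}_k(s)$ by direct induction on the number of lines, and then transport the conclusion back to $C$ by a smoothness argument in the Kontsevich moduli space of stable sections. First I will combine the given irreducible deformations of the individual teeth $C_i$ into chains of $k_i$ free lines (with the handle $s(B)$ held fixed) to produce an irreducible one-parameter family of stable sections whose generic fiber is $C$ and whose special fiber is a comb $C^\circ$ with handle $s(B)$ and with teeth equal to these chains of free lines. I set $k := k_1 + \cdots + k_n$.

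Next I will verify that $C^\circ \in \overline{M}_k(s)$ by induction on $k$. At step $j$, one attaches one further free line of $C^\circ$ to the current configuration in $M_j$ at a point in a smooth fiber; the resulting stable section is an object of the universal family $\mathcal{S}_j \times_{\mcX} \mathcal{L}$, and therefore lies in $\overline{M}_{j+1}(s)$ by Construction \ref{Sequence}. Traversing the teeth sequentially from the handle outward and attaching one line at a time completes the induction in exactly $k$ steps.

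The main technical step will be smoothness of both $C$ and $C^\circ$ as points of the Kontsevich moduli space of stable sections, which reduces to $H^1$-vanishing of their normal sheaves. For a tooth $C_i \subset \mcX_{b_i}$ the short exact sequence
\[
0 \to \mcN_{C_i/\mcX_{b_i}} \to \mcN_{C_i/\mcX} \to \mcN_{\mcX_{b_i}/\mcX}\big|_{C_i} \to 0
\]
has trivial rightmost term, since $b_i$ is a smooth point of the curve $B$, so freeness of $C_i$ in the fiber supplies $H^1(C_i, \mcN_{C_i/\mcX}(-p)) = 0$ at the attachment point $p$; combined with freeness of the handle via the standard normal-sheaf sequence of a nodal curve, this delivers $H^1(C, \mcN_{C/\mcX}) = 0$, and the same argument applied to chains of free lines gives $H^1(C^\circ, \mcN_{C^\circ/\mcX}) = 0$.

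Once smoothness of $C^\circ$ is established, $\overline{M}_k(s)$ is automatically the unique irreducible component of the Kontsevich moduli space containing $C^\circ$; the irreducible deformation of the first paragraph places $C$ and $C^\circ$ in a common irreducible subvariety, and smoothness of $C$ then forces the unique irreducible component through $C$ to coincide with $\overline{M}_k(s)$. The subtlest part of the argument will be carrying out the normal-sheaf calculation cleanly at every node — including the internal nodes inside each chain of free lines — with the twists correctly absorbed by the freeness hypotheses on the handle and the teeth; the degeneration, the induction, and the identification of the unique component are then essentially formal.
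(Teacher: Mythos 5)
Your smoothness computation and the final induction are in the spirit of the paper's argument, but there is a genuine gap in your first step: you cannot degenerate the teeth to chains of free lines \emph{while holding the handle $s(B)$ fixed}. The hypothesis only provides, for each $C_i$, an irreducible one-parameter deformation of $C_i$ as a stable map into the fiber $\mcX_{b_i}$ whose limit is a chain of free lines; this deformation carries no marked point, so the limiting chain has no reason to pass through the attachment point $s(b_i)$, let alone to contain $s(b_i)$ in its smooth locus. Consequently the ``irreducible one-parameter family of stable sections whose generic fiber is $C$ and whose special fiber is $C^\circ$'' with fixed handle need not exist: at the special point the tooth detaches from the handle and you no longer have a comb. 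Even the weaker statement that \emph{some} chain of free lines in the correct component passes through the specific point $s(b_i)$ (with that point in the chain's smooth locus) is only guaranteed for points in a dense open subset of $\mcX_{b_i}$.

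The paper's proof addresses exactly this: it defines, for each $b_i$, the locus $U_i \subset \mcX_{b_i}$ of points $x$ through which there is a chain of free lines in the same component of the Kontsevich space as $C_i$, with $x$ in the smooth locus of the chain; since the complement of $U_i$ is a proper closed subset, a \emph{general deformation} $s'$ of the free section $s$ meets every $\mcX_{b_i}$ inside $U_i$. The degeneration is then performed over an irreducible curve $T$ that simultaneously moves the handle from $s$ to $s'$ and the teeth from the $C_i$ to chains of free lines attached along the moving section. Because $s$ and $s'$ lie in the same irreducible family of free sections, they generate the same sequence $M_i(s)=M_i(s')$, and the induction on the number of lines is carried out for the comb with handle $s'$. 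To repair your argument you should replace the fixed-handle degeneration by this two-parameter gluing (deform handle and teeth together over an irreducible base), and add the observation that passing from $s$ to its general deformation $s'$ does not change the sequence of components. Your normal-sheaf verification of smoothness of $C$ and $C^\circ$ is a legitimate (and more detailed) justification of what the paper asserts in one line, and can be kept as is.
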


\begin{proof}
For each point $b_i$, Define 
\begin{align*}
U_i=\{ &x \in \mcX_{b_i}|\text{ There is a chain of free lines } \mathcal{L}=L_1 \cup \ldots \cup L_k \text{ such that } x\\
&\text{is a point in } L_1 \text{ and lie in the smooth locus of } \mathcal{L}. \text{ The chain }\mathcal{L} \text{ lie in}\\
&\text{the same irreducible component of the Kontsevich space as } C_i.\}.
\end{align*}
By the assumptions, the complement of $U_i$ in $\mcX_{b_i}$ is a proper closed subset. Thus a general deformation of the free section $s(B)$ meets the fibers $\mcX_{b_i}$ in $U_i$.

Therefore there are families of curves $\mathcal{S} \to T, \mathcal{C}_i \to T, i=1, \ldots, n$ over an irreducible curve $T$  with sections $s_0: T \to \mathcal{S}, s_i: T \to \mathcal{C}_i$ and evaluation morphisms $ev_0: \mathcal{S} \to \mcX, ev_i: \mathcal{C}_i \to \mcX, i=1, \ldots, n$ such that
\begin{enumerate}
\item
The family $\mathcal{S} \to T$ is  a family of sections of $\mcX \to B$ and the families $\mathcal{C}_i \to T$ are families of curves in the fiber $\mcX_{b_i}$.
\item
The families $\mathcal{S} \to T, \mathcal{C}_i \to T$ glues together along the sections and form a family of combs $\mathcal{C} \to T$.
\item
There are two points $t_1, t_2$ in $T$ such that $\mathcal{C}_{t_1}$ is the comb $C$ and $\mathcal{C}_{t_2}$ is  a comb whose handle is a free section $s'(B)$ and whose teeth are chains of free lines.
\end{enumerate}

Clearly the comb $C$ and the comb $\mathcal{C}_{t_2}$ are smooth points of the \emph{same} irreducible component of the Kontsevich space of stable sections. Moreover the two sections $s(B)$ and $s'(B)$ give the same sequence of space of sections. So it suffices to show that the comb $\mathcal{C}_{t_2}$ lies in one of the $M_i(s')$'s. This follows by a simple induction on the number of lines in the teeth of the comb. 
\end{proof}

\section{Hasse principle for quadrics}\label{sec:2}
In this case we prove hasse principle for smooth quadrics defined over a global function field of odd characteristic.

First consider the case of a semistable family of smooth quadrics $\mcX \to B$ over $\FF_q$ of relative dimension at least $3$.

Still denote the base change of the family to an algebraic closure $\bar{\FF}_q$ by $\pi: {\mcX} \to {B}$. Then this family over $\bar{\FF}_q$ satisfies Hypothesis \ref{hyp}. The only non-trivial condition to check is the existence of a free section. To see this, first note that over $\bar{\FF}_q$ all the singular fibers are integral, and thus there is a point in the smooth locus. Over each singular fiber, choose a smooth point of the fiber. We know that the family ${\mcX} \to {B}$ satisfies weak approximation. Then we can find a section $s_0$ which passes through the chosen smooth points of the singular fibers. In particular, the section $s_0$ lies in the smooth locus of ${\mcX}$. Then we can add very free curves in general fibers and take a general deformation of the comb to produce a free section. Then we get nice sections by Lemma \ref{lem}.

By Corollary \ref{cor:reduction}, the existence of a section over $\FF_q$ would follow from the following lemma.

\begin{lem}\label{lem:connectQ}
Let $\pi: \mcX \to B$ be a semistable family of quadric hypersurfaces of dimension at least $3$ defined over $\bar{\FF}_q$. Then there is an asymptotically canonical sequence of sections.
\end{lem}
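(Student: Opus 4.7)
The plan is, for any two families $\mathcal{S}_i \to W_i$ ($i=1,2$) of nice sections, to pick general geometric points of $W_1, W_2$ with corresponding nice sections $s_1, s_2$ and to construct an irreducible ruled surface $\Sigma \subset \mcX$ such that $\Sigma \to B$ has generically smooth conic fibers in the corresponding quadric fibers of $\mcX$ and contains both $s_1(B)$ and $s_2(B)$ as sections. Once $\Sigma$ is in hand, a linear equivalence on $\Sigma$ produces a complete linear system of stable sections of $\mcX$ whose generic member is a section of $\mcX \to B$, and which contains both $s_1 \cup (\text{conic teeth})$ and $s_2 \cup (\text{conic teeth})$ as degenerations inside a single irreducible component of the Kontsevich moduli. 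Degenerating each smooth conic tooth to a chain of two free lines inside its quadric fiber and invoking Lemma~\ref{porcupine} will then force $\overline{M}_{2m_1}(W_1) = \overline{M}_{2m_2}(W_2)$ for appropriate integers $m_1, m_2$, hence $M_{2m_1+i}(W_1) = M_{2m_2+i}(W_2)$ for every $i \geq 0$, which is the asymptotically canonical property.

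To construct $\Sigma$, I use the classical description of conics in smooth quadrics: in a smooth quadric $Q^n \subset \PP^{n+1}$ with $n \geq 3$ and two general points $p, q \in Q$, the $\PP^{n-1}$ of $2$-planes through the line $\overline{pq}$ parametrizes conics $\Pi \cap Q$ (generically smooth) passing through $p$ and $q$. Globalizing, I form the relative $\PP^{n-1}$-bundle $\mathcal{P} \to B$ whose fiber over $b \in B$ is the $\PP^{n-1}$ of $2$-planes containing the line spanned by $s_1(b)$ and $s_2(b)$; a general section of $\mathcal{P} \to B$ meets $\mcX$ in the desired ruled surface $\Sigma$. By Corollary~\ref{cor:semiQ} every closed fiber $\mcX_b$ is a geometrically integral quadric with codimension-$\geq 2$ singular locus, so a general section of $\mathcal{P}$ picks out a $2$-plane that avoids the singular locus of $\mcX_b$ for every $b$; this ensures $\Sigma$ is irreducible, contains $s_1$ and $s_2$ as sections, and has generically smooth conic fibers with at worst nodal-conic (chain-of-two-lines) degenerations.

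With $\Sigma \cong \PP(E)$ for some rank-$2$ bundle $E$ on $B$, we have $[s_1] - [s_2] \in \pi^*\mathrm{Pic}(B)$ in $\mathrm{Pic}(\Sigma)$, giving a linear equivalence $s_1 + \sum_i F_{a_i} \sim s_2 + \sum_j F_{c_j}$ where $F_b$ denotes the fiber of $\Sigma \to B$ over $b$. Since the divisor class intersects each fiber of $\Sigma \to B$ in $1$, every smooth member of the corresponding complete linear system on $\Sigma$ is a section of $\Sigma \to B$ (hence of $\mcX \to B$), and both $s_1 \cup \bigcup_i F_{a_i}$ and $s_2 \cup \bigcup_j F_{c_j}$ lie as degenerations in a single irreducible component of the Kontsevich moduli of stable sections of $\mcX$. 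Next, each smooth conic tooth $F_b$ in $\mcX_b$ degenerates through the irreducible Hilbert scheme of conics in the smooth quadric $\mcX_b$ to a chain of two free lines; applying Lemma~\ref{porcupine}, the two resulting ``handle + chains of $2$ lines'' combs are smooth points of $\overline{M}_{2|\{a_i\}|}(W_1)$ and $\overline{M}_{2|\{c_j\}|}(W_2)$, forcing equality of these irreducible components, and thus $M_{2|\{a_i\}|+i}(W_1) = M_{2|\{c_j\}|+i}(W_2)$ for all $i \geq 0$. The main obstacle is the construction of $\Sigma$ over singular fibers of $\mcX$: the chosen $2$-plane sections must avoid the singularities and produce only mild fiber degenerations, which is controlled by the codimension-$\geq 2$ estimate of Corollary~\ref{cor:semiQ} together with a dimension count on the relative $\PP^{n-1}$-bundle $\mathcal{P} \to B$.
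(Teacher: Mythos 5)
Your overall strategy is the same as the paper's: pick nice sections $s_1,s_2$ in the two families, build a conic-bundle surface over $B$ inside $\mcX$ containing both as sections (the paper realizes your ``general section of the relative $\PP^{n-1}$-bundle of $2$-planes'' concretely as the family of planes spanned by $s_1(b),s_2(b)$ and a third section $s_3$ produced by weak approximation, with $s_3$ chosen to control the fibers over the bad points of $B$), then use a linear equivalence on that surface plus degeneration of conics to chains of two lines and Lemma \ref{porcupine}.

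There is, however, a genuine gap at the step ``$\Sigma\cong\PP(E)$, so $[s_1]-[s_2]\in\pi^*\mathrm{Pic}(B)$ and $s_1+\sum_iF_{a_i}\sim s_2+\sum_jF_{c_j}$.'' The surface $\Sigma$ is \emph{not} a $\PP^1$-bundle: the discriminant of the restricted quadratic form is a divisor on the space of $2$-planes, so any one-parameter family of planes meets it, and $\Sigma\to B$ unavoidably has finitely many reducible fibers (pairs of lines) --- this happens in particular at every $b$ where the line $\overline{s_1(b)s_2(b)}$ lies in $\mcX_b$, no matter how general the plane. (Your own sentence ``at worst nodal-conic degenerations'' contradicts $\Sigma\cong\PP(E)$.) At such fibers $s_1(b)$ and $s_2(b)$ may land on different components, and then $s_1-s_2$ is not a combination of full fibers: one must pass to the minimal resolution of the $xy=t^n$ singularities at the nodes and add \emph{chains of individual fiber components} to the linear equivalence, exactly as in the paper's relation $s_1+\sum ch_j+\sum R_{b_i}\sim s_2+\sum R_{b_i}$. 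Relatedly, your claim that avoiding $\mathrm{Sing}(\mcX_b)$ forces the conic $\Pi\cap\mcX_b$ to be smooth (or even reduced) is not correct --- a plane disjoint from the singular locus can still be tangent along a line and cut out a double line --- so over the singular fibers of $\mcX$ you need a genuine genericity/transversality argument (the paper's conditions on $s_3$) to guarantee reduced, and there in fact smooth, conics; this matters because Lemma \ref{porcupine} requires all teeth of the combs to be free curves in \emph{smooth} fibers of $\mcX$. These repairs are exactly what the paper's proof supplies; without them the pencil of ``sections'' on $\Sigma$ and the identification of the two components via Lemma \ref{porcupine} do not go through as written.
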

\begin{proof}
Let $b_1, \ldots, b_n$ be the points in $B$ whose fiber $\mcX_{b_j}$ is singular. 
Given two geometrically irreducible components of nice sections, we need to show that they produce same irreducible components when the degree of the section becomes large enough.
To show this, let $s_1$ and $s_2$ be two nice sections belonging to the two families.
Then it suffices to show that after adding enough lines in general fibers, there is a deformation parameterized by an irreducible curve of the union of $s_1$ with lines to the union of $s_2$ with lines.
The general idea to find such a deformation is to construct a ruled surface in $\mcX$ over $B$ such that $s_1$ and $s_2$ appear as sections of the ruled surface.
Below is the construction.

First of all, by taking a general deformation of $s_2$, we may assume that the line spanned by the points $s_1(b_j)$ and $s_2(b_j)$ is not contained in the fiber $\mcX_{b_j}$.

Over each point $b_j$, choose a third general point $x_j$ which does not lie in the same line with $s_1(b_j)$ or $s_2(b_j)$. By weak approximation, there is a section $s_3$ such that
\begin{enumerate}
\item
$s_3(b_j)=x_j$
\item
If $b$ is a point such that the line spanned by the two points $s_1(b)$ and $s_2(b)$ lie in the fiber $\mcX_{b}$ (the fiber $\mcX_b$ is necessarily smooth), then the line spanned by $s_3(b)$ and $s_1(b)$ (resp. $s_2(b)$) is not contained in $\mcX_b$.
\end{enumerate}

Then take the family of planes $\Pi \to B$ spanned by $s_1(b), s_2(b), s_3(b)$. The intersection of $\Pi$ with $\mcX$ is a ruled surface $S$ fibered over $B$ whose fibers are conics $\{R_b\}_{b \in B}$.

By the choice of $s_3$, the conics in the singular fibers, $R_{b_i}$, are smooth conics and $R_b$ is reduced for all $b$. Furthermore, $s_1$ and $s_2$ only intersects fibers of $S \to B$ in the smooth locus.

The surface $S$ might have singularities when the conic $R_b$ is a union of two lines and the singularity is locally given by $xy=t^n$. 
Let $\tilde{S}$ be the minimal resolution. 
Note that the fibers of $\tilde{S} \to B$ over each point is still reduced by a local computation. 
On the surface $\tilde{S}$, the strict transform of the two sections $s_1$ and $s_2$ are linearly equivalent modulo some vertical fibers. 
In fact, we have
\begin{align*}
s_1+\sum ch_j+\sum_{i=1}^k R_{b_i} \sim s_2+\sum_{i=k+1}^{n} R_{b_i},
\end{align*}
where $ch_j$ is a chain of irreducible components (each with multiplicity $1$) in the fibers 
where $s_1$ and $s_2$ are not in the same irreducible component and $R_{b_i}$ are general fibers of $\tilde{S} \to B$. 
This defines a pencil in the linear system whose general member is a smooth curve and is a section of $\tilde{S} \to B$.

Thus there is a pencil $\tilde{\mathcal{C}} \to \PP^1$ spanned by the above two divisors and a general member is a smooth curve. 
The map $\tilde{\mathcal{C}} \to \mcX$ factors through a new family $\mathcal{C} \to \mcX$ which contracts all the exceptional divisors of $\tilde{S} \to S$.

Then one can assemble two combs $C_1, C_2$ with handles $s_1$ and $s_2$ and teeth consisting of conics and lines in general fibers 
such that there is a family of stable maps $\mathcal{C}\to \PP^1$ with the following properties:
\begin{enumerate}
\item A general member of the family $\mathcal{C}_t \to \mcX$ is a section of $\mcX \to B$.
\item $\mathcal{C}_0=C_1$ and $\mathcal{C}_\infty=C_2$.
\end{enumerate}

Note that for any smooth conic in a quadric hypersurface, there is a one parameter degeneration to a union of two lines. So by Lemma \ref{porcupine}, the proof is finished.
\end{proof}
The case of conics is even easier.
\begin{lem}\label{conic_dim_one}
Let $\mcX \to B$ be a semistable model of a smooth conic in $\PP^2_{\FF_q(B)}$ which has local sections every where. Then there is a geometrically irreducible component of the space of sections.
\end{lem}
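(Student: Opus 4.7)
By Corollary \ref{cor:semiQ}(1), the hypothesis of local sections everywhere forces $\mcX$ to be smooth and every fiber of $\mcX \to B$ to be a smooth conic; so $\mcX \to B$ is a smooth conic bundle. My plan is to exhibit a geometrically irreducible component of the space of sections as a single Galois-invariant stratum of the Hilbert scheme, cut out by an $\FF_q$-defined polarization degree.

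The first step is to base change to $\bar{\FF}_q$ and apply Tsen's theorem: since $Br(\bar{\FF}_q(B)) = 0$, the generic fiber of $\mcX_{\bar{\FF}_q} \to B_{\bar{\FF}_q}$ is isomorphic to $\PP^1_{\bar{\FF}_q(B)}$, and by smoothness of the fibration this rational section extends to a regular one. Hence $\mcX_{\bar{\FF}_q} \cong \PP(\mathcal{E})$ for some rank-$2$ vector bundle $\mathcal{E}$ on $B_{\bar{\FF}_q}$. Next, I will fix an $\FF_q$-defined polarization $L$ on $\mcX$ (for instance the hyperplane class coming from $\mcX \subset \PP^2_B$), which partitions the Hilbert scheme of sections into $\FF_q$-strata $H_d$ indexed by the $L$-degree $d$.

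The core step is to check that $H_d \otimes_{\FF_q} \bar{\FF}_q$ is irreducible and non-empty for $d$ sufficiently large. Over $\bar{\FF}_q$, sections of $\PP(\mathcal{E})$ correspond (in Grothendieck's convention) to line-bundle quotients $\mathcal{E} \twoheadrightarrow \mathcal{L}$, and fixing the $L$-degree $d$ fixes $\deg \mathcal{L} = c$, with $c \to \infty$ as $d \to \infty$. The moduli of such quotients fibers over $\mathrm{Pic}^c(B_{\bar{\FF}_q})$ with fiber over $\mathcal{L}$ an open subset of $\PP(H^0(B_{\bar{\FF}_q}, \mathcal{E}^\vee \otimes \mathcal{L}))$ (surjections modulo scalars); for $c$ large Riemann--Roch makes this fiber dimension constant, producing a $\PP^N$-bundle over the irreducible variety $\mathrm{Pic}^c(B_{\bar{\FF}_q})$. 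This bundle is irreducible, hence so is its open subset $H_d \otimes \bar{\FF}_q$, and non-emptiness follows from the existence of a section extracted in the Tsen step above. Being defined over $\FF_q$ and geometrically irreducible, $H_d$ is the sought component.

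The only subtle point I anticipate is bookkeeping: $\mathcal{E}$ is defined only up to twisting by a line bundle from $B_{\bar{\FF}_q}$, so some care is required to match the $\FF_q$-polarization on $\mcX$ with the $\bar{\FF}_q$-invariant $\deg \mathcal{L}$. However both the $L$-degree $d$ and the stratum $H_d$ are intrinsic to $\mcX/\FF_q$, so this is a matter of convention rather than substance. Notably, no ruled-surface or comb construction in the spirit of Lemma \ref{lem:connectQ} seems to be required in this one-dimensional setting.
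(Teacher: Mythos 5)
Your proposal is correct and follows essentially the same route as the paper: both reduce via Corollary \ref{cor:semiQ} to a smooth conic bundle, identify it over $\bar{\FF}_q$ with $\PP(\mathcal{E})$ for a rank-$2$ bundle (the paper asserts this directly where you invoke Tsen), and realize high-degree sections as line-bundle quotients of $\mathcal{E}$, giving an irreducible space fibered over the Jacobian whose Galois invariance is immediate. The extra bookkeeping you supply on matching the $\FF_q$-polarization degree with $\deg\mathcal{L}$ is a reasonable elaboration of what the paper dismisses as obvious.
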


\begin{proof}
By Corollary \ref{cor:semiQ}, the fibration $\mcX \to B$ is smooth. 
Over $\bar{\FF}_q$, this family is isomorphic to $\PP(E) \to B$ for some locally free sheaf $E$ of rank $2$. 
Thus a section whose $\OO_{\PP(E)}(1)$ degree is $d$ is the same as a surjection $E \to L \to 0$ to some invertible sheaf $L$ of degree $d$. 
When $d$ is large enough, the moduli space of sections is fibered over the Jacobian of $C$ with fibers an open subset of the projective space $\PP(H^0(C, E^{*} \otimes L))$, thus irreducible. 
Obviously this component is Galois invariant and defined over $\FF_q$.
\end{proof}
This finishes the proof of the Hasse principle for conics by Lemma \ref{reduction}.

For quadric surfaces, we can reduce to the case of conics by considering the relative Fano scheme of lines. 
This is a family of conics fibered over a curve $C$ which has a generically \'etale map to $B$ of degree $2$. 
Given a rational point of a quadric surface, there are exactly two lines containing this point. 
Conversely given two lines of different families of the rulings, we get a rational point by taking the intersection of the two lines. 
So the original family has a local section everywhere if and only if the family of conics over $C$ has a section everywhere locally. 
And there is section of $\mcX \to B$ if and only if there is a section for the family of conics over $C$. 

\section{Hasse principle for cubics}\label{sec:3}
\subsection{$R$-connectedness of cubic hypersurfaces}
We first review the construction of Madore \cite{MadoreREquivalence}.

Let $X$ be a smooth cubic hypersurface of dimension at least $4$ defined over a $C_1$ field $k$ and $x, y$ two $k$-rational points in $X$. 
Then there is a chain of rational curves connecting $x$ and $y$ by a result of Madore \cite{MadoreREquivalence}. 

In the following we give a description of the chain of rational curves that connects two general points under some extra assumptions. 
This is all we need. The more general case can be treated similarly.

\begin{lem}\label{lem:rconnected}
Let $k$ be a $C_1$-field. Let $Y$ be a singular cubic hypersurface in $\PP^n$ defined over $k$ and $z$ a $k$-rational point in the singular locus of $Y$ with multiplicity $2$. Assume that the set of $k$-rational points of the projective tangent cone of $Y$ at $z$ is Zariski dense. 
Then for a general point $u$ in $Y$, there is a map $f: \PP^1 \to Y$ defined over $k$ such that $f(0)=z, f(\infty)=u$.
\end{lem}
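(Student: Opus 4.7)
The plan is to use projection from the singular point $z$ to convert the problem into finding a $k$-rational line in $\PP^{n-1}$. Fix coordinates so that $z=[0:\cdots:0:1]$. Since $z$ has multiplicity $2$ on $Y$, the defining cubic $F$ takes the form
\[
F(X_0,\ldots,X_n)=X_n\,Q(X_0,\ldots,X_{n-1})+C(X_0,\ldots,X_{n-1}),
\]
with $Q=0$ cutting out the projective tangent cone $T_zY$ inside $\PP^{n-1}=\{X_n=0\}$ and $C$ a cubic form. A line through $z$ in direction $[X_0:\cdots:X_{n-1}]$ meets $Y$ at $z$ with multiplicity $2$ and at one residual point, which yields the birational map
\[
\sigma\colon\PP^{n-1}\dashrightarrow Y,\qquad[X_0:\cdots:X_{n-1}]\longmapsto[\,Q(X)X_0:\cdots:Q(X)X_{n-1}:-C(X)\,],
\]
whose inverse is the projection $\pi_z$ away from $z$. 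The key feature of $\sigma$ is that whenever $Q(p)=0$ but $C(p)\neq 0$, one has $\sigma(p)=[0:\cdots:0:-C(p)]=z$, so $\sigma$ contracts the open subset $\{Q=0\}\setminus\{C=0\}$ of the tangent cone onto $z$.

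Given a general point $u\in Y(k)$, the image $\bar u:=\pi_z(u)\in\PP^{n-1}(k)$ is a $k$-rational point with $Q(\bar u)\neq 0$ and $C(\bar u)\neq 0$. Using the Zariski density of $\{Q=0\}(k)$, I choose a $k$-rational point $p\in\{Q=0\}$ with $C(p)\neq 0$; such a $p$ exists because $\{C=0\}\cap\{Q=0\}$ is a proper closed subset of the tangent cone (see below), and a Zariski dense set of $k$-points must meet its open complement. The unique line $\ell\subset\PP^{n-1}$ through $\bar u$ and $p$ is defined over $k$, and since $\ell\cong\PP^1$ is smooth and $Y$ is projective, $\sigma|_\ell$ extends uniquely to a $k$-morphism $f\colon\PP^1\to Y$. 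By construction $f(\bar u)=u$ and $f(p)=z$, and a reparametrization sending $p\mapsto 0$ and $\bar u\mapsto\infty$ yields the map required by the lemma.

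The main technical point to verify is that $C$ does not vanish identically on $\{Q=0\}$. Otherwise $Q$ divides $C$ (after reducing to an irreducible factor of $Q$ if necessary), so $F$ factors as $(X_n+L)\,Q$ for some linear form $L$, forcing $Y$ to be reducible with a quadric or hyperplane component containing $z$. In such degenerate situations the Zariski density hypothesis still supplies plenty of $k$-rational lines on that component through $z$, which may be concatenated with a residual-intersection line on the cubic $Y$ to connect a general $u$ to $z$ by a single $\PP^1$. Together with the projection construction of the main paragraph, this disposes of all cases and establishes the lemma.
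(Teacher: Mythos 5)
Your proof is correct and follows essentially the same route as the paper: project from $z$ and restrict the inverse birational map $[X_0:\cdots:X_{n-1}]\mapsto[QX_0:\cdots:QX_{n-1}:-C]$ to the $k$-line joining $\pi_z(u)$ to a $k$-point of $\{Q=0\}\setminus\{C=0\}$ supplied by the density hypothesis (the paper arranges for the line to miss $\{Q=C=0\}$ entirely, while you extend the rational map over the smooth curve $\ell$ and check the values at the two marked points are unaffected --- both are valid). The only soft spot is your fallback when $C$ vanishes on $\{Q=0\}$: concatenating two lines produces a chain rather than a single $f:\PP^1\to Y$ with $f(0)=z$, $f(\infty)=u$, but as you observe that case forces $Y$ to be reducible, a situation the paper's own proof also tacitly excludes and which does not arise in the application.
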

\begin{proof}
Projection from $z$ to a hyperplane gives a birational map $Y \dashrightarrow \PP^{n-1}$.
Equivalently, one can blow up the projective space at the point $z$ and take the strict transform $\tilde{Y}$ of $Y$. 
Then there is a birational morphism $\tilde{Y} \to \PP^{n-1}$. 
The projective tangent cone of $Y$ at the point $z$ is a quadric in the exceptional divisor $E$ of $Bl_z \PP^n \to \PP^n$.
Denote by $v$ the image of $u$ in $\PP^{n-1}$ under this morphism.

Assume the coordinate of $z$ is $[1, 0, \ldots, 0]$ and the hyperplane $\PP^{n-1}$ is defined by $X_0=0$. Write the equation of $Y$ as
\[
X_0 Q(X_1, \ldots, X_n)+C(X_1, \ldots, X_n)=0.
\]
Then the projective tangent cone in $E$ is defined by $Q(X_1, \ldots, X_n)=0$.
The inverse birational map is the following:
\begin{align*}
\PP^{n-1} & \dashrightarrow Y\\
[X_1, \ldots, X_n] &\dashrightarrow [-C(X_1, \ldots, X_n), X_1Q(X_1, \ldots, X_n), \ldots, X_n Q(X_2, \ldots, X_n)]
\end{align*}

Thus the map is not defined when $Q=C=0$, which is precise the locus parameterizing the family of lines in $Y$ containing $z$.
This is also clear from the geometric description of the birational map as a projection. 
A general point in $Q=0$ is mapped to $z$. 
Choose one such general point such that the line spanned by this point and $v$ avoids the locus $Q=C=0$. 
This is possible by our assumption.
Then the rational curve we want is the restriction of the birational map to the line.
\end{proof}
\begin{rem}
The assumption is satisfied if $k$ is algebraically closed or if $k$ is the function field of a curve over an algebraically closed field and the projective tangent cone is not a union of two Galois conjugate hyperplanes.
\end{rem}

Given two general points $x, y$ in a smooth cubic hypersurface $X$ of dimension at least $4$, denote by $H_x$ and $H_y$ the tangent hyperplane of $X$ at $x$ and $y$.
Then $H_x \cap H_y \cap X$ is a smooth cubic hypersurface of dimension at least $2$. 
Thus it has a rational point $z$ over any $C_1$ field.
If $z$ is general, we can apply Lemma \ref{lem:rconnect} to $x, z$ and $y, z$ to get a chain of two rational curves.
\subsection{Geometry of cubics}
In this section we collect some useful results about the geometry of cubic hypersurfaces.
\begin{lem}\label{lem:surface}
Let $X$ be a normal cubic surface defined over an algebraically closed field $k$ of characteristic not equal to $2, 3, 5$. Assume furthermore that $X$ is not a cone over a plane cubic curve.
\begin{enumerate}
\item
There are only finitely many lines on $X$.
\item \label{surface2}
Let $x \in X$ be a general point and $H_x$ the tangent hyperplane at $x$. Then $H_x \cap X$ is a nodal cubic plane curve.
\item \label{surface3}
For any point $x \in X^{\text{sm}}$, there is a very free curve.
\item \label{surface4}
For any two general points $x, y \in X$, the intersection of their tangent hyperplanes with $X$, $H_x \cap H_y \cap X$, is smooth (i.e. $3$ distinct points).
\end{enumerate}
\end{lem}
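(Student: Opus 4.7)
The plan is to establish the four claims in sequence, using the classification of ruled cubic surfaces, the behavior of the Gauss map, and standard Bertini arguments, with the characteristic hypothesis ensuring separability throughout.

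\textbf{Finiteness of lines (1).} I would argue by contradiction. Suppose the Fano scheme of lines $F(X)$ has a positive-dimensional component. Then a one-parameter family of lines sweeps out a surface which, by irreducibility of $X$, must equal $X$; in particular $X$ is ruled by lines. I would then invoke the classical classification of ruled cubic surfaces in $\PP^3$: any such surface is either a cone over a plane cubic curve, or the projection to $\PP^3$ of the smooth rational normal scroll $S(1,2)\subset\PP^4$ from a point. The latter is singular along a double line, hence not normal. Since $X$ is normal and not a cone by hypothesis, this is a contradiction, and $F(X)$ must be finite.

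\textbf{Tangent sections are nodal cubics (2).} For any $x \in X^{\text{sm}}$, the tangent section $H_x\cap X$ is a plane cubic with at least a double point at $x$. In local coordinates at $x$ with $H_x = \{z=0\}$, write $X$ locally as $z = q(u,v) + (\text{higher order})$; then $H_x\cap X$ is cut out in $H_x$ by $q(u,v)+(\text{higher order})=0$, and the singularity at $x$ is an ordinary node precisely when the second fundamental form $q$ is nondegenerate. Since $X$ is normal and not a cone it is not developable, so in characteristic $\neq 2,3$ the parabolic locus (where $q$ degenerates) is a proper closed subset of $X^{\text{sm}}$. To rule out a second singular point $y\neq x$ of $H_x\cap X$, observe that $y$ is singular iff $H_y=H_x$, iff $x$ and $y$ have the same image under the Gauss map $g: X^{\text{sm}}\to(\PP^3)^*$. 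The non-injectivity locus of $g$ is a proper closed subset (using again that $g$ is separable in the allowed characteristic and that $X$ is not developable), so for general $x$ no such $y$ exists. The intersection of the two open conditions gives the conclusion.

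\textbf{Very free curves (3) and transversal triple intersection (4).} For (3), let $\tilde X\to X$ be the minimal resolution. Since a normal non-conical cubic surface has at worst Du Val (rational double point) singularities, $\tilde X$ is a smooth weak del Pezzo surface of degree $3$, in particular separably rationally connected, and is an isomorphism over $X^{\text{sm}}$. For any $x\in X^{\text{sm}}$, a standard comb-smoothing argument on $\tilde X$ produces a very free rational curve through $x$ whose image lies in $X^{\text{sm}}$ (e.g.\ smoothing a chain of lines on $\tilde X$ meeting at $x$ and pushing forward); this gives a very free curve in $X$ through $x$. For (4), by (2) the Gauss map $g$ has a $2$-dimensional image and is generically injective, so the assignment $(x,y)\mapsto \ell := H_x\cap H_y$ is dominant onto the Grassmannian of lines $G(2,4)$. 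By Bertini, a general line $\ell\subset\PP^3$ meets $X$ transversally in three distinct points of $X^{\text{sm}}$, so $H_x\cap H_y\cap X$ consists of three distinct points.

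\textbf{Expected main obstacle.} The subtlest part is (2), which requires controlling the Gauss map and the second fundamental form in positive characteristic. The hypothesis $\mathrm{char}\,k \notin\{2,3,5\}$ is precisely what is needed to guarantee that $g$ is separable and generically injective, that the parabolic locus is a proper subvariety, and that no Frobenius-related degeneracy makes every tangent plane ``overly tangent.'' Once (2) is established, (1), (3), and (4) follow fairly directly from classification and Bertini.
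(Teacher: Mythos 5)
Your overall architecture for (2) matches the paper's (Gauss map, second fundamental form as its differential), but the crucial step is asserted rather than proved. You claim that since $X$ is not developable, ``in characteristic $\neq 2,3$ the parabolic locus is a proper closed subset,'' and likewise that the characteristic hypothesis ``is precisely what is needed'' for $g$ to be separable and generically injective. In positive characteristic this does not follow from non-developability: a surface can have everywhere-degenerate second fundamental form (equivalently, a Gauss map that is nowhere \'etale) without being developable, precisely when the Gauss map is inseparable. The paper supplies the missing mechanism: writing $g$ as the restriction of the degree-$2$ gradient map $\PP^3 \dashrightarrow \PP^{3*}$ gives $\deg g \cdot \deg Y = 12$ for smooth $X$, hence $\deg g \le 12$ for all normal non-cone cubics by degeneration; for $\mathrm{char}\,k \ge 7$ inseparability could then only occur when $\deg g \in \{7,11\}$, which forces the Gauss image $Y$ to be a plane, and that is excluded by an explicit computation with the partial derivatives. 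Without some such argument your (2) is incomplete. A smaller point: when you rule out a second singular point $y$ of $H_x\cap X$ via injectivity of $g$, you only treat $y \in X^{\mathrm{sm}}$; a singular point of $X$ lying on $H_x$ is automatically a singular point of $H_x\cap X$, so you also need $H_x$ to miss $X^{\mathrm{sing}}$ for general $x$. The paper avoids both issues at once: a plane cubic with two singular points is reducible, hence forces a line through $x$, contradicting the choice of $x$ off the finitely many lines.

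For (3) your route through the minimal resolution is genuinely different from the paper's, but as sketched it does not prove the statement, which concerns an \emph{arbitrary} point of $X^{\mathrm{sm}}$. The issue is producing a rational curve through a given smooth point $x$ whose image avoids $X^{\mathrm{sing}}$ (equivalently, a curve on $\widetilde X$ avoiding the $(-2)$-curves): ``smoothing a chain of lines on $\widetilde X$ meeting at $x$'' is not available, since a general, let alone arbitrary, smooth point lies on no line, and a very free curve produced by abstract rational-connectedness arguments on $\widetilde X$ has no reason to avoid the exceptional locus. This is exactly what the paper's construction handles: it first obtains a nodal tangent-hyperplane section $C_y \subset X^{\mathrm{sm}}$ at a general $y$ (whose normalization is already very free), then transports it through the given $x$ via the birational involution $i_z$ centered at a general point $z$ of the line through $x$ and $y$, and only then attaches further tangent sections and smooths with $x$ fixed. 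You would need this, or a strong-rational-connectedness statement valid in characteristic $p$, to complete (3). On the positive side, your argument for (4) --- dominance of $(x,y)\mapsto H_x\cap H_y$ onto the Grassmannian plus Bertini --- is clean and more explicit than what the paper records for that part; just note that dominance requires the Gauss image $Y$ not to be a plane (so that a general line of $\PP^3$ is a secant of $Y$), which is again the computation from (2).
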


\begin{proof}
The first statement is well-known.

For the second one, we consider the Gauss map defined on the smooth locus:
\begin{align*}
g: X &\dashrightarrow \PP^{3}\\
x \in X^{\text{sm}} &\dashrightarrow H_x
\end{align*}
The second fundamental form of the cubic surface at a smooth point is the same as the differential of the Gauss map at the point. 
Thus the tangent hyperplane intersection at a point $x$ has a node at $x$ if and only if the Gauss map is smooth (or equivalently, {\'e}tale) at the point $x$.

Denote by $Y$ the closure of the image. If we write the defining equation of $X$ as $F(X_0, \ldots, X_3)=0$, then the above map is the restriction of the map
\begin{align*}
\tilde{g}: \PP^3 &\dashrightarrow \PP^{3*}\\
[X_0, \ldots, X_3] &\dashrightarrow [\frac{\partial F}{\partial X_0}, \ldots, \frac{\partial F}{\partial X_3}].
\end{align*}
If the characteristic is not $3$ and the surface $X$ is smooth, 
then the map is a well-defined morphism on $\PP^3$. 
Clearly $\tilde{g}^*\OO(1)=\OO(2)$. Thus $\deg g \cdot \deg Y=12$. 
By smoothing and degeneration,  we see that $\deg g \leq 12$ for all normal cubic surfaces which is not a cone. 

We claim that if the characteristic of the base field $k$ is not $2, 3, 5$, then the map $g$ is separable. 
Admitting this, the map $g$ is {\'e}tale at a general point since it is generically one to one. 
Take a general point $x$ which does not lie in any line on $X$ and let $H_x$ be the tangent hyperplane at the $x$. 
Then $H_x \cap X$ has a nodal singularity at $x$.
If $y \in H_x \cap X$ is another singular point of $H_x \cap X$, 
then the intersection $H_x \cap X$ is reducible and there is a line through $x$, which contradicts the choice of $X$. 
Thus $H_x \cap X$ is a nodal plane cubic and, in particular, lies in the smooth locus. 

To see the claim, note that a map is non-separable only if the degree of $g$ is divisible by the characteristic. 
So the map is separable except possibly in the case $\deg g=\text{char} k=7$ or $11$. 
In these two cases, if the map is non-separable, then the image $Y$ is a hyperplane. 
Let $[1, 0, 0, 0]$ be a smooth point not contained in any line in $X$ and write $F=X_0^2X_1+X_0Q(X_2, X_3)+C(X_1, X_2, X_3)$. 
If the image of $X$ is a hyperplane, then there are constants $\lambda_i, i=0, \ldots, 3$ such that 
\begin{equation}\label{eq:linear}
\lambda_0 \frac{\partial F}{\partial X_0}+\ldots+\lambda_3 \frac{\partial F}{\partial X_3}=0
\end{equation}
over $X$.
We may write $Q(X_2, X_3)=aX_2^2+bX_3^2$. 
If $ab \neq 0$, then we are done. At least one of $a, b$ is non-zero otherwise the point $x$ is contained in a line. Assume that $a=1, b=0$. 
This forces $\lambda_0=\lambda_1=\lambda_2=0$. 
Also the derivative $\frac{\partial F}{\partial X_3}$ cannot be identically zero otherwise $X$ is a cone with vertex $[0, 0, 0, 1]$. So $\lambda_3$ has to be zero too 
and the image of $X$ is never a hyperplane and we finish the proof of the claim.

For (\ref{surface3}), take a general point $x$ which does not lie in any line on $X$ and let $H_x$ be the tangent hyperplane at the $x$. 
Then $H_x \cap X$ is a nodal curve.
 Let $f: \PP^1 \to H_x \cap X \subset X$ be the composition of the normalization and the inclusion map. 
Then $f$ is an immersion. 
It follows that the normal sheaf defined as the quotient of $T_{\PP^1} \to f^*T_X$ is an invertible sheaf and isomorphic to $\OO(1)$ by a simple Chern class computation. Thus $f$ is very free.

To show the last statement, it suffices to show that there is a rational curve contained in the smooth locus through every point $x \in X^{\text{sm}}$. 
Once we have this curve, we can add many general tangent hyperplane sections which lie in the smooth locus. 
Then a general smoothing with the point $x$ fixed produces a very free curve at $x$.

Choose $y$ to be a general point such that $C_y=H_y\cap X$ is a nodal plane cubic contained in $X^{\text{sm}}$ 
and such that the line spanned by $x, y$ intersects the the cubic surface at a point $z$ which does not lie on any line in $X$.

There is a birational involution:
\begin{align*}
i_z : &X \dashrightarrow X\\
&p \mapsto q
\end{align*}
where $p, q, z$ are colinear. Then $i_z(C_y)$ is a rational curve in $X^{\text{sm}}$ and contains $x$.
\end{proof}

\begin{lem}\label{lem:line}
Let $X$ be a normal cubic hypersurface in $\PP^n$ which is not a cone over a smooth plane cubic. Then the family of lines through a general point has the expected dimension $n-4$.
\end{lem}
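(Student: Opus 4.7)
The plan is induction on $n$. The base case $n = 3$ follows immediately from Lemma~\ref{lem:surface}(1): a normal cubic surface not a cone over a smooth plane cubic has only finitely many lines, so a general point lies on none, yielding the empty family of dimension $n - 4 = -1$.

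For the inductive step $n \geq 4$, I would split into two cases. First suppose $X$ is a cone with point vertex $v$ over a normal cubic hypersurface $X_0 \subset \PP^{n-1}$ of dimension $n - 2$; then $X_0$ inherits from $X$ the property of not being an iterated cone over a smooth plane cubic. Let $\pi : X \dashrightarrow X_0$ denote projection from $v$ and let $p \in X$ be a general point with image $p' = \pi(p) \in X_0$. The lines through $p$ in $X$ are parameterized by pairs $(\ell, L)$ where $\ell$ is a line in $X_0$ through $p'$ and $L$ is a line through $p$ inside the plane $\Pi_\ell := \pi^{-1}(\ell) \subset X$. By the inductive hypothesis applied to $X_0$, the $\ell$'s form a family of dimension $(n-1) - 4 = n - 5$; together with the $\PP^1$ of lines through $p$ in each $\Pi_\ell$, this yields total dimension $(n-5) + 1 = n - 4$. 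If $X$ is a cone with positive-dimensional vertex, iterate the projection.

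In the non-cone case, fix a general smooth point $p \in X$ and form the incidence
\[
I := \{(L, H) \,:\, p \in L \subset X, \, L \subset H\},
\]
where $L$ ranges over lines through $p$ in $X$ and $H$ over hyperplanes in $\PP^n$. The projection $I \to V_p$ onto the family $V_p$ of lines through $p$ in $X$ has fibers the $(n-2)$-dimensional variety of hyperplanes containing a given line. The projection $I \to \{H : p \in H\} \cong \PP^{n-1}$ has fiber over generic $H$ the family of lines through $p$ in $X' := X \cap H$. Applying Bertini--Seidenberg to the linear system of hyperplanes through the base point $p$ (using that $X$ is normal of dimension $\geq 3$, so that $\mathrm{Sing}(X)$ has codimension $\geq 2$ in $X$ and $p$ is smooth on $X$), one verifies that $X'$ is a normal cubic hypersurface of dimension $n-2$ in $\PP^{n-1}$ for generic $H \ni p$. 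Moreover, since cone-ness is a closed condition and $X$ is not a cone, generic $X'$ is also not a cone, hence not an iterated cone over a smooth plane cubic. The inductive hypothesis applied to $X'$ then gives the dimension of lines through $p$ in $X'$ as $(n-1) - 4 = n - 5$. Balancing the two fiber-dimension counts for $\dim I$,
\[
\dim V_p + (n-2) \;=\; (n-1) + (n-5) \;=\; 2n-6,
\]
yields $\dim V_p = n - 4$, as desired.

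The main obstacle is the Bertini step: ensuring that for generic $H$ through $p$, the slice $X'$ is normal and, crucially, not an iterated cone over a smooth plane cubic, so that the induction hypothesis applies. Normality is routine since $X'$ is automatically $S_2$ (being a hypersurface) and is $R_1$ by the codimension estimate on $\mathrm{Sing}(X) \cap H$, hence normal by Serre's criterion. The non-cone condition requires somewhat more care in low-dimensional cases (particularly $n = 4$), but can be handled by a separate dimension count showing that the locus of hyperplanes through $p$ yielding an iterated-cone-over-smooth-plane-cubic slice is a proper closed subset of the pencil of hyperplanes through $p$, provided $X$ itself is not a cone over such a cubic.
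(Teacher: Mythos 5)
Your base case and your cone case are fine, but the non-cone inductive step contains a genuine circularity, and it is the heart of the argument. Fix the general point $p$ and identify the hyperplanes $H\ni p$ with hyperplanes $\check H$ in the $\PP^{n-1}$ of directions at $p$, inside which the family $V_p$ of lines through $p$ sits. The fiber of your incidence variety $I$ over a generic $H$ is exactly $V_p\cap\check H$, i.e.\ a \emph{generic hyperplane section of $V_p$}, whose dimension is $\dim V_p-1$ by construction. So your two fiber counts read $\dim I=\dim V_p+(n-2)=(n-1)+(\dim V_p-1)$, an identity carrying no information. The only nontrivial input would be that $p$ is a \emph{general} point of $X\cap H$ in the sense of the inductive hypothesis; but ``$\dim(V_p\cap\check H)=n-5$ for generic $H\ni p$'' is, by the remark above, \emph{equivalent} to ``$\dim V_p=n-4$'', which is the statement being proved. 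The inductive hypothesis only controls points of $X\cap H$ outside an unknown proper closed subset, and nothing in your argument certifies that the fixed $p$ avoids it. (The step can be repaired, but only by reversing the quantifiers: take a general $H$ not tied to $p$, apply the inductive hypothesis to a general point $q$ of $X\cap H$, and then use that a hyperplane section drops dimension by at most one, so $n-5=\dim(V_q\cap\check H)\ge\dim V_q-1$, together with the a priori lower bound $\dim V_q\ge n-4$ coming from the fact that $V_q$ is cut out by equations of degrees $1,2,3$ in $\PP^{n-1}$; one then checks that such $q$ sweep out a dense subset of $X$. That is a different argument from the one you wrote.)

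For comparison, the paper does not induct at all: it bounds the dimension of the whole Fano scheme of lines by $2n-6$ directly, by showing that lines through a vertex form a family of dimension at most $(n-4)+(n-2)$, lines through a non-vertex double point form a family of dimension at most $\dim X^{\mathrm{sing}}+(n-3)\le 2n-6$ (using that $Q=C=0$ is a genuine $(2,3)$ intersection when $X$ is irreducible), and the Fano scheme is smooth of dimension $2n-6$ at any line contained in the smooth locus. The general fiber of the evaluation map then has dimension at most $(2n-6)+1-(n-1)=n-4$. You would also need to shore up your Bertini step (normality and non-cone-ness of the generic slice, and the characteristic hypotheses hidden in Lemma \ref{lem:surface}), but those are secondary to the circularity above.
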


\begin{proof}
If $x \in X$ is a vertex of a cone, then family of lines through $x$ has dimension $n-2$. Thus the family of lines through a vertex has dimension at most
\[
\dim \{x \in X| x \text{ is a vertex.}\}+n-2=2n-6.
\]

Let $x =[1, 0, \ldots, 0] \in X$ be a singular point with multiplicity $2$. We may write the defining equation of $X$ as
\[
X_0 Q(X_1, \ldots, X_n)+C(X_1, \ldots, X_n)=0.
\] 
The family of lines through $x$ is defined by $Q(X_1, \ldots, X_n)=C(X_1, \ldots, X_n)=0$ in $\PP^{n-1}$ (with coordinates $[X_1, \ldots, X_n]$).
The polynomials $Q$ and $C$ have no common factor otherwise $X$ is reducible.
Thus the family of lines through $x$ has dimension $n-3$. 
Therefore the family of lines through a singular point with multiplicity $2$ has dimension at most $\dim X^{\text{sing}}+n-3 \leq 2n-6$.

If a line lies in the smooth locus of $X$, then the Fano scheme of lines is smooth at this point and has dimension $2n-6$.

So under the assumptions on $X$, the evaluation map of Fano scheme of lines has fiber dimension at most $2n-6+1-(n-1)=n-4$ at a general point.
\end{proof}

\begin{lem}\label{lem:codim2}
\begin{enumerate}
\item
Let $X$ be smooth cubic hypersurface of dimension at least $3$ over an algebraically closed field of characteristic at least $5$. Then the locus of points in $X$ through which there does not exist a free line has codimension at least $2$.
\item 
Let $X$ be normal cubic hypersurface of dimension at least $3$ over an algebraically closed field of characteristic at least $5$. Assume that $X$ is not a cone over a plane cubic. Then the locus of points in $X$ through which the family of lines has dimension more than $n-4$ has codimension at least $2$.
\end{enumerate}
\end{lem}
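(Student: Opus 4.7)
For both parts the strategy is to exploit the universal family of lines $\pi : \mcL \to F(X)$ together with the evaluation morphism $\mathrm{ev} : \mcL \to X$, bounding the bad locus $Z \subset X$ by a dimension count upstairs. Since the generic fiber of $\mathrm{ev}$ has dimension $n-4$ by Lemma \ref{lem:line}, it suffices to exhibit a bad locus of codimension at least $2$ in $\mcL$ (or equivalently in $F(X)$).

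For part (1), smoothness of $X$ combined with the characteristic hypothesis ensures that $F(X)$ is smooth of the expected dimension $2n-6$, that $\mcL$ is smooth of dimension $2n-5$, and that $\mathrm{ev}$ is equidimensional with relative dimension $n-4$. The non-free locus $F^{\mathrm{bad}} \subset F(X)$ is closed by upper semicontinuity of $h^1(N_{\ell/X}(-1))$. Since every fiber of $\mathrm{ev}$ over a point of $Z$ lies in $\pi^{-1}(F^{\mathrm{bad}})$, equidimensionality gives $\dim Z \leq \dim F^{\mathrm{bad}} + 1 - (n-4)$, so it is enough to prove that $F^{\mathrm{bad}}$ has codimension $\geq 2$ in $F(X)$. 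From the short exact sequence
\[
0 \to N_{\ell/X} \to \OO_\ell(1)^{\oplus(n-1)} \to \OO_\ell(3) \to 0,
\]
one realizes $N_{\ell/X}$ as the kernel of a map of vector bundles on $\ell \cong \PP^1$; non-freeness is a specific rank-drop condition on this map, and an explicit parameter count (with separability guaranteed by the characteristic hypothesis) bounds each degeneracy stratum to have codimension $\geq 2$ in $F(X)$.

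For part (2), the singular locus $X^{\mathrm{sing}}$ has codimension $\geq 2$ by normality of $X$ and is contained in $Z$ by the proof of Lemma \ref{lem:line}, contributing at the correct codimension. On the smooth locus, expanding the defining cubic in affine coordinates centered at $x \in X^{\mathrm{sm}}$ as $F = F'_x + F''_x + F'''_x$, the family of lines through $x$ in $X$ identifies with the $(2,3)$-complete intersection $\{F''_x = F'''_x = 0\}$ inside $\PP(T_xX) \cong \PP^{n-2}$. This exceeds the expected dimension $n-4$ precisely when the quadric $Q_x = \{F''_x = 0\}$ and the cubic $C_x = \{F'''_x = 0\}$ share a positive-dimensional component, which splits into three strata: $Q_x$ drops rank enough to share a linear factor with $C_x$; $F''_x$ divides $F'''_x$; or $Q_x$ is non-reduced and its support lies in $C_x$. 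I would stratify $X^{\mathrm{sm}}$ by these degeneracy types and bound each stratum by a determinantal-style parameter count.

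The main obstacle is the stratum $F''_x \mid F'''_x$, where the hypothesis that $X$ is not a cone over a plane cubic enters crucially. I would argue by contradiction: if this stratum had codimension $\leq 1$ in $X$, then the excess family of lines through each such $x$ would sweep out a linear ruling, ultimately forcing $X$ itself to be a cone and contradicting the hypothesis. A similar case-by-case argument bounds the remaining strata by codimension $\geq 2$, and combining these bounds with the singular-locus contribution yields $\mathrm{codim}_X Z \geq 2$.
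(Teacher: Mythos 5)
Your argument for part (1) rests on the claim that the non-free locus $F^{\mathrm{bad}}$ has codimension at least $2$ in $F(X)$, and that claim is false: already for a smooth cubic threefold $X\subset\PP^4$ the non-free lines (those with normal bundle $\OO(1)\oplus\OO(-1)$, the ``lines of the second type'' of Clemens--Griffiths) form a \emph{divisor} in the Fano surface $F(X)$. Feeding $\mathrm{codim}\,F^{\mathrm{bad}}=1$ into your pushforward inequality only yields codimension $1$ for $Z$, so the proposed parameter count cannot close the argument. (A secondary inaccuracy: $\mathrm{ev}$ is not equidimensional for smooth cubics --- Eckardt-type points carry excess families of lines --- though that failure occurs in high enough codimension not to matter.) The conceptual point you are missing is that a point may lie on a non-free line and still lie on a free one, so the bad locus of $X$ is where \emph{all} lines fail, not the image of $\pi^{-1}(F^{\mathrm{bad}})$. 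The paper's route is different: for a cubic threefold the evaluation map is finite of degree $6$, hence separable and generically \'etale in characteristic at least $5$, so \emph{every} line through a general point is free; higher-dimensional cases reduce to this by general hyperplane sections; and finally, if the bad locus had a divisorial component $D$, then $D$ would be ample (since $\mathrm{Pic}(X)=\ZZ$ for a smooth hypersurface of dimension at least $3$) and would therefore meet some free line $\ell$, producing a point of $D$ through which a free line passes --- a contradiction. That ``an effective divisor is ample and must meet a free line'' step is exactly what upgrades codimension $1$ to codimension $2$, and it is absent from your proposal.

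For part (2), your stratification of $X^{\mathrm{sm}}$ by common factors of the pair $(F''_x,F'''_x)$ is a plausible starting point, but the two steps that carry all the content --- the ``determinantal-style parameter count'' for each stratum and the cone argument for the stratum $F''_x\mid F'''_x$ --- are deferred rather than proved, so as written this part is a plan, not a proof. The paper's argument is much shorter: by (the proof of) Lemma \ref{lem:line} every irreducible component of the Fano scheme of $X$ has dimension $2n-6$ and the evaluation map has generic fiber dimension $n-4$ and is flat in codimension $1$, whence the jump locus has codimension at least $2$.
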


\begin{proof}
First assume that $\dim X=3$.
Then the evaluation map of the universal family of lines is finite surjective of degree $6$, 
thus separable and generically \'etale when the characteristic of the field is at least $5$.
So every line through a general point of a cubic $3$-fold is free.
For a smooth cubic hypersurface of dimension higher than $3$, we can take general hyperplane sections to cut it down to a $3$-fold.
A free line in the hyperplane section is also a free line in the hypersurface $X$.
So there is a free line through a general point of $X$ and the locus has codimension at least $1$. 
But if the locus of points in $X$ through which there does not exist a free line has a codimension $1$ component, 
then it is an ample divisor which will intersect a free line.
Thus the locus has to have codimension at least $2$.

For the second part, note that the evaluation map of the universal family of lines has generic fiber dimension $n-4$ and is flat in codimension $1$.
\end{proof}

\begin{lem}\label{lem:reduced}
Let $X \in \PP^n, n\geq 5$ be a normal cubic hypersurface which is not a cone over a plane cubic. 
Given two general points $x, y$, denote by $H_x$ and $H_y$ the tangent hyperplane at $x$ and $y$. 
Then the intersection $H_x \cap H_y \cap X$ is a reduced cubic hypersurface. 
If $X$ is smooth, then this is true for any two points. 
In fact, the intersection of $X$ with any two hyperplane is reduced.
\end{lem}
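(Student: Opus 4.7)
The plan is to handle the any-two-hyperplane case for smooth $X$ by an explicit coordinate computation, and the normal case (general points) by a two-step Bertini argument combined with the codimension bound on the singular locus.

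\textbf{Smooth case.} Assume $H_1 \neq H_2$ (the case $H_1 = H_2$ is handled by the analogous computation with one fewer variable). Choose coordinates so $H_1 = \{X_0 = 0\}$, $H_2 = \{X_1 = 0\}$, and set $\Lambda := H_1 \cap H_2 \cong \PP^{n-2}$. Write the defining cubic as
$$F = X_0 A(X) + X_1 B(X) + G(X_2, \ldots, X_n),$$
with $A, B$ quadric forms and $G$ a cubic form in $X_2, \ldots, X_n$. The scheme $X \cap \Lambda$ is cut out by $G$, and non-reducedness forces $G = L^2 M$ or $G = L^3$ for some linear forms $L, M$. At any point $p$ of the $(n-3)$-plane $\{L = 0\} \cap \Lambda \subset X$, direct differentiation gives $\partial F/\partial X_i(p) = 0$ for every $i \geq 2$, so smoothness at $p$ requires $A|_\Lambda(p) \neq 0$ or $B|_\Lambda(p) \neq 0$. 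However, the locus $\{L = A|_\Lambda = B|_\Lambda = 0\}$ in $\Lambda \cong \PP^{n-2}$ is an intersection of one linear and two quadric forms, so by Krull's height theorem its dimension is at least $n - 5 \geq 0$ for $n \geq 5$. This locus is therefore non-empty, forcing singular points of $X$ and contradicting smoothness. Hence $G$ is reduced, proving the ``any two hyperplanes'' statement for smooth $X$.

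\textbf{Normal case.} For normal $X$ not a cone over a plane cubic, I apply Bertini to the family of tangent hyperplanes $\{H_z\}_{z \in X^{sm}}$. This family is base-point-free: a common base point $p$ would lie in $T_z X$ for every smooth $z$, forcing $X$ to be a cone with vertex $p$, but normality bounds $\dim \text{Sing}(X) \leq n-3$, so the only normal cone structure not excluded by hypothesis is a cone over a plane cubic, which is ruled out. Bertini then yields, for general $x$, a section $X \cap H_x$ smooth away from $\text{Sing}(X) \cup \{x\}$, with a nodal singularity at $x$ by the higher-dimensional analogue of Lemma \ref{lem:surface} (\ref{surface2}). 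Applying Bertini again to the restriction of $\{H_z\}$ to $X \cap H_x$, a general $y$ makes $(X \cap H_x) \cap H_y$ smooth on the smooth locus of $X \cap H_x$; since $x \notin H_y$ for general $y$, the singular locus of $X \cap H_x \cap H_y$ reduces to $\text{Sing}(X) \cap H_x \cap H_y$, which has dimension at most $n - 5$ by the general position of $H_x \cap H_y$. Being a complete intersection, $X \cap H_x \cap H_y$ is Cohen--Macaulay of pure dimension $n - 3$; its singular locus having codimension $\geq 2$, Serre's criterion ($R_0 + S_1$) gives reducedness.

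\textbf{Main obstacle.} The delicate point is the base-point-freeness of the tangent-hyperplane family for a normal but singular $X$, together with the verification that Bertini applies in positive characteristic. Any common base point corresponds to a vertex of $X$; normality caps the vertex dimension at $n-3$, with equality precisely for cones over plane cubics (the excluded case). Cones with smaller vertex (dimension $\leq n-4$) can be handled directly: projecting from the vertex exhibits $X$ as a cone over a lower-dimensional cubic $Y$ satisfying the hypotheses, and reducedness of the corresponding intersection on $Y$ transports to $X$ via the cone structure. The separability of the Gauss map of a cubic in characteristic $\geq 5$ (as used in the proof of Lemma \ref{lem:surface}) ensures the Bertini applications are valid in the characteristic range of the paper.
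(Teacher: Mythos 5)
Your smooth case is correct and is essentially the paper's own argument: a repeated linear factor $L$ of $G$ forces $X$ to be singular along the non-empty locus $\{X_0=X_1=L=A|_\Lambda=B|_\Lambda=0\}$ when $n\geq 5$. The normal case, however, rests on two applications of ``Bertini'' to the family of tangent hyperplanes $\{H_z\}_{z\in X^{\text{sm}}}$, and this is a genuine gap. That family is not a linear system --- it is parameterized by the image of the Gauss map, a proper and generally non-linear subvariety of $(\PP^n)^*$ --- and every member is by construction tangent to $X$ somewhere, so the standard Bertini theorems say nothing about its general member. The assertion that for general $x$ the section $X\cap H_x$ is smooth away from $\mathrm{Sing}(X)\cup\{x\}$ with exactly a node at $x$ is a reflexivity statement for the Gauss map; it is delicate in positive characteristic, and the paper proves it only for normal cubic \emph{surfaces} (Lemma \ref{lem:surface}, part (\ref{surface2}), via separability of the degree-$12$ Gauss map). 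Your appeal to ``the higher-dimensional analogue of Lemma \ref{lem:surface} (\ref{surface2})'' for a singular normal cubic of dimension $\geq 4$ assumes an unproved statement essentially as strong as the lemma itself. The second Bertini application has the same defect, and the further claim that $\mathrm{Sing}(X)\cap H_x\cap H_y$ has dimension $\leq n-5$ ``by general position'' is not automatic either, since $H_x$ and $H_y$ range only over tangent hyperplanes (for a cone, for instance, every tangent hyperplane contains the vertex, so tangent hyperplanes need not cut $\mathrm{Sing}(X)$ properly).

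For contrast, the paper sidesteps all of this with a degeneration: cut $X$ by $n-3$ general hyperplanes to get a normal cubic surface with only finitely many lines (hence not a cone), form the isotrivial family $F(tX_0,\dots,tX_{n-4},X_{n-3},\dots,X_n)=0$ whose central fiber is the cone over that surface and whose general fiber is isomorphic to $X$, and observe that reducedness of $H_x\cap H_y\cap X$ for general $x,y$ is an open condition on the family, so it suffices to check it on the central fiber, where it reduces to the surface statement of Lemma \ref{lem:surface} (\ref{surface4}). If you wish to keep your two-step structure, you would have to replace ``Bertini'' by an explicit incidence-variety dimension count together with separability of the Gauss map for normal non-cone cubics in the relevant characteristics; as written, the normal case is not proved.
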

\begin{proof}
If we take $(n-3)$ general hyperplane sections of $X$ then we get a normal cubic surface which is not a cone. 
The normality is clear. Since the family of lines of $X$ has the expected dimension, the cubic surface constructed has only finitely many lines. So it is not a cone.
Without loss of generality, assume the hyperplanes are $X_0=X_1= \ldots=X_{n-4}=0$. 
Denote by $F(X_0, \ldots, X_n)$ the defining equation for $X$. 
Then take the family $F(tX_0, \ldots, tX_{n-4}, X_{n-3}, \ldots, X_n)=0$ in $\PP^n \times \mathbb{A}^1$.
This is an isotrivial family $\mathcal{V} \to T\cong \mathbb{A}^1$ such that the fiber over $0$ is a cone over a normal cubic surface which is not a cone over a plane cubic and a general fiber is isomorphic to $X$.

Since the condition that $H_x \cap H_y \cap X$ is a reduced cubic hypersurface is an open condition, it suffices to show this for two general points in the central fiber, which is (\ref{surface4}) of Lemma \ref{lem:surface}.

Next we discuss the case where $X$ is a smooth cubic hypersurface. 
Let $H_1$ and $H_2$ be two hyperplanes. 
Without loss of generality, assume $H_1=\{X_0=0\}$ and $H_2=\{X_1=0\}$. 
If $X \cap H_1 \cap H_2$ is non-reduced, then the defining equation of $X$ has the form
\[
X_2^2X_3+X_0 Q_0+X_1 Q_1=0,
\]
or
\[
X_2^3+X_0 Q_0+X_1 Q_1=0.
\]
Clearly $X$ is singular along $X_0=X_1=X_2=Q_1=Q_2=0$, which is non-empty.
\end{proof}

\begin{lem}\label{lem:tangentcone}
Let $X$ be a normal cubic hypersurface of dimension at least $2$ defined over an algebraically closed field of charateristic at least $7$. Assume that $X$ is not a cone over a plane cubic.
Then for a general point $x$, the projective tangent cone of $X$ at $x$ is reduced.
If furthermore $X$ is smooth, then the projective tangent cone is a smooth quadric hypersurface.
\end{lem}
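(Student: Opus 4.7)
The plan is to reduce the reducedness assertion to the cubic-surface case (Lemma~\ref{lem:surface}(\ref{surface2})) by hyperplane sections, and, in the smooth case, to deduce non-degeneracy of the tangent cone from generic \'etaleness of the Gauss map. Since $X$ is normal, a general point $x$ lies in $X^{\mathrm{sm}}$; fixing coordinates so that $x=[1,0,\ldots,0]$ and $H_x=\{X_1=0\}$, the defining equation takes the shape
\[
F = X_0^2 X_1 + X_0 Q(X_1,\ldots,X_n) + C(X_1,\ldots,X_n),
\]
so the projective tangent cone at $x$ is the quadric $T_x = \{Q(0,X_2,\ldots,X_n)=0\} \subset \PP^{n-2}$.

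For reducedness, the case $\dim X = 2$ is exactly Lemma~\ref{lem:surface}(\ref{surface2}). For $\dim X \geq 3$, I would slice by a general $3$-dimensional linear subspace $L \ni x$: by Bertini for normal varieties the surface $S = X \cap L$ is normal with $x \in S^{\mathrm{sm}}$, and for generic $L$ it is not a cone over a plane cubic. Lemma~\ref{lem:surface}(\ref{surface2}) applied to $S$ then says $H_x \cap S$ has a node at $x$, so the tangent cone of $S$ at $x$ is a reduced pair of points in $\PP^1 = \PP(T_x S)$. This tangent cone equals $T_x \cap \PP(T_x S)$, and as $L$ varies $\PP(T_x S)$ sweeps out a general line in $\PP(T_x X) = \PP^{n-2}$; a quadric meeting a general line in two distinct points cannot be a double hyperplane, so $T_x$ is reduced.

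For smoothness when $X$ is smooth, the quadric $T_x$ is the second fundamental form of $X \subset \PP^n$ at $x$, whose non-degeneracy at a general $x$ is equivalent to the Gauss map $g : X \to (\PP^n)^*$, $y \mapsto H_y$, being generically unramified. The image $X^* = g(X)$ is a hypersurface (classically of degree $3\cdot 2^{n-1}$), and in characteristic $p\geq 7$ we have $p \nmid d(d-1) = 6$, so $g$ is separable by Hefez--Kleiman; since $\dim X = \dim X^* = n-1$, $g$ is generically \'etale and $T_x$ is a smooth quadric on a dense open of $X$. I expect the main obstacle to be precisely this last step: arguing that the Gauss map of a smooth cubic is separable and birational onto a hypersurface image in positive characteristic requires genuine positive-characteristic input, and it is this that forces the hypothesis $\mathrm{char}\geq 7$, matching the restriction already used in Lemma~\ref{lem:surface}.
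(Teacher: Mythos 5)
Your overall strategy --- reduce reducedness to the cubic-surface case and deduce smoothness of the tangent cone from separability of the Gauss map --- is the paper's strategy, and your treatment of the smooth case is essentially the paper's argument: the degree of the Gauss map divides $d(d-1)^{n-1}=3\cdot 2^{n-1}$, whose only prime factors are $2$ and $3$, so in characteristic $\geq 7$ the map is separable, hence generically \'etale, hence the second fundamental form is nondegenerate at a general point.

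The reducedness step, however, has a genuine gap. You fix a general $x\in X$, choose a general $3$-plane $L\ni x$, and then apply Lemma \ref{lem:surface} (\ref{surface2}) to $S=X\cap L$ \emph{at the point $x$}. That lemma only asserts that $H_y\cap S$ is nodal at a \emph{general} point $y$ of $S$; to invoke it at your prescribed point $x$ you must know that $x$ lies in the good locus of the specific surface $S$, i.e.\ that the second fundamental form of $S$ at $x$ --- which is exactly the restriction of the quadric $T_x$ to the line $\PP(T_xS)$ --- is nondegenerate. Since $T_x$ restricts non-reducedly to \emph{every} line through a point of $\PP(T_xX)$ precisely when $T_x$ is a double hyperplane, the assertion ``$x$ is a general point of $S$ in the required sense'' is equivalent to the statement you are trying to prove, so the argument as written is circular. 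The gap is repairable: either (i) note that reducedness of $T_x$ is the open condition that the second fundamental form of $X$ at $x$ has rank at least $2$, so it suffices to exhibit a single smooth point where this holds, which a general (unconstrained) point of a general surface section supplies; or (ii) run an incidence-variety dimension count over pairs $(y,L)$ to show the bad locus cannot dominate $X$ with full fibers; or (iii) follow the paper, which degenerates $X$ isotrivially to a cone over a general normal cubic surface section as in Lemma \ref{lem:reduced} and uses openness of the condition in that family. As written, the step ``Lemma \ref{lem:surface}(\ref{surface2}) applied to $S$ then says $H_x\cap S$ has a node at $x$'' is not justified.
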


\begin{proof}
The tangent hyperplane at a general point of a normal cubic surface which is not a cone intersects the cubic surface at a nodal plane cubic. 
Thus the projective tangent cone at this point is reduced. 
We can use the same degeneration as in Lemma \ref{lem:reduced} to show this for the cubic hypersurface $X$.

If the hypersurface $X$ is smooth, it suffices to show that the Gauss map is generically \'etale, or equivalently, separable. This is true since the degree of the Gauss map is $12$ (c.f. proof of Lemma \ref{lem:surface}, (\ref{surface2})) and the characteristic is not $2$ or $3$.
\end{proof}

\begin{lem}\label{lem:break3}
Let $X$ be a smooth cubic hypersurface of dimension at least $3$ and $C$ a conic or the intersection of $X$ with a tangent plane which has only one node as the singularity. Then $C$ is a free curve and there is a deformation of $C$, parameterized by an irreducible curve, to a chain of free lines.
\end{lem}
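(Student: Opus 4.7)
\textit{Proof plan.} I would handle the two cases of $C$ (smooth conic, nodal plane cubic) in parallel by analyzing the normalization $f\colon\PP^1\to X$, which is an immersion of degree $e\in\{2,3\}$ since the unique node of the tangent plane cubic is resolved under normalization. Freeness of $C$ amounts to $f^*T_X$ being globally generated, equivalently to the normal sheaf $N_f$ having no negative summand on $\PP^1$; the deformation statement amounts to producing, inside the irreducible component of the Kontsevich moduli space of genus zero degree $e$ stable maps to $X$ which contains $[f]$, a stable map whose domain is a chain of $e$ free $\PP^1$'s, together with an irreducible curve in the moduli space connecting the two points.

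For the freeness of a smooth conic $C$ in $X$ of dimension $d\geq 3$, the conic spans a $2$-plane $\Pi$ with $\Pi\cap X=C\cup L$ residually a line, and I would combine the exact sequence
\[
0\to N_{C/X}\to N_{C/\PP^{d+1}}\to \OO_C(3)\to 0
\]
with the standard splitting $N_{C/\PP^{d+1}}\cong \OO_{\PP^1}(4)\oplus \OO_{\PP^1}(2)^{d-1}$ to see that $N_{C/X}$ has rank $d-1$ and degree $2d-4$. A direct calculation based on the defining equation of $X$, combined with the fact that a general line through any point of $X$ is free (Lemma \ref{lem:codim2}), shows that every summand of $N_{C/X}$ is non-negative. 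For the nodal plane cubic $C$, the map $f$ factors through the tangent $2$-plane $\Pi\subset T_xX$; I would compute $f^*T_X$ by restricting $T_{\PP^{d+1}}$ and using $N_{\Pi/\PP^{d+1}}\cong\OO_\Pi(1)^{d-1}$, and check that the resulting splitting type on $\PP^1$ is non-negative.

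For the deformation, the strategy is first to produce, in the irreducible component of the Kontsevich space containing $[f]$, a stable map whose domain is a chain of $e$ free lines, and then to connect the two points by an irreducible curve. For the first step I would use the standard comb/smoothing argument: start from a chain of $e$ free lines in $X$, which exists by Lemma \ref{lem:codim2}; glue and smooth it to a smooth rational curve of degree $e$; and use irreducibility of the Kontsevich space of low-degree rational curves on smooth cubic hypersurfaces of dimension $\geq 3$ in characteristic $\geq 7$ to conclude that the smoothing lies in the same component as $[f]$. Within this component, chains whose every component is a free line form a non-empty dense open subset of the appropriate boundary divisor (by a dimension count using the codimension bound of Lemma \ref{lem:codim2}), so $[f]$ can be connected to such a chain by an irreducible curve in the moduli space.

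The main obstacle I expect is the verification of freeness without any genericity hypothesis on $C$: the sketched semicontinuity argument only yields freeness for a general conic or a general tangent plane cubic in its moduli, and to cover \emph{every} such $C$ one needs either a direct positivity argument for the normal sheaf of $f$ at every point, or a careful case analysis of the possible splitting types of $N_f$ ruled out by the lower bounds on $\deg N_f$ and the codimension estimate of Lemma \ref{lem:codim2}. Establishing irreducibility of the Kontsevich space of degree $2$ and $3$ rational curves on $X$ is also a delicate point but is standard under the given hypotheses on the characteristic.
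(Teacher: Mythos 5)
Your route diverges from the paper's and leaves two genuine gaps, one of which you flag yourself. First, the lemma needs freeness for \emph{every} conic and every one-nodal tangent-plane section, not just a general one, and your degree count for $N_{C/X}$ (rank $d-1$, degree $2d-4$) does not by itself exclude a negative summand; the promised ``direct calculation from the defining equation'' is exactly the missing content. The paper avoids this entirely by cutting down: take general hyperplanes through (the plane of) $C$ so that $C$ sits on a cubic surface $S=X\cap\PP^3$, observe that a conic and a nodal plane cubic on a smooth cubic surface are free (the nodal cubic is even very free, cf.\ Lemma \ref{lem:surface}), and then lift freeness to $X$ via $0\to N_{C/S}\to N_{C/X}\to N_{S/X}|_C\to 0$, whose quotient is a sum of restrictions of $\OO(1)$'s and hence positive. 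This handles all $C$ uniformly.

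Second, your deformation argument leans on irreducibility of the Kontsevich space of degree $2$ and $3$ rational curves on $X$, which you call standard but do not prove; in positive characteristic this is a delicate global statement, and it is not needed. The lemma (and its consumer, Lemma \ref{porcupine}) only requires \emph{one} irreducible one-parameter family joining $C$ to \emph{some} chain of free lines. The paper gets this locally: first deform $C$ inside its (smooth, hence irreducible) deformation space to a general conic, resp.\ a twisted cubic, contained in a general $\PP^3$ whose intersection with $X$ is a smooth cubic surface all of whose $27$ lines are free in $X$; then degenerate the conic, resp.\ twisted cubic, to a chain of lines within a linear system on that surface. Concatenating these two irreducible families gives the required deformation without any statement about the global structure of the moduli space. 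I would rework your proof along these lines, or else supply both the pointwise positivity of $N_{C/X}$ and the irreducibility input you are currently assuming.
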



\begin{proof}
First of all, a conic and a nodal plane cubic in a smooth cubic surface is always free. 
Then the freeness in higher dimensional case can be proved by taking general hyperplane sections containing the curve $C$ and using the normal sheaf exact sequence.
In fact a nodal plane cubic is a very free curve in a smooth cubic surface and thus is also very free in $X$.

If $C$ is a nodal plane cubic, then one can take a general deformation of $C$, which yields an embedded rational curve of degree $3$, i.e. a twisted cubic. 
The twisted cubic determines a unique $\PP^3$. If the twisted cubic is general, then the $\PP^3$ is general. 

If $C$ is a conic, then one can also find a general $\PP^3$ containing (a general deformation of) $C$.

For a general $\PP^3 \subset \PP^n$, the intersection of $X$ with the $\PP^3$ is a smooth cubic surface all of whose $27$ lines are free lines in $X$. 
One can find a deformation of a conic (resp. twisted cubic) degenerate to a chain of lines in a cubic surface (e.g. one can take the degeneration in the linear system of the conic or the twisted cubic). This gives the desired deformation in $X$.
\end{proof}

\subsection{Asymptotic canonical sequence}
By Corollary \ref{cor:semiC}, there is a tower of degree $2$ base changes $C_0 \to C_1 \to \ldots \to C_n=B$ 
such that for the cubic hypersurface $X$ defined over $\FF_q(B)$, the base change $X \times_{\FF_q(B)} \FF_q(C_0)$ has an integral model over $C_0$ 
whose closed fibers are normal and not a cone over a plane cubic.

The cubic hypersurface $X$ has an $\FF_q(C_i)$-rational point if and only if $X$ has a rational point over $\FF_q(C_{i-1})$ (c.f. proof of the case of cubic hypersurfaces in Lemma \ref{reduction}). Thus the Hasse principle for $X$ will follow from the following.
\begin{lem}
Let $\mcX \to B$ be a family of cubic hypersurfaces in $\PP^n, n\geq 5$ 
defined over an algebraically closed field of characteristic greater than $5$. 
Assume that every fiber is reduced and irreducible, normal, and not a cone over a plane cubic. 
Then there is an asymptotic canonical sequence of sections.
\end{lem}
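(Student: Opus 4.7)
The plan is to adapt the strategy of Lemma \ref{lem:connectQ} from the quadric case, replacing the family of conics there by a family of chains of rational curves produced via Madore's $R$-equivalence construction (Lemma \ref{lem:rconnected}). Given two geometrically irreducible components $W_1,W_2$ of the space of nice sections, pick representative nice sections $s_1 \in W_1$ and $s_2 \in W_2$. It suffices to show that, after adding enough free lines in general fibers, $s_1$ and $s_2$ lie in the same geometrically irreducible component of the Kontsevich moduli space of stable sections. By Lemma \ref{lem} together with weak approximation (Appendix \ref{WA}), I would first deform $s_1$ and $s_2$ inside their components so that for every $b\in B$ one has $s_1(b)\ne s_2(b)$, both points lie in the smooth locus $\mcX_b^{\text{sm}}$, and the hyperplane sections $H_{s_i(b)}\cap \mcX_b$ together with the triple intersection $H_{s_1(b)}\cap H_{s_2(b)}\cap \mcX_b$ are reduced cubic hypersurfaces with reduced projective tangent cones at $s_i(b)$ (possible by Lemmas \ref{lem:reduced} and \ref{lem:tangentcone}).

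The core construction is a relative version of Madore's argument. Using weak approximation once more, find an auxiliary section $z:B\to \mcX$ whose value $z(b)$ lies in the common tangent-hyperplane intersection $Y(b):=H_{s_1(b)}\cap H_{s_2(b)}\cap \mcX_b$ at every $b$; this is possible because for $n\ge 5$ the family $b\mapsto Y(b)$ sweeps out a subscheme of $\mcX$ of dimension at least $n-2\ge 3$ that dominates $B$ with positive-dimensional fibers. The cubic $Y_i(b):=H_{s_i(b)}\cap \mcX_b$ has multiplicity $2$ at $s_i(b)$, and Lemma \ref{lem:rconnected} applied with singular point $s_i(b)$ and target $z(b)$ produces a rational curve $R_i^b\subset Y_i(b)$ joining the two. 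Carrying out these constructions in families yields two pencils of rational curves assembling into a reducible ruled surface $R_1\cup R_2\to B$, with $s_1,z$ as sections of $R_1$ and $z,s_2$ as sections of $R_2$.

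With this ruled surface in hand I would argue exactly as in Lemma \ref{lem:connectQ}: resolve the fiberwise singularities of $R_1\cup R_2$, express the strict transforms of $s_1$ and $s_2$ as linearly equivalent divisors modulo a collection of vertical components, and obtain a pencil of sections of $\mcX\to B$ whose two boundary members are combs with handles $s_1,s_2$ and whose teeth are conics, nodal tangent-plane cubics, or chains of rational curves concentrated in finitely many fibers. By Lemma \ref{lem:break3} each such tooth admits, in an irreducible one-parameter family, a degeneration to a chain of free lines, so Lemma \ref{porcupine} places the resulting combs at smooth points of a single geometrically irreducible component of the space of stable sections. Iterating Construction \ref{Sequence} then forces $M_k(W_1)=M_k(W_2)$ for all $k$ sufficiently large, establishing that the sequence is asymptotically canonical.

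The main obstacle is carrying the fiberwise Madore construction out uniformly over $B$, especially across those fibers of $\mcX\to B$ that are non-normal (which by Corollary \ref{cor:semiC} can occur only when $n=5$) and those where the tangent hyperplane sections or tangent cones degenerate. At such fibers one must choose $z$ to avoid the codimension-$2$ loci of Lemmas \ref{lem:line} and \ref{lem:codim2} and to keep $Y(b)$ reduced, so that the birational projection from $s_i(b)$ remains well-defined and produces genuine rational curves rather than degenerating to lines in the exceptional divisor. Ensuring these genericity conditions simultaneously at every problematic fiber is precisely the role of weak approximation from Appendix \ref{WA}; once they are arranged, the remainder of the argument is formally parallel to the quadric case.
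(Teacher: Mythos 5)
Your proposal follows essentially the same route as the paper: both pass through an auxiliary section of $\mathcal{H}_{s_1}\cap\mathcal{H}_{s_2}\cap\mcX$ placed by weak approximation, build ruled surfaces of tangent-plane cubics via the projection of Lemma \ref{lem:rconnected}, arrange the genericity conditions of Lemmas \ref{lem:line}, \ref{lem:codim2}, \ref{lem:reduced}, \ref{lem:tangentcone} at the bad fibers, and conclude with Lemmas \ref{lem:break3} and \ref{porcupine}. The only difference is presentational: instead of one linear-equivalence argument on the glued reducible surface $R_1\cup R_2$ (where $s_1$ and $s_2$ lie on different irreducible components, so the pencil argument of Lemma \ref{lem:connectQ} does not apply verbatim), the paper runs the pencil on each irreducible ruled surface separately and obtains $M_{N_1+i}(s_1)=M_{N_3+i}(s_3)$ and $M_{N_2+i}(s_2)=M_{N_3'+i}(s_3)$, concluding by transitivity.
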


\begin{proof}
First note that there exist free sections of the family. The proof is similar as the case of quadric hypersurface fibrations. 
We choose a point in the smooth locus of every singular fiber of the family and there is a section through these points since weak approximation is true 
for the family (Theorem \ref{thm:WACubic}).
This section lies in the smooth locus of $\mcX$.
After adding enough very free rational curves in general fibers and taking a general smoothing,
we get a free section.
Then we can find families of nice sections.
So we can apply Construction \ref{Sequence}.

Given two geometrically irreducible components of sections, choose two nice sections $s_1$ and $s_2$ in each family. Let $\mathcal{H}_1 \to B$ (resp. $\mathcal{H}_2 \to B$) be the family of tangent hyperplanes of along $s_1$ (resp. $s_2$).
Let $\mathcal{Y} \to B$ be the intersection of $\mathcal{H}_1, \mathcal{H}_2$ and $\mcX$. 
Then $\mathcal{Y} \to B$ is a family of cubic hypersurfaces in $\PP^{n-2}, n-2 \geq 3$.
Up to replacing the two sections with general deformations,
we may assume the following:
\begin{enumerate}
\item \label{c1}
For every point $b \in B$ such that $\mcX_b$ is singular, $s_1(b)$ and $s_2(b)$ are general points in $\mcX_b$. That is, the conclusions of Lemmas \ref{lem:line}, \ref{lem:reduced}, \ref{lem:tangentcone} holds for $X$.
\item \label{c2}
For every point $b \in B$ such that $\mcX_b$ is smooth, there is at least a free line through $s_1(b)$ (resp. $s_2(b)$) and the family of lines through $s_1(b)$ (resp. $s_2(b)$) has dimension $n-4$. This is possible by Lemma \ref{lem:codim2}.
\item \label{c3}
For a general point $b \in B$, the projective tangent cone of $\mathcal{H}_{s_1(b)} \cap \mcX_b$ at $s_1(b)$ (resp. $s_2(b)$) is a smooth quadric hypersurface. In addition, the fiber $\mathcal{Y}_b$ is smooth for a general point $b$.
\item \label{c4}
For every point $b \in B$, the projective tangent cone of $\mcX_b$ at $s_1(b)$ (resp. $s_2(b)$) is a reduced quadric hypersurface.
\item \label{c5}
The intersection $\mathcal{Y} \cap \mcX^0 \cap \mathcal{X}^1$ is a non-empty open subset of $\mathcal{Y}$. For the definition of $\mcX^0$ and $\mcX^1$, see the paragraph before Definition \ref{def:nicesection}.
\end{enumerate}
Note that (\ref{c4}) follows from (\ref{c2}) when the fiber $\mcX_b$ is smooth since if the projective tangent cone at a point is non-reduced, then the family of lines through this point has no smooth point.

By Lemma \ref{lem:reduced} and our choices of $s_1, s_2$, every fiber of $\mathcal{Y}$ is reduced. 

By weak approximation for the family $\mathcal{Y} \to B$ (Theorem \ref{thm:WACubic}), 
given finitely many points in the smooth locus of different fibers, 
one can find a free section $s$ of $\mathcal{Y} \to B$ passing through these points,
which can be considered as a section of $\mcX$ by composing with the inclusion $\mathcal{Y} \to \mcX$. 
This section $s$, considered as a section of $\mcX$, is free provided that its $\OO_{\mcX}(1)$ degree is sufficiently large compared to the genus of $B$.
This condition can be achieved by adding very free curves in general fibers of $\mathcal{Y} \to B$ to $s$ and taking a general deformation of the comb in $\mathcal{Y}$.
In other words, given finitely many points in the smooth locus of different fibers of $\mathcal{Y} \to B$,
there is a free section $s$ of $\mathcal{Y} \to  B$ passing through these points.
Furthermore, when considered as a section of $\mcX \to B$, this section $s$ is also free.

There are two families of projective spaces $\mathcal{P}_1=\text{Proj}(E) \to B$ and $\mathcal{P}_2=\text{Proj}(E) \to B$
together with birational maps $\mathcal{P}_1 \dashrightarrow \mcX \cap \mathcal{H}_1$ and $\mathcal{P}_2 \dashrightarrow \mcX \cap \mathcal{H}_2$, 
whose restriction to every fiber is the birational map $\PP^{n-2} \dashrightarrow \mcX_b \cap \mathcal{H}_{1 b}$ and $\PP^{n-2} \dashrightarrow \mcX_b \cap \mathcal{H}_{2 b}$ discussed in Lemma \ref{lem:rconnected}.
The space of lines through $s_1(b)$ (resp. $s_2(b)$) has dimension $n-3$ by construction. 
Since the indeterminancy locus in the fiber $\mathcal{P}_{1 b}$ (resp. $\mathcal{P}_{2 b}$) corresponds to the family of lines through $s_1(b)$ (resp. $s_2(b)$), it has codimension $2$ in each fiber.
Inside the family of projective spaces, there is a family of quadric hypersurfaces $\mathcal{Q}_1 \subset \mathcal{P}_1$ (resp. $\mathcal{Q}_2 \subset \mathcal{P}_2$) 
which corresponds the family of projective tangent cones of $\mcX \to B$ along the section $s_1: B \to \mcX$ (resp. $s_2$).
By Lemma \ref{lem:tangentcone}, every fiber of $\mathcal{Q}_1$ and $\mathcal{Q}_2$ is reduced.

Choose a general $2$-free section $s_3$ of $\mathcal{Y} \to B$ such that 
\begin{enumerate}
\item
For every point $b \in B$ such that $\mcX_b$ is singular, the point $s_3(b)$ is in the locus where the birational maps $\mathcal{P}_{1 b} \dashrightarrow \mcX_b \cap \mathcal{H}_{1 b}$ and $\mathcal{P}_{2 b} \dashrightarrow \mcX_b \cap \mathcal{H}_{1 b}$ are isomorphisms.
\item
After composing with the inclusion $\mathcal{Y} \subset \mcX$, the section is a nice section of the family $\mcX \to B$.
\end{enumerate}
It is easy to find $2$-free sections of $\mathcal{Y} \to B$ whose composition with the inclusion $\mathcal{Y} \subset \mcX$ is a $2$-free section of $\mcX \to B$. Then it is a nice section by (\ref{c5}).

In the following we show how to find a ruled surface which contains the two sections $s_1$ and $s_3$. 

There is a section $\sigma_3$ of the family $\mathcal{P}_1 \to B$ whose image under the birational map $\mathcal{P}_1 \dashrightarrow \mcX \cap \mathcal{H}_1$ is the section $s_3$.
Take a section $\sigma_1$ of the family of quadrics $\mathcal{Q}_1 \subset \mathcal{P}_1 \to B$ such that for each point $b$ where $\mcX_b$ is singular, 
the line spanned by $\sigma_1(b)$ and $\sigma_3(b)$ inside $\mathcal{P}_{1 b}$ does not meet the indeterminancy locus of $\mathcal{P}_{1 b} \dashrightarrow \mcX \cap \mathcal{H}_1$. 
This is possible since the locus, which is the same as the locus parameterizing the family of lines through $s_1(b)$, has codimension at least $2$ along every fiber over $b$ by the assumptions on $s_1$.

Let $\mathcal{L}_1 \to B$ be the family of lines in $\mathcal{P}_1 \to B$ spanned by the two sections $\sigma_1$ and $\sigma_3$. There is a rational map $\mathcal{L}_1 \dashrightarrow \mcX$.
Denote by $\mathcal{S}_1 \to B$ the normalization of the closure of the image of $\mathcal{L}_1 \dashrightarrow \mcX$. There is a morphism $ev: \mathcal{S}_1 \to \mcX$. By construction,
\begin{enumerate}
\item
Each fiber of $\mathcal{S}_1 \to B$ is either smooth or is a union of two $\PP^1$'s, both with multiplicity $1$.
\item
Each fiber over $b$ is mapped to a plane cubic which is the intersection of a tangent plane at $s_1(b)$ and $\mcX_b$.
\item 
The two sections $\sigma_1$ and $\sigma_3$ are also sections of the family $\mathcal{S}_1 \to B$, which lie in the smooth locus of $\mathcal{S}_1$, and such that $ev\circ \sigma_1=s_1, ev \circ \sigma_3=s_3$.
\item
For any point $b \in B$ such that $\mcX_b$ is singular, the fiber $\mathcal{S}_{1 b}$ is mapped to an irreducible nodal plane cubic.
\end{enumerate}
Using the ruled surface $\mathcal{S}_1 \to B$, we can assemble two combs $C_1, C_3$. 
The handle of $C_1$ (resp. $C_3$) is $s_1$ (resp. $s_3$) and the teeth are free lines, conics, and nodal plane cubics. 
We may construct the two combs in such a way that none of the teeth is contained in the singular fibers of $\mcX$. 
This is because that $\mathcal{S}_1$ has irreducible fiber over any point $b \in B$ over which the fiber $\mcX_b$ is singular. 
Furthermore there is a deformation of the two combs given by a pencil in the ruled surface $\mathcal{S}_1$.

Then by Lemma \ref{lem:break3} and Corollary \ref{porcupine}, there exist numbers $N_1, N_3$ such that $M_{N_1+i}(s_1)=M_{N_3+i}(s_3)$ for all $i \geq 0$.

Similarly we have $M_{N_2+i}(s_2)=M_{N_3'+i}(s_3)$ for some $N_2, N_3'$ and all $i \geq 0$. So there is a canonical sequence.
\end{proof}


\section{Hasse principle and weak approximation for complete intersection of two quadrics}\label{sec:22}
\subsection{$R$-equivalence and weak approximation}
In this section we briefly review a construction in \cite{CTSD}. 
\begin{lem}\label{lem:rconnect}
Let $k$ be a field of odd characteristic such that the set of rational points on every smooth quadric hypersurface of positive dimension is Zariski dense once non-empty.
Let $X$ be a smooth complete intersection of two quadrics in $\PP^n, n\geq 5$ defined over the field $k$ and $x$ a general $k$-rational points of $X$. 
If $y$ is another general (for the condition of being ``general", see the proof) $k$-rational point, 
then there is a map $f: \PP^1 \to X$ defined over $k$ such that $f(0)=f(\infty)=x, f(1)=y$.
\end{lem}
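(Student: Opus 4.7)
The plan is to realize $f$ as a $k$-parametrization of the normalization of $C = \Pi \cap X$, where $\Pi \subset \PP^n$ is a suitably chosen $k$-rational $3$-plane through both $x$ and $y$. Concretely, I aim for $C$ to be a geometrically integral nodal curve of arithmetic genus $1$ with a single node at $x$, with $y$ as a smooth $k$-rational point and with the two tangent branches of $C$ at $x$ individually defined over $k$. Since $p_a(C) = 1$ and $C$ has exactly one node, the normalization $\tilde C$ has geometric genus $0$; the smooth $k$-point $y$ then forces $\tilde C \cong \PP^1_k$, and the $k$-rationality of the two preimages of the node lets me parametrize $\tilde C \to C \hookrightarrow X$ sending those preimages to $0$ and $\infty$, and a preimage of $y$ to $1$, producing the required $f$.

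To find such a $\Pi$: a direct tangent-space computation shows that $C = \Pi \cap X$ is singular at $x$ precisely when $\Pi$ is contained in the unique hyperplane $H' = \mathrm{span}(T_x X, y)$ (assuming the generic condition $y \notin T_x X$). The $3$-planes with $\overline{xy} \subset \Pi \subset H'$ form a $k$-rational Grassmannian isomorphic to $\mathrm{Gr}(2, n-2)$, which is positive-dimensional thanks to $n \geq 5$. On a Zariski open dense subset of this Grassmannian, $C$ is geometrically an irreducible nodal curve of arithmetic genus $1$, smooth away from $x$ and containing $y$ in its smooth locus (the complementary loci, where $C$ becomes reducible, acquires a cusp, or fails to meet $y$ smoothly, are proper closed subschemes). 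The standing hypothesis on $k$, which in particular forces $k$ to be infinite since density of rational points on positive-dimensional smooth quadrics requires this, then supplies $k$-rational $\Pi$'s in this open subset.

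The main obstacle is arranging further that the two tangent branches of $C$ at $x$ be individually $k$-rational, equivalently, that the tangent cone of $C$ at $x$, a singular plane conic in the $2$-plane $\Pi \cap T_x X$ whose unique singular point is $x$, split over $k$ as a pair of $k$-rational lines. As $\Pi$ varies over the Grassmannian, I plan to check that the splitting condition cuts out an open subset of a smooth quadric hypersurface built naturally from the restrictions of the two defining quadratic forms to the tangent space $T_x X$. The standing hypothesis, applied to this auxiliary quadric, then produces a $k$-rational $\Pi$ for which both branches at the node are defined over $k$. With such a $\Pi$ in hand, the normalization and parametrization described in the first paragraph deliver the map $f$ required by the lemma.
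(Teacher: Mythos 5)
Your proposal is essentially the paper's own proof viewed upstream of the projection from $x$: your nodal quartic $C=\Pi\cap X$ is exactly the image of the conic that the paper constructs in the quadric $Q$ obtained by projecting the tangent hyperplane section $X\cap H'$ from $x$, and your ``auxiliary quadric'' (the projectivized tangent cone of $X\cap H'$ at $x$, cut out by the restrictions of the defining forms to $T_xX$) is exactly the paper's hyperplane section $E=\{X_2=0\}\cap Q$, on which the paper likewise chooses the two $k$-rational points giving the two branches of the node at $x$. The construction, the genericity conditions on $x,y$, and the appeal to the standing hypothesis on quadrics all coincide, so this is the same argument in different clothing.
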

\begin{proof}
Assume $x$ has coordinate $[1, 0, \ldots, 0]$ and write the equation of $X$ as
\[
\left\{
\begin{aligned}
&X_0 X_1+q(X_1, \ldots, X_n)=0\\
&X_0 X_2+q'(X_1, \ldots, X_n)=0.
\end{aligned}
\right.
\]
Consider the pencil of tangent hyperplanes $\lambda X_1+\mu X_2$ at $x$. There is exactly one member of the pencil $\lambda X_1+\mu X_2$ contains the point $y$. Denote it by $H$. The intersection of this hyperplane $H$ and $X$ is a singular $(2, 2)$-complete intersection in $\PP^{n-1}$. Projection from $x$ one gets a quadric hypersurface in $\PP^{n-2}$ defined by the equation 
\[
\lambda q(X_1, \ldots, X_n)+\mu q'(X_1, \ldots, X_n)|_{\lambda X_1+\mu X_2=0}=0.
\]

Here we take the condition that $x, y$ are general points to mean that the quadric hypersurface 
\[
\lambda q(X_1, \ldots, X_n)+\mu q'(X_1, \ldots, X_n)|_{\lambda X_1+\mu X_2=0}=0
\]
is smooth and its hyperplane section 
\[
\lambda q(0, 0, X_3 \ldots, X_n)+\mu q'(0, 0, X_3, \ldots, X_n)=0
\]
is geometrically integral. 

For any infinite field $k$ and a general $[\lambda, \mu] \in \PP^1$, 
the hypersurface 
\[
\lambda q(X_1, \ldots, X_n)+\mu q'(X_1, \ldots, X_n)|_{\lambda X_1+\mu X_2=0}=0
\]
is smooth. 

If $n=5$ and the point $x$ is general, in the sense that there is a free line through $x$, 
a general choice of $[\lambda, \mu] \in \PP^1$ gives a smooth conic 
\[
\lambda q(0, 0, X_3 \ldots, X_5)+\mu q'(0, 0, X_3, \ldots, X_5)=0.
\] 
To see this, just note that if there is a free line, then $q$ and $q'$ cannot be simultaneously singular at the same point 
otherwise they are cones with the same vertex and there is only one line through $x$ with multiplicity $4$. 

If $n \geq 6$, then we can take $(n-5)$-general hyperplane sections to cut down to the case $n=5$. Clearly for general points $x, y$, the quadric hypersurface
\[
\lambda q(X_1, \ldots, X_n)+\mu q'(X_1, \ldots, X_n)|_{\lambda X_1+\mu X_2=0}=0
\]
is smooth and its hyperplane section 
\[
\lambda q(0, 0, X_3 \ldots, X_n)+\mu q'(0, 0, X_3, \ldots, X_n)=0
\] 
is geometrically integral if the $(n-3)$-general hyperplane sections have the same properties.

For the simplicity of the discussion, we assume in the following that the tangent hyperplane at $x$ containing $y$ is given by $X_1=0$. Thus $y$ being general means that $q(0, X_2, \ldots, X_n)=0$ and $q(0, 0, X_3, \ldots, X_n)=0$ defines a smooth quadric hypersurface $Q$ in $\PP^{n-2}$ and a geometrically integral hyperplane section of $Q$. The hypersurface $Q$, by construction, is birational to the singular $(2, 2)$ complete intersection and the birational map is explicitly given as
\begin{align*}
Q &\dashrightarrow X\cap H\\
[X_2, \ldots, X_n] &\dashrightarrow [-q'(0, X_2, \ldots, X_n), X_2^2, X_2 X_3, \ldots, X_2 X_n]
\end{align*}
The generic point of the hyperplane section $X_2=0$ of $Q$ is mapped to $x=[1, 0, \ldots, 0]$. 
The map is not defined on the locus in $Q$ satisfying $q'=X_2=0$, which is the locus parameterizing lines through $x$. 
This is also clear from the geometric description of the birational map $X \cap H \dashrightarrow Q$ as a projection.

The point $y$ is mapped to a $k$-rational point in $Q$, denoted by $u$, which does not line in the hyperplane section $X_2=0$. 
Then it is straightforward to check that there is a smooth conic through the point $u$ and two general $k$-rational points in the hyperplane section $X_2=0$ which also satisfies $q'\neq 0$. 
This conic with the rational points $u, v, w$ is the rational curve we are looking for.
\end{proof}

For later reference, we note that in the proof we have proved the following.

\begin{lem}\label{lem:generalxy}
Let $X$ be a smooth complete intersection of two quadrics in $\PP^5$ defined over an algebraically closed field $k$ of odd characteristic and $x$ a point in $X$.
Assume that there is a free line through the point $x$. 
Also assume that $x$ has coordinate $[1, 0, \ldots, 0]$ and write the equation of $X$ as
\[
\left\{
\begin{aligned}
&X_0 X_1+q(X_1, \ldots, X_n)=0\\
&X_0 X_2+q'(X_1, \ldots, X_n)=0.
\end{aligned}
\right.
\]
The for a general choice of $\lambda, \mu$, the quadric hypersurface 
\[
\lambda q(X_1, \ldots, X_n)+\mu q'(X_1, \ldots, X_n)|_{\lambda X_1+\mu X_2=0}=0
\]
and its hyperplane section 
\[
\lambda q(0, 0, X_3 \ldots, X_n)+\mu q'(0, 0, X_3, \ldots, X_n)=0
\]
are smooth.
\end{lem}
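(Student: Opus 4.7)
The plan is to unpack the two generic-smoothness assertions already implicit in the proof of Lemma~\ref{lem:rconnect}; I handle the two parts separately, starting with the easier conic case.

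For the hyperplane section, I consider the pencil of conics $\lambda q(0,0,X_3,\ldots,X_n)+\mu q'(0,0,X_3,\ldots,X_n)$ in $\PP^{n-3}$. Over an algebraically closed field of odd characteristic, a general member of such a pencil is smooth iff the discriminant cubic in $[\lambda:\mu]$ does not vanish identically, which in turn is equivalent to the two conics having no common linear factor and no common singular point. A common linear factor $L$ would produce the $2$-plane $\{X_1=X_2=L=0\}\subset X$, contradicting the fact that a smooth complete intersection of two quadrics in $\PP^5$ has Picard rank one and thus contains no $2$-plane. A common singular point $p$ would force, via B\'ezout, the base scheme of the conic pencil to be concentrated at the length-four fat point at $p$; then the Fano scheme of lines through $x$ in $X$ would be a single non-reduced point of length $4$, contradicting the existence of a free line through $x$. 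This last observation is exactly the argument given in the proof of Lemma~\ref{lem:rconnect}.

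For the quadric hypersurface in $\PP^{n-2}$, I pass to the incidence variety
\[
\mathcal{Q}=\{([\lambda:\mu],p)\in\PP^1\times\PP^{n-1}:\lambda q(p)+\mu q'(p)=0\text{ and }\lambda X_1(p)+\mu X_2(p)=0\}
\]
fibered over $\PP^1$, and reduce generic smoothness of the fibers to the nonvanishing of the appropriate discriminant polynomial in $[\lambda:\mu]$. Smoothness of $X$ forces the sub-pencil $\lambda q+\mu q'$ in $\PP^{n-1}$ to contain a smooth quadric for generic $[\lambda:\mu]$ (otherwise one produces a singular point of $X$ lying over the locus $\{X_0=\ast\}$); it then remains to check that the moving hyperplane $\lambda X_1+\mu X_2=0$ is not tangent to this quadric for generic $[\lambda:\mu]$. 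Both conditions are codimension-one open conditions, and can be verified either by a Bertini-type argument applied to $\mathcal{Q}$ or by exhibiting one explicit value of $[\lambda:\mu]$ for which the restricted quadric is smooth.

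The main obstacle will be this second part: while the conic case is immediate from the no-$2$-plane property of smooth $(2,2)$-intersections in $\PP^5$ together with the B\'ezout argument, the quadric case is subtler because the ambient $\PP^{n-2}$ itself varies with $[\lambda:\mu]$, so a naive pencil-of-quadrics discriminant argument does not apply directly. The key input is that smoothness of $X$ forbids the pathological configuration in which every member of the sub-pencil is singular along its corresponding hyperplane; unwinding this rigorously reduces, in the worst case, to the Kronecker normal-form analysis of symmetric matrix pencils applied to $\lambda M_q+\mu M_{q'}$ and the linear form $(\lambda,\mu,0,\ldots,0)$.
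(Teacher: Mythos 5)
Your argument for the conic half is correct and is essentially the paper's own: the paper proves Lemma \ref{lem:generalxy} only by reference to the proof of Lemma \ref{lem:rconnect}, and there the smoothness of the general member of the conic pencil is deduced exactly as you do, from the fact that a common singular point of the two restricted conics would make the scheme of lines through $x$ a single point of length four, which is incompatible with the existence of a free line through $x$. (You are in fact slightly more careful than the paper, since you also rule out a common linear factor, via the absence of $2$-planes on $X$.)

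The quadric half is where your proposal has a genuine gap. You reduce it to two claims --- that the pencil $\lambda q+\mu q'$ in $\PP^{n-1}$ has a smooth general member, and that the moving hyperplane $\lambda X_1+\mu X_2=0$ is generically non-tangent to it --- but you establish neither: the first is supported only by a parenthetical remark that does not cover the case where every member of the pencil has corank one (which produces no obvious singular point of $X$), and for the second you list two possible strategies and then concede that the ``main obstacle'' remains and would require a Kronecker normal-form analysis. As written, this half is a plan rather than a proof. (To be fair, the paper is no more rigorous here: in the proof of Lemma \ref{lem:rconnect} the generic smoothness of the restricted quadric is simply asserted.) A clean way to close the gap is the bordered-determinant identity
\[
\det(\lambda A+\mu B)\;=\;-\tfrac14\,(\lambda e_1+\mu e_2)^{T}\,\mathrm{adj}\bigl(\lambda M_q+\mu M_{q'}\bigr)\,(\lambda e_1+\mu e_2),
\]
where $A,B$ are the Gram matrices of the full equations $X_0X_1+q$ and $X_0X_2+q'$, and $e_1,e_2$ are the coordinate vectors corresponding to $X_1,X_2$ in the $(X_1,\ldots,X_n)$-space. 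Smoothness of $X$ is classically equivalent (char~$\neq 2$) to $\det(\lambda A+\mu B)$ having $n+1$ simple roots. If $\lambda M_q+\mu M_{q'}$ were singular for every $[\lambda:\mu]$, its adjugate would either vanish identically (so $\det(\lambda A+\mu B)\equiv 0$) or be generically a symmetric rank-one matrix $\gamma\,vv^{T}$, in which case the right-hand side carries the square factor $(\lambda v_1+\mu v_2)^{2}$ of a binary form of positive degree, forcing a multiple root or identical vanishing of the discriminant; either way a contradiction. Hence $\lambda M_q+\mu M_{q'}$ is invertible for general $[\lambda:\mu]$, and then the restricted quadric is singular precisely when $\lambda X_1+\mu X_2=0$ is tangent to $\{\lambda q+\mu q'=0\}$, i.e.\ precisely when the displayed quantity --- hence $\det(\lambda A+\mu B)$ --- vanishes, which happens for only finitely many $[\lambda:\mu]$. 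Note in particular that this half needs only the smoothness of $X$; the free-line hypothesis is used only for the conic.
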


Examples of fields satisfying conditions in the lemma are $\FF_q\Semr{t}, \bar{\FF}_q\Semr{t}$, finite extensions of $\QQ_p$, $\FF(B), \bar{\FF}(B)$ (where $B$ is a smooth curve), number fields. Using this construction and the fibration method, Colliot-Th\'el\`ene, Sansuc and Swinnerton-Dyer proved the following theorem. 

\begin{thm}\cite{CTSD}\label{thm:WA22}
Let $X$ be a smooth complete intersection of two quadrics in $\PP^n, n \geq 5$ defined over a global field of odd characteristic. Assume that $X$ has a rational point. Then $X$ satisfies weak approximation.
\end{thm}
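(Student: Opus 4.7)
The plan is to implement the fibration method of Colliot-Thélène--Sansuc--Swinnerton-Dyer, using Lemma \ref{lem:rconnect} as the local input and the Hasse principle for smooth quadrics as the global glue. Fix a rational point $x \in X(K)$ and a finite set $S$ of places with local points $(x_v)_{v\in S}$ to approximate. After a coordinate change putting $x=[1,0,\ldots,0]$ and writing $X$ as $X_0 X_1 + q(X_1,\ldots,X_n) = X_0 X_2 + q'(X_1,\ldots,X_n) = 0$, the pencil of tangent hyperplanes $\{\lambda X_1 + \mu X_2 = 0\}$ at $x$ produces a pencil $\mathcal{Q}\to\PP^1_K$ of $(n-3)$-dimensional quadrics over $K$, each smooth fiber $Q_{[\lambda,\mu]}$ being $K$-birational to the hyperplane section $H_{\lambda,\mu}\cap X$ via projection from $x$, as in the proof of Lemma \ref{lem:rconnect}. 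A $K$-rational smooth conic in $Q_{[\lambda,\mu]}$ through three appropriately chosen $K$-points pulls back to a $K$-rational map $\PP^1\to X$ of the type produced in that lemma, passing through $x$ twice.

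For each $v\in S$ I apply Lemma \ref{lem:rconnect} over $K_v$ to $x$ and a point $y_v$ very close to $x_v$ (perturbed into the ``general'' locus of Lemma \ref{lem:generalxy} if necessary): this yields a parameter $[\lambda_v,\mu_v]\in\PP^1(K_v)$, a smooth fiber $Q_{[\lambda_v,\mu_v]}$, and a smooth conic on it whose image under the birational map approximates $x_v$. Weak approximation on $\PP^1_K$ gives a global $[\lambda,\mu]\in\PP^1(K)$ close to each $[\lambda_v,\mu_v]$; avoiding the finite bad locus of the pencil, the fiber $Q_{[\lambda,\mu]}$ is a smooth quadric hypersurface of dimension $\geq 2$ over $K$ that is everywhere locally isotropic (locally at $v\in S$ by continuity from the local data, and at all remaining places by a standard argument using the $C_i$-property or the local Hasse-Minkowski input). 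The Hasse principle for smooth quadric hypersurfaces—classical Hasse--Minkowski in the number field case, and Theorem \ref{thm:2}(1) in the odd-characteristic function field case—then supplies a $K$-point of $Q_{[\lambda,\mu]}$.

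Combined with the classical weak approximation property of smooth quadrics with a rational point over a global field, one can exhibit a $K$-rational smooth conic in $Q_{[\lambda,\mu]}$ whose three reference $K$-points are arbitrarily close at each $v\in S$ to the three reference points of the local conic $C_v$ constructed above. Transporting this global conic through the birational map $\mathcal{Q}\dashrightarrow X$, which is an isomorphism away from the locus of lines through $x$, and evaluating at the parameter corresponding to $f(1)$ in Lemma \ref{lem:rconnect}, produces a $K$-rational point of $X$ approximating $(x_v)_{v\in S}$ to prescribed precision.

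The main obstacle is the bookkeeping of approximations through the nonlinear birational maps and the quadric bundle: one must verify that the chosen $[\lambda,\mu]$ avoids the finite degeneration locus of the pencil; that the smoothness and geometric integrality hypotheses of Lemma \ref{lem:generalxy} hold for the global $Q_{[\lambda,\mu]}$ and its hyperplane section; that local isotropy of $Q_{[\lambda,\mu]}$ is preserved under the approximation in $[\lambda,\mu]$; and that the birational transport $\mathcal{Q}\dashrightarrow X$ is $v$-adically continuous on the neighborhoods of the local conics, so that $v$-adic closeness of conics in $Q_{[\lambda,\mu]}$ implies $v$-adic closeness of the induced points of $X$. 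All of these are standard continuity/openness arguments once the global quadric $Q_{[\lambda,\mu]}$ is secured by the Hasse principle, so the only essential geometric input beyond Lemma \ref{lem:rconnect} is the Hasse principle for quadric hypersurfaces.
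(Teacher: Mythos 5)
Your proposal follows exactly the route the paper itself indicates: the paper does not reprove this theorem but cites \cite{CTSD} and observes that their fibration-method proof, built on the tangent-hyperplane-pencil construction of Lemma \ref{lem:rconnect}, carries over to odd-characteristic global function fields. Your sketch is a faithful outline of that argument (pencil of tangent hyperplanes at $x$, projection from $x$ to a pencil of quadrics, weak approximation on $\PP^1$, Hasse principle and weak approximation for the selected quadric fiber, transport back through the explicit birational map), so in approach there is nothing different from what the paper intends.

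The one step whose justification as written would fail is the claim that $Q_{[\lambda,\mu]}$ is isotropic at all places outside $S$ ``by the $C_i$-property.'' The completions are $C_2$ fields, which forces isotropy only for quadratic forms in at least $5$ variables, i.e.\ quadrics of dimension at least $3$; for $n=5$ the fiber $Q_{[\lambda,\mu]}$ is a quadric surface in $\PP^3$, a $4$-variable form, and such forms can be anisotropic over a local field. The phrase ``or the local Hasse--Minkowski input'' does not supply the missing argument, since local points of $X$ do not obviously produce local points on the particular hyperplane section $H_{\lambda,\mu}\cap X$. The standard repair, which is what \cite{CTSD} do, is to enlarge $S$ at the outset to contain all places of bad reduction of $X$ and of the pencil $\mathcal{Q}\to\PP^1$, prescribing arbitrary local points of $X$ at the added places; at the remaining places the chosen fiber has good reduction to a smooth quadric over a finite field, which has a rational point, and this lifts by Hensel's lemma. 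With that substitution the rest of your bookkeeping (continuity of the birational transport, weak approximation on the rational quadric $Q_{[\lambda,\mu]}$) goes through.
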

This is proved in \cite{CTSD} for the case of number fields using the above construction and the fibration method (c.f. Theorem 3.10, Theorem 3.11 of \cite{CTSD}). But the proof works in this setup as well. We refer the interested readers to \cite{CTSD} for details.

\subsection{Geometry of complete intersection of two quadrics}
In this section we collect some useful facts about complete intersections of two quadrics.
\begin{lem}\label{22smooth}
Let $X$ be a smooth complete intersection of two quadrics in $\PP^n, n\geq 5$ defined over an algebraically closed field of odd characteristic. 
\begin{enumerate}
\item \label{22smooth1}
If $n=5$, then there are four lines through a general point, all of which are free. 
\item \label{22smooth2}
Given any point $x$ in $X$, the family of lines through $x$ has dimension $n-5$.
\item \label{22smooth3}
The locus $\{x \in X | \text{there is a no free line through}~x\}$ has codimension at least $2$ in $X$.
\end{enumerate}
\end{lem}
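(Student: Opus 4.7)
The plan is to handle (\ref{22smooth2}) via a direct local computation, deduce (\ref{22smooth1}) by specializing it and analyzing the universal family of lines, and prove (\ref{22smooth3}) by restricting to general $\PP^5$-slices.

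For (\ref{22smooth2}), I would place $x=[1,0,\ldots,0]$ and write the two defining quadrics as $Q_i=X_0 L_i(X_1,\ldots,X_n)+q_i(X_1,\ldots,X_n)$ for $i=1,2$. A line through $x$ with direction $[v]\in\PP^{n-1}$ lies on both $Q_i$ precisely when $L_i(v)=q_i(v)=0$, so the scheme of lines through $x$ is cut out in $\PP^{n-1}$ by two linear forms and two quadrics. Smoothness of $X$ at $x$ makes $L_1,L_2$ linearly independent, so $\{L_1=L_2=0\}\cong\PP^{n-3}$, and restricting the two quadrics there gives a scheme of dimension at least $n-5$. Strict inequality would force $X$ to contain a linear subspace of dimension $n-3$ through $x$; but for $n\geq 5$ one has $n-3>\lfloor(n-2)/2\rfloor$, and a smooth $(2,2)$ complete intersection in $\PP^n$ admits no linear subspace of that dimension.

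Specializing to $n=5$ would give the count in (\ref{22smooth1}): the scheme of lines through a general $x$ becomes the intersection of two conics in a $\PP^2$, i.e.\ four reduced points by B\'ezout. For freeness of all four, I would consider the Fano scheme of lines $F$, the universal incidence $\mathcal{I}\to F$, and the evaluation $\operatorname{ev}\colon\mathcal{I}\to X$. In odd characteristic $F$ is a smooth projective surface, so $\mathcal{I}$ is a smooth threefold and $\operatorname{ev}$ is a finite morphism of degree $4$ between smooth threefolds; the inseparable degree of $K(\mathcal{I})/K(X)$ is a $p$-power dividing $4$ and $p\neq 2$, hence trivial, so $\operatorname{ev}$ is generically \'etale. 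Analyzing its differential via the normal-bundle sequence identifies the \'etale locus with $\{([L],x):N_{L/X}\cong\mcO\oplus\mcO\}$, i.e.\ the pointed free lines; hence over a general $x\in X$ all four lines through $x$ are free.

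For (\ref{22smooth3}), I would first treat $n=5$: by (\ref{22smooth1}) the bad locus $S$ is a proper closed subset of $X$ and sits inside the branch locus $B$ of $\operatorname{ev}$, which by purity is either empty or a divisor. At a general point of $B$ only one of the four preimages is ramified (with index $2$), so three of the four lines there remain free; hence $S\subsetneq B$ and $\operatorname{codim}_X S\geq 2$. For $n>5$, I would fix $x\in X$ and choose a general $\Lambda\cong\PP^5\subset\PP^n$ containing $x$; Bertini gives $Y=X\cap\Lambda$ smooth. The normal-bundle sequence
\[
0\to N_{L/Y}\to N_{L/X}\to N_{Y/X}|_L\cong\mcO(1)^{n-5}\to 0
\]
has vanishing $\operatorname{Ext}^1$ when $N_{L/Y}$ is globally generated, so it splits and free lines in $Y$ remain free in $X$; therefore $S\cap Y$ is contained in the bad locus of $Y$. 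A hypothetical codimension-$1$ component of $S$ would meet a general $Y$ through a general point of it in codimension $1$, contradicting the $n=5$ case. The hardest step will be controlling the branch locus of $\operatorname{ev}$ carefully enough in odd characteristic to secure the $n=5$ case of (\ref{22smooth3}).
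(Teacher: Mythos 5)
Your treatments of parts (\ref{22smooth2}) and (\ref{22smooth1}) are essentially the paper's: the same local computation of the scheme of lines through $x$ as a $(1,1,2,2)$ intersection, exclusion of a jump in dimension via the nonexistence of large linear subspaces (the paper invokes the Lefschetz theorem on Picard groups, you invoke the bound $n-3>(n-2)/2$; both work), and for $n=5$ the degree-$4$ finite evaluation map, separability from $p\neq 2$, and the identification of the \'etale locus with pointed free lines. One shared small omission: when the restricted quadrics $q|_{\Lambda}$, $q'|_{\Lambda}$ are \emph{proportional} rather than sharing a linear factor, the intersection also has dimension $n-4$ without $X$ visibly containing a $\PP^{n-3}$; one must note that then some member of the pencil contains the $\PP^{n-2}$ given by $X_1=X_2=0$ and hence has rank at most $4<n$, impossible for smooth $X$. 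The paper's proof has the same elision, so I do not count it against you.

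The genuine gap is in your $n=5$ case of (\ref{22smooth3}), and you correctly sense it. Purity tells you the branch locus $B$ is a divisor, but your assertion that over a general point of $B$ only one of the four preimages is ramified is unjustified: for a separable degree-$4$ cover the generic ramification pattern over a component of $B$ could a priori be $2+2$ or $4$, in which case \emph{no} preimage is unramified there and your conclusion $S\subsetneq B$ fails; nothing in the setup rules this out. The paper's argument sidesteps the branch locus entirely and works uniformly for all $n\geq 5$: the set of points through which a free line passes is open (the evaluation map is smooth at pointed free lines), so $S$ is closed; since $\operatorname{Pic}(X)=\ZZ$ by Grothendieck--Lefschetz, any divisorial component of $S$ would be ample and would therefore meet every free line $L$ at some point $y\in S$, exhibiting a free line through a point of $S$ --- a contradiction. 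I recommend replacing your ramification count by this ampleness argument; your reduction of $n>5$ to $n=5$ via the split normal-bundle sequence is correct but then becomes unnecessary, since the Picard-rank-one argument only needs the existence of a free line through a general point, which follows from (\ref{22smooth1}) by taking hyperplane sections.
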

\begin{proof}
Given a smooth complete intersection of two quadric in $\PP^5$, there is a two dimensional family of lines parameterized by a variety $U$. Moreover it is easy to see that the normal bundle of every line is either $\OO(1)\oplus \OO(-1)$ or $\OO\oplus \OO$. In any case, the normal bundle has no $H^1$. So the Fano scheme $U$ is smooth. It is well-known that $U$ is connected, thus also irreducible. Moreover, the evaluation map of the universal family is dominant. 

Assume that the defining equation can be written as
\[
\left\{
\begin{aligned}
&X_0 X_1+q(X_1, \ldots, X_5)=0\\
&X_0 X_2+q'(X_1, \ldots, X_5)=0.
\end{aligned}
\right.
\]
Thus the family of lines through $[1, 0, \ldots, 0]$ is defined by 
\[
X_1=X_2=q(X_1, \ldots, X_5)=q'(X_1, \ldots, X_5)
\] 
in $\PP^4$ (with coordinates $X_1, \ldots, X_5$). If this point is general, there are only finitely many lines through this point by dimension argument. Thus this family is a zero dimensional scheme of length four. In other word, the evaluation map of the universal family has degree $4$. Since the field has odd characteristic, the evaluation map is separable. So it is generically smooth. Then the first statement follows from standard argument of deformation theory.

For (\ref{22smooth2}), we still assume that the point $x$ is $[1, 0, \ldots, 0]$ and write the defining equations as
\[
\left\{
\begin{aligned}
&X_0 X_1+q(X_1, \ldots, X_5)=0\\
&X_0 X_2+q'(X_1, \ldots, X_5)=0.
\end{aligned}
\right.
\] 
Then there is an $(n-4)$-dimensional family of lines through $x$ if and only if $q(0, 0, X_3, \ldots, X_n)$ and $q'(0, 0, X_3, \ldots, X_n)$ has a common linear factor $L(X_3, \ldots, X_5)$. Then $X$ contains the $(n-3)$-dimensional linear space $X_1=X_2=L(X_3, \ldots, X_5)$, which is impossible by the Lefschetz hyperplane theorem for Picard groups.

For (\ref{22smooth3}), by taking hyperplane sections and using the first statement, 
we know that given any complete intersection of two quadric in $\PP^n, n\geq 5$, 
there is a free line through a general point. 
Thus the locus $$\{x \in X | \text{ There is a no free line through } x.\}$$ has codimension at least $1$. 
But $X$ has Picard number $1$. 
So this locus will intersect a free line if it is codimension $1$, 
which contradicts the definition of the locus. 
Thus it has codimension at least $2$.
\end{proof}

\begin{lem}\label{22line}
Let $X$ be a complete intersection of two quadrics in $\PP^n$ defined over an algebraically closed field of odd characteristic. Assume that $X$ is reduced and irreducible. Then the dimension of the family of lines through a general point $x \in X$ is $n-5$, unless $X$ is non-normal or is a cone over a $(2, 2)$-complete intersection curve in $\PP^3$.
\end{lem}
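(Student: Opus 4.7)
The plan is to fix a general smooth point $x \in X$ and analyze, via local equations, when the scheme of lines through $x$ lying in $X$ has excess dimension. Since $X$ is assumed normal, a general $x$ is smooth, and I may choose coordinates so that $x = [1:0:\cdots:0]$ and write the two defining quadrics as
\[
Q = X_0 L_1 + q_1, \qquad Q' = X_0 L_2 + q_2,
\]
with $L_1, L_2$ linearly independent linear forms in $X_1, \ldots, X_n$ (by smoothness of $x$ on $X$) and $q_1, q_2$ quadratic in the same variables. The lines through $x$ lying in $X$ are then parameterized by $V(L_1, L_2, q_1, q_2) \subset \PP^{n-1}$, or equivalently by $V(\bar q_1, \bar q_2)$ inside $V(L_1,L_2) \cong \PP^{n-3}$, where $\bar q_i := q_i|_{V(L_1,L_2)}$. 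The expected dimension is $n-5$, and it is exceeded precisely when the two quadrics $\bar q_1, \bar q_2$ in $\PP^{n-3}$ have a common divisorial component: either (A) some nontrivial combination $\lambda \bar q_1 + \mu \bar q_2$ vanishes identically, or (B) $\bar q_1, \bar q_2$ are independent but share a common linear factor $M$.

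In case (A), the quadric $R_0 := \lambda Q + \mu Q'$ vanishes on the tangent $(n-2)$-plane $T_x X = V(L_1,L_2)$, hence has rank at most $4$. A dimension count --- the locus of $y \in X$ with $T_y X \subset V(R)$ for a fixed quadric $R$ has dimension at most $n-4$, not the $n-3$ required for $x \mapsto [\lambda_x : \mu_x] \in \PP^1$ to be nonconstant --- forces $R_0$ to be independent of $x$. Ranks $1$ and $2$ of $R_0$ are ruled out since they would force $X$ into a hyperplane, impossible because $I(X)$ is generated in degree $2$; rank $3$ is ruled out because it would make $X$ either a linear space or reducible as a union of linear subspaces. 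Thus $R_0$ has rank $4$, with vertex $V^*$ of dimension $n-4$, and the ruling structure of $V(R_0)$ as a cone over a smooth quadric in $\PP^3$ gives $V^* \subset T_x X$ for every general $x$. A computation with the polarization of $Q'$ forces either $X$ into the $3$-dimensional polar $V^{*\perp}_{Q'}$ (impossible by dimension for $n \geq 5$) or $V^* \subset X$; projecting from $V^*$ then exhibits $X$ as a cone over the $(2,2)$-complete intersection curve $V(R_0, Q') \cap \PP^3$, contradicting the hypothesis.

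In case (B), the linear $(n-3)$-plane $\Pi_x := V(L_1, L_2, M)$ is contained in $X$, and varying $x$ produces a family of such planes sweeping out $X$. I would show that all the $\Pi_x$ share a common $(n-4)$-dimensional subspace $V$, so that $X = V * \bar X$ is a cone over a degree-$4$ curve $\bar X \subset \PP^3$; analysis of the ideal then forces $\bar X$ to be a $(2,2)$-complete intersection curve. The main technical obstacle is exactly here: one must rule out the possibility that the family of $\Pi_x$ forms a genuine scroll without a common vertex. The idea is that the complete-intersection structure forces excess intersection of two general members beyond the expected dimension $n-6$, and combined with the irreducibility of $X$ this yields a fixed common $(n-4)$-plane $V$. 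Putting the two cases together contradicts the assumption that $X$ is neither non-normal nor a cone over a $(2,2)$-curve in $\PP^3$, proving that the family of lines through a general point has the expected dimension $n-5$.
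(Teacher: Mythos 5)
Your proposal takes a genuinely different route from the paper --- a pointwise analysis of the scheme of lines through one general point, followed by a classification of the degenerate configurations --- and as written it has two real gaps, both in the hard half of the argument. In case (A), the key dimension bound, namely that $\{y\in X : T_yX\subset V(R)\}$ has dimension at most $n-4$ for a fixed quadric $R$ in the pencil, is asserted but not proved, and it is not innocuous: if $R=\lambda Q+\mu Q'$ has rank $4$, then for every $(n-2)$-plane $\Lambda\subset V(R)$ the restrictions $Q|_\Lambda$ and $Q'|_\Lambda$ are proportional, so $X\cap\Lambda$ is a quadric divisor in $\Lambda$ of dimension $n-3$, and the naive bound on your locus is $1+(n-3)=n-2$, i.e.\ all of $X$. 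Improving this to $n-4$ amounts to controlling the contact loci of the Gauss map, which in positive characteristic (and for singular $X$) can be positive-dimensional; no argument is given. The rank-$3$ exclusion is likewise only asserted ($V(R_0)\cap V(Q')$ with $R_0$ of rank $3$ is not in general a union of linear spaces). In case (B) you name the main technical obstacle yourself --- ruling out that the $(n-3)$-planes $\Pi_x$ form a scroll with no common $(n-4)$-dimensional vertex --- and offer only ``the idea''; that is the entire content of the case, so the proposal is an outline rather than a proof.

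For comparison, the paper avoids this case analysis entirely by a global dimension count on the Fano scheme $F(X)$: lines through a non-vertex singular point $y$ form a family of dimension $n-4$ (the two quadrics restricted to the tangent hyperplane of the smooth member at $y$ cannot share a linear factor, else $X$ is reducible), lines through a vertex form a family of dimension $n-3$, and a line in $X^{\mathrm{sm}}$ is a smooth point of $F(X)$ of dimension $2n-8$. Normality gives $\dim X^{\mathrm{sing}}\le n-4$ and the non-cone-over-a-curve hypothesis bounds the vertex dimension by $n-5$, so every component of $F(X)$ has dimension exactly $2n-8$ and the evaluation map has generic fiber dimension $(2n-8)+1-(n-2)=n-5$. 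If you want to keep your local approach you must supply genuine proofs for the Gauss-map bound in (A) and the common-vertex claim in (B); otherwise I would switch to the stratified count, which needs only the two elementary facts about lines through singular points.
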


\begin{proof}
If $X$ is smooth, the statement holds for any point by Lemma \ref{22smooth}. So in the following we assume $X$ is singular.

Let $y$ be a singular point of $X$ which is not a vertex if $X$ is a cone. 
Assume the coordinate of the point $y$ is $[1, 0, \ldots, 0]$. Then we can write the equation of $X$ as
\[
\left\{
\begin{aligned}
&q(X_1, \ldots, X_n)=0\\
&X_0 X_2+q'(X_1, \ldots, X_n)=0.
\end{aligned}
\right.
\]
Note that $q(X_1, 0, X_3, \ldots, X_n)$ and $q'(X_1, 0, X_3, \ldots, X_n)$ has no common linear factor 
otherwise the variety $X$ contains a linear space of dimension $n-2$ and thus is reducible. 
So the family of lines through a non-vertex singular point has dimension $n-4$.

The family of lines through a vertex has dimension $n-3$.

If a line lies in the smooth locus of $X$, then the normal bundle of the line has no $H^1$. 
Thus the Fano scheme is smooth at this point and has dimension $2n-8$.

If $X$ is normal and not a cone over a $(2, 2)$-complete intersection in $\PP^3$, 
then by the above computation, 
the family of lines containing a singular non-vertex point has dimension at most $\dim X^{\text{sing}}+n-4\leq n-4+n-4=2n-8$ 
and the family of lines containing a vertex has dimension at most $n-5+n-3=2n-8$. 
Thus every irreducible component of the Fano scheme has dimension $2n-8$ 
and the fiber of the evaluation map over a general point on $X$ has dimension $n-5$.
\end{proof}

Finally we need the following result of degeneration of low degree rational curves.
\begin{lem}\label{lem:break22}
Let $X$ be a smooth complete intersection of two quadrics in $\PP^n, n\geq 5$ 
defined over an algebraically closed field of odd characteristic and $x$ a general point in $X$. 
Furthermore let $C$ be either a smooth conic, or a twisted cubic, 
or a degree $4$ nodal rational curve which is the intersection of $X$ with a $\PP^3$ tangent to complete intersection $X$ at the point $x$. 
Then there is a deformation of $C$, parameterized by an irreducible curve, to a chain of free lines.
\end{lem}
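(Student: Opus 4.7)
The plan is to mimic the strategy of Lemma \ref{lem:break3}: in each case I would realize (a smooth deformation of) $C$ as a curve on a smooth del Pezzo surface $S$ of degree $4$, cut out as $S = X \cap \PP^4$, and then degenerate it to a chain of $(-1)$-curves on $S$ via a pencil in a suitable linear system. The lines in the chain need to be free in $X$, which I would verify using the normal bundle sequence $0 \to \OO_L(-1) \to N_{L/X} \to \OO_L(1)^{n-4} \to 0$: for a generic $\PP^4$ the extension class is non-degenerate, so every $(-1)$-curve on $S$ becomes a free line in $X$.

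For the smooth conic $C$, the linear span $\langle C\rangle$ is a $\PP^2$ and a general $\PP^4 \supset \langle C\rangle$ meets $X$ in a smooth del Pezzo $S$ containing $C$. On $S$, the class of $C$ is $H - E_i$, and $|H - E_i|$ is a conic fibration whose four degenerate members are pairs of intersecting $(-1)$-curves; this pencil supplies the desired degeneration. For the smooth twisted cubic, $\langle C\rangle$ is a $\PP^3$ and the analogous $S$ supports $C$ with class $2H - E_1 - E_2 - E_3$; the two-dimensional linear system $|C|$ contains the chain of three $(-1)$-curves $(H - E_1 - E_2) + (H - E_3 - E_4) + E_4$, and a pencil through this reducible divisor gives the degeneration.

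The nodal quartic case, where $C = X \cap \PP^3$ with $\PP^3$ tangent to $X$ at the general point $x$, requires a preliminary smoothing: since the normalization of $C$ is $\PP^1$, I would view $C$ as a free stable map in $\overline{M}_{0,0}(X, 4)$ and deform generically to obtain a smooth rational normal quartic $C' \subset X$ spanning a $\PP^4$. Then $S = X \cap \langle C'\rangle$ is a smooth del Pezzo of degree $4$ on which $C'$ has class $2H - E_1 - E_2$ (using $-K_S \cdot C' = 4$ and adjunction, forcing $C'{}^2 = 2$), and the three-dimensional linear system $|2H - E_1 - E_2|$ contains the chain of four $(-1)$-curves
\[
E_3 + (H - E_1 - E_3) + (H - E_2 - E_4) + E_4,
\]
as one verifies by direct intersection computation. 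A pencil through this reducible divisor degenerates $C'$ to a chain of four free lines, and concatenation with the deformation $C \rightsquigarrow C'$ inside the same irreducible Kontsevich component produces the required one-parameter family.

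The main obstacle I anticipate is the nodal quartic case: one must confirm that a generic Kontsevich smoothing of $C$ is non-degenerate --- i.e., spans a full $\PP^4$ rather than a smaller linear subspace --- and that $C$ itself is a free stable map, so that such smoothings are available. Both should follow from a direct analysis of the normal bundle of $C$'s normalization, potentially via a preliminary del Pezzo containing $C$. A secondary obstacle is the freeness of the $(-1)$-curves of $S$ inside $X$, which reduces to a genericity condition on the extension class in the normal bundle sequence for lines on $S$.
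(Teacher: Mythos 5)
Your proposal follows essentially the same route as the paper's proof: take a general deformation of $C$ to an embedded rational curve, cut $X$ by general hyperplanes containing it to land on a smooth degree-$4$ del Pezzo surface $Y = X \cap \PP^4$ all of whose lines are free in $X$, and degenerate $C$ inside a linear system on $Y$ to a chain of $(-1)$-curves. The paper leaves the linear-system degenerations and the genericity/freeness checks as assertions, whereas you supply the explicit divisor classes and chains; the content is the same.
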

\begin{proof}
It is easy to check that the curves are free. 

Taking a general deformation of $C$, we get an embedded rational curve of degree $2, 3$ or $4$, 
which is contained in a linear projective space of dimension $2, 3$ or $4$.

By taking $(n-4)$-general hyperplanes containing a general deformation of the curve $C$, we may assume that
the curve $C$ is contained in a smooth complete intersection of two quadrics $Y$ in $\PP^4$.
Furthermore, we may assume that every line in $Y$ is a free line in $X$.
So it suffices to prove that $C$ degenerate to a chain of lines in $Y$.
One can do this explicitely.
For instance, take the degeneration in a linear system.
\end{proof}

\subsection{Asymptotic canonical sequence}
In the following we prove Hasse principle. 
First of all we study non-normal complete intersection of two quadrics.

\begin{lem}\label{lem:nonnormal22}
Let $X$ be a geometrically integral, non-normal complete intersection of two quadrics in $\PP^5$ defined over a field $k$ of characteristic at least $3$. 
Assume that the unique $(n-3)$-dimensional component of the singular locus is defined by $X_0=X_1=X_2=0$ (c.f. Lemma \ref{nonnormal22}). 
Then over $\bar{k}$ up to projective isomorphism the variety $X$ (in $\PP^5$) is defined by one of the following equations.
\[
\left\{
\begin{aligned}
&X_0 X_3+X_1^2=0\\
&X_0 X_4+X_2^2=0
\end{aligned}
\right.
\]

\[
\left\{
\begin{aligned}
&X_0 X_3+X_1 X_4+X_2 X_5=0\\
&X_0L(X_1, X_2)+Q(X_1, X_2)=0
\end{aligned}
\right.
\]
\[
\left\{
\begin{aligned}
&X_0 X_3+X_1 X_4+X_2^2=0\\
&X_0L(X_1, X_2)+Q(X_1, X_2)=0
\end{aligned}
\right.
\]
\[
\left\{
\begin{aligned}
&X_0 X_3+X_1^2+X_2^2=0\\
&X_0L(X_1, X_2)+Q(X_1, X_2)=0
\end{aligned}
\right.
\]
\end{lem}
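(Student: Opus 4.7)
The approach is to encode the singularity hypothesis as a rank-one condition on a matrix of linear forms and then reduce to normal forms via coordinate changes preserving $\Pi := \{X_0 = X_1 = X_2 = 0\}$ together with pencil operations, using irreducibility of $X$ to rule out degenerate sub-cases. Since $Q_1, Q_2$ vanish on $\Pi$, they can be written as $Q_i = X_0 L_0^{(i)} + X_1 L_1^{(i)} + X_2 L_2^{(i)}$ with the $L_k^{(i)}$ linear. A direct Jacobian calculation shows that $X$ is singular along $\Pi$ iff the $2 \times 3$ matrix $M := (\ell_k^{(i)})$ of restrictions $\ell_k^{(i)} := L_k^{(i)}|_\Pi$ has all $2 \times 2$ minors vanishing identically. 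A factorization argument in $\bar{k}[X_3, X_4, X_5]$ then splits the analysis into two cases: either (\textbf{A}) the rows of $M$ are $\bar{k}$-linearly dependent, in which case after a pencil change of basis $Q_2 \in (X_0, X_1, X_2)^2$; or (\textbf{B}) each row factors as a linear form in $X_3, X_4, X_5$ times a common constant vector in $\bar{k}^3$, which after a linear change in $(X_0, X_1, X_2)$ we may take to be $(1, 0, 0)$.

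In Case A, the bilinear pairing of $(X_0, X_1, X_2)$ with $(X_3, X_4, X_5)$ inside $Q_1$ is encoded by a $3 \times 3$ scalar matrix $B$ of rank $r$. Substitutions $X_{k+3} \mapsto X_{k+3} + (\text{linear in } X_0, X_1, X_2)$ absorb the $X_{0,1,2}$-quadratic contributions to $Q_1$ lying in the row range of $B$, and a compensated $\mathrm{GL}_3$-action brings $B$ to the standard rank-$r$ normal form. Irreducibility of $X$ excludes $r = 0$ (else $X$ is a cone over four points), and combined with further pencil re-basing forces the residual $X_{0,1,2}$-quadratic of $Q_1$ in the kernel part of $B$ to have rank exactly $3 - r$, producing the second, third, and fourth systems for $r = 3, 2, 1$ respectively. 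Irreducibility also forces $Q_2$ to be a rank-$3$ quadric in $(X_0, X_1, X_2)$ (otherwise $Q_2$ factors over $\bar{k}$ and $X$ is reducible); choosing $X_0$ along an isotropic direction of $Q_2$, available over $\bar{k}$ in characteristic $\ne 2$, eliminates the $X_0^2$ coefficient and yields $Q_2 = X_0 L(X_1, X_2) + Q(X_1, X_2)$, with the compensating change in $(X_3, X_4, X_5)$ preserving the already-normalized $Q_1$.

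In Case B, further normalization of the common direction and absorption of $X_0$-multiples via shifts of $X_3, X_4$ reduces to $Q_1 = X_0 X_3 + A(X_1, X_2)$, $Q_2 = X_0 X_4 + B(X_1, X_2)$ for binary quadratic forms $A, B$. Over $\bar{k}$ the discriminant of the pencil $\langle A, B \rangle$ has degree $2$; if it has a double root then $A$ and $B$ share a common linear factor and $X$ contains the $3$-plane $\{X_0 = (\text{common factor}) = 0\}$, contradicting irreducibility of a degree-$4$ threefold in $\PP^5$. Hence $\langle A, B \rangle$ has two distinct singular members, simultaneously diagonalizable to $(X_1^2, X_2^2)$, producing the first listed system.

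The main obstacle is the careful interplay of pencil re-basing and coordinate changes needed to force the residual binary quadratic forms to attain maximal rank in each case, together with the use of isotropic directions over $\bar{k}$ to eliminate the $X_0^2$ coefficient of $Q_2$ in Case A; both ultimately rest on the degree argument that an irreducible threefold of degree $4$ in $\PP^5$ cannot contain a $3$-dimensional linear subspace.
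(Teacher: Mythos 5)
Your proposal is correct and follows essentially the same route as the paper: write both quadrics in the ideal $(X_0,X_1,X_2)$, read off from the Jacobian that the $2\times 3$ matrix of linear forms has identically vanishing $2\times 2$ minors, split according to whether the two rows are proportional over $\bar{k}$ or are a common constant vector times two independent linear forms, and then reduce to the four normal forms using coordinate changes preserving the singular plane together with pencil re-basing and the fact that an integral $(2,2)$ threefold in $\PP^5$ contains no $\PP^3$. You are in fact slightly more explicit than the paper about two points it glosses over, namely why the residual quadratic in the kernel directions attains full rank after re-basing the pencil and why the $X_0^2$ term of the second equation can be removed compatibly with the normal form of the first.
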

\begin{proof}
The defining equation of $X$ can be written in the form
\[
\left\{
\begin{aligned}
&X_0 L_0(X_3, X_4, X_5)+X_1 L_1(X_3, X_4, X_5)+X_2 L_2(X_3, X_4, X_5)+Q(X_0, X_1, X_2)=0\\
&X_0 L_0'(X_3, X_4, X_5)+X_1 L_1'(X_3, X_4, X_5)+X_2 L_2'(X_3, X_4, X_5)+Q'(X_0, X_1, X_2)=0.
\end{aligned}
\right.
\]

Since it is singular along $X_0=X_1=X_2=0$, the jacobian matrix at every point in $X_0=X_1=X_2=0$ is of the form
\[
\begin{pmatrix}
&L_0(X_3, X_4, X_5) &L_1(X_3, X_4, X_5) &L_2(X_3, X_4, X_5) &0 &0 &0\\
&L_0'(X_3, X_4, X_5) &L_1'(X_3, X_4, X_5) &L_2'(X_3, X_4, X_5) &0 &0 &0\\
\end{pmatrix},
\]
and has rank at most $1$.

Up to a change of variables, we may assume that $L_0(X_3, X_4, X_5)=X_3$. 

If $L_0'$ is not a multiple of $X_3$, then we may assume that it is $X_4$. 
It then follows that $L_1, L_2$ are multiples of $X_3$ and $L_1', L_2'$ are multiples of $X_4$. 
So after a change of coordinates the equations can be written as
\[
\left\{
\begin{aligned}
&X_0 X_3+Q(X_1, X_2)=0\\
&X_0 X_4+Q'(X_1, X_2)=0.
\end{aligned}
\right.
\]
Note that $Q$ and $Q'$ has no common factor otherwise $X$ is reducible. 
Over an algebraically closed field of odd characteristic, 
we may modify the equations by taking linear combinations of the two equations and a new combination of coordinates $X_3, X_4$. Then the new equation becomes:
\[
\left\{
\begin{aligned}
&X_0 X_3+X_1^2=0\\
&X_0 X_4+X_2^2=0.
\end{aligned}
\right.
\]

If $L_0'$ is a multiple of $X_3$, write it as $\lambda X_3$. It follows that $L_i'=\lambda L_i, i=1, 2$. 
So we may assume the second equation is of the form $Q'(X_0, X_1, X_2)$.  
Depending on dimension of the $\bar{k}$-span of $L_0, L_1, L_2$ and up to a linear change of coordinates, 
we may assume that the first equation is one of the followings 
(e.g. we can eliminate monomials containing $X_0$ by replacing $X_3$ with a linear combination of $X_3$ and other linear coordinates):
\[
X_0X_3+X_1 X_4+X_2 X_5=0,
\] 
\[
X_0 X_3+X_1 X_4+ X_2^2=0,
\]
\[
X_0 X_3+ X_1^2+ X_2^2=0.
\]
Up to a change of coordinates, we may assume that there is a smooth point of $X$ of the form $[1, 0, \ldots, 0]$ 
and we may write the second equation as $X_0L(X_1, X_2)+Q(X_1, X_2)=0$ while the first equation remains the same. 
\end{proof}

\begin{lem}\label{reducedfiber}
Let $X$ be a reduced and irreducible complete intersection of two quadrics defined over an algebraically closed field $k$ of odd characteristic.
Assume that $X$ is not a cone over a complete intersection curve in $\PP^3$.
Let $x$ be a general smooth point of $X$. Assume that the point $x$ is $[1, 0, \ldots, 0]$ 
and write the defining equation of $X$ as
\[
\left\{
\begin{aligned}
&X_0X_1+Q(X_1, \ldots, X_n)=0\\
&X_0X_2+Q'(X_1, \ldots, X_n)=0
\end{aligned}
\right.
\]
Then for a general $[\lambda, \mu]\in \PP^1(k)$, 
$$\lambda Q(0, 0, X_3, \ldots, X_n)+\mu Q'(0, 0, X_3, \ldots, X_n)=0$$ defines a reduced quadric hypersurface. 
In particular, the quadric defined by $$\lambda Q(0, X_2, X_3, \ldots, X_n)+\mu Q'(0, X_2, X_3, \ldots, X_n)=0$$ is also reduced.
\end{lem}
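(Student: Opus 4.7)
The plan is to argue by contradiction: suppose that for every $[\lambda,\mu]\in\PP^1(k)$ the quadric $\lambda Q_0+\mu Q_0'$ in the variables $X_3,\ldots,X_n$ is non-reduced, where $Q_0:=Q(0,0,X_3,\ldots,X_n)$ and $Q_0':=Q'(0,0,X_3,\ldots,X_n)$. Recall that a quadric is non-reduced precisely when it is a scalar multiple of the square of a linear form, i.e.\ when its associated symmetric matrix has rank at most one. A linear-algebra argument then shows that $Q_0$ and $Q_0'$ must be scalar multiples of a common square $H^{2}$: writing $A=\alpha vv^{T}$ and $A'=\alpha' ww^{T}$ for their associated rank-one symmetric matrices, the vectors $v$ and $w$ must be proportional, for otherwise $A+A'$ would have rank two, contradicting the hypothesis that every member of the pencil has rank at most one. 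Hence $Q_0=\alpha H^{2}$ and $Q_0'=\alpha' H^{2}$ for a common linear form $H\in k[X_3,\ldots,X_n]$ and scalars $(\alpha,\alpha')\neq(0,0)$.

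I next identify the base locus $\{Q_0=Q_0'=0\}\subset\PP^{n-3}$ (coordinates $X_3,\ldots,X_n$) with the Fano scheme of lines through $x$ lying in $X$: substituting the parametrisation of the line from $x=[1:0:\cdots:0]$ in direction $[a_1:\cdots:a_n]$ into the two defining equations and collecting terms of degree one and two in the line parameter shows that the line lies in $X$ if and only if $a_1=a_2=0$ and $Q_0(a_3,\ldots,a_n)=Q_0'(a_3,\ldots,a_n)=0$. Under the assumption above, this locus is set-theoretically the hyperplane $\{H=0\}\subset\PP^{n-3}$, and hence has dimension $n-4$. But by Lemma~\ref{22line}, for a reduced and irreducible $X$ which is normal and which is not a cone over a $(2,2)$-complete intersection curve in $\PP^3$, the family of lines through a general point of $X$ has dimension exactly $n-5$, contradicting the dimension $n-4$ obtained above.

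The remaining non-normal case can only occur when $n=5$, and it is handled by a case-by-case inspection of the four normal forms of Lemma~\ref{lem:nonnormal22}, showing that at a \emph{general} smooth point (as opposed to the distinguished coordinate origin $[1:0:\cdots:0]$, which may be atypical for special values of the parameters $L, Q$) the resulting $Q_0$ and $Q_0'$ are not both proportional to a common square. The ``in particular'' statement for the quadric in $X_2,X_3,\ldots,X_n$ then follows formally: if its restriction $\lambda Q_0+\mu Q_0'$ to $\{X_2=0\}$ is nonzero and reduced, then the larger quadric cannot equal $L^{2}$ for any linear form $L$, since $L|_{X_2=0}$ would be a nonzero linear form whose square would coincide with the reduced restriction.

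The main obstacle is the final verification in the non-normal case: while the linear-algebra reduction and the identification of the base locus with lines through $x$ are straightforward, the dimension-count contradiction rests on Lemma~\ref{22line}, whose statement excludes non-normal $X$, and one must invoke the explicit classification in Lemma~\ref{lem:nonnormal22} to close the argument.
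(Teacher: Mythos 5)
Your argument for the normal case is essentially the paper's: identify $\{Q(0,0,X_3,\ldots,X_n)=Q'(0,0,X_3,\ldots,X_n)=0\}$ with the family of lines through $x$, and use Lemma~\ref{22line} to force this locus to have dimension $n-5$, which is incompatible with both quadrics being multiples of a common square $H^2$. That part is fine (your rank-one linear algebra is just a more explicit version of the paper's ``no common factor'' step), and your justification of the ``in particular'' clause is correct.

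The gap is in the non-normal case, and it is a genuine one for two reasons. First, your assertion that ``the remaining non-normal case can only occur when $n=5$'' is false in the generality of this lemma: the restriction to $n=5$ in Lemma~\ref{226} comes from semistability of an integral model, which is not a hypothesis here, and non-normal $(2,2)$ complete intersections that are not cones over curves in $\PP^3$ exist in every $\PP^n$, $n\geq 5$. One must first reduce to $n=5$ by intersecting with $n-5$ general hyperplanes through a general deformation of $x$ (the restriction of $\lambda Q_0+\mu Q_0'$ to a general linear subspace being reduced implies the original is reduced), which is exactly what the paper does before invoking the classification of Lemma~\ref{lem:nonnormal22}. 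Second, and more seriously, you do not carry out the case-by-case verification at all; you only announce it. This verification is the bulk of the paper's proof and is not routine: for the normal form $X_0X_3+X_1X_4+X_2^2=0$, $X_0L(X_1,X_2)+Q(X_1,X_2)=0$, the distinguished point $[1,0,\ldots,0]$ genuinely fails when $L=X_1$, and the paper has to perform several successive changes of coordinates and split into sub-cases on the coefficients (the parameters $d$ and $e$ in its computation) before exhibiting a smooth point at which the pencil $\lambda Q_0+\mu Q_0'$ has a reduced member. You correctly observe that the condition is open in $x$ so one witness point suffices, but producing that witness in the third normal form is precisely the content you have omitted, so as written the proof is incomplete.
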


\begin{proof}
First consider the case that $X$ is normal and not a cone over a curve in $\PP^3$.
Then for a general point $x$, the family of lines through $x$ has dimension $n-5$. 
Thus $X_1=X_2=Q=Q'=0$ cut out a complete intersection scheme in $\PP^{n-1}$. 
In particular, $Q(0, 0, X_2, \ldots, X_n)$ and $Q'(0, 0, X_2, \ldots, X_n)$ has no common factor. 
Then a general linear combination of them defines a reduced quadric hypersurface.

Next consider that case that $X$ is non-normal, but not a cone over a curve. 
We can cut down the variety $X$ by general hyperplane sections until $X$ is a threefold in $\PP^5$. 
If we can prove the statement in this case, the general case also follows.

So now assume that $X$ is a non-normal threefold in $\PP^5$ and is not a cone. 
Lemma \ref{lem:nonnormal22} classifies the defining equations. The last one is a cone over a curve in $\PP^3$. 
So we only consider the first three cases. This property is an open condition for points. 
So it suffices to find one point satisfying the condition. 
For the first two cases, one checks easily that the point $[1, 0, \ldots, 0]$ satisfies the condition. 

Finally consider the case that $X$ is defined by
\[
\left\{
\begin{aligned}
&X_0 X_3+X_1 X_4+X_2^2=0\\
&X_0L(X_1, X_2)+Q(X_1, X_2)=0
\end{aligned}
\right.
\]
If $L(X_1, X_2)$ is not of the form $bX_1$, one directly checks that such $(\lambda, \mu)$ exists for the point $[1, 0, \ldots, 0]$.
So in the following assume $L(X_1, X_2)=X_1$. 
The quadratic polynomial $Q$ contains the monomial $X_2^2$ otherwise $X$ is reducible. 
Then by a linear change of coordinates, we may assume that $X$ is defined as
\[
\left\{
\begin{aligned}
&X_0 X_3+X_1 X_4=0\\
&X_0X_1+Q(X_1, X_2)=0
\end{aligned}
\right.
\]
We may further write it as
\[
\left\{
\begin{aligned}
&X_0 X_3+X_1 X_4=0\\
&(X_0+cX_1+dX_2)X_1+X_2^2=0
\end{aligned}
\right.
\]
Then after a change of coordinates
\[
\left\{
\begin{aligned}
&X_1 X_4+X_0 X_3-d X_2X_3=0\\
&X_1 X_0+X_2^2=0
\end{aligned}
\right.
\]
Thus if $d$ is non-zero, the point $[0, 1, 0, \ldots, 0, 0]$ is what we want. 
In the following assume $d=0$. Then $[-e^2, 1, e, 0, 0, 0], e\neq 0,$ is a smooth point. 
Make the change of coordinates $$Y_0=X_0+e^2 X_1, Y_1=X_1, Y_2=X_2-e X_1, Y_3=X_3, Y_4=X_4-e^2 X_3.$$
The new equation becomes
\[
\left\{
\begin{aligned}
&Y_1 Y_4+Y_0 Y_3=0\\
&Y_1 (Y_0+2e Y_2)+Y_2^2=0
\end{aligned}
\right.
\]
Clearly $[0, 1, 0, 0, 0, 0]$ is a point we want.
\end{proof}

The following Lemma shows the existence of a canonical sequence of for a smooth $(2, 2)$-complete intersection in $\PP^5$. 
Therefore the Hasse principle holds for such a variety.

In principle this proof should also prove the general case.
But there is a subtle technical point.
The author do not know whether the generic fiber of the family $\mathcal{E}$ appearing in the proof (i.e. the hyperplane section in Lemma \ref{lem:rconnect}) is a smooth quadric hypersurface when $n$ is at least $6$.
One can show that this is the case if every line through a general point of a general fiber is free.
However the author does not know how to show this in general.
What can be proved is that this is a quadric hypersurface with at worst one singular point.
It might also be possible to deal with this case by some careful analysis.
But the author prefers to work with smooth varieties to avoid such complexity.

There are two ways to prove the Hasse principle in general case. We may either use the standard fibration method, or note that to prove the existence of an asymptotically canonical sequence, we can take a general family of hyperplane sections to reduce to the case $n=5$ and construct the ruled surface.

\begin{lem}\label{lem:connect22}
Let $\pi: \mcX \to B$ be a family of complete intersection of two quadric hypersurfaces in $\PP^5$ 
defined over an algebraically closed field $k$ of odd characteristic. Then there is an asymptotically canonical sequence.
\end{lem}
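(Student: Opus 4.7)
The plan is to follow the template established in the proofs for quadrics (Lemma \ref{lem:connectQ}) and cubic hypersurfaces. First, I need to verify that the hypotheses of Construction \ref{Sequence} are met: the Fano scheme of lines is smooth on a smooth $(2,2)$-complete intersection in $\PP^5$ (normal bundle computation, c.f. Lemma \ref{22smooth}), a general line is free, the relative dimension is $3$, and free sections exist. The last point follows by choosing a smooth point in each singular fiber (which exists since every fiber is geometrically integral and not a cone over a curve in $\PP^3$ by Corollary \ref{cor:semiQQ}), using weak approximation (Theorem \ref{thm:WA22}) to produce a section through these points, and then adding very free curves in general fibers to guarantee freeness. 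Nice sections then exist by Lemma \ref{lem:familyofnicesection}.

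Given two nice sections $s_1, s_2$, the main task is to construct a ruled surface $\mathcal{S} \to B$ inside $\mcX$ containing both as sections (possibly after adding teeth in general fibers). I will imitate the $R$-equivalence construction of Lemma \ref{lem:rconnect}: fiberwise over $b \in B$, the pencil of tangent hyperplanes of $\mcX_b$ at $s_1(b)$ contains a unique member $H(b)$ passing through $s_2(b)$, and projection from $s_1(b)$ gives a birational map from a quadric threefold $\mathcal{Q}(b) \subset \PP^3$ onto $\mcX_b \cap H(b)$, with the indeterminacy locus being the family of lines through $s_1(b)$ (codimension $\geq 2$ by Lemma \ref{22smooth} and Lemma \ref{22line}). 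Globalizing, I obtain a family $\mathcal{Q} \to B$ of quadric threefolds together with a section $\sigma_2 \colon B \to \mathcal{Q}$ lifting $s_2$, and with a distinguished contracted divisor in each fiber whose image is $s_1(b)$.

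Next, I choose a third auxiliary nice section $s_3$ of $\mcX \to B$, lifted to a section $\sigma_3$ of $\mathcal{Q}$, in general position so that the line through $\sigma_2(b)$ and $\sigma_3(b)$ avoids the indeterminacy locus in each fiber. Inside $\mathcal{Q} \to B$, I construct a family of smooth conics passing through $\sigma_2(b)$, $\sigma_3(b)$ and a general point on the contracted hyperplane mapping to $s_1(b)$; this is possible for generic choices by Lemma \ref{lem:generalxy} and Lemma \ref{reducedfiber}, which ensure the relevant quadrics (and the contracted hyperplane sections) are smooth and reduced. The image in $\mcX$ is a ruled surface whose minimal resolution $\tilde{\mathcal{S}}$ has $s_1, s_2, s_3$ appearing as sections lying in the smooth locus. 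A pencil in an appropriate linear system on $\tilde{\mathcal{S}}$ connects combs with handle $s_1$ to combs with handle $s_2$, with teeth being conics, twisted cubics, or nodal degree $4$ rational curves in fibers of $\mcX \to B$ (the degrees depend on how the rulings meet the various sections after contracting exceptional curves). By Lemma \ref{lem:break22}, each such tooth deforms, in an irreducible one-parameter family, to a chain of free lines. Then Lemma \ref{porcupine} gives $M_{N_1+i}(s_1) = M_{N_3+i}(s_3)$ and symmetrically $M_{N_2+i}(s_2) = M_{N_3'+i}(s_3)$ for all $i \geq 0$, which is exactly the asymptotic canonicity we need.

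The main obstacle will be controlling the construction over singular fibers of $\mcX \to B$, particularly the non-normal fibers allowed by Corollary \ref{cor:semiQQ} when $n=5$. Over such $b$, the pencil of tangent hyperplanes at $s_1(b)$ is well-defined because $s_1$ is nice (hence meets the smooth locus by Definition \ref{def:nicesection}), but the projection from $s_1(b)$ may produce a quadric threefold whose geometry degenerates, and the indeterminacy locus may grow. I will handle this by first deforming $s_2$ and $s_3$ generically so that their values at each singular fiber land in the ``good" locus identified in Lemmas \ref{reducedfiber} and \ref{lem:generalxy}, using weak approximation to find such deformations, and then invoking the Hasse principle for the relative quadric family $\mathcal{Q} \to B$ (a consequence of the quadric case already proved) together with Theorem \ref{thm:WA22} to produce the conic-rulings as honest sections of the ruled surface. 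The ruled surface may acquire singularities over singular fibers of $\mcX$, but as in the proof of Lemma \ref{lem:connectQ}, passing to the minimal resolution and working with the linear equivalence $s_1 + \sum c h_j + \sum R_{b_i} \sim s_2 + \sum R_{b_i}$ modulo vertical fibers recovers the required pencil.
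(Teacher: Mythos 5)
Your proposal follows essentially the same route as the paper's proof: project the family of tangent hyperplane sections along $s_1$ to a family of quadric surfaces via Lemmas \ref{lem:rconnect} and \ref{lem:generalxy}, use weak approximation (or Tsen, since $k$ is algebraically closed) to find auxiliary sections spanning a family of plane conics, pass to the resulting ruled surface and a pencil on its resolution, and conclude with Lemmas \ref{lem:break22} and \ref{porcupine}. Two small slips worth fixing: the projected quadric in $\PP^3$ is a surface, not a threefold, and the genericity required by Lemma \ref{reducedfiber} is a condition on the projection center $s_1(b)$ in each singular fiber (the paper imposes it on general deformations of both $s_1$ and $s_2$), not on $s_2$ and $s_3$.
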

\begin{proof}
First of all, we show the existence of a nice section.
This basically follows the same line of argument as before.
We use the fact that there are smooth points in every closed fiber and weak approximation holds for smooth complete intersection of two quadrics (e.g. by \cite{CTGilleWA}) to find a section in the smooth locus of $\mcX$.
Then the standard argument of attaching very free curves and smoothing produces a free section and then a nice section by Lemma \ref{lem}.

Given two different irreducible family of sections, we will show that the construction \ref{Sequence} eventually gives the same irreducible components of spaces of sections. Choose two nice sections $s_1$ and $s_2$ in each of these two families.
We may replace the two sections $s_1$ and $s_2$ by their general deformations in the moduli such that
\begin{enumerate}
\item
Over every point $b \in B$ where the fiber is singular, the points $s_1$ and $s_2$ are general in the sense that the conditions of Lemmas \ref{reducedfiber}, \ref{22line} hold.
\item
There are only finitely many points $b \in B$ such that there is a non-free line in $\mcX_b$ passing through $s_1(b)$.
\item
For any point $b \in B$ such that the fiber $\mcX_b$ is smooth, there is a free line through $s_1(b)$.
\end{enumerate}

Now we construct a ruled surface $\mathcal{S} \to B$ and a $B$-morphism $f:\mathcal{S} \to \mcX$ together with two sections $\sigma_1$ and $\sigma_2$ such that
\begin{enumerate}
\item
$f\circ \sigma_i=s_i, i=1, 2$.
\item
For any point $b \in B$, the points $\sigma_1(b)$ and $\sigma_2(b)$ lie in the smooth locus of the fiber.
\item 
Over any point $b \in B$, the fiber of $\mathcal{S}$ over $B$ is reduced and has at most two irreducible components.
\item
Over each $b$ such that $\mcX_b$ is singular, the fiber of $\mathcal{S}$ over $b$ is either smooth or 
a union of two $\PP^1$'s such that $\sigma_1$ and $\sigma_2$ lie in the smooth locus of the same irreducible component.
\item
A general fiber of $\mathcal{S}$ over $b \in B$ is mapped to the intersection of $\mcX_b$ with a tangent $\PP^3$ at $s_1(b)=f\circ \sigma_1(b)$. 
Each irreducible component of reducible fibers of $S$ are mapped to embedded rational curves of degree $1, 2$ or $3$.
\end{enumerate}
The construction of such a surface uses Lemma \ref{lem:rconnect}. 
First of all, one takes the family of tangent hyperplanes along $s_1$ which contains the section $s_2$ 
and then project the hyperplane section to a family of quadric surfaces, as described in Lemma \ref{lem:rconnect}. 
The generic fiber is a smooth quadric surface $Q$ defined over $k(B)$. 
Over the generic fiber, there is a hyperplane section $E$ of this quadric whose image under the birational map is the rational point corresponding to $s_1$. 
This hyperplane section $E$ is smooth over $k(B)$ by Lemma \ref{lem:generalxy}.
 
Denote by $\mathcal{Q} \to B$ and $\mathcal{E} \to B$ the family of quadric hypersurfaces and its hyperplane sections.
There is a rational map $\mathcal{Q} \dashrightarrow \mcX$ which is defined away from a multi-section of degree $4$ of $\mathcal{E} \to B$.

By Lemma \ref{reducedfiber} and our choice of $s_1$ and $s_2$, both the family of quadric surfaces and the family of hyperplane sections have reduced fibers over every point $b \in B$. 

Since smooth quadrics fibrations of positive dimension over a curve $B$ satisfies weak approximation, 
and the families $\mathcal{Q} \to B$ and $\mathcal{E} \to B$ have reduced fibers over every point, 
we may find three sections $\tilde{\sigma}_1, \tilde{\sigma}_2, \tilde{\sigma}_3$ of the family $\mathcal{Q} \to B$ such that
\begin{enumerate}
\item
The section $\tilde{\sigma}_2$ is contained in the locus where the rational map $\mathcal{Q} \dashrightarrow \mcX$ is defined 
and is mapped to the section $s_2$ under this birational map.
\item
The sections $\tilde{\sigma}_1$ and $\tilde{\sigma}_3$ are also sections of the family $\mathcal{E} \to B$.
\item
If the fiber of $\mathcal{Q}$ is reducible over a point $b$, 
then $\tilde{\sigma}_1(b)$ and $\tilde{\sigma}_2(b)$ lies in the same irreducible component of $\mathcal{Q}_b$.
\item
If the fiber of $\mathcal{Q}$ is singular but irreducible over a point $b$, 
then the plane spanned by $\tilde{\sigma}_1(b)$ and $\tilde{\sigma}_2(b)$ and $\tilde{\sigma}_3(b)$ intersect $\mathcal{Q}_b$ at a smooth conic.
\item
The sections $\tilde{\sigma}_1$ and $\tilde{\sigma}_3$ intersect the indeterminancy locus of $\mathcal{Q} \dashrightarrow \mcX$ transverserly
over the points $b$ where the fibers of $\mcX, \mathcal{Q}$, and $\mathcal{E}$ are smooth.
\end{enumerate} 

Let $\Pi \to B$ be the family of planes spanned by the three sections $\tilde{\sigma}_1, \tilde{\sigma}_2, \tilde{\sigma}_3$ and let $\mathcal{S}_0$ be the family of conics of the intersection of the family of planes $\Pi$ and the family $\mathcal{Q}$. 
There is a rational map $S_0 \dashrightarrow \mathcal{X}$. 
The surface $\mathcal{S}$ is constructed as the normalization of the closure of the image of $\mathcal{S}_0$. 
The sections $\sigma_1$ and $\sigma_2$ are strict transforms of $\tilde{\sigma}_1$ and $\tilde{\sigma}_2$.

As before, we can construct a pencil inside the minimum resolution of $\mathcal{S}$, 
which gives a deformation of a comb with handle $\sigma_1$ and a comb with handle $\sigma_2$.
The pencil induces a deformation of a comb with handle $s_1$ and a comb with handle $s_2$ in $\mcX$. 
By construction, we may take the teeth to be free lines, conics, twisted cubics, and a nodal rational curve tangent to $\mcX_b$ along $s_1(b)$ for some $b$.
Furthermore, these curves all lie in the smooth fibers of $\mcX \to B$. 
This is because, by construction, over a point $b \in B$ where $\mcX_b$ is singular, 
the sections $\sigma_1$ and $\sigma_2$ intersect the fiber of $\mathcal{S}$ over $b$ at the same irreducible component even if the fiber of $\mathcal{S}$ over $b$ could be reducible.
So there is no need to add curves in singular fibers of $\mcX$ to construct the comb.
Finally the existence of the canonical sequence follows from Lemma \ref{lem:break22} and Lemma \ref{porcupine}.
\end{proof}

\appendix

\section{Weak approximation for cubic hypersurfaces defined over function fields of curves}\label{WA}
In this appendix we indicate how to modify the argument of \cite{WACubic} to prove the following theorem.

\begin{thm}\label{thm:WACubic}
Let $X$ be a smooth cubic hypersurface in $\PP^n, n \geq 3$ defined over the function field $K(B)$ 
of a smooth curve $B$ over an algebraically closed field $K$ of characteristic not equal to $2, 3, 5$. 
Then $X$ satisfies weak approximation.
\end{thm}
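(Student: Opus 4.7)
The plan is to show that the comb-smoothing machinery of \cite{WACubic} carries over to positive characteristic as soon as $\mathrm{char}\,K \neq 2, 3, 5$. Fix a smooth projective model $\pi:\mcX \to B$ of $X$ which is smooth over a non-empty open subset of $B$; after a preliminary base change we may assume $\pi$ admits a section lying in the smooth locus of $\mcX$. Weak approximation then amounts to: given a finite set $T = \{b_1,\ldots,b_r\} \subset B$ and formal jets $\whts_{b_i}$ of order $N$, produce a section $s:B\to\mcX$ matching these jets. Following \cite{WACubic}, one starts from an approximate section $s_0$ and attaches to it very free rational curves (``teeth'') in fibers $\mcX_{b_i}$ to build a comb whose normal sheaf is $N$-free in the sense of Definition \ref{def:free}; this comb is then smoothed to an honest section realizing the prescribed jets by a standard $H^1$-vanishing argument.

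The first step I would carry out is to verify that the geometric ingredients of this argument remain valid under $\mathrm{char}\,K \geq 7$. The key input is that through a general point of a general fiber there exists a very free rational curve realizing any prescribed jet. For this I take the intersection of $\mcX_{b}$ with a generic $\PP^3$ tangent at the chosen point: by Lemma \ref{lem:surface}(\ref{surface3}) and Lemma \ref{lem:break3} this yields a very free rational curve of the desired form (a nodal plane cubic in a tangent cubic surface section), whose deformation can be degenerated to a chain of free lines by Lemma \ref{porcupine}. The characteristic hypothesis enters via Lemma \ref{lem:surface}: separability of the degree-$12$ Gauss map of a normal cubic surface (which requires $\mathrm{char}\,K \neq 2,3,5$) is precisely what makes a generic tangent-hyperplane section nodal, and hence makes the associated rational curve very free.

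The second step is to run the comb-smoothing argument of \cite{WACubic} using these teeth. I would attach very free teeth at the marked fibers $\mcX_{b_i}$, together with auxiliary teeth in generic fibers, so that (a) at each $b_i$ the attached tooth contributes the jet corrections needed to cancel the discrepancy between $s_0$ and $\whts_{b_i}$ to order $N$, and (b) the resulting comb has globally $N$-free normal sheaf. The smoothing lemma (Riemann--Roch together with vanishing of $H^1$ of the normal sheaf twisted by the jets at $T$) then produces a family of sections of $\pi$ whose $N$-jet map at $T$ is dominant, and a general member realizes the prescribed jets.

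The main obstacle is the first step, namely producing very free curves in positive characteristic with prescribed jet behaviour. In characteristic zero a generic smoothing of a chain of general lines does the job, but in positive characteristic one must control separability of both the Gauss map and the evaluation map on the Fano scheme of lines, and rule out wild behaviour such as non-reduced projective tangent cones at general points. The hypothesis $\mathrm{char}\,K \neq 2,3,5$ is used exactly to rule this out via Lemma \ref{lem:surface} (and also Lemma \ref{lem:codim2} for the codimension estimate on points with no free line through them). Once these inputs are in place, the remaining deformation theory in \cite{WACubic} is a purely Riemann--Roch and obstruction-vanishing argument that is characteristic-independent, and therefore applies verbatim.
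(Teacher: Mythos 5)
There is a genuine gap: your argument only addresses places of good reduction, whereas the entire difficulty of this theorem --- and the entire content of \cite{WACubic} and of Appendix A --- lies at the places of bad reduction. You begin by fixing ``a smooth projective model'' and then attach very free teeth in the fibers $\mcX_{b_i}$ realizing prescribed jets. But weak approximation must be proved at arbitrary places and for arbitrary formal sections: at a place of bad reduction the fiber is a singular cubic, it need not be separably rationally connected, and it certainly does not carry very free rational curves through every point with every prescribed jet (your Lemma \ref{lem:surface}/\ref{lem:break3} inputs only give such curves through \emph{general} points of \emph{general}, i.e.\ smooth, fibers). Moreover the formal section $\whts_{b_i}$ one must approximate may pass through the singular locus of the total space of any integral model, so the comb one would need to smooth is not even contained in the smooth locus of $\mcX$, and the $H^1$-vanishing smoothing lemma does not apply. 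One also cannot simply assume the total space is smooth: resolution of singularities is not available in positive characteristic in the relevant dimensions, and a ``preliminary base change'' changes the field over which weak approximation is being asserted.

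The paper's proof is structured around exactly this difficulty. It works with the Corti--Koll\'ar standard model \cite{CortiCubic}, \cite{KollarIntegralPolynomial} (integral closed fibers, terminal Gorenstein total space), and the one statement from \cite{WACubic} whose proof genuinely breaks in positive characteristic is Lemma \ref{lem:smoothlocus}: given a section and prescribed jets, produce a congruent section lying in the smooth locus of the total space. The paper reproves this by a discrepancy and dimension count on a resolution $\mcX_2 \to \mcX_1$ of the terminal $3$-fold singularities, showing that a general deformation of the section must descend to exceptional divisors of strictly smaller coefficient and hence eventually leaves the singular locus. After that, the argument of \cite{WACubic} (degree-$2$ multisections, strong rational connectedness of the smooth locus of ADE cubic surfaces, $G$-equivariant techniques) is quoted verbatim; the hypothesis $\mathrm{char}\,K \neq 2,3,5$ is needed not only for separability of the Gauss map in Lemma \ref{lem:surface}, as you note, but also so that the degree $2,\ldots,6$ base changes occurring in Proposition 3.4 of \cite{WACubic} are tame cyclic Galois covers to which the equivariant machinery applies. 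None of these steps is present in your proposal, so the argument as written does not establish the theorem.
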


The first result we need is the following.
\begin{thm}[\cite{CortiCubic}, \cite{KollarIntegralPolynomial}]
Let $X$ be smooth projective cubic surface in $\PP^3$ over $K(B)$, 
the function field of a curve $C$ defined over an algebraically closed field $K$ of characteristic not equal to $2, 3$. 
Then there is an integral model $\mcX \to C$ such that
\begin{enumerate}
\item Each closed fiber is an integral cubic surface.
\item The total space $\mcX$ has terminal singularities and is Gorenstein.
\end{enumerate}
\end{thm}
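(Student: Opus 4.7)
The plan is to combine Kollár's semistable reduction for cubic hypersurfaces (Theorem \ref{thm:ss}) with a relative minimal model program over the base curve $C$, exploiting the fact that the generic fiber is a del Pezzo surface of degree $3$.

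First, I would spread out the smooth cubic surface $X/K(B)$ to a flat projective family over $C$, then glue local semistable models at each closed point of $C$, paralleling the globalization used in Corollary \ref{cor:semiC}, to obtain a projective model $\mcX_1 \to C$ whose closed fibers are GIT semistable cubic surfaces in $\PP^3_K$. Since $\mathrm{char}\, K \neq 2,3$ and $K$ is algebraically closed, the Hilbert--Mumford classification of GIT semistable cubic surfaces is transparent: each closed fiber either is geometrically integral with at worst rational double point or minimally elliptic singularities, or lies in a short explicit list of degenerate configurations (unions of three planes meeting in a line, a plane with multiplicity three, cones over a cuspidal cubic, etc.). For each non-integral degenerate fiber, an explicit local sequence of blowups and contractions on a formal neighborhood -- analogous to the surface-level calculations of Lemma \ref{lem:cone}, but substantially simpler in dimension two -- produces an improved model $\mcX_2 \to C$ whose closed fibers are all geometrically integral cubic surfaces.

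Next, I would run a relative minimal model program over $C$ on $\mcX_2$. Because the generic fiber is a smooth degree-$3$ del Pezzo with Picard rank one and $-K$ relatively ample, the MMP terminates at a Mori fiber space $\mcX \to C$ that is $\QQ$-factorial with terminal singularities and whose generic fiber is still $X$. To conclude Gorenstein-ness, one uses that each closed fiber $F$ is an integral cubic surface embedded in $\PP^3_K$, so $\omega_F \cong \OO_F(1)$ by adjunction on $\PP^3$; the relative adjunction formula $\omega_{\mcX/C}|_F \cong \omega_F$ then forces $\omega_{\mcX/C}$ to be a line bundle in a neighborhood of every closed fiber, so $\mcX$ is Gorenstein.

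The main obstacle is the interplay between Step 2 and Step 3: a divisorial or flipping contraction from the MMP can \emph{a priori} re-break an integral fiber into a non-integral one, or drag a non-terminal singularity back into the total space. The resolution is Corti's theorem on standard models of del Pezzo fibrations, which provides a tailored MMP for degree-$3$ del Pezzo fibrations whose extremal contractions are classified and which preserves the integrality of fibers achieved in Step 2. Once this is invoked, the verification that no terminal non-Gorenstein singularity (such as a $cA/n$ with $n > 1$) survives follows from the adjunction argument above, and both conclusions of the theorem hold simultaneously.
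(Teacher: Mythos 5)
Your overall strategy --- spread out, pass to Koll\'ar's semistable models, then invoke Corti's standard-model machinery --- points at the right two references, and indeed the paper treats this theorem as a consequence of \cite{CortiCubic} and \cite{KollarIntegralPolynomial} rather than reproving it. But the way you combine them has concrete problems. First, the closed fibers of a Koll\'ar-semistable model over a DVR are \emph{not} GIT semistable cubic surfaces: semistability of the family is the condition $\mathrm{mult}_W F \le d\sum w_i/(n+1)$ on the whole $t$-adic expansion, and the special fiber can perfectly well be GIT unstable --- the paper's own Lemma \ref{lem:semiC} allows a cone over three conjugate lines (over $\bar{\FF}_q$, three planes) as the special fiber of a semistable family of cubics. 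Relatedly, your ``short explicit list of degenerate semistable configurations'' (three planes through a line, a triple plane, cones over cuspidal cubics) consists of GIT \emph{unstable} surfaces; the actual GIT semistable cubic surfaces are integral and normal with at worst $A_1$, $A_2$ singularities. So your Step 1 does not produce what you claim, and Step 2 is calibrated against the wrong list of bad fibers.

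Second, and more seriously, the logical heart of the statement in positive characteristic is the \emph{termination} of Corti's algorithm (equivalently, the existence of the standard model): Corti proves that \emph{if} his algorithm terminates the output is a standard model, and his termination argument is a characteristic-zero argument. You therefore cannot simply ``invoke Corti's theorem on standard models'' to repair the interplay between your Steps 2 and 3 --- that is precisely the point at which \cite{KollarIntegralPolynomial} must enter, via the fact that a semistable model minimizes the valuation of the GIT invariant and the algorithm's moves strictly decrease it. Your proposal instead leans on running a general relative MMP for threefolds over a curve in characteristic $p$, which is neither available in the generality you assume nor how either cited proof proceeds. (The Gorenstein claim, by contrast, is essentially free once the model is a relative cubic hypersurface in $\PP^3_C$ --- it is a local complete intersection in a smooth fourfold --- so the adjunction detour is unnecessary; and note that adjunction gives $\omega_F \cong \OO_F(-1)$, not $\OO_F(1)$.)
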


Such a model is called a standard model of $X/K(B)$ in \cite{CortiCubic}.

A. Corti gives an algorithm to produce such a model in \cite{CortiCubic} and shows that if the algorithm terminates, 
the end product is the so-called standard model satisfying the conditions in the above theorem (c.f. Theorem 2.15, \cite{CortiCubic}). 
When $K$ is a field of characteristic zero, the existence of such a model (i.e. the termination of the algorithm) is proved by Corti. 
The general case follows from Koll\'ar's result on the existence of semi-stable models \cite{KollarIntegralPolynomial}.

Essentially the only thing in \cite{WACubic} that needs to be changed is the proof of following lemma.
\begin{lem}[=Lemma 5.1 in \cite{WACubic}]\label{lem:smoothlocus}
Let $\pi: \mcX \to B$ be a standard model of families of cubic surfaces over a smooth projective curve $B$ 
and let $s: B \to \mcX$ be a section. 
Given finitely many points $b_1, \ldots, b_k$ in $B$, and a positive integer $N$, 
there is a section $s': B \to \mcX$ such that $s'$ is congruent to $s$ modulo $\mfm_{B,b_i}^{N}$ 
and $s'(B-\cup b_i)$ lies in the smooth locus of $\pi: \mcX \to B$.
\end{lem}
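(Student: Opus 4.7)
Since $\mcX$ is a Gorenstein threefold with terminal singularities, its singular locus $\Sigma := \operatorname{Sing}(\mcX)$ is finite. Consequently, $T := s^{-1}(\Sigma) \subset B$ is finite, and it suffices to modify $s$ at the ``bad'' points $b^* \in T \setminus \{b_1,\ldots,b_k\}$ (if this set is empty, take $s' = s$). For each such $b^*$, the fiber $\mcX_{b^*}$ is a geometrically integral cubic surface by the standard-model hypothesis, its smooth locus is nonempty and open, and only finitely many points of $\mcX_{b^*}$ lie in $\Sigma$; choose a $K$-rational point $x_{b^*} \in \mcX_{b^*}$ that is smooth on both $\mcX_{b^*}$ and $\mcX$.

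My plan is to produce $s'$ by comb-smoothing: attach to $s$ very free rational curves in smooth fibers to obtain a comb $C_0$ with sufficiently positive normal sheaf, then smooth subject to the prescribed jet- and point-constraints. Pick general points $c_1,\ldots,c_m \in B$ disjoint from $\{b_i\} \cup T$ with $\mcX_{c_j}$ smooth, and attach at $s(c_j)$ a very free rational curve $R_j \subset \mcX_{c_j}$ through $s(c_j)$; such curves exist in abundance because $\mcX_{c_j}$, being a smooth cubic surface, is separably rationally connected in the characteristic range considered. Choosing $m$ sufficiently large forces the $H^1$ of the comb's normal sheaf, twisted by the negative of the divisor $\sum_{i} N \cdot b_i + \sum_{b^*} b^*$, to vanish. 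Standard deformation theory for stable maps then produces algebraic smoothings of $C_0$ into sections of $\mcX \to B$; the jet- and point-constraints cut out a nonempty subfamily by this $H^1$-vanishing, and a general smoothing $s'$ satisfies (i) $s' \equiv s \pmod{\mfm_{B,b_i}^{N}}$ at each $b_i$; (ii) $s'(b^*) = x_{b^*} \notin \Sigma$ at each bad point; and (iii) by continuity and the closedness of $\Sigma$, $s'$ stays in $\mcX \setminus \Sigma$ elsewhere.

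The main obstacle is that $s$ itself meets $\Sigma$ at points of $T$, so the normal sheaf $N_{s/\mcX}$ fails to be locally free at $s(T)$ and the standard comb-smoothing machinery does not apply directly. I would circumvent this by passing to a resolution $\mu: \widetilde{\mcX} \to \mcX$ of the finitely many isolated (compound Du Val) singularities, which exists explicitly in the characteristic range considered thanks to the classification of cDV points. The section $s$ lifts uniquely to $\tilde{s}: B \to \widetilde{\mcX}$ since $B$ is a smooth curve, and the teeth $R_j$, lying in smooth fibers disjoint from $\Sigma$, lift identically. I would perform the comb-smoothing on the smooth threefold $\widetilde{\mcX}$, impose the jet and point conditions there, and push the resulting sections down via $\mu$. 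Since $\mu$ is an isomorphism away from the finitely many exceptional fibers, the pushforward preserves the $N$-jets at $b_i$, and the modified values at each $b^*$ land in the smooth locus of $\mcX$ by construction.
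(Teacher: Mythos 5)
Your reduction to finitely many ``bad'' points and your handling of the jet conditions at the $b_i$ by twisting down are fine, but the step that is supposed to carry the whole proof --- ``perform the comb-smoothing on the smooth threefold $\widetilde{\mcX}$ \ldots and push the resulting sections down via $\mu$'' --- does not work, and the failure is exactly the difficulty the lemma is about. Let $p=s(b^*)$ be a singular point of $\mcX$ and let $E$ be the exceptional divisor of $\mu$ over $p$; since $p$ lies in the fiber $\mcX_{b^*}$, the divisor $E$ is contained in the fiber $\widetilde{\mcX}_{b^*}$. Because $s$ passes through $p$, its lift satisfies $\tilde{s}(b^*)\in E$, i.e.\ $\tilde{s}(B)\cdot E\geq 1$. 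This intersection number is constant in any connected family of honest sections of $\widetilde{\mcX}\to B$, and your teeth, lying in smooth fibers over points $c_j\neq b^*$, contribute nothing to it. Since a section meets the fiber over $b^*$ only at its value there, $\tilde{s}'(B)\cdot E>0$ forces $\tilde{s}'(b^*)\in E$ for \emph{every} smoothing $\tilde{s}'$ of your comb, so $\mu\circ\tilde{s}'$ still passes through $p$. Equivalently, the incidence condition $\tilde{s}'(b^*)=\tilde{x}_{b^*}$ with $\tilde{x}_{b^*}\notin E$ cuts out the empty set in your deformation family: the $H^1$-vanishing you invoke guarantees smoothness and dominance of evaluation maps on that component, not that a numerically impossible constraint becomes satisfiable.

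What is actually needed is to deform the section inside the \emph{singular} total space and to use terminality quantitatively, which is what the paper does. Writing $-K_{\mcX_1}=-K_{\mcX_2}+\sum_i a_iE_i$ with all $a_i>0$, the component $V$ of sections of the (singular, lci, terminal) space $\mcX_1$ through $f_1$ has dimension at least $-f_1^*K_{\mcX_1}\cdot B$, which strictly exceeds the expected dimension $-f_2^*K_{\mcX_2}\cdot B$ of sections upstairs whenever the lift meets an exceptional divisor. Comparing $V$ with a component $U$ of stable sections of $\mcX_2$ dominating it, this dimension gap forces the point of $U$ over $f_1$ to have reducible domain $B\cup R$ with $R$ contracted into the exceptional locus; an intersection computation against an ample divisor of the form $H_2-\sum b_iE_i$ with $b_1<\dots<b_n$ then shows that a general deformation of $f_1$ in $V$, if it still meets the singular locus, lifts to a section meeting an $E_{k'}$ with $k'$ strictly smaller, and a descending induction terminates with a section avoiding the singularity. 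This mechanism --- gaining moduli downstairs from the positive discrepancies --- is entirely absent from your proposal, and without it the section cannot be moved off the singular point.
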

The author proves this lemma in characteristic $0$ in \cite{WACubic}. 
The proof given there uses the fact that a dominant map in characteristic $0$ is separable. 
Below we give a variant of the original proof which avoids the use of such a statement.
\begin{proof}
We can assume that the base field $K$ is uncountable.

We first resolve the singularities of $\mcX$ along the fibers over $b_i$ 
in such a way that the resolution is an isomorphism except along the singular locus in the fibers over $b_i$ (\cite{Resolution1}, \cite{Resolution2}). 
Then we use the iterated blow-up construction (c.f. Section 2.2 \cite{WACubic}) according to the jet data of $s$ near the points $b_i$. 
After sufficiently many iterated blow-ups, fixing the jet data is the same as passing through fixed components. 
Call the new space $\mcX_1$.

Then the lemma is reduced to showing that there is a section of the new family $\mcX_1 \to B$ 
which has desired intersection number with irreducible components of the fibers over $b_1, \ldots, b_k$ in $B$ 
and lies in the smooth locus of $\pi_1: \mcX_1 \to B$.

Denote by $f_1: B \to \mcX_1$ the strict transform of the given section $s: B \to \mcX$. The section $f_1$ has the correct intersection number but may intersect the singular locus of $\mcX_1$.
We will show below that if the section $f_1$ only passes through one singular point of $\mcX_1$, 
then we can deform the section away from the singularity. 

Assuming this, the general case can be proved by induction. 
Namely one first resolve all but one singularity along the section and then apply this argument to deform the section 
away from this singularity (after adding enough very free curves in general fibers). 
In this way one get a section which passes through less singular points. 
In this argument, we only take general deformations. 
So the condition of the intersection numbers is always preserved.

In the following we explain why a general deformation of $f_1$ deforms outside the singularities of $\mcX_1$.
We may also assume that the singularity on the section is the only singularity of the total space $\mcX_1$. 
Denote by $b$ the image of the singular point of $\mcX_1$ in $B$. 
Take a resolution of singularities $\pi_{21}: \mcX_2 \to \mcX_1$ which is an isomorphism over the smooth locus of $\mcX_1$
such that the exceptional locus in $\mcX_2$ consists of simple normal crossing divisors $E_i, i=1, \ldots, n$. 
After adding very free curves in general fibers and smoothing, we may assume the strict transform of the section $f_1$, 
denoted by $f_2: B \to \mcX_2$, 
satisfies $H^1(B, \mcN_{f_2}(-p))=0$ for any point $p \in B$, 
where $\mcN_{f_2}$ is the normal sheaf of the section in $\mcX_2$. 
This in particular implies that $\mcN_{f_2}$ is globally generated and the deformation of the section is a unobstructed. 

Let $V$ be an irreducible component of the Kontsevich moduli space of stable maps of $\mcX_1$ containing the point represented by the map $f_1: B \to \mcX_1$. 
There is a forgetful map $F$ from the Kontsevich space of stable sections of the fibration $\mcX_2 \to B$ 
to the Kontsevich space of stable sections of the fibration $\mcX_1 \to B$. 
Every section of $\mcX_1 \to B$ lifts to a section of $\mcX_2 \to B$, 
so the forget map $F$ is surjective on closed points. 
Since the field $K$ is uncountable and the Kontsevich space has only countably many irreducible components, 
there is an irreducible component $U$ of the moduli space of stable sections which maps surjectively onto $V$. 
Furthermore, a general point of $U$ parametrizes a section of $\mcX_2 \to B$ (i.e. the domain is irreducible). 
Let $f_2': B \to \mcX_2 $ be a section parameterized by a very general point in $U$ 
and $f_1'$ be the composition $\pi_{21} \circ f_2'$ of $f_2': B \to \mcX_2$ and $\pi_{21}: \mcX_2 \to \mcX_1$. 

If $f_1': B \to \mcX_1$ already avoids the singular locus, we are done. 
In the following, assume that $f_1'(B)$ still passes through the (unique) singular point of $\mcX_1$.

Since the map $F: U \to V$ is surjective, 
there is a stable map $\tilde{f}_2$ from a possibly reducible domain to $\mcX_2$ 
whose composition with $\pi_{21}$ is the section $f_1: B \to \mcX_1$. 
We claim that the domain of $\tilde{f}_2$ has to be reducible. 
Assume the contrary, then the stable map $f_2'$ is a deformation of $\tilde{f}_2=f_2$. 
Thus a general point of $U$ is also unobstructed, in particular a smooth point of $U$, 
and $U$ has the expected dimension $-{f_2'}^*K_{\mcX_2} \cdot B$ at this point.

The standard model $\mcX$ has $3$-fold terminal and local complete intersection singularities. 
So does the new total space $\mcX_1$ by construction. 
Therefore every irreducible component containing the point $f_1': B \to \mcX_2 \to \mcX_1$ 
has dimension at least $-f_1^*K_{\mcX_1}\cdot B$. 
Furthermore, by definition of terminal singularities, we have
\[
-K_{\mcX_1}=-K_{\mcX_2}+\sum_{i=1}^n a_i E_i, a_i >0,
\]
where the sum is over all exceptional divisors of $\pi_{21}: \mcX_2 \to \mcX_1$.

Since the image of $f_1'(B)$ in $\mcX_1$ intersects the singular locus, 
$-{f_1'}^*K_{\mcX_1} \cdot B$ is strictly larger than $-{f_2'}^*K_{\mcX_2} \cdot B$. 
Hence $\dim V > \dim U$, which is impossible since $U$ surjects onto $V$.

Denote by $\tilde{f}_2: \tilde{B}=B \cup R \to \mcX_2$ 
the stable map from a reducible domain whose composition with $\pi_{21}$ is the section $f_1$. 
Note that the resolution of singularities $\mcX_2 \to \mcX_1$ is an isomorphism away from the fiber over the point $b$. 
So the curve $R$ is supported in the exceptional divisors of the fiber of $\mcX_2$ over the point $b$.

Let $H_1$ be an ample divisor on $\mcX_1$ and $H_2=\pi_{21}^*H_1$. 
Then there are positive rational number $b_1, \ldots, b_n$ such that $H_2-\sum b_i E_i$ is an ample divisor on $\mcX_2$. 
Up to perturbing the numbers $b_i$ and renumbering the index, 
we may assume that $b_1 < b_2< \ldots <b_n$. The section $f_2(B)$ intersects $E_k$ for some $k$. 
Assume that $f_2'(B)$ intersects the divisor $E_{k'}$. 
We have the following inequality 
\begin{align*}
&{f_1'}^*H_1 \cdot B-b_{k'}={f_1'}^*H_1 \cdot B-{f_2'}^*(\sum b_i E_i) \cdot B\\
=&{f_2'}^*H_1 \cdot B-{f_2'}^*(\sum b_i E_i) \cdot B ~(\text{since }{f_1'}^*H_1 \cdot B={f_2'}^*H_2 \cdot B)\\
=&{\tilde{f}_2}^*(H_2-\sum b_i E_i) \cdot (B+R)\\
&(\text{since } \tilde{f}_2:B\cup R \to \mcX_1 \text{ and } f_2':B \to \mcX_1 \text{ are deformation equivalent})\\
=&f_2^*(H_2-\sum b_i E_i) \cdot B +(H_2-\sum b_i E_i) \cdot R\\
>&f_2^*(H_2-\sum b_i E_i) \cdot B=f_2^*H_2 \cdot B-b_{k}.
\end{align*}
Since ${f_1'}^*H_1 \cdot B=f_1^*H_1 \cdot B=f_2^*H_2 \cdot B$, we have $b_{k'}<b_k$. Thus $k'<k$.

To sum up, we start with a section $f_1: B \to \mcX_1$ whose strict transform $f_2: B \to \mcX_2$ is an unobstructed section  
which intersects the exceptional divisor $E_k$. 
If the deformation of the section $f_1:B \to \mcX_1$ in the irreducible component $V$ still intersect the singular locus, 
then we produce a new section $f_2': B \to \mcX_2$ which intersect the exceptional divisor $E_{k'}$ for some $k'<k$. 
Clearly in the process we may keep all the desired intersection numbers unchanged.

Continue this process, we will eventually find a section $s': B\to \mcX_1$ 
which has the desired intersection numbers and lies in the smooth locus of the total space $\mcX_1$. 
Finally note that if a section lies in the smooth locus of the total space $\mcX_1$, 
then the section lies in the smooth locus of the morphism $\pi: \mcX_1 \to B$.
\end{proof}

Once this lemma is proved, the proof proceeds exactly as in \cite{WACubic}. 
We first produce a multisection of degree $2$ and reduce the weak approximation problem to a new weak approximation problem for the multisection. 
In particular by Lemma \ref{lem:smoothlocus} we only need to approximate formal sections in the smooth locus, which is handled in Lemma 4.5 \cite{WACubic}. 
The proof of this proposition depends on three things: 
strong rational connectedness of the smooth locus of a cubic surface with ADE singularities (Lemma \ref{lem:surface}), 
computation of base change and birational modifications of the integral model (Proposition 3.4 \cite{WACubic}), 
and G-equivariant techniques (Lemma 3.6, Theorem 4.1 \cite{WACubic}). 
When the characteristic is not $2, 3, 5$, the proof of these results needs no change. 
In Proposition 3.4 \cite{WACubic}, the author computed the base change needed for the new central fiber to have ADE singularities. 
They are of degrees $2, 3, 4, 5, 6$. Thus under the assumption on the characteristic, 
all the base changes needed are Galois and the Galois groups are cyclic groups of order prime to the characteristic. 
The $G$-equivariant techniques apply in these cases. 

\section{Fundamental group of rationally connected fibrations}\label{sec:fundamentalgroup}

In this section we collect some easy corollaries of Koll\'ar's results on the fundamental group of (separably) rationally connected varieties. All the fundamental groups mentioned below are to be understood as the algebraic fundamental group.
\begin{thm}\label{thm:fungroup}
Let $\pi: \mcX \to B$ be projective family of varieties over a smooth projective connected curve $B$ defined over a field $k$ and $x \in \mcX$ a $k$-rational point in the smooth locus of $\mcX$. 
Assume that the generic fiber is smooth and separably rationally connected. 
Furthermore assume that there is a free section $s_0: B \to \mcX$ (c.f. Definition \ref{def:free}).
Then there is a geometrically irreducible component of the space of sections with a marked point defined over $k$, i.e. a family $\mathcal{S} \to W$ together with a section $p: W \to \mathcal{S}$ and an evaluation morphism $ev: \mathcal{S} \to \mcX$ such that
\begin{enumerate}
\item
$ev(p(W))=x$.
\item
A general geometric point $w$ in $W$ parameterize a $2$-free section $\mathcal{S}_w$.
\item
Choose an algebraic closure $K$ of $k$. For any open subvariety $\mathcal{X}^0 \subset \mcX$ defined over $K$ containing the point $x$, the map of fundamental groups
\[
\pi_1(ev^{-1}(\mathcal{X}^0), p(w)) \to \pi_1(\mathcal{X}^0, x)
\]
is surjective.
\end{enumerate}
\end{thm}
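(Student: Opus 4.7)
The plan is threefold: first, to produce over $\bar{k}$ a $2$-free section $s$ of $\pi$ passing through $x$; second, to exhibit the geometrically irreducible component $W$ of the space of such pointed sections containing $s$ as being defined over $k$; and third, to derive the $\pi_{1}$-surjectivity from Koll\'ar's results on the fundamental group of separably rationally connected fibrations.

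For the construction of $s$, I would start from the $k$-rational free section $s_{0}$. Attaching a very free rational curve in a general smooth fibre of $\pi$ at a general point of $s_{0}$ and smoothing, one enlarges $s_{0}$ to a free section of arbitrarily positive normal sheaf; in particular one may arrange that the new section meets $\mcX_{\pi(x)}$ at a point $y$ lying in the smooth locus of $\mcX$. Since $x$ also lies in $\mcX^{\mathrm{sm}}$ and the generic fibre is separably rationally connected, Koll\'ar's theorem on $R$-connectedness of the smooth locus of an SRC variety supplies a chain of free rational curves in $\mcX_{\pi(x)}\cap\mcX^{\mathrm{sm}}$ joining $y$ to $x$. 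Attaching this chain as an extra tooth at $y$ and smoothing yields a section through $x$; a further round of attaching very free teeth in general smooth fibres makes the result $2$-free. The obstruction $H^{1}(B,\mcN_{s}(-\pi(x)))$ then vanishes, so the Kontsevich moduli stack $\ol{\mcM}_{x}$ of stable pointed sections with marked point mapping to $x$ is smooth at $s$, and the component $W_{\bar{k}}$ containing $s$ is open, geometrically irreducible, and has $2$-free general member.

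To descend $W_{\bar{k}}$ to $k$, I would observe that $\ol{\mcM}_{x}$, $s_{0}$, and $x$ are all defined over $k$, so the comb construction can be carried out $\mathrm{Gal}(\bar{k}/k)$-equivariantly: any chosen tooth or chain can be replaced by the disjoint union of its Galois conjugates, producing a comb whose smoothing remains $2$-free once sufficiently many teeth have been attached. The resulting $2$-free section then lies in a Galois-invariant component of $\ol{\mcM}_{x}$; combined with the fact that the $2$-free locus in $W_{\bar{k}}$ is open, connected, and (by the above averaging) meets its own Galois orbit, this forces $W_{\bar{k}}$ itself to be Galois-stable and hence to descend to a geometrically irreducible component $W$ defined over $k$. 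The hard part of the argument is exactly this step: the individual attaching choices are \emph{a priori} not Galois-equivariant, so one must argue carefully that all sufficiently iterated smoothed combs land in a single Galois-invariant component.

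For the $\pi_{1}$-surjectivity, let $\mcX^{0}\subset \mcX$ be an open subvariety over $K$ containing $x$. The restriction $ev\colon ev^{-1}(\mcX^{0})\to \mcX^{0}$ has rationally chain connected general geometric fibres: over a general $z\in \mcX^{0}$, the fibre contains the locus of $2$-free sections in $W$ passing through both $x$ and $z$, which by $2$-freeness is a nonempty open subset of the affine space parameterizing deformations of $s$ with prescribed values at $\pi(x)$ and $\pi(z)$. The surjectivity of $\pi_{1}(ev^{-1}(\mcX^{0}),p(w))\to \pi_{1}(\mcX^{0},x)$ then follows from the standard principle in \cite{KollarFundamentalGroup} that a dominant morphism between normal varieties whose general geometric fibre is rationally chain connected induces a surjection on \'etale fundamental groups.
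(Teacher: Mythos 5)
Your steps (1) and (2) are broadly in the spirit of the paper's proof (attach Kollár's very free curves in fibres to $s_0$, smooth, and observe that the resulting component of the pointed Kontsevich space is unique and descends to $k$ because $s_0$ and the family of teeth are defined over $k$). But step (3) has a genuine gap. You claim that the general geometric fibre of $ev\colon ev^{-1}(\mcX^0)\to\mcX^0$ is rationally chain connected because it is ``a nonempty open subset of the affine space parameterizing deformations of $s$ with prescribed values at $\pi(x)$ and $\pi(z)$.'' There is no such affine space: $2$-freeness gives only that the space of sections through the two prescribed points is smooth and unobstructed at $[s]$, with tangent space $H^0(B,\mcN_s(-\pi(x)-\pi(z)))$; it says nothing about the global geometry of that space, let alone that it is rational or rationally chain connected. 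Rational connectedness of the double-pointed section space is exactly the ``rational simple connectedness'' property, which is the hard technical core of \cite{dHS} (established there only via very twisting surfaces) and which the present paper explicitly does \emph{not} have at its disposal — the introduction describes this work as operating in the ``non-rationally-simply-connected'' case. So the hypothesis of the $\pi_1$-surjectivity principle you invoke is unverified, and verifying it would be at least as hard as the theorem itself.

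The paper's route to (3) avoids this entirely: assuming non-surjectivity, one gets a connected finite étale cover $\mathcal{Y}^0\to\mcX^0$ whose pullback $\mathcal{S}\times_{\mcX}\mathcal{Y}^0$ is disconnected (using normality of $\mathcal{S}$). One then specializes a section of $W$ to a reducible comb $\mathcal{S}_0=s_0\cup C_1\cup\dots\cup C_k$, choosing the teeth $C_i$ in general fibres via Kollár's fibrewise theorem so that each $C_i\times_{\mcX_{b_i}}\mathcal{Y}^0_{b_i}$ has exactly one irreducible component per connected component of $\mathcal{Y}^0_{b_i}$; the teeth then tie together all sheets of the cover over the section $s_0$, so $\mathcal{S}_0\times_{\mcX}\mathcal{Y}^0$ is connected, contradicting the disconnectedness of the total fibre product. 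A secondary point worth flagging in your step (1): the fibre $\mcX_{\pi(x)}$ need not be smooth or separably rationally connected (only the generic fibre is), so Kollár's $R$-connectedness of the smooth locus cannot be applied to it without further justification; in the paper's applications $x$ is taken to be a general point, so it lies in a general (smooth, SRC) fibre.
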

\begin{proof}
This is a simple corollary of Koll\'ar's result on the fundamental group of separably rationally connected varieties.

\begin{thm}[\cite{KollarFundamentalGroup}, (3)-(5)]
Let $X$ be a smooth projective separably rationally connected variety defined over a field $k$ and $x \in X$ a $k$-rational point. Then there is a dominating family of rational curves through $x$ (defined over $k$):
\[
F: U \times \PP^1 \to X, F(U \times [0, 1])=x
\]
with the following properties:
\begin{enumerate}
\item 
The variety $U$ is geometrically irreducible, smooth and open in $\text{Hom}(\PP^1, X, [1, 0] \mapsto x)$. The morphism $F: U \times [1, 0] \to X$.
\item
For every geometric point $u$ of $U$, $F_u^*T_X$ is ample.
\item
Choose an algebraic closure $K$ of $k$. For any $K$-open subvariety $X^0 \subset X$ containing the point $x$, the map 
\[
\pi_1(F^{-1}(X^0) \cap (U \times \PP^1), (u, [0, 1])) \to \pi_1(X^0, x)
\]
is surjective.
\item
Under the same assumptions as before, for any \'etale cover $Y^0 \to X^0$ defined over $K$, there is an open subset $U(Y_0)$ of $U$ such that for any $K$-point $u \in U(Y_0)$, the fiber product $\PP^1_u \times_X Y^0$ is geometrically irreducible.
\end{enumerate}
\end{thm}
The last statement is not explicitly stated in \cite{KollarFundamentalGroup} but is very easy to deduce, see, for example the proof of (\ref{ind1}). 

We apply this theorem to the generic fiber of the fibration with the rational point given by the section. 
Then we get a family of very free curves defined over the function field of $B$, which can be ``spread out" to a geometrically irreducible family, still denoted by $U$, of rational curves in general fibers passing through the section. 
Over a general fiber, the family of rational curves in $U$ still satisfies the conditions in Koll\'ar's theorem, if we choose the base point to be the intersection point of the section $s_0$ with the fiber.

Consider the unique irreducible component $\mathcal{S} \to W$ containing the union of the free section $s_0$ and sufficiently many rational curves of this family in different general fibers. 
A general deformation of the union is a $2$-free section.
After shrinking the base, we may assume that the total space $\mathcal{S}$ is normal and $W$ still contains all the points which parameterize a stable map consisting of the union of $s_0$ and general very free curves of the family $U$ in general fibers.

Given an open subset $\mcX^0 \subset \mcX$, if the induced map on fundamental groups 
\[
\pi_1(ev^{-1}(\mathcal{X}^0), p(w)) \to \pi_1(\mathcal{X}^0, x)
\]
is not surjective, then there is a finite \'etale cover $\mathcal{Y}^0 \to \mathcal{X}^0 \subset \mcX$ from an irreducible variety $\mathcal{Y}^0$ defined over $K$, 
such that there is a point $y \in \mathcal{Y}^0$ which is mapped to $x$ under the morphism, 
and the fiber product $\mathcal{S} \times_{\mcX} \mathcal{Y}^0$ is reducible.
The total family of $\mathcal{S}$ is normal, so is the fiber product.
Thus it is disconnected.

We choose a reducible fiber $\mathcal{S}_0$ in the family consisting of the free section $s_0$ and general very free curves in general fibers in the following way.
We first choose general points $b_1, \ldots, b_k \in B$ so that the points $s_0(b_i)$ is contained in the open subset $\mcX^0$ and the family of very free curves in the generic fiber specializes to a family of very free curves in the fibers over $b_1, \ldots, b_k$ which induces surjections on the fundamental group.
The cover $\mathcal{Y}^0 \to \mathcal{X}^0$ induces a finite (possibly disconnected) \'etale cover $\mathcal{Y}^0_{b_i} \to \mcX^0_{b_i}$ for each $1 \leq i \leq k$.
We then choose a very free curve in the fiber over $b_i$ to be a general curve $C_i$ such that the base change $C_i \times \mathcal{Y}^0_{b_i}$ has the same number of geometrically irreducible components as geometrically connected components of $\mathcal{Y}^0_b$.
That is to say, the base change of each geometrically irreducible component of $\mathcal{Y}^0_{b_i}$ to $C_i$ is geometrically irreducible.
This is possible by Koll\'ar's results.

We now look at the fiber product $\mathcal{S}_0 \times_{\mcX} \mathcal{Y}^0$. This is geometrically connected. To see this, simply note that each geometrically irreducible component of $C_i \times \mathcal{Y}^0_{b_i}$ is connected by one (in fact, any) geometrically irreducible component of the inverse image of the section $s_0$ and that all the geometrically irreducible components of the inverse image of the section $s_0$ are connected by the base change of any curve $C_i$.
So it connects every geometrically irreducible component of $\mathcal{S} \times_{\mcX} \mathcal{Y}^0$.
Thus we get a contradiction and the map on fundamental groups
\[
\pi_1(ev^{-1}(\mathcal{X}^0), p(w)) \to \pi_1(\mathcal{X}^0, x)
\]
is surjective.
\end{proof}

We also have the following version without specifying a base point.

\begin{thm}\label{thm:b2}
Let $\pi: \mcX \to B$ be projective family of varieties over a smooth projective connected curve $B$ defined over a field $k$.
Assume that the generic fiber is smooth and separably rationally connected. 
Furthermore assume that there is a free section $s_0: B \to \mcX$.
Then there is a geometrically irreducible component of the space of sections defined over $k$, 
i.e. a family of sections $\mathcal{S} \to W$ and an evaluation morphism $ev: \mathcal{S} \to \mcX$ such that
\begin{enumerate}
\item
A general geometric point $w$ in $W$ parameterize a $2$-free section $\mathcal{S}_w$.
\item
Choose an algebraic closure $K$ of $k$. For any dominant map $f: \mathcal{Z} \to \mcX$ from an irreducible variety $\mathcal{Z}$, there is an open subset $W^0$ of $W$ such that for any geometric point $w \in W^0$, the fiber product $\mathcal{S}_w \times_{\mcX} \mathcal{Z}$ is geometrically irreducible.
\end{enumerate}
\end{thm}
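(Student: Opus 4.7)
The plan is to take $\mathcal{S} \to W$ to be the same family constructed in the proof of Theorem~\ref{thm:fungroup}, and to verify the second condition by an upper-semicontinuity argument that transfers irreducibility from a specific comb to the generic section. I would let $W$ be the unique geometrically irreducible component of the Kontsevich moduli space of stable sections of $\mcX \to B$ that contains the combs $C_{0} = s_{0} \cup R_{1} \cup \cdots \cup R_{m}$, where each $R_{i}$ is drawn from the geometrically irreducible $k$-rational family $U$ of very free rational curves in general fibers through $s_{0}$ obtained by spreading out Koll\'ar's $k(B)$-family. Since both $s_{0}$ and $U$ are Galois-invariant, $W$ is defined over $k$, and a general point parameterizes a $2$-free section, exactly as in Theorem~\ref{thm:fungroup}.

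Given a dominant map $f \colon \mathcal{Z} \to \mcX$ from an irreducible $K$-variety, I would first reduce to the case of a finite \'etale cover. Let $L$ be the algebraic closure of $K(\mcX)$ inside $K(\mathcal{Z})$ and let $\mcX' \to \mcX$ be the normalization of $\mcX$ in $L$. This is a finite morphism which is \'etale over an open subvariety $\mcX^{0} \subset \mcX$, and by construction $\mathcal{Z} \to \mcX'$ has geometrically irreducible generic fiber. It therefore suffices to show that for a general $w \in W$ the base change $\mathcal{S}_{w} \times_{\mcX} \mcX'$ is geometrically irreducible; combining this with the geometrically irreducible generic fibers of $\mathcal{Z} \to \mcX'$ and taking closures gives the desired irreducibility of $\mathcal{S}_{w} \times_{\mcX} \mathcal{Z}$.

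To prove the irreducibility at the specific comb $[C_{0}] \in W$, I would mimic the $\pi_{1}$-argument in the proof of Theorem~\ref{thm:fungroup}. Using the last part of Koll\'ar's theorem on surjectivity of $\pi_{1}$ for very free curves, one can choose the teeth $R_{i}$ in distinct general fibers so that the pullback of the \'etale cover $\mcX'|_{\mcX^{0}} \to \mcX^{0}$ to each $R_{i}$ is geometrically irreducible, and so that collectively the handle $s_{0}$ together with enough teeth $R_{i}$ generates a subgroup of $\pi_{1}(\mcX^{0})$ acting transitively on the fiber of $\mcX'|_{\mcX^{0}} \to \mcX^{0}$. Additionally, the $R_{i}$ can be chosen to avoid the non-\'etale locus of $\mcX' \to \mcX$, which is a proper closed subset of $\mcX$. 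With these choices, the pullback $C_{0} \times_{\mcX} \mcX'$ is geometrically connected, and since it lies in the \'etale locus it is geometrically irreducible.

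The final step is to spread this irreducibility from $[C_{0}]$ to a general $w \in W$. The morphism $\mathcal{S} \times_{\mcX} \mcX' \to W$ is proper (since $\mathcal{S} \to W$ is proper and $\mcX' \to \mcX$ is finite) and flat on an open neighborhood $W^{0}$ of $[C_{0}]$, where flatness is ensured by the avoidance condition arranged in the previous step. Upper semi-continuity of $h^{0}$ of the structure sheaf on geometric fibers then shows that the number of geometrically connected components of $\mathcal{S}_{w} \times_{\mcX} \mcX'$ is upper semi-continuous on $W^{0}$; since this number equals one at $[C_{0}]$, it equals one on a dense open subset of $W^{0}$. Combined with the regularity of the cover over $\mcX^{0}$, this gives geometric irreducibility of $\mathcal{S}_{w} \times_{\mcX} \mcX'$ for a general $w$, completing the proof. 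The main obstacle is the simultaneous arrangement of the monodromy-transitivity, avoidance, and flatness conditions in the choice of the $R_{i}$, which requires combining several parts of Koll\'ar's theorem on the fundamental group of separably rationally connected varieties with elementary dimension counts in general fibers.
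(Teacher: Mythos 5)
Your construction of $W$ and your reduction of a general dominant $f\colon \mathcal{Z}\to\mcX$ to the finite, generically \'etale cover $\mcX'\to\mcX$ are fine, and your monodromy argument at the comb $C_0$ is essentially the one the paper runs inside the proof of Theorem \ref{thm:fungroup}. But your final spreading-out step goes in the opposite direction from the paper's, and it has a genuine gap. The paper's proof of Theorem \ref{thm:b2} is the generic-point trick: apply the pointed Theorem \ref{thm:fungroup} over $K(\mathcal{S})$ to the generic point $\eta$ of $\mathcal{S}$, which lies over the generic point of $\mcX$ and hence inside the \'etale locus of any such cover; this gives irreducibility of the \emph{generic} fiber of $\mathcal{S}\times_{\mcX}\mathcal{Z}\to W$, and the open set $W^0$ then comes from constructibility of the locus of geometrically irreducible fibers (EGA IV 9.7.9 \cite{EGAIV}, as invoked in Lemma \ref{lem:familyofnicesection}). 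At no point does the paper pass from a \emph{special} fiber to the general one.

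Your argument does exactly that, via properness, flatness, and upper semicontinuity of $h^0(\OO)$, and both hypotheses of that mechanism fail. First, flatness of $\mathcal{S}\times_{\mcX}\mcX'\to W$ near $[C_0]$ cannot be arranged by choosing the teeth: $\mcX'\to\mcX$ is finite but in general not flat (normalization in a field extension is flat only under regularity/Cohen--Macaulay hypotheses unavailable here), and its non-flat, non-\'etale locus can contain entire fibers $\pi^{-1}(b)$ --- take $\mathcal{Z}=\mcX\times_B B'$ for a ramified cover $B'\to B$ --- which the fixed handle $s_0$ must meet; only the teeth are at your disposal. Second, even granting flatness, semicontinuity requires $h^0(\OO_{C_0\times_{\mcX}\mcX'})=1$, which is strictly stronger than connectedness: where the handle meets the branch locus the scheme-theoretic fiber product can acquire embedded points, and each such point contributes to $H^0(\OO)$ (a $\PP^1$ with one embedded point has $h^0(\OO)=2$). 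Note also that bare connectedness does not specialize from special to general fibers of a proper map (two disjoint sections colliding over a point is the standard example), so the $h^0=1$ input is essential and unavailable. The repair is the one the paper uses inside Theorem \ref{thm:fungroup}: work with the finite \'etale cover of the connected open set $ev^{-1}(\mcX^0)\subset\mathcal{S}$ rather than with the proper fiber product over $W$. Every connected component of such a cover surjects onto $ev^{-1}(\mcX^0)$ and therefore meets $C_0\cap ev^{-1}(\mcX^0)$, so your monodromy computation at $C_0$ already shows the \emph{total space} is connected, hence irreducible after shrinking and normalizing; irreducibility of the general fiber $\mathcal{S}_w\times_{\mcX}\mathcal{Z}$ then follows from the generic-point and constructibility argument, with no properness or flatness needed.
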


\begin{proof}
The free section $s_0$ determines a geometrically irreducible family of sections $\mathcal{S} \to W$ defined over $k$. 
Moreover there is a dominant map $\mathcal{S} \to \mcX$.
Let $K$ be the function field of $\mathcal{S}$ and $\eta$ be the generic point of $\mathcal{S}$. 
Apply Theorem \ref{thm:fungroup} over the field $K$ to the $K$-point $\eta$ of $\mcX \times_k K$.
For details see the proof of (6) in \cite{KollarFundamentalGroup}.
\end{proof}

In the following we specialize to the case of a family Fano complete intersections satisfying Hypothesis \ref{hyp}. For the ease of the reader, we reproduce the hypothesis below.

\begin{hypo}[=Hypothesis \ref{hyp}]
Given a family $\mcX \to B$ of Fano complete intersections defined over an algebraically closed field, assume the followings are satisfied.
\begin{enumerate}
\item There is free section $s: B \to \mcX$.
\item The Fano scheme of lines of a general fiber $\mcX_b$ is smooth.
\item A general line in a general fiber is a free line.
\item The relative dimension of $\mcX \to B$ is at least $3$.
\end{enumerate}
\end{hypo}
To apply the results of Koll\'ar, we first need to check that a general fiber is separably rationally connected. This is taken care of by the following result.
\begin{thm}[\cite{hypersurface}, Corollary 9]
Let $X$ be a smooth Fano complete intersection of dimension at least $3$. Then $X$ is separably rationally connected if and only if it is separably uniruled.
\end{thm}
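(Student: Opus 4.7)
The reverse direction is immediate: a separably rationally connected variety is always separably uniruled, since a very free curve through a general point is in particular free. So my plan focuses on the forward implication: a smooth Fano complete intersection $X \subset \PP^n$ of dimension at least $3$ that admits a free rational curve must admit a very free one.

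The strategy is to upgrade a single free curve into a comb whose smoothing is very free, exploiting the Euler/normal-bundle sequences of a complete intersection. First I would extract a free morphism $f_0 \colon \PP^1 \to X$ from the hypothesis of separable uniruledness, so that $f_0^*T_X$ is globally generated and $H^1(\PP^1, f_0^*T_X)=0$. Because $X$ has Picard number $1$ and dimension at least $3$, deforming $f_0$ shows that the locus of points admitting a free rational curve is a non-empty open set of codimension at least $2$ (this is the analogue of Lemma~\ref{lem:codim2} in the cubic case, and the general principle holds for any smooth Fano complete intersection of dimension $\geq 3$ once one free curve exists). Fix a general point $x \in X$ through which several free rational curves pass.

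Next I would assemble a comb $C = C_0 \cup T_1 \cup \ldots \cup T_m$ in $X$ whose handle $C_0$ is a free rational curve and whose teeth $T_i$ are free rational curves attached at distinct general points of $C_0$, all lying over the open locus where a free curve exists. Using the exact sequence
\begin{equation*}
0 \to T_X \to T_{\PP^n}|_X \to \bigoplus_{i=1}^{c} \OO_X(d_i) \to 0
\end{equation*}
restricted to each component, together with the Euler sequence, one computes that attaching sufficiently many teeth yields a comb $C$ for which $H^1(C, N_{C/X}(-x)) = 0$. The Fano condition $\sum d_i \leq n$ guarantees enough positivity of $T_X$ along these curves to make the cohomological count work. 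Standard deformation theory (Kollár, Rational Curves on Algebraic Varieties, II.7) then smooths $C$ to a morphism $\tilde f \colon \PP^1 \to X$ passing through $x$ with $\tilde f^*T_X$ ample, i.e.\ a very free curve. Existence of such a curve is equivalent to separable rational connectedness.

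The main obstacle is verifying that the smoothing remains \emph{separable} in positive characteristic: a priori the smoothed curve could factor through a Frobenius twist. Here the argument relies on the fact that $C$ meets the smooth locus transversely and that $N_{C/X}$ (computed via the two exact sequences above) has no $H^1$ even after twisting down by a point, which forces the universal deformation to be generically étale on its source. The combinatorics of the degrees $d_i$ of the complete intersection, together with the hypothesis that the characteristic is not too small relative to these degrees, is what makes this last step go through. Once this separability is in hand, the resulting very free curve delivers separable rational connectedness, completing the proof.
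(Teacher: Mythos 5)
First, a point of reference: the paper does not prove this statement at all --- it is quoted verbatim from \cite{hypersurface} (Corollary 9) and used as a black box, so there is no internal proof to compare yours against. I therefore assess your argument on its own merits, and it has a genuine gap at its central step.

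The gap is the claim that a comb $C = C_0 \cup T_1 \cup \dots \cup T_m$ with a \emph{free} handle and \emph{free} teeth, once $H^1(C, N_{C/X}(-x))$ is killed by adding enough teeth, smooths to a \emph{very free} curve. What that cohomology vanishing buys you is unobstructedness of the deformation fixing $x$ and freeness of the smoothing; it does not control the splitting type of $\tilde f^*T_X$. Very freeness is the statement that every summand of $\tilde f^*T_X \cong \bigoplus \OO_{\PP^1}(a_i)$ has $a_i \geq 1$, i.e.\ a statement about the \emph{minimum} of the $a_i$, whereas the Euler and conormal sequences you invoke only compute $\deg \tilde f^*T_X = \sum a_i$; no degree or Riemann--Roch count can see the minimum. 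The classical comb arguments (Koll\'ar II.7, Graber--Harris--Starr) that do produce very free curves require either very free teeth or teeth whose tangent directions at the attachment points generate the non-positive part of $N_{C_0/X}$, and arranging the latter is essentially equivalent to a rational connectedness statement --- which is what you are trying to prove. A clean sanity check that free teeth alone cannot suffice: on $\PP^1 \times A$ with $A$ an abelian variety (smooth, separably uniruled, not Fano), every free rational curve is a fiber of the projection to $A$, every comb of free curves lies in a single fiber, and no smoothing is very free. So the Fano complete intersection hypothesis must enter in a structurally stronger way than ``enough positivity to make the cohomological count work''; in the literature this is done, e.g., by analyzing the distribution spanned by the positive part of $f^*T_X$ for a general minimal free curve, or by an explicit construction of very free curves adapted to the defining equations, not by a generic comb smoothing.

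Secondary remarks: your worry about the smoothing ``factoring through Frobenius'' is misplaced --- that is not where positive characteristic bites here (the danger is that very free curves simply fail to exist, i.e.\ rational chain connectedness without separable rational connectedness), and your proposed fix via generic \'etaleness of the universal deformation does not address the real issue above. On the other hand, your codimension-$\geq 2$ argument for the non-free locus via Picard number $1$ is fine, and the reverse implication is indeed immediate.
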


Thus all the previous results apply to the fibration $\mcX \to B$. 
Recall that $F \to B$ is the relative Hilbert scheme of lines and $F(U) \to U$ is the relative Hilbert scheme of lines for the smooth fibers. 
Finally $\overline{F} \to B$ is the closure of $F(U)$ in $F$ and $\mathcal{L} \to \overline{F}$ is the universal families of lines restricted to $F$.
There is a natural $B$-morphism $\mathcal{L} \to \mcX$.

\begin{lem}\label{lem:familyofnicesection}
Let $\mcX \to B$ be a family defined over an algebraically closed field $k$ satisfying Hypothesis \ref{hyp}. 
\begin{enumerate}
\item\label{ind1}
There is a nice section.
\item \label{general}
Let $\mathcal{S} \to W$ be an irreducible component of the space of sections such that there is a geometric point $w \in W$ which parameterizes a nice section $\mathcal{S}_w$. Then a general point of $W$ parameterizes a nice section.
\item \label{induction}
Let $\mathcal{S} \to W$ be a geometrically irreducible component of the space of sections such that a general geometric point parameterizes a nice section. Then $\mathcal{S} \times_{\mcX} \mathcal{L}$ is geometrically irreducible and generically smooth.
Furthermore it is contained in a unique irreducible component of the Kontsevich moduli space of stable sections which contains an open substack parameterizing nice sections.
\end{enumerate}
\end{lem}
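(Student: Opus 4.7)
For part (1), I would begin with the free section supplied by Hypothesis \ref{hyp} and apply Theorem \ref{thm:b2} to the dominant morphism $\mathcal{L} \to \mcX$. Since a general fiber is a smooth Fano complete intersection of dimension at least $3$, its Fano scheme of lines is smooth and connected; thus $\bar F$ and hence $\mathcal{L}$ is geometrically irreducible. Theorem \ref{thm:b2} then yields a geometrically irreducible family of $2$-free sections $\mathcal{S} \to W$ such that for a general geometric $w \in W$, the fiber product $\mathcal{S}_w \times_{\mcX} \mathcal{L}$ is geometrically irreducible. A $2$-free section has enough deformations that its evaluation on any chosen smooth fiber of $\mcX \to B$ is dominant; since the complement of $\mcX^0 \cap \mcX^1$ has codimension at least two in $\mcX$, a general member of $W$ must meet $\mcX^0 \cap \mcX^1$, producing a nice section.

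For part (2), the three conditions defining ``nice'' are each open on any family of sections. Being $2$-free is open by upper semicontinuity of $h^1$ applied to the relative twisted normal sheaf, and the section meeting $\mcX^0 \cap \mcX^1$ is open because its failure means the section is entirely contained in a closed subscheme of $\mcX$. For the irreducibility of $\mathcal{S}_{w'} \times_{\mcX} \mathcal{L}$, observe that the given nice section is $2$-free and therefore a smooth point of the Hom scheme, so $W$ is the unique irreducible component of the moduli of sections through it. Applying Theorem \ref{thm:b2} to this nice section yields the same component (up to shrinking $W$), and its second conclusion, applied with $\mathcal{Z} = \mathcal{L}$, ensures that $\mathcal{S}_{w'} \times_{\mcX} \mathcal{L}$ is geometrically irreducible for a general $w'$.

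For part (3), I would show first that $Y = \mathcal{S} \times_{\mcX} \mathcal{L}$ is geometrically irreducible: the projection $Y \to W$ is surjective (since $\mathcal{L} \to \mcX$ is) with geometrically irreducible fibers over a dense open $W^{\mathrm{nice}} \subset W$, and since $W$ is geometrically irreducible, the unique dominant component of $Y$ must coincide with $Y$ generically. Generic smoothness reduces to the étale locus, since $\mathcal{Z}^0 \to \mcX^0$ is étale and $\mathcal{L} \to \mathcal{Z}$ has geometrically integral generic fiber. A general closed point of $Y$ represents a stable map obtained by attaching a free line to a nice section at a smooth point of $\mcX$; both components being free and meeting at a smooth point, the stable map has unobstructed deformations and is a smooth point of the Kontsevich moduli space of stable sections, so $Y$ lies in a unique irreducible component of that space. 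The main obstacle is confirming that a general smoothing of such a reducible stable map is itself a nice section of higher line degree: $2$-freeness and meeting $\mcX^0 \cap \mcX^1$ follow from openness, but the irreducibility of the new fiber product with $\mathcal{L}$ requires re-applying Theorem \ref{thm:b2} to the new $2$-free section, which is precisely the recursive content that makes Construction \ref{Sequence} iterate.
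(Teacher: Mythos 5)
Your part (1) is essentially the paper's argument, routed through Theorem \ref{thm:b2} rather than the pointed version Theorem \ref{thm:fungroup}; this is fine, since $\mathcal{L}$ is irreducible, $\mathcal{L}\to\mcX$ is proper and surjective, and $2$-freeness forces a general member of the resulting component to meet the dense open subset $\mcX^0\cap\mcX^1$.

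The gap is in parts (2) and (3), where you verify the third condition in the definition of a nice section by ``re-applying Theorem \ref{thm:b2} to the given $2$-free section'' and asserting that this returns the same component $W$. It does not: the proofs of Theorems \ref{thm:fungroup} and \ref{thm:b2} construct their output component by attaching very free curves in general fibers to the input section $s_0$ and smoothing, so the output parameterizes sections of strictly larger degree and is never the component containing $s_0$ itself. Hence applying Theorem \ref{thm:b2} to the nice section $\mathcal{S}_w$ gives no information about general points of $W$. Nor can you fall back on openness: geometric irreducibility of $\mathcal{S}_{w'}\times_{\mcX}\mathcal{L}$ is only a \emph{constructible} condition on $w'$ (EGA IV 9.7.9), so knowing it at one special point $w$ does not propagate to a dense open set, and your framing of part (2) as ``all three conditions are open'' fails for exactly the condition that carries the content of the lemma. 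The paper's proof of (2) instead deforms $\mathcal{S}_w$ with one general point $x\in\mathcal{S}_w\cap\mcX^0$ held fixed, obtaining a family $\mathcal{T}\to U$; lifting $x$ into $\mathcal{Z}^0$ gives a section of $\mathcal{T}\times_{\mcX}\mathcal{Z}^0\to U$ and hence a distinguished irreducible component of the total space, while the irreducibility of $\mathcal{S}_w\times_{\mcX}\mathcal{Z}^0$ forces it to lie in a unique component; so there is only one component, general fibers over $U$ are irreducible, and constructibility then upgrades ``general in $U$'' to a dense open subset of $W$. The same marked-point specialization, applied to the comb consisting of a nice section with a free line attached, is what closes part (3); you correctly isolate this as the remaining difficulty, but the tool you propose for it does not apply.
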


\begin{proof}
The morphism $\mathcal{L} \to \mcX$ factors through a variety $\mathcal{Z}$: $\mathcal{L} \to \mathcal{Z} \to \mcX$ such that a general fiber of $\mathcal{L} \to \mathcal{Z}$ is geometrically irreducible and $\mathcal{Z} \to \mathcal{X}$ is finite and generically \'etale.
Let $\mcX^0$ be the open locus of $\mcX$ such that $\mathcal{Z} \to \mathcal{X}$ is \'etale and $\mcX^1$ be the open locus of $\mcX$ such that $\mathcal{Z} \to \mathcal{X}$ is has constant fiber dimension.
Let $\mathcal{Z}^0$ be the inverse image of $\mcX^0$ in $\mathcal{Z}$.
The complement of $\mcX^1$ in $\mcX$ has codimension at least $2$.
Thus a general deformation of a free section lies in $\mcX^1$.

Choose a general point $x$ in $\mcX^0$. Consider the family of sections $\mathcal{S} \to W$ containing the point $x$ constructed in Theorem \ref{thm:fungroup}.
By the Theorem \ref{thm:fungroup}, the fiber product $\mathcal{S} \times_{\mcX} \mathcal{Z}^0$ is geometrically irreducible. 
By shrinking $W$, we may assume that $W$ is smooth.
Also the morphism $\mathcal{S} \to W$ is a smooth morphism  
Furthermore there is a section from $W$ to $\mathcal{S} \times_{\mcX} \mathcal{Z}^0$ by choosing a point in $\mathcal{Z}^0$ lying over $x$.
The generic fiber of $\mathcal{S} \times_{\mcX} \mathcal{Z}^0 \to W$ is smooth and contains a rational point, thus geometrically irreducible.
Then a general fiber of the morphism $\mathcal{S}\times_{\mcX} \mathcal{Z}^0 \to W$ is geometrically irreducible.

A $2$-free section $s: B \to \mcX$ is nice if 
\begin{enumerate}
\item
It is contained in $\mathcal{X}^1$.
\item
The fiber product $B \times_{\mcX} \mathcal{Z}^0$ is irreducible.
\end{enumerate}

So a general member of the family $\mathcal{S} \to W$ is a nice section.

For the second statement, let $x$ be a general point in $\mathcal{S}_w \cap \mcX^0$. We first deform the nice section $\mathcal{S}_w$ with one general point fixed. 
This gives a family $\mathcal{T}\to U, F: \mathcal{T} \to \mcX$, with a section $p: U \to \mathcal{T}$ such that $F(p(U))=x$.
The deformation covers an open subset of $\mcX$ since $\mathcal{S}_w$ is $2$-free.
Furthermore a general deformation is a $2$-free section which lies in the locus $\mcX^1$.

Consider the fiber product $\mathcal{T} \times_{\mcX} \mathcal{Z}^0$ and its projection to $U$. 
A section $T_u$ over a general geometric point $u \in U$ is $2$-free. 
Thus by replacing $U$ with a smaller open subset, we may assume that $\mathcal{T}, U$ are smooth and the fibration $\mathcal{T} \to U$ is smooth.
So $\mathcal{T} \times_{\mcX} \mathcal{Z}^0$ is irreducible if and only if it is connected.
There is a section of the map $\mathcal{T} \times_{\mcX} \mathcal{Z}^0 \to U$ by lifting $x$ to one of its inverse images in $\mathcal{Z}^0$.
This determines a geometrically irreducible component of $\mathcal{T} \times_{\mcX} \mathcal{Z}^0$. 
Since $\mathcal{S}_w \times_{\mcX} \mathcal{L}$ is irreducible, $\mathcal{S}_w \times_{\mcX} \mathcal{Z}^0$ is irreducible. 
Thus it lies in a unique irreducible component.
Then there can be only one irreducible component, independent of the chosen inverse image of $x$.
Once we know the total space is geometrically irreducible, a similar argument as above shows that for a general point $u \in U$, the section $\mathcal{T}_u$ is nice.

Consider the evaluation map of the total family $\mathcal{S} \to W$ to $\mcX$, $ev_S: \mathcal{S} \to \mcX$. 
The previous paragraphs shows that for a general point $x \in \mathcal{S}_w \cap \mathcal{X}^0$, there is one irreducible component of the fiber of $ev_S$ over the point $x$, whose general points parameterize nice section. 
The deformation of these cover an open subset of $\mcX$.
Then repeating the above argument shows that for a general point $y \in \mcX$, there is an irreducible component of the fiber of $ev_S$ over $y$ whose general points parameterize nice sections. 

The locus in $W$ such that the fiber of $\mathcal{S} \times_{\mcX} \mathcal{Z}^0 \to W$ is geometrically irreducible is a constructible set by EGA IV 9.7.9 \cite{EGAIV}.
Then the previous paragraph shows that it is contains an open subset of $W$.

For the last statement, first note that a standard computation using the exact sequence of normal sheaves shows that a general deformation of the union of $\mathcal{S}_w$ and a free line is a $2$-free section.
Furthermore, if we deform the union of a nice section and a free line with one general point of the section fixed, then a general deformation is a nice section.
Call this family $\mathcal{C} \to V$.
We may assume $V, \mathcal{C}$ are smooth after restricting to a smaller open subset.
The proof proceeds similarly to proof of Theorem \ref{thm:fungroup}. 
Namely one first shows that the total space of the base change $\mathcal{C} \times_{\mcX} \mathcal{Z}^0$ is geometrically irreducible.
One can show this by specializing to the union of a nice section and a general free line.
Then a general member of the family is a nice section.
So a general point of the family parameterizes a nice section by the second statement.
\end{proof}

\bibliographystyle{alpha}
\bibliography{MyBib}

\end{document}